\newlength{\fixboxwidth}
\newcommand{\re}{\mathbb{R}}\newcommand{\N}{\mathbb{N}}
\newcommand{\zz}{\mathbb{Z}}
\newcommand{\tor}{\mathbb{T}}
\newcommand{\Z}{{\zz}^d}
\newcommand{\R}{{\re}^d}
\newcommand{\cs}{{\mathcal S}}
\newcommand{\cl}{{\mathcal L}}
\newcommand{\ce}{{\mathcal E}}
\newcommand{\cf}{{\mathcal F}}
\newcommand{\cfi}{{\cf}^{-1}}
\newcommand{\cz}{{\mathcal Z}}
\newcommand{\supp}{{\rm supp \, }}
\newcommand{\ls}{\lesssim}
\newcommand{\mix}{{\rm mix}}
\newcommand{\bproof}{\begin{proof}}
\newcommand{\eproof}{\end{proof}}
\newcommand{\be}{\begin{equation}}
\newcommand{\ee}{\end{equation}}
\newcommand{\beq}{\begin{eqnarray}}
\newcommand{\beqq}{\begin{eqnarray*}}
\newcommand{\eeq}{\end{eqnarray}}
\newcommand{\eeqq}{\end{eqnarray*}}
\numberwithin{equation}{section}
\newtheorem{theorem}{Theorem}[section]
\newtheorem{definition}[theorem]{Definition}
\newtheorem{corollary}[theorem]{Corollary}
\newtheorem{lemma}[theorem]{Lemma}
\newtheorem{proposition}[theorem]{Proposition}
\newtheorem{remark}[theorem]{Remark}
\begin{document}


\title{Weyl Numbers of Embeddings of Tensor Product Besov Spaces}

\author{Van Kien Nguyen \& Winfried Sickel\\
Friedrich-Schiller-University Jena, Ernst-Abbe-Platz 2, 07737 Jena, Germany\\
kien.nguyen@uni-jena.de \quad \& \quad  winfried.sickel@uni-jena.de
}


\date{\today}

\maketitle

\begin{abstract}
In this paper we investigate the asymptotic behaviour of Weyl numbers of embeddings of
tensor product Besov spaces into Lebesgue spaces. These results will be compared with the known
behaviour of entropy numbers. 
\end{abstract}


\section{Introduction}\label{intro}
 

Weyl numbers have been introduced by Pietsch \cite{Pi1}.
They are relatives of approximation numbers.
Recall, the $n$-th approximation number of the linear operator $T \in \mathcal L(X,Y)$ is defined to be 
\be\label{app}
 a_n(T):=\inf\{\|T-A\|: \ A\in \mathcal L(X,Y),\ \ \text{rank} (A)<n\}\, , \qquad n \in \N\, . 
 \ee
Here $X$ and $Y$ are quasi-Banach spaces.
Now we are in position to define Weyl numbers.
\\  
The $n$-th Weyl number of the linear operator $T \in \mathcal L(X,Y)$ is given by 
$$ x_n(T):=\sup\{a_n(TA):\ A\in \mathcal L(\ell_2,X),\ \|A\|\leq 1\}\, , \qquad n \in \N\, .$$
Approximation and Weyl numbers belong to the family of $s$-numbers, see Section \ref{basic} for more details.
The particular interest in Weyl numbers stems from the fact that 
they are the smallest known $s$-number satisfying the famous Weyl-type inequalities, i.e.,
\be\label{ws-01}
\Big( \prod_{k=1}^{2n-1} |\lambda_k (T)|\Big)^{1/(2n-1)} \le \sqrt{2e} \, \Big( \prod_{k=1}^{n} x_k (T)\Big)^{1/n}
\ee
holds for all $n \in \N$, in particular, 
\[
|\lambda_{2n-1} (T)| \le \sqrt{2e} \, \Big( \prod_{k=1}^{n} x_k (T)\Big)^{1/n} \, ,
\]
see Pietsch \cite{Pi1} and Carl, Hinrichs \cite{CH}.
Here $T: \, X \to X $ is a compact linear operator
in a Banach space $X$ and 
$(\lambda_n(T))_{n=1}^\infty$ denotes the sequence of all non-zero
eigenvalues of $T$, repeated according to algebraic multiplicity
and ordered such that 
\[
|\lambda_1(T)| \ge |\lambda_2(T)| \ge \, \ldots \, \ge 0\, .
\]
Also as a consequence of \eqref{ws-01} one obtains, for all $p\in (0,\infty)$, the existence
of a constant $c_p$ (independent of $T$) s.t.
\[
\Big(\sum_{n=1}^\infty |\lambda_n (T)|^p\Big)^{1/p}
\le c_p \, \Big(\sum_{n=1}^\infty x^p_n (T)\Big)^{1/p}\, .
\]
Hence, Weyl numbers may be seen as an appropriate tool to control the eigenvalues of $T$.
Many times operators of interest can be written as a composition of an identity between appropriate function spaces 
and a further bounded operator, see, e.g., the monographs of K\"onig \cite{Koe} and of Edmunds, Triebel \cite{ET}.
This motivates the study of Weyl numbers of identity operators. Pietsch \cite{Pi4}, 
Lubitz \cite{Claus}, K\"onig \cite{Koe} and Caetano \cite{Cae1,Cae2,Caed} 
studied the Weyl numbers of $id: B^t_{p_1,q_1}((0,1)^d) \to L_{p_2}((0,1)^d)$, where
$B^t_{p_1,q_1}((0,1)^d)$ denotes the isotropic Besov spaces.
Zhang, Fang and Huang \cite{Fang1} and Gasiorowska and Skrzypczak \cite{GS} investigated the case of embeddings of weighted Besov spaces, defined on $\R$, into Lebesgue spaces.
Here we are interested in the investigation of the asymptotic behaviour of 
Weyl numbers of the identity 
\[
id: S^t_{p_1,p_1}B((0,1)^d) \to L_{p_2}((0,1)^d) \, ,
\]
where $S^t_{p_1,p_1}B((0,1)^d)$ denotes a d-fold tensor product of univariate Besov spaces $ B^t_{p_1,p_1}(0,1)$.
This notation is chosen in accordance with the fact that
$S^t_{p_1,p_1}B((0,1)^d)$ can be interpreted as a special case of the scale of Besov spaces 
of dominating mixed smoothness, see Section \ref{domino}. 
\\
The behaviour of 
$x_n (id:\, S^t_{p_1,p_1}B((0,1)^d)\to L_{p_2}((0,1)^d))$, $1 <p_2 <\infty$,
will be discussed in Subsection \ref{paley}. Here, up to some limiting situations, we have the complete 
picture, i.e., we know the exact asymptotic behaviour of the Weyl numbers.
For the extreme cases $p_2 = \infty$ and $p_2 = 1$, 
see Subsection \ref{extrem} and Subsection \ref{extrem1},
we are also able to describe the behaviour in almost all situations.  
In Subsection \ref{extrem2} we discuss the behaviour of 
$x_n (id:\, S^t_{p_1,p_1}B((0,1)^d)\to \cz^s_\mix((0,1)^d))$, where $\cz^s_\mix ((0,1)^d)$ denotes a space of 
H\"older-Zygmund type. Finally, in Subsection \ref{entropy}, we compare the behaviour of  Weyl numbers 
with that one of entropy numbers.
\\
Summarizing we present an almost complete picture of the behaviour of $x_n (id:\, S^t_{p_1,p_1}B((0,1)^d)\to L_{p_2}((0,1)^d))$, 
$0 < p_1 \le \infty$, $1\le p_2 \le \infty$, $t > \max(0, \frac{1}{p_1}-\frac{1}{p_2})$.
This is a little bit surprising since the corresponding results for approximation numbers 
are much less complete, see, e.g., \cite{Baz7}. 
Let us mention in this context that Weyl numbers have some specific properties not shared 
by approximation numbers like interpolation properties, see Theorem \ref{inter}, or the inequality \eqref{kuehn}.
\\
The paper is organized as follows. Our main results are discussed in Section \ref{main1}. 
In Section \ref{basic} we recall the definition of $s$-numbers and discuss some further  properties of Weyl numbers.
Section \ref{domino} is devoted to the function spaces under consideration.
In Section \ref{sequence} we investigate the Weyl numbers of embeddings of certain sequence spaces
associated to tensor product Besov spaces and spaces of dominating mixed smoothness.
This will be followed by Section \ref{proof}, where, beside others, Theorem \ref{main} (our main result) will be proved.
In Appendix A we recall the behaviour of the Weyl numbers of embeddings $id_{p_1,p_2}^m:~ \ell_{p_1}^m \to \ell_{p_2}^m$.
Finally, in Appendix B, a few more facts about the Lizorkin-Triebel spaces of dominating mixed smoothness 
$S^t_{p,q}F(\R)$, $S^t_{p,q}F((0,1)^d)$ and the Besov spaces of dominating mixed smoothness $S^t_{p,q}B(\R)$, $S^t_{p,q}B((0,1)^d)$
are collected.


\subsection*{Notation}


As usual, $\N$ denotes the natural numbers, $\N_0 := \N \cup \{0\}$,
$\zz$ the integers and
$\re$ the real numbers. For a real number $a$ we put $a_+ := \max(a,0)$.
By $[a]$ we denote the integer part of $a$.
If $\bar{j} \in \N_0^d$, i.e., if $\bar{j}=(j_1, \ldots \, , j_d)$, $j_\ell \in \N_0$, $\ell=1, \ldots \, , d$, then we put
\[
|\bar{j}|_1 := j_1 + \ldots \, + j_d\, .
\]
By $\Omega$ we denote the unit cube in $\R$, i.e., $\Omega:= (0,1)^d$.
If $X$ and $Y$ are two quasi-Banach spaces, then the symbol $X \hookrightarrow Y$ indicates that the embedding is continuous. 
As usual, the symbol $c $ denotes positive constants 
which depend only on the fixed parameters $t,p,q$ and probably on auxiliary functions, unless otherwise stated; its value may vary from line to line.
Sometimes we will use the symbols ``$ \lesssim $'' 
and ``$ \gtrsim $'' instead of ``$ \le $'' and ``$ \ge $'', respectively. The meaning of $A \lesssim B$ is given by: there exists a constant $c>0$ such that
 $A \le c \,B$. Similarly $\gtrsim$ is defined. The symbol 
$A \asymp B$ will be used as an abbreviation of
$A \lesssim B \lesssim A$.
For a discrete set $\nabla$ the symbol $|\nabla|$ 
denotes the cardinality of this set.
Finally, the symbols ${id}, id^*$ will be used for identity operators, $id^*$ mainly in connection with sequence spaces.
The symbol $id_{p_1,p_2}^m$ refers to the identity 
\be\label{idlp}
id_{p_1,p_2}^m:~ \ell_{p_1}^m \to \ell_{p_2}^m\, .
\ee
Tensor products of Besov and Sobolev spaces are investigated 
in \cite{Spreng}, \cite{SUt}, \cite{SUspline} and Hansen \cite{Hansen}.
General information about Besov and Lizorkin-Triebel spaces of dominating mixed smoothness can be found, e.g., in 
\cite{Am,Baz1,Baz2,Baz3,Sc2,ST,Vybiral} ($S^t_{p,q}B(\R)$, $S^{t}_{p,q}F(\R)$).
The (Fourier analytic) definitions of these spaces are reviewed in the Appendix B.
The reader, who is interested in more elementary descriptions of these spaces, e.g., by means of differences, is 
referred to \cite{Am,ST} and \cite{U1}.


\section{Some preparations}


As a preparation for the main results we recall under which conditions the identity $S^t_{p_1,p_1}B((0,1)^d) \hookrightarrow L_{p_2}((0,1)^d)$ is 
compact, see Vybiral \cite[Thm.~3.17]{Vybiral}.

\begin{proposition}\label{emb}
The following assertions are equivalent:
\\
{\rm (i)} The embedding $S^t_{p_1,p_1}B((0,1)^d) \hookrightarrow L_{p_2}((0,1)^d)$
is compact;
\\
{\rm (ii)} We have
\be\label{ws-02}
t > \max \Big(0, \frac{1}{p_1}- \frac{1}{p_2} \Big) \, .
\ee
 \end{proposition}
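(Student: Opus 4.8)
The plan is to establish the equivalence by proving two implications, relying on the tensor product (equivalently, dominating mixed smoothness) structure of $S^t_{p_1,p_1}B((0,1)^d)$ and a standard wavelet/atomic decomposition that reduces the embedding to a question about a weighted sequence space. First I would fix a suitable building-block decomposition: writing $f \in S^t_{p_1,p_1}B((0,1)^d)$ via its frequency-localized pieces $f_{\bar j}$, $\bar j \in \N_0^d$, the quasi-norm is equivalent to $\big(\sum_{\bar j} 2^{t|\bar j|_1 p_1}\|f_{\bar j}\|_{p_1}^{p_1}\big)^{1/p_1}$ (with the obvious modification), and each $f_{\bar j}$ lives in a space of trigonometric-polynomial type with $\asymp 2^{|\bar j|_1}$ degrees of freedom. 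This turns the embedding into a countable sum of finite-dimensional embeddings $\ell_{p_1}^{m_{\bar j}} \to \ell_{p_2}^{m_{\bar j}}$ twisted by the weights $2^{-t|\bar j|_1}$ and by the norms of the local Nikol'skij-type inequalities relating $\|\cdot\|_{p_1}$ and $\|\cdot\|_{p_2}$ on blocks of size $2^{|\bar j|_1}$.

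For the implication (ii) $\Rightarrow$ (i), I would show that \eqref{ws-02} forces the sequence of block operator norms (each block contributing a factor $\asymp 2^{-t|\bar j|_1} \cdot 2^{(1/p_1 - 1/p_2)_+ |\bar j|_1}$, the second factor being the norm of the embedding $\ell_{p_1}^{2^{|\bar j|_1}} \to \ell_{p_2}^{2^{|\bar j|_1}}$ up to constants) to tend to zero, so that the embedding is approximable in operator norm by its finite partial sums over $|\bar j|_1 \le N$; each partial sum has finite rank, hence is compact, and a norm limit of compact operators is compact. The key inequality here is $t > (1/p_1 - 1/p_2)_+$, which makes $2^{(-t + (1/p_1-1/p_2)_+)|\bar j|_1} \to 0$ as $|\bar j|_1 \to \infty$; one must also check that there are only polynomially many $\bar j$ with $|\bar j|_1 = \ell$, so no loss occurs from multiplicity. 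If $p_1 \le p_2$ the correction exponent $(1/p_1-1/p_2)_+$ is $0$ and one just needs $t > 0$; if $p_1 > p_2$ one needs the strict inequality $t > 1/p_1 - 1/p_2$.

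For (i) $\Rightarrow$ (ii), I would argue by contraposition: if \eqref{ws-02} fails, I exhibit a bounded non-precompact sequence (or show the embedding is not even bounded). When $t \le 0$, or more precisely when $t \le (1/p_1-1/p_2)_+$, testing on a single block $f_{\bar j}$ realizing the extremal vector of $id_{p_1,p_2}^{2^{|\bar j|_1}}$ and letting $|\bar j|_1 \to \infty$ produces a sequence that is bounded in $S^t_{p_1,p_1}B$ but whose images keep a fixed positive distance apart in $L_{p_2}$, contradicting compactness; in the genuinely unbounded subcase (e.g. $t < 1/p_1 - 1/p_2$ with $p_1 > p_2$) the same test functions show the embedding is not continuous at all. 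The main obstacle I anticipate is purely technical bookkeeping: transferring cleanly between the function space on $\Omega = (0,1)^d$ and the sequence space model (handling the extension/restriction operators and the localized Nikol'skij inequalities with constants uniform in $\bar j$), and verifying that the polynomial multiplicity of the level sets $\{|\bar j|_1 = \ell\}$ is harmless. Since Proposition~\ref{emb} is quoted from Vybiral, I would in fact keep this proof sketch short and defer the detailed sequence-space estimates to Section~\ref{sequence}, where the finer $s$-number asymptotics are developed anyway.
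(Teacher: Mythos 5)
The paper itself offers no proof of Proposition~\ref{emb}: it is quoted from Vybiral \cite[Thm.~3.17]{Vybiral}, so there is no in-paper argument to compare yours against. Your plan is the standard route behind such results and is viable in outline: reduce via the wavelet isomorphism (Lemma~\ref{wavelet}, Definition~\ref{defomega}) to blockwise embeddings with $D_\mu\asymp \mu^{d-1}2^{\mu}$ degrees of freedom on the level $|\bar j|_1=\mu$, note that the level-$\mu$ piece has operator norm $\asymp 2^{\mu(-t+(1/p_1-1/p_2)_+)}$ up to powers of $\mu$, deduce compactness by finite-rank approximation when $t>(1/p_1-1/p_2)_+$, and defeat compactness with block test functions otherwise; the net exponent you write for the block norm is correct and the polynomial multiplicity of $\{|\bar j|_1=\mu\}$ is indeed harmless.

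Three points as written are wrong or incomplete. First, your case analysis is inverted: $(1/p_1-1/p_2)_+>0$ precisely when $p_1<p_2$, so it is for $p_1<p_2$ that one needs $t>1/p_1-1/p_2$, while for $p_1\ge p_2$ condition \eqref{ws-02} reduces to $t>0$; likewise the ``not even bounded'' regime $t<1/p_1-1/p_2$ occurs for $p_1<p_2$, not $p_1>p_2$. Carried literally into the sufficiency step, your statement would assert compactness for $p_1<p_2$ under $t>0$ alone, which is false. Second, the factor $2^{\mu(1/p_1-1/p_2)_+}$ is not the norm of $id^{D_\mu}_{p_1,p_2}:\ell_{p_1}^{D_\mu}\to\ell_{p_2}^{D_\mu}$; that norm is $D_\mu^{(1/p_2-1/p_1)_+}$, nontrivial exactly when $p_2<p_1$. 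The factor you need is the Nikol'skij-type constant reflecting the different $L_{p_1}$- versus $L_{p_2}$-normalizations of the level-$\mu$ block (your first paragraph describes it correctly; the parenthetical in the second contradicts it, although the product you actually use is right). Third, in the necessity direction the borderline case $t=0$, $p_1> p_2$ is not covered by ``one extremal block per level'': normalized single wavelets have $L_{p_2}$-norms tending to zero there, and the normalized all-ones blocks at different levels, though of $L_{p_2}$-norm $\asymp 1$, are not automatically uniformly separated from one another in $L_{p_2}$ (near-orthogonality plus Littlewood--Paley rescues this for $1<p_2<\infty$, but not for $p_2=1$ without extra work). A cleaner fix is to stay within one sufficiently high level $\mu$ and take exponentially many $\pm1$ coefficient patterns differing in a fixed fraction of the $D_\mu$ positions; these give arbitrarily many unit-ball elements with pairwise $L_{p_2}$-distance bounded below, contradicting total boundedness of the image of the unit ball.
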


Since we are exclusively interested in compact embeddings the restriction \eqref{ws-02} will be always present.
\\
Also for later use, we recall the Weyl numbers of the embedding $id: B^t_{p_1,q_1}(0,1) \to L_{p_2}(0,1)$.
Let $0 < p_1,q_1 \le \infty$, $1 \le p_2 \le \infty$ and $t> \max (0, 1/p_1-1/p_2)$. Then, in all cases listed in Prop. \ref{isotrop},  we have
\be\label{ws-03}
x_n (id)= x_n (id: ~B^t_{p_1,q_1}(0,1) \to L_{p_2}(0,1)) \asymp n^{-\alpha}\, , \qquad n \in \N\, .
\ee
Here the value of $\alpha = \alpha (t,p_1,p_2)$ is given in the following proposition.

\begin{proposition}\label{isotrop}
 The value of $\alpha$ in \eqref{ws-03} is given by
\begin{itemize}
\item[$I$]\qquad{\makebox[3.5cm][l]{$\alpha= t$} if\ \ $ p_1,p_2 \le 2$};
\item[$II$]\qquad{\makebox[3.5cm][l]{$\alpha= t + \frac{1}{p_2} - \frac 12$} if\ \ $p_1 \le 2 \le p_2 $};
\item[$III$]\qquad{\makebox[3.5cm][l]{$\alpha=t + \frac{1}{p_2} - \frac{1}{p_1} $} if\ \ $ 2 \le p_1 \le p_2$};
\item[$IV^*$]\qquad{\makebox[3.5cm][l]{$\alpha= t + \frac{1}{p_2} - \frac{1}{p_1} $} if\ \ $2 \le p_2 < p_1 \text{ and } t> \frac{1/p_2 - 1/p_1}{p_1/2 - 1} $};
\item[$IV_*$]\qquad{\makebox[3.5cm][l]{$\alpha= \frac{tp_1}{2} $} if\ \ $2 \le p_2 < p_1 <\infty\text{ and } t < \frac{1/p_2 - 1/p_1}{p_1/2 - 1}$};
\item[$V^*$]\qquad{\makebox[3.5cm][l]{$\alpha= t - \frac{1}{p_1} + \frac 12$} if\ \ $p_2 \le 2 < p_1 \text{ and } t>\frac{1}{p_1}$};
\item[$V_*$]\qquad{\makebox[3.5cm][l]{$\alpha= \frac{tp_1}{2}$} if\ \ $ p_2 \le 2 < p_1<\infty \text{ and } t< \frac{1}{p_1}$}.
\end{itemize}
 \end{proposition}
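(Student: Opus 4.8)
The plan is to reduce the statement about $x_n(id:\, B^t_{p_1,q_1}(0,1)\to L_{p_2}(0,1))$ to the known asymptotics of the Weyl numbers of the finite-dimensional embeddings $id^m_{p_1,p_2}:\, \ell^m_{p_1}\to\ell^m_{p_2}$ recalled in Appendix A. First I would fix a wavelet (or spline) basis adapted to $(0,1)$ and use the standard isomorphism $B^t_{p_1,q_1}(0,1)\cong b^t_{p_1,q_1}$ between the Besov space and its sequence space: a function $f=\sum_{j\ge 0}\sum_{k}\lambda_{j,k}\psi_{j,k}$ has norm equivalent to $\big(\sum_j 2^{j(t+1/2-1/p_1)q_1}\|(\lambda_{j,k})_k\|_{p_1}^{q_1}\big)^{1/q_1}$, while $L_{p_2}(0,1)$ corresponds (up to a norm equivalence, since $1<p_2<\infty$, with the boundary cases handled by embeddings $b^0_{p_2,\min(p_2,2)}\hookrightarrow f^0_{p_2,2}\hookrightarrow b^0_{p_2,\max(p_2,2)}$) to the Triebel--Lizorkin type sequence space. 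Thus it suffices to compute the Weyl numbers of the diagonal-block operator
\[
id^*:\ b^t_{p_1,q_1}\longrightarrow \ell_{p_2}\text{-type space}, \qquad (\lambda_{j,k})\mapsto (\lambda_{j,k}),
\]
which decomposes as a direct sum over dyadic levels $j$ of scaled copies of $id^{2^j}_{p_1,p_2}$ with weight $2^{-j(t+1/2-1/p_1)}$ on level $j$ (after renormalising the target).

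The core of the argument is then a two-sided estimate. For the upper bound I would use the standard additivity/rank properties of $s$-numbers: choosing a level $J$ with $2^J\asymp n$, approximate the contribution of levels $j\le J$ by truncating within each block (using the known upper bounds for $x_m(id^{2^j}_{p_1,p_2})$ from Appendix A and distributing the rank budget $\asymp n$ across levels so as to equalise the resulting errors), and estimate the tail $j>J$ by the operator norm of $id$ restricted to those levels, which is summable because $t>\max(0,1/p_1-1/p_2)$ guarantees geometric decay. The optimal distribution of ranks, together with the level weights $2^{-j(t+1/2-1/p_1)}$ and the profile of $x_m(id^{2^j}_{p_1,p_2})$ in $m$, is exactly what produces the seven different exponents $\alpha$; in the regimes $IV_*$ and $V_*$ one sees the phenomenon that a single block (the critical level) dominates, which is why the exponent degenerates to $tp_1/2$ and a side condition on $t$ appears. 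For the lower bound I would exploit the lifting property $x_n(id)\ge c\,2^{-j(t+1/2-1/p_1)}\,x_n(id^{2^j}_{p_1,p_2})$ obtained by restricting $id$ to a single dyadic level (a subspace on which $id$ acts as a scaled copy of $id^{2^j}_{p_1,p_2}$, and $s$-numbers do not increase under restriction to subspaces / coordinate projections), and then optimise over $j$; in the cases where the upper bound comes from balancing many levels one instead needs the sharper lower bound for sums of operators, e.g. via a volume or Bernstein-type argument, or by invoking the known lower estimates for $x_n$ of the relevant $\ell_{p_1}\to\ell_{p_2}$ embeddings together with a suitable "fooling" set of indices.

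The main obstacle is the case distinction itself: one must verify that the optimisation over $J$ (and over how the rank budget is split among levels) genuinely yields each of the exponents $I$--$V_*$, and in particular pin down the threshold $t=\frac{1/p_2-1/p_1}{p_1/2-1}$ in $IV^*/IV_*$ and $t=1/p_1$ in $V^*/V_*$, where the balance shifts between "many levels contribute" and "one level dominates." This requires the precise two-regime behaviour of $x_m(id^{2^j}_{p_1,p_2})$ in $m$ (recalled in Appendix A), since for $p_1>2$ these Weyl numbers themselves are constant for small $m$ and only start to decay past a breakpoint. Matching lower and upper bounds in these limiting regimes, and checking that the logarithmic losses cancel so that a clean power $n^{-\alpha}$ survives, is the delicate part; everything else is bookkeeping with the sequence-space isomorphism and the elementary properties of $s$-numbers. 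I note that \eqref{ws-03} and Proposition \ref{isotrop} are exactly the one-dimensional ($d=1$) specialisation of the results of Caetano \cite{Cae1,Cae2,Caed}, Lubitz \cite{Claus} and K\"onig \cite{Koe}, so in the paper this proposition is quoted rather than reproved; the sketch above indicates how such a proof runs.
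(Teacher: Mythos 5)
The paper offers no proof of Proposition \ref{isotrop}: it is quoted from Pietsch \cite{Pi4}, Lubitz \cite{Claus}, K\"onig \cite{Koe} and, for parameters below $1$, Caetano \cite{Caed,Cae2}, exactly as you observe in your closing sentence, so there is no in-paper argument to measure yours against. Your sketch --- wavelet discretization to sequence spaces, level-wise block decomposition with a rank budget allocated against the Appendix A asymptotics of $x_m(id^m_{p_1,p_2})$, tail control by operator norms, and single-level restriction for the lower bounds --- is precisely the strategy the paper itself executes in Sections \ref{sequence}--\ref{proof} for the $d$-dimensional Theorem \ref{main}, so the route is the intended one; just note that as written it remains an outline, since the regime-by-regime optimization that actually produces the seven exponents and the thresholds $t=\frac{1/p_2-1/p_1}{p_1/2-1}$ and $t=\frac{1}{p_1}$ (in particular the lower bounds in $IV_*$, $V_*$, which require choosing $n$ near the breakpoint $m^{2/p_1}$ of the block dimension $m$ rather than $n\asymp m/2$) is flagged but not carried out.
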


\begin{minipage}[t]{7cm}
$$
\begin{tikzpicture}
\fill (0,0) circle (1.5pt);
\draw[->, ](0,0) -- (5,0);
\draw[->, ] (0,0) -- (0,5);
\draw (2,0)-- (2,4);
\draw (0,0) -- (2,2);
\draw (0,2) -- (5,2);
\draw (0,4) -- (5,4);
\node[below] at (0,0) {$0$};
\node [below] at (2,0) {$\frac{1}{2}$};
\node [left] at (0,2) {$\frac{1}{2}$};
\node [left] at (0,4) {$ 1$};
\node [left] at (0,5) {$\frac{1}{p_2}$};
\node [below] at (4,0) {$1$};
\draw (4, -0.05) -- (4, 0.05);
\node [below] at (5,0) {$\frac{1}{p_1}$};
\node [] at (3,3) {$I$};
\node [] at (3,1) {$II$};
\node [] at (1,3) {$V$};
\node [right] at (1,1/2) {$III$};
\node [above] at (1/2,1) {$IV$};
\node [right] at (2,-1) {Figure 1};
\end{tikzpicture}
$$
\end{minipage}
\hfill
\begin{minipage}[t]{6cm}
{~}\\ 
The above results indicate a decomposition of the $(1/p_1,1/p_2)$-plane into five parts.
In regions $IV$ and $V$ we have a further splitting
into the cases of small ($IV_*$, $V_*$) and large smoothness ($IV^*$, $V^*$).
Proposition \ref{isotrop} has been proved by Pietsch \cite{Pi4} and  Lubitz \cite[Satz 4.13]{Claus} in case $1 \le p_1, q_1,p_2 \le \infty$, 
we refer also to K\"onig \cite{Koe} in this context. 
\end{minipage}

{~ }\\
\noindent
The proof of the general case, i.e., also for values of  $p_1,q_1$  less than $1$,  
can be found in the thesis of Caetano \cite{Caed}, see also \cite{Cae2}.
Obviously we do not have a dependence on the fine-index $q_1$. This will be different in the dominating mixed case with $d>1$.
The behaviour of the Weyl numbers in the limiting situations 
$t = \frac{1/p_2 - 1/p_1}{p_1/2 - 1}$ (see $IV_*, IV^*$) and $t=1/p_1$ (see $V_*,V^*$) is open, in particular it seems to be unknown 
whether the behaviour is still polynomial in $n$.


\section{The main results}\label{main1}


It seems to be appropriate to split our considerations into the three cases: 
(i) $1 < p_2 < \infty$, (ii) $p_2 = \infty$ and  (iii) $ p_2 =1. $


\subsection{The Littlewood-Paley case}
\label{paley}

Littlewood-Paley analysis is one of the main tools to understand the behaviour of the Weyl numbers 
if $1 < p_2 < \infty$
(i.e., the target space $L_{p_2}$ allows a Littlewood-Paley-type decomposition).
The cases $p_2 = 1$ and $p_2 = \infty$ require different techniques and will be treated in the next subsections.
As in the isotropic case the results suggest to work with the same decomposition 
of the $(1/p_1,1/p_2)$-plane as in Proposition \ref{isotrop}. So, the symbols $I,II, \ldots$, used below, 
have the same meaning as in Figure 1 (and therefore as in Prop. \ref{isotrop}). 
In addition the regions $I^*$ and $I_*$ are given by $p_1,p_2 \le 2$.
\\
Let  $0 < p_1 \le \infty$, $1< p_2 <\infty$ and $t> \max (0, 1/p_1-1/p_2)$. Then in all cases, listed in Theorem \ref{main}, we have
\be\label{ws-05}
x_n (id)= x_n (id: S^t_{p_1,p_1}B((0,1)^d) \to L_{p_2}((0,1)^d)) \asymp n^{-\alpha}\, 
(\log n)^{(d-1)\, \beta}\, , \qquad n\geq 2\, . 
\ee
The values of $\alpha = \alpha (t,p_1,p_2)$ and $\beta = \beta (t,p_1,p_2)$ are be given in the following theorem.

\begin{theorem}\label{main}
The values of $\alpha$ and $\beta$ in \eqref{ws-05} are given by
\begin{itemize}
\item[$I^*$] 
\qquad $\alpha = t $ and $\beta = t+\frac{1}{2}-\frac{1}{p_1}\quad \text{if }\quad t> \frac{1}{p_1}-\frac{1}{2}$;
\item[$I_*$]
\qquad $\alpha = t $ and $\beta = 0 \quad \text{if } \quad t<\frac{1}{p_1}-\frac{1}{2}$;
\item [$II$] 
\qquad $\alpha = t -\frac{1}{2}+\frac{1}{p_2}$ and $\beta = t+\frac{1}{p_2}-\frac{1}{p_1}$;
\item [$III$] 
\qquad $\alpha =t-\frac{1}{p_1}+\frac{1}{p_2} $ and $\beta = t+\frac{1}{p_2}-\frac{1}{p_1}$;
\item [$IV^*$] 
\qquad 
$\alpha =t-\frac{1}{p_1}+\frac{1}{p_2} $ and $\beta =t+\frac{1}{p_2}-\frac{1}{p_1} $ 
\quad if\quad $t>\frac{{1}/{p_2}-{1}/{p_1}}{{p_1}/{2}-1} $;
\item [$IV_*$]
\qquad
$\alpha =\frac{tp_1}{2} $ and $ \beta =t+\frac{1}{2}-\frac{1}{p_1}$ if 
\quad $t<\frac{{1}/{p_2}-{1}/{p_1}}{{p_1}/{2}-1}$;
\item [$V^*$] 
\qquad $\alpha =t-\frac{1}{p_1}+\frac{1}{2} $ and $\beta = t+\frac{1}{2}-\frac{1}{p_1}$ \quad
if \quad $t>\frac{1}{p_1}$;
\item [$V_*$]
\qquad 
$\alpha = \frac{tp_1}{2} $ and $\beta =t +\frac{1}{2}-\frac{1}{p_1}$ \quad if \quad $t<\frac{1}{p_1}$.
\end{itemize}
\end{theorem}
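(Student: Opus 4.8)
The plan is to reduce everything to finite-dimensional sequence space embeddings. Using a tensor-product Daubechies wavelet basis one identifies $S^t_{p_1,p_1}B(\Omega)$ with the associated sequence space $s^t_{p_1,p_1}b$, and --- since $1 < p_2 < \infty$, so that the target admits a Littlewood--Paley decomposition --- one identifies $L_{p_2}(\Omega)$ with the sequence space $s^0_{p_2,2}f$, in which the $\ell_2$-sum over the dyadic levels is taken inside the $\ell_{p_2}$-norm. As $s$-numbers are stable under such isomorphisms, it suffices to study $x_n(id^*: s^t_{p_1,p_1}b \to s^0_{p_2,2}f)$. Because $p_1 = q_1$, the source is merely a weighted $\ell_{p_1}$-space: the coefficients are indexed by pairs $(\bar{j},\bar{m})$ with $\bar{j} \in \N_0^d$, there are $\asymp 2^{|\bar{j}|_1}$ indices $\bar{m}$ on level $\bar{j}$, and the level-$\bar{j}$ block carries the weight $2^{|\bar{j}|_1 t}$. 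Writing $id^* = \sum_{\bar{j}} id^*_{\bar{j}}$ as the sum of its dyadic block operators, each $id^*_{\bar{j}}$ is --- up to a scalar factor of order $2^{-|\bar{j}|_1(t + 1/p_1 - 1/p_2)}$ produced by the smoothness weight and the change of $L_p$-normalisation --- a copy of $id_{p_1,p_2}^{m_{\bar{j}}}$ with $m_{\bar{j}}\asymp 2^{|\bar{j}|_1}$; the $\ell_2$-summation of the target is inert inside a single level but couples the $\asymp L^{d-1}$ blocks with $|\bar{j}|_1 = L$.

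\textbf{Upper bound.} Fix $L$ with $2^L L^{d-1}\asymp n$, so $L \asymp \log n$. The blocks with $|\bar{j}|_1 > cL$ are discarded; the resulting error is at most the sum of their operator norms, which converges and is of the right order precisely because of the compactness assumption $t>\max(0,1/p_1-1/p_2)$. Among the remaining blocks --- equivalently, among the $\asymp L$ levels $|\bar{j}|_1 = 0, 1, \ldots, cL$, each carrying $\asymp L^{d-1}$ blocks --- one distributes a rank budget of order $n$, invoking for each block the estimates for $x_{n_{\bar{j}}}(id_{p_1,p_2}^{m_{\bar{j}}})$ recalled in Appendix~A, and one sums the resulting errors by the additivity $x_{k+\ell-1}(S+T)\le x_k(S)+x_\ell(T)$ of Weyl numbers. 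Optimising the allocation $(n_{\bar{j}})$ and the cut-off $L$ produces the exponents $\alpha$ and $\beta$ in \eqref{ws-05}. To match the $\ell_2$-coupling of the target with the $\ell_{p_2}$-structure of the individual blocks --- which is what replaces the naive exponent $t + 1/p_2 - 1/p_1$ by $t + 1/2 - 1/p_1$ in the relevant regions, and governs the small-smoothness regime where the decaying block norms $2^{-|\bar{j}|_1(t+\cdots)}$ rather than the block sizes dictate the cost --- one uses the interpolation property of Weyl numbers (Theorem~\ref{inter}) together with the inequality~\eqref{kuehn}. The two sub-cases in regions $IV$ and $V$ are exactly this competition: for large $t$ the cost is dominated by coarsely approximating many blocks, for small $t$ by the geometric decay across levels combined with the Maurey-type small-rank behaviour of $x_n(id_{p_1,p_2}^m)$ when $p_1>2$, which produces the exponents $\alpha = tp_1/2$.

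\textbf{Lower bound.} Two test configurations are matched against the upper bound. Restricting $id^*$ to a single block --- or to a small sub-family of blocks --- of optimally chosen level already produces the polynomial factor $n^{-\alpha}$, via the lower bounds for $x_n(id_{p_1,p_2}^m)$ from Appendix~A. To recover the logarithmic factor one restricts instead to the block-diagonal operator built from all $\asymp L^{d-1}$ blocks with $|\bar{j}|_1 = L$, that is, a vector-valued identity of type $\ell_{p_1}^{L^{d-1}}(\ell_{p_1}^{2^L}) \to \ell_2^{L^{d-1}}(\ell_{p_2}^{2^L})$ weighted by $2^{-L(t+1/p_1-1/p_2)}$, and one bounds its Weyl numbers from below using the known estimates for Weyl numbers of such diagonal/vector-valued embeddings. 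This delivers $(\log n)^{(d-1)\beta}$ with $\beta = t + 1/2 - 1/p_1$ when the $\ell_2$-coupling dominates (regions $I^*$, $IV_*$, $V^*$, $V_*$) and $\beta = t + 1/p_2 - 1/p_1$ when the $\ell_{p_2}$-coupling dominates (regions $II$, $III$, $IV^*$); in region $I_*$, where the latter value would be negative, the single-block bound is already sharp and $\beta = 0$.

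\textbf{Main obstacle.} The delicate part is the joint optimisation in the upper bound together with the matching lower bounds in the regions $p_1 > 2$ and $p_2 \le 2$. There the behaviour of $x_n(id_{p_1,p_2}^m)$ itself splits into a small-rank and a large-rank regime, and one must follow how these interact both with the number $L^{d-1}$ of blocks on a level and with the $\ell_2$-summation coming from Littlewood--Paley. Pinning the logarithmic exponent $\beta$ down sharply --- distinguishing $t + 1/2 - 1/p_1$, $t + 1/p_2 - 1/p_1$ and $0$, and locating the thresholds $t = 1/p_1$ and $t = (1/p_2 - 1/p_1)/(p_1/2 - 1)$ --- is precisely what this bookkeeping costs; and it is here that the interpolation property of Weyl numbers, which has no analogue for approximation numbers, becomes indispensable.
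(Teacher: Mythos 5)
Your overall architecture is the paper's: wavelet discretisation to $id^*:s^{t,\Omega}_{p_1,p_1}b\to s^{0,\Omega}_{p_2,2}f$ (Lemma \ref{weyl}, Lemma \ref{littlewood}), a splitting into dyadic pieces with full rank spent on low levels, rank allocation in a middle range, tail controlled by operator norms (this is exactly \eqref{ws-16}--\eqref{sum1}), reduction of each piece to $x_n(id^{m}_{p_1,p_2})$ and Appendix~A, and lower bounds by restricting to single blocks or whole levels. However, the proposal has genuine gaps precisely where the theorem is decided. First, the assertion that the $\ell_2$/$\ell_{p_2}$ fine-index coupling is handled by the interpolation property of Weyl numbers (Theorem \ref{inter}) together with \eqref{kuehn} is both unsupported and incorrect as a description of what is needed: neither tool enters the proof of Theorem \ref{main} (they serve only the endpoint cases $p_2=\infty$ and $p_2=1$). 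In the Littlewood--Paley case the mismatch is resolved elementarily: since $p_1=q_1$, the whole level $\{|\bar{\nu}|_1=\mu\}$ of the source is the scalar space $2^{\mu(t-\frac{1}{p_1})}\ell_{p_1}^{D_\mu}$, $D_\mu\asymp\mu^{d-1}2^{\mu}$, and the target level space $(s^{0,\Omega}_{p_2,2}f)_\mu$ is compared with $(s^{0,\Omega}_{\delta,\delta}f)_\mu$, $\delta\in\{\min(p_2,2),\max(p_2,2)\}$, at the cost of explicit powers of $\mu^{d-1}$ (Prop.~\ref{wichtig1}, Cor.~\ref{ba2-1}, Lem.~\ref{ba3}). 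The same remark shows that your lower-bound appeal to ``known estimates for Weyl numbers of diagonal/vector-valued embeddings'' $\ell_{p_1}^{L^{d-1}}(\ell_{p_1}^{2^L})\to\ell_2^{L^{d-1}}(\ell_{p_2}^{2^L})$ is a gap: no such estimates are cited or available, and none are needed, because the source is scalar and the target is traded for $\ell_{\min(p_2,2)}^{D_\mu}$ at an explicit cost, after which only Appendix~A is used. (There is also a sign slip in your normalisation: the level factor is $2^{-|\bar{j}|_1(t-\frac{1}{p_1}+\frac{1}{p_2})}$, not $2^{-|\bar{j}|_1(t+\frac{1}{p_1}-\frac{1}{p_2})}$.)

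Second, ``optimising the allocation $(n_{\bar{j}})$ and the cut-off'' is exactly the content of the theorem and is not carried out; the paper itself stresses that the choice of $J$, $L$ and $n_\mu$ is the real problem. These choices are region-dependent and not of the generic form you describe: in region $IV_*$ one needs $L\asymp\frac{p_1}{2}J+(d-1)(\frac{p_1}{2}-1)\log J$ with $n_\mu=[D_\mu 2^{(\mu-L)\beta+J-\mu}]$; in the small-smoothness case $0<p_1<p_2<2$, $\frac{1}{p_1}-\frac{1}{p_2}<t<\frac{1}{p_1}-\frac{1}{2}$ a three-cut splitting $J<L<K$ with two different rank regimes is required (Lemma \ref{the5-4}); and the matching lower bounds for the exponents $\frac{tp_1}{2}$ come from the small-rank choice $n=[D_\mu^{2/p_1}]$ combined with Appendix~A (b)(ii), (c)(ii), not from any coupling or interpolation argument. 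Without this bookkeeping --- which you yourself flag as the main obstacle but do not resolve --- the proposal identifies neither the values of $\alpha$ and $\beta$ nor the thresholds $t=\frac{1}{p_1}$ and $t=\frac{1/p_2-1/p_1}{p_1/2-1}$, so it remains a plan in the right direction rather than a proof.
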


Thm. \ref{main} gives the final answer about the behaviour of the $x_n$
in almost all cases. It is interesting to notice that 
in regions $I,IV$ and $V$ we have a different behaviour for small smoothness compared with large smoothness.
Only in the resulting limiting cases we are not able to characterize the behaviour of the $x_n(id)$.
However, estimates from below and above also for these limiting situations will be given in Subsection \ref{sequence3}.
\\
In essence the proof is standard. Concerning the estimate from above
the first step consists in a reduction step.
By means of wavelet characterizations we switch from the consideration of 
$id: S^t_{p_1,p_1}B((0,1)^d) \to L_{p_2}((0,1)^d)$
to $id^*: s^{t, \Omega}_{p_1,p_1} b \to s^{0,\Omega}_{p_2,2}f$,
where $s^{t,\Omega}_{p,q}b$ and $s^{t,\Omega}_{p,q}f$ are appropriate sequence spaces.
Next, this identity is splitted into $id^* = \sum_{\mu=0}^\infty id_\mu^*$ (the $id_\mu^*$ are identities with respect to certain subspaces)
which results in an estimate of $x_n (id^*: s^{t, \Omega}_{p_1,p_1} b \to s^{0,\Omega}_{p_2,2}f)$
\[
x_n(id^*)\leq \sum_{\mu=0}^{J} x_{n_{\mu}}(id_{\mu}^*) + \sum_{\mu=J+1}^{L} x_{n_{\mu}}(id_{\mu}^*)
+ \sum_{\mu=L+1}^{\infty} \|id_{\mu}^*\|  
\]
where $n-1 = \sum_{\mu=0}^{L}(n_{\mu}-1)$, 
see \eqref{ws-16}. 
Till this point we would call the proof standard, compare, e.g., with Vybiral \cite{Vy}. But now the problem consists 
in choosing $J,L$ and $n_\mu$ in a way leading to the desired result.
This is the real problem which we solved in Subsection \ref{sequence3}.
In a further reduction step estimates of $x_{n_{\mu}}(id_{\mu}^*)$ are traced back to estimates of 
 $x_{n_\mu}(id_{p_1,p_2}^{D_\mu})$, see \eqref{idlp}. All what is needed about these number is collected in Appendix A.
Concerning the estimate from below one has to figure out appropriate subspaces of 
$S^t_{p_1,p_1}B((0,1)^d)$ ($s^{t,\Omega}_{p_1,p_1}b$). Then, also in this case, all can be reduced to the known estimates of 
 $x_{n}(id_{p_1,p_2}^{D_\mu})$.


\subsection{The extreme case $p_2 =\infty$}
\label{extrem}


Let us recall a result of Temlyakov \cite{Te93}, see also \cite{CKS}.

\begin{proposition}\label{satz 2}
Let $t>\frac{1}{2}$. Then we have 
\beqq
 x_{n} (id: \, S^{t}_{2,2}B ((0,1)^d) \to L_\infty ((0,1)^d)) \asymp
\frac{(\log n)^{(d-1)t}}{n^{t-\frac 12}}\, , \qquad n\geq 2\, .
\eeqq
\end{proposition}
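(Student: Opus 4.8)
The plan is to establish matching lower and upper estimates; both reduce, through the wavelet characterisations of $S^t_{2,2}B(\Omega)$ and a splitting into hyperbolic layers, to the finite-dimensional identities $id_{2,\infty}^m$ from Appendix~A. For the \emph{upper bound} I would first replace the target: since $L_\infty(\Omega)$ has no Littlewood--Paley description, I use $S^0_{\infty,1}B(\Omega)\hookrightarrow L_\infty(\Omega)$, so that by the wavelet isomorphisms it suffices to bound $x_n(id^*\colon s^{t,\Omega}_{2,2}b\to s^{0,\Omega}_{\infty,1}b)$ (equivalently, $S^t_{2,2}B(\Omega)$ being a Hilbert space, $x_n(id)=a_n(id)$ and one may work with approximation numbers throughout). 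I then split $id^*=\sum_{\mu\ge0}id_\mu^*$ into the layers $|\bar{j}|_1=\mu$; such a layer is a direct sum of $N_\mu\asymp\mu^{d-1}$ pieces of dimension $\asymp 2^\mu$, so $D_\mu:=\rank id_\mu^*\asymp 2^\mu\mu^{d-1}$, and a short computation with the wavelet normalisations (using $\sum_{|\bar{j}|_1=\mu}\|\lambda_{\bar{j}\cdot}\|_{\ell_\infty}\le\sqrt{N_\mu}\,\|\lambda\|_{\ell_2}$ and $\|id_{2,\infty}^{2^\mu}\|=1$ on each piece) gives $\|id_\mu^*\|\asymp 2^{-(t-1/2)\mu}\mu^{(d-1)/2}$ — a sequence summable in $\mu$ precisely because $t>\tfrac12$. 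Since Weyl numbers satisfy $x_k\le a_k$, so that $x_k(A)=0$ when $k>\rank A$, and are additive, I would choose $L=L(n)$ to be the largest integer with $\sum_{\mu\le L}D_\mu<n$ — whence $\sum_{\mu\le L}D_\mu\asymp 2^LL^{d-1}\asymp n$, i.e.\ $L\asymp\log n$ and $2^L\asymp n(\log n)^{-(d-1)}$ — and conclude
\[
x_n(id^*)\ \le\ x_n\Big(\sum_{\mu\le L}id_\mu^*\Big)+\sum_{\mu>L}\|id_\mu^*\|\ =\ \sum_{\mu>L}\|id_\mu^*\|\ \asymp\ 2^{-(t-1/2)L}L^{(d-1)/2}\ \asymp\ \frac{(\log n)^{(d-1)t}}{n^{t-1/2}}\,,
\]
the last equality being $(d-1)(t-\tfrac12)+(d-1)\tfrac12=(d-1)t$. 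Note that, unlike in the range $1<p_2<\infty$, no delicate optimisation of the parameters is needed: one merely discards all layers above level~$L$.

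For the \emph{lower bound} I would fix $L$ with $D_L\asymp n$ and test with the layer-$L$ block. Let $B\colon\ell_2^{D_L}\to S^t_{2,2}B(\Omega)$ send the canonical unit vectors to $c\,2^{-tL}\psi_{\bar{j}\bar{k}}$, $|\bar{j}|_1=L$, with $c$ small enough that $\|B\|\le1$; then $x_n(id)\ge a_n(id\circ B)$ and $(id\circ B)v=c\,2^{-tL}f_v$, where $f_v:=\sum v_{\bar{j}\bar{k}}\psi_{\bar{j}\bar{k}}$. For any $A$ of rank $<n$, restricting to $\ker A$ (of dimension $\ge D_L-n+1\gtrsim D_L$) gives $\|id\circ B-A\|\ge c\,2^{-tL}\sup\{\|f_v\|_\infty:v\in\ker A,\ \|v\|_2\le1\}$, so it remains to show that every subspace $V\subset\re^{D_L}$ with $\dim V\gtrsim D_L$ contains a unit vector $v$ with $\|f_v\|_\infty\gtrsim 2^{L/2}L^{(d-1)/2}$. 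To this end, for the centre $x_Q$ of each dyadic cube $Q$ of side $\asymp 2^{-L}$ I put $w^{(Q)}:=2^{-L/2}\big(\psi_{\bar{j}\bar{k}}(x_Q)\big)_{\bar{j},\bar{k}}\in\re^{D_L}$, so that $\langle v,w^{(Q)}\rangle=2^{-L/2}f_v(x_Q)$; the localisation of the wavelets gives $\|w^{(Q)}\|_2\asymp\sqrt{N_L}$, and the stable sampling inequality $2^{-dL}\sum_Q f_v(x_Q)^2\asymp\|f_v\|_2^2\asymp\|v\|_2^2$ (valid because $f_v$ lives on layer~$L$) says exactly that $\sum_Q w^{(Q)}(w^{(Q)})^{T}\asymp 2^{(d-1)L}I_{D_L}$. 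Then $\sum_Q\|P_Vw^{(Q)}\|_2^2=\operatorname{tr}\big(P_V\sum_Q w^{(Q)}(w^{(Q)})^{T}\big)\asymp 2^{(d-1)L}\dim V\gtrsim 2^{(d-1)L}D_L$, and averaging over the $\asymp 2^{dL}$ cubes gives $\max_Q\|P_Vw^{(Q)}\|_2^2\gtrsim D_L 2^{-L}\asymp N_L$; the maximising $Q$ and $v:=P_Vw^{(Q)}/\|P_Vw^{(Q)}\|_2$ then satisfy $\|f_v\|_\infty\ge 2^{L/2}\|P_Vw^{(Q)}\|_2\gtrsim 2^{L/2}\sqrt{N_L}$. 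This yields $x_n(id)\gtrsim 2^{-(t-1/2)L}L^{(d-1)/2}\asymp n^{-(t-1/2)}(\log n)^{(d-1)t}$, which matches the upper bound.

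The routine parts are the wavelet reduction, the elementary block estimates, and the parameter bookkeeping. The one genuinely delicate point — and the main obstacle — is the sampling identity $\sum_Q w^{(Q)}(w^{(Q)})^{T}\asymp 2^{(d-1)L}I$; equivalently, that the layer-$L$ part of $id$ into $L_\infty$ is, up to the scalar $2^{-(t-1/2)L}L^{(d-1)/2}$, as ``spread out'' as $id_{2,\infty}^{D_L}$, so that it cannot be well approximated by operators of rank $o(D_L)$. This rests on the near-orthogonality of the wavelet system at the resolution of layer~$L$ (for which a fixed over-refinement of the sampling grid, together with sufficiently many vanishing moments of the wavelet, is used), and it is precisely what makes the endpoint $p_2=\infty$ non-trivial. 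Alternatively, one may quote the known value of the Kolmogorov widths $d_n\big(S^t_{2,2}B(\Omega),L_\infty(\Omega)\big)$ due to Temlyakov \cite{Te93} (see also \cite{CKS}) together with the identity $x_n(id)=a_n(id)$ valid for a Hilbert domain.
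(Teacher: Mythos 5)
Your proposal and the paper part ways at the very start: the paper does not prove Proposition \ref{satz 2} at all, but quotes the two-sided estimate for the approximation numbers $a_n(id:S^t_{2,2}B((0,1)^d)\to L_\infty((0,1)^d))$ from Temlyakov \cite{Te93} (see also \cite{CKS}) and converts it into a statement on Weyl numbers via $x_n(T:H\to Y)=a_n(T:H\to Y)$ for Hilbert domains, cf.\ Remark \ref{banachhilbert} and \cite[Prop.~2.4.20]{Pi3} -- which is exactly the fallback you mention in your last sentence (only note that the cited results are stated for approximation numbers rather than Kolmogorov widths; this is immaterial here since the domain is Hilbert and $L_\infty$ is injective, so these $s$-numbers coincide). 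What you propose instead is a self-contained proof, and its upper half is correct and pleasantly simple: the factorization through $S^0_{\infty,1}B\hookrightarrow L_\infty$, the layer norms $\|id_\mu^*\|\asymp 2^{-(t-1/2)\mu}\mu^{(d-1)/2}$, and the rank/additivity argument with $2^LL^{d-1}\asymp n$ give $n^{-(t-1/2)}(\log n)^{(d-1)t}$ directly, the tail being summable precisely because $t>\frac12$; no optimisation of parameters is needed, in contrast to the proofs in Section \ref{sequence3}.

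The one genuine gap is the point you flag yourself: the lower frame (stable sampling) bound $\sum_Q w^{(Q)}(w^{(Q)})^{T}\gtrsim 2^{(d-1)L}I$, equivalently $2^{-dL}\sum_Q|f_v(x_Q)|^2\gtrsim\|f_v\|_2^2$ uniformly in $L$ for all $f_v$ in the hyperbolic layer, is asserted rather than proved, and it is exactly where the difficulty of the endpoint $p_2=\infty$ lives; the rest of your lower bound (restriction to $\ker A$, the trace identity, averaging over the $\asymp 2^{dL}$ sample points) only uses this one inequality and is sound. The inequality is true, but ``sufficiently many vanishing moments'' is not the relevant property: what works is the MRA structure. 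With Daubechies-type wavelets the layer span sits inside the $d$-fold tensor product of the univariate scaling space of level $L$; the univariate sampling inequality for that space on a grid of spacing $2^{-L-c}$, with a \emph{fixed} over-refinement $c$, follows by rescaling to the integer-shift space of the scaling function, where the sampling form is a Riemann sum (uniform in the frequency parameter) of the bracket function, bounded below by the Riesz-basis property of the shifts; tensorizing preserves the two-sided bounds, the constants are uniform in $L$, and the number of sample points stays $\asymp 2^{dL}$, so your averaging step is unaffected. (Your auxiliary claim $\|w^{(Q)}\|_2\asymp\sqrt{N_L}$ need not hold at every sample point, since all wavelet values there can be small, but it is never used, so nothing is lost.) With this sampling lemma supplied your argument is an independent proof of the proposition; without it, you should fall back on the citation route, which is all the paper itself does.
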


\begin{remark}\label{banachhilbert}
 \rm
{\rm (i)} In the literature many times the notation 
$H^{t}_{\text{mix}} ((0,1)^d) $ and $MW^t_2 ((0,1)^d)$ are used instead of $S^{t}_{2,2}B ((0,1)^d)$.
\\
{\rm (ii)} 
In \cite{Te93} and \cite{CKS} the authors deal with approximation numbers
$a_{n} (id: \, S^{t}_{2,2}B ((0,1)^d) \to L_{\infty} ((0,1)^d))$, see \eqref{app}.
However, for Banach spaces $Y$ and Hilbert spaces $H$ we always have 
\beqq
x_n (T: H \to Y) = a_n (T: H \to Y)\, , 
\eeqq
see \cite[Prop.~2.4.20]{Pi3}.
\end{remark}

By using specific properties of Weyl numbers we will extent 
Prop. \ref{satz 2} to the following result.

\begin{theorem}\label{satz01}
Let $0<p_1\leq \infty$. Then we have
\begin{eqnarray}\label{ws-44}
x_{n} (id: \ S^{t}_{p_1,p_1}B((0,1)^d) &\to& L_\infty((0,1)^d)) 
\\
 & \asymp & 
\left\{
\begin{array}{lll}
\frac{(\log n)^{(d-1)(t+\frac 12 -\frac{1}{p_1} )}}{n^{t-\frac{1}{2}}} & \quad & \mbox{if}\quad 0 < p_1 \le 2\, ,\  t>\frac{1}{p_1}\, ;
\\
&& \\
\frac{(\log n)^{(d-1)(t + \frac 12 - \frac{1}{p_1})}}{n^{t-\frac{1}{p_1}}} & \quad & \mbox{if}\quad 2 < p_1 \le \infty\,  ,\  
t>\frac{1}{2}+\frac{1}{p_1}\, ;
\end{array}
\right.
\nonumber
\end{eqnarray}
for all $n \ge 2$.
\end{theorem}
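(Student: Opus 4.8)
The plan is to reduce the general case $0<p_1\le\infty$ to the Hilbert space case $p_1=2$ already settled in Proposition \ref{satz 2}, exploiting the multiplicativity and additivity properties of Weyl numbers together with elementary embeddings between tensor product Besov spaces. First I would treat the estimate from above. For $0<p_1\le 2$ I would use the embedding $S^t_{p_1,p_1}B(\Omega)\hookrightarrow S^{t}_{2,2}B(\Omega)$, valid when $t>\tfrac{1}{p_1}-\tfrac12$ (an elementary consequence of the Besov embedding $B^t_{p_1,p_1}\hookrightarrow B^{t'}_{2,2}$ in one variable with $t'=t-(\tfrac1{p_1}-\tfrac12)$ — actually I would rather factor through $S^{t}_{p_1,\infty}B\hookrightarrow S^{\tilde t}_{2,2}B$ with the right shift so that the target smoothness is exactly $t$; the point is that no loss in smoothness occurs because we only need to reach $L_\infty$). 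Hmm — more carefully: for $p_1\le 2$ we want the bound $n^{-(t-1/2)}(\log n)^{(d-1)(t+1/2-1/p_1)}$. I would write $id:\,S^t_{p_1,p_1}B\to L_\infty$ as a composition $S^t_{p_1,p_1}B\hookrightarrow S^{t-(1/p_1-1/2)}_{2,2}B \to L_\infty$; but then the exponent of $n$ would become $(t-(1/p_1-1/2))-1/2 = t-1/p_1$, which is \emph{not} what is claimed. So the plain composition is too lossy in the power of $n$, and the extra logarithmic gain $(\log n)^{(d-1)(t+1/2-1/p_1)}$ versus $(\log n)^{(d-1)(t-1/p_1)\cdot\text{something}}$... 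This tells me the genuine tool must be the \emph{interpolation property} of Weyl numbers (Theorem \ref{inter}) combined with the sharp sequence-space estimates of Subsection \ref{sequence3}, rather than a crude embedding.

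So the honest approach: I would run the same wavelet-based machinery used for Theorem \ref{main}, namely pass to the sequence space level $id^*:\,s^{t,\Omega}_{p_1,p_1}b\to s^{0,\Omega}_{\infty,\infty}f=s^{0,\Omega}_{\infty,\infty}b$ (note $L_\infty$ has no Littlewood–Paley decomposition, so the target sequence space is the obvious $\ell_\infty$-type space), split $id^*=\sum_\mu id^*_\mu$ over dyadic blocks $\bar j$ with $|\bar j|_1=\mu$, estimate $x_{n_\mu}(id^*_\mu)$ via the elementary numbers $x_{n_\mu}(id^{D_\mu}_{p_1,\infty})$ from Appendix A, where $D_\mu\asymp 2^\mu\mu^{d-1}$ is the block dimension, and optimize over $J,L,(n_\mu)$ exactly as in the scheme displayed after Theorem \ref{main}. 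The key input from Appendix A is the behaviour of $x_n(id^m_{p_1,\infty})$: for $p_1\le 2$ this is (up to constants) $\min\{1,\ (m/n)^{1/2}\}$-type, i.e. $x_n(id^m_{p_1,\infty})\asymp 1$ for $n\le cm$ and decays like $(m/n)^{?}$ — the relevant regime gives the exponent $t-1/2$ in $n$ and, after summing the block contributions with their multiplicities $\mu^{d-1}$, the logarithmic power $(d-1)(t+1/2-1/p_1)$. For $2<p_1\le\infty$ one uses instead $x_n(id^m_{p_1,\infty})\asymp(m/n)^{1-1/p_1}$-type behaviour in the operative range, producing the exponent $t-1/p_1$; the condition $t>\tfrac12+\tfrac1{p_1}$ is exactly what guarantees that the tail $\sum_{\mu>L}\|id^*_\mu\|$ converges fast enough and that the dominant block is the first one. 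The lower bound, as always, comes from restricting to a single well-chosen dyadic block: fixing $\mu$ and testing on the subspace supported on $\{\bar j:|\bar j|_1=\mu\}$ reduces $x_n(id)$ from below to $x_n(id^{D_\mu}_{p_1,\infty})$ with $D_\mu\asymp 2^\mu\mu^{d-1}$, and choosing $\mu$ as a function of $n$ so that $n\asymp D_\mu$ yields the matching two-sided estimate.

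Alternatively, and perhaps more in the spirit of the remark preceding the theorem (``by using specific properties of Weyl numbers''), for $0<p_1\le 2$ I would try to deduce the result directly from Proposition \ref{satz 2} by the factorization $S^t_{p_1,p_1}B\xrightarrow{\ \mathrm{id}\ }S^t_{2,2}B\xrightarrow{\ \mathrm{id}\ }L_\infty$ \emph{only for the first factor being bounded} and using multiplicativity $x_n(id)\le \|S^t_{p_1,p_1}B\hookrightarrow S^t_{2,2}B\|\cdot x_n(S^t_{2,2}B\to L_\infty)$ — but this embedding is bounded precisely when $p_1\ge 2$, i.e. it is the wrong direction for $p_1<2$; for $p_1<2$ one instead has $S^t_{2,2}B\hookrightarrow S^t_{p_1,p_1}B$ (on the bounded domain $\Omega$, since $p_1\le 2$) which only gives a lower bound $x_n(S^t_{p_1,p_1}B\to L_\infty)\ge c\,x_n(S^t_{2,2}B\to L_\infty)$. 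The upper bound for $p_1<2$ therefore still needs the sequence-space computation above (or an interpolation argument between $p_1=2$ and a very small smoothness endpoint).

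\textbf{Main obstacle.} The hard part will be the upper bound: neither a single embedding nor a two-term composition reproduces the claimed exponents, so one must carry out the full block-decomposition optimization of the quantity $\sum_{\mu\le J}x_{n_\mu}(id^*_\mu)+\sum_{J<\mu\le L}x_{n_\mu}(id^*_\mu)+\sum_{\mu>L}\|id^*_\mu\|$ with the \emph{correct} choice of the cut-offs $J,L$ and the allocation $(n_\mu)$ — precisely the delicate balancing that Subsection \ref{sequence3} is devoted to. Getting the logarithmic power exactly $(d-1)(t+\tfrac12-\tfrac1{p_1})$ (and not something larger) hinges on extracting the sharp constant in $x_{n_\mu}(id^{D_\mu}_{p_1,\infty})$ from Appendix A in the transitional regime $n_\mu\asymp D_\mu$, and on checking that the smoothness restrictions ($t>1/p_1$ for $p_1\le 2$, $t>\tfrac12+1/p_1$ for $p_1>2$) are exactly the thresholds separating the ``dominant first block'' regime from a possibly different behaviour. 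Once the sequence-space estimates of Subsection \ref{sequence3} are in hand, the passage back to the function-space statement via wavelet isomorphisms (Section \ref{domino}) is routine.
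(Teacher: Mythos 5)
Your main route --- reducing to the sequence-space identity $id^{*}\colon s^{t,\Omega}_{p_1,p_1}b \to s^{0,\Omega}_{\infty,\infty}b$, splitting into blocks and optimizing, with a single-block argument for the lower bound --- cannot prove this theorem. First, the reduction itself is invalid in the upper-bound direction: $L_\infty$ admits no wavelet (Littlewood--Paley) isomorphism, and the ``obvious $\ell_\infty$-type'' sequence space corresponds to $S^{0}_{\infty,\infty}B((0,1)^d)$, which \emph{strictly contains} $L_\infty((0,1)^d)$; an upper estimate for $id^{*}$ therefore only bounds the Weyl numbers of the embedding into $S^{0}_{\infty,\infty}B$, not into $L_\infty$. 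Second, and decisively, even the sharp answer at the sequence-space level is the wrong one: this computation is carried out in the paper (Theorem \ref{erg-1}, en route to Theorem \ref{satz04}) and gives, e.g.\ for $p_1\le 2$, $x_n(id^{*})\asymp n^{-t+\frac12}(\log n)^{(d-1)(t-\frac{1}{p_1})}$, which differs from the claim \eqref{ws-44} by the factor $(\log n)^{(d-1)/2}$ (see Remark \ref{nochmal}(ii)). For the same reason your single-block lower bound, which factors through coefficient functionals and hence only uses $L_\infty\hookrightarrow S^0_{\infty,\infty}B$, can never produce the full log power $(d-1)(t+\frac12-\frac{1}{p_1})$. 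So the plan proves a different (and weaker, respectively non-matching) statement.

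The missing ideas are exactly the two ``specific properties of Weyl numbers'' the paper exploits. For the upper bound one does factor through a Hilbert--Besov space, but keeping the Weyl numbers of \emph{both} factors and letting the intermediate smoothness float: choose $r>\frac12$ with $t>r+(\frac{1}{p_1}-\frac12)_+$ and use multiplicativity, $x_{2n-1}(id_1)\le x_n(id_2)\,x_n(id_3)$ with $id_2\colon S^t_{p_1,p_1}B\to S^r_{2,2}B$ and $id_3\colon S^r_{2,2}B\to L_\infty$; by the lifting Lemma \ref{lift2}, $x_n(id_2)\asymp x_n(S^{t-r}_{p_1,p_1}B\to L_2)$, which is covered by Theorem \ref{main}, while $x_n(id_3)$ is Temlyakov's Proposition \ref{satz 2}. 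The exponents then add up exactly to $t-\frac12$ (resp.\ $t-\frac{1}{p_1}$) and $(d-1)(t+\frac12-\frac{1}{p_1})$ --- your dismissal of the factorization was based on using only the operator norm of the first factor, which is indeed too lossy. For the lower bound the key is Pietsch's inequality \eqref{kuehn}: from $x_{2n-1}(S^t_{p_1,p_1}B\to L_2)\le x_n(S^t_{p_1,p_1}B\to L_\infty)\,x_n(L_\infty\to L_2)$ and $n^{1/2}x_n(L_\infty\to L_2)\le\pi_2(L_\infty\to L_2)=1$, one transfers the sharp $L_2$-target estimates (Theorems \ref{the5-5}, \ref{the4-1}) with a gain of $n^{1/2}$, which is precisely where the extra $(\log n)^{(d-1)/2}$ survives. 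Neither of these two devices appears in your proposal, and without them the stated two-sided bound is out of reach; the interpolation Theorem \ref{inter} you mention is used in the paper for the case $p_2=1$, not here.
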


\begin{remark}
 \rm
{\rm (i)}
Recall that $S^{t}_{p_1,p_1}B((0,1)^d)$ is compactly embedded into $ L_\infty((0,1)^d)$ if and only if $t> 1/{p_1}$, 
see Prop. \ref{emb}.
The cases $2<p_1\leq \infty$ and $t\in \big(\frac{1}{p_1},\frac{1}{p_1}+\frac{1}{2}\big]$ remain open.
\\ 
{\rm (ii)} Considering $p_2 \to \infty$ in parts II and III of Thm. \ref{main} then it turns out
that in \eqref{ws-44} there is an additional log factor, more exactly $(\log n)^{(d-1)/2}$. 
\\
{\rm (iii)} 
To prove Theorem \ref{satz01} 
we shall employ an inequality due to Pietsch \cite{Pi2}. 
For any linear operator $T$ we have
\be\label{kuehn}
n^{1/2}\, x_n (T) \le \pi_2 (T)\, , \qquad n \in \N\, , 
\ee
where $\pi_2 (T)$ refers to the 2-summing norm of $T$.
\\
{\rm (iv)} There is a small  number of cases,  where the exact order of 
$s_{n} (id: S^{t}_{p_1,q_1}B((0,1)^d) \to L_\infty((0,1)^d)) $, $p_1 \neq 2$, if $n$ tends to infinity, has been found.
Here $s_n$ stands for any $s$-number, see Sect. \ref{basic}.
Beside Prop. \ref{satz 2} we refer to Temlyakov \cite{Te96} where 
\[
d_{n} (id: S^{t}_{\infty, \infty}B(\tor^2) \to L_\infty(\tor^2)) \asymp n^{-t}\, (\log n)^{t+1} \, , \qquad t>0, 
\]
is proved for all $n \ge 2$. Here $d_n$ denotes the $n$-th Kolmogorov number and
$\tor^2$ refers to the two-dimensional periodic case.
For some partial results (i.e., with a gap between the estimates from above and below)
with respect to Kolmogorov numbers we refer to Romanyuk \cite{Rom3}.
\end{remark}


\subsection{The extreme case $p_2=1$}
\label{extrem1}


Let us recall a result obtained by Romanyuk \cite{Rom9} (again Romanyuk has dealt with
approximation numbers, but see Remark \ref{banachhilbert} for this).

\begin{proposition}\label{satz21}
Let $t>0$. Then we have
\beqq
 x_{n} (id: \, S^{t}_{2,2}B ((0,1)^d) \to L_1 ((0,1)^d)) \asymp
\frac{(\log n)^{(d-1)t}}{n^t}\, , \qquad n\geq 2\, .
\eeqq
\end{proposition}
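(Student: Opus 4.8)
Since the domain $S^t_{2,2}B(\Omega)$ is (isomorphic to) a Hilbert space and the target $L_1(\Omega)$ is a Banach space, Remark \ref{banachhilbert}(ii), i.e.\ \cite[Prop.~2.4.20]{Pi3}, gives
\[
x_n(id:\, S^t_{2,2}B(\Omega)\to L_1(\Omega)) = a_n(id:\, S^t_{2,2}B(\Omega)\to L_1(\Omega))\,,\qquad n\in\N\,,
\]
so it suffices to determine the approximation numbers; for these the assertion is the result of Romanyuk \cite{Rom9}. For completeness I would organise the two inequalities as follows.

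\emph{Upper estimate.} As $\Omega$ has finite measure, $\|f\|_{L_1(\Omega)}\le \|f\|_{L_2(\Omega)}$, so the canonical embedding $j:\, L_2(\Omega)\to L_1(\Omega)$ has norm $\le 1$. Factoring $id:\, S^t_{2,2}B(\Omega)\to L_1(\Omega)$ through $L_2(\Omega)$ and using the $s$-number property $x_n(j\circ S)\le \|j\|\,x_n(S)$ yields
\[
x_n(id:\, S^t_{2,2}B(\Omega)\to L_1(\Omega)) \le x_n(id:\, S^t_{2,2}B(\Omega)\to L_2(\Omega))\,.
\]
The right-hand side is covered by Theorem \ref{main}: with $p_1=p_2=2$ we are in region $I^*$ (the condition $t>\tfrac{1}{p_1}-\tfrac12=0$ being automatic), where $\alpha=t$ and $\beta=t+\tfrac12-\tfrac{1}{p_1}=t$. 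Hence $x_n(id:\, S^t_{2,2}B(\Omega)\to L_1(\Omega))\lesssim n^{-t}(\log n)^{(d-1)t}$ for $n\ge 2$.

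\emph{Lower estimate.} This is the substantial part and I would follow Romanyuk's scheme. Using a tensorised wavelet (or Faber–spline) characterisation one passes to a sequence-space identity $id^{*}:\, s^{t,\Omega}_{2,2}b\to$ (a sequence model of $L_1$). Fix the hyperbolic level $\{\bar\ell:\, |\bar\ell|_1=J\}$, whose associated wavelet system has cardinality $D_J\asymp 2^J J^{d-1}$, and let $A:\, \ell_2^{D_J}\to S^t_{2,2}B(\Omega)$ send the unit vectors to the $S^t_{2,2}B$-normalised level-$J$ wavelets; by the sequence-space norm $\|A\|\asymp 1$. The composition $id\circ A:\, \ell_2^{D_J}\to L_1(\Omega)$ then behaves, up to the factor $2^{-tJ}$ and the $L_1$-normalisation $\asymp 2^{-J/2}$ of a level-$J$ wavelet, like $id^{D_J}_{2,1}$ from \eqref{idlp}; combining this with the Weyl numbers of $id^{D_J}_{2,1}$ collected in Appendix A gives a lower bound for $x_{n}(id)$ once $n\lesssim D_J$. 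Choosing $J$ with $2^J J^{d-1}\asymp n$, i.e.\ $J\asymp\log n$ and $2^J\asymp n(\log n)^{-(d-1)}$, then reproduces the factor $2^{-tJ}\asymp n^{-t}(\log n)^{(d-1)t}$.

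\emph{Main obstacle.} The delicate point is that $L_1(\Omega)$ admits no Littlewood–Paley or unconditional-wavelet description, so the naive reduction to $id^{D_J}_{2,1}$ via a biorthogonal ``coefficient'' map $L_1(\Omega)\to\ell_1^{D_J}$ loses a power of $\log n$ — the dual wavelets attached to the $\asymp J^{d-1}$ different anisotropic shapes at level $J$ overlap, so that map has norm $\asymp J^{(d-1)/2}2^{J/2}$ rather than $\asymp 1$. One therefore has to argue more carefully — e.g.\ by a randomisation over the wavelet signs together with a Khintchine inequality in $L_1$, or by the explicit construction of extremal functions in \cite{Rom9} — to see that on the relevant finite-dimensional subspace the $L_1$-norm controls exactly the $\ell_1$-type norm of the coefficients with no spurious logarithmic loss, which is what pins the exponent of $\log n$ down to $(d-1)t$ rather than $(d-1)(t\pm\tfrac12)$.
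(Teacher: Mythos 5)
Your proposal takes essentially the same route as the paper: Proposition \ref{satz21} is presented there simply as a recalled result of Romanyuk \cite{Rom9} for approximation numbers, transferred to Weyl numbers via the identity $x_n(T:H\to Y)=a_n(T:H\to Y)$ of Remark \ref{banachhilbert}(ii), which is exactly your opening paragraph, and no further proof is given. Your supplementary sketch is superfluous for matching the paper (and the $L_1$ lower-bound obstacle you flag is genuine, but the paper never attempts that argument; if one wanted a self-contained lower bound, the interpolation trick used in Step 3 of the proof of Theorem \ref{extrem11}, combined with Theorem \ref{main} for $L_p$ and $L_2$, yields it without any wavelet analysis in $L_1$).
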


By making use of the embedding $S^0_{1,2}F((0,1)^d) \hookrightarrow L_1 ((0,1)^d)$ 
we are able to extend Prop. \ref{satz21} to the following.

\begin{theorem}\label{extrem11}
Let $0<p_1\leq\infty $ and $t>(\frac{1}{p_1}-1)_+$. Then
\beqq x_n(id: S_{p_1,p_1}^tB((0,1)^d)& \to & L_1((0,1)^d ))
\\
&\asymp& 
\begin{cases}
 n^{- t} &  \text{if}\quad   p_1< 2\, , \ \ t< \frac{1}{p_1}-\frac{1}{2},\\
n^{-t}(\log n)^{(d-1)(t+\frac{1}{2}-\frac{1}{p_1})} & \text{if}\quad  p_1\leq 2\, , \ \ t> \frac{1}{p_1}-\frac{1}{2},\
\\
n^{-t+\frac{1}{p_1}-\frac{1}{2}}(\log n)^{(d-1)(t+\frac{1}{2}-\frac{1}{p_1})} &  
\text{if}\quad 2<p_1\leq \infty \, , \  \  t>\frac{1}{p_1}, 
\\
n^{-\frac{tp_1}{2}}(\log n)^{(d-1)(t+\frac{1}{2}-\frac{1}{p_1})} &  
\text{if}\quad 2< p_1<\infty\, , \  \  t<\frac{1}{p_1},
\end{cases}
\eeqq
for all $n\geq 2$.
\end{theorem}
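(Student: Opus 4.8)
The plan is to reduce everything to the case $p_2 = 2$ already available via Proposition \ref{satz21} together with Theorem \ref{main}, using the elementary embedding $S^0_{1,2}F((0,1)^d) \hookrightarrow L_1((0,1)^d)$ and standard properties of $s$-numbers. First I would handle the \emph{upper estimate}. Since $L_2 \hookrightarrow L_1$ on the bounded domain $\Omega$, the factorization $id: S^t_{p_1,p_1}B \to L_2 \to L_1$ combined with the multiplicativity/ideal property of Weyl numbers gives
\[
x_n(id: S^t_{p_1,p_1}B((0,1)^d) \to L_1((0,1)^d)) \le c\, x_n(id: S^t_{p_1,p_1}B((0,1)^d) \to L_2((0,1)^d))\, ,
\]
and the right-hand side is known from Theorem \ref{main} (region $I$ for $p_1 \le 2$, regions $V^*, V_*$ for $p_1 > 2$, all with $p_2 = 2$). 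Reading off the exponents there yields exactly the four claimed upper bounds: $\alpha = t$ with $\beta = 0$ or $\beta = t+\tfrac12-\tfrac1{p_1}$ when $p_1 \le 2$ (according to whether $t \lessgtr \tfrac1{p_1}-\tfrac12$), $\alpha = t - \tfrac1{p_1}+\tfrac12$ when $2<p_1\le\infty$, $t>\tfrac1{p_1}$, and $\alpha = \tfrac{tp_1}{2}$ when $2<p_1<\infty$, $t<\tfrac1{p_1}$, in each case with the log exponent $(d-1)(t+\tfrac12-\tfrac1{p_1})$. So the upper bound is essentially free.

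The substance is the \emph{lower estimate}, and here I would exploit the embedding $S^0_{1,2}F((0,1)^d)\hookrightarrow L_1((0,1)^d)$ in the opposite direction, i.e.\ to bound $L_1$ from below by a space that carries a Littlewood--Paley decomposition. The idea: $id: S^t_{p_1,p_1}B \to L_1$ factors through nothing useful directly, but via the lifting/duality structure one compares $x_n(id: S^t_{p_1,p_1}B \to L_1)$ with $x_n$ of an identity into $S^0_{1,2}F$, and then passes to the wavelet sequence-space model $id^*: s^{t,\Omega}_{p_1,p_1}b \to s^{0,\Omega}_{1,2}f$ exactly as in the sketch given before Theorem \ref{main}. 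On the sequence-space side one selects, for each dyadic level $\mu$, a block of cubes $\bar j$ with $|\bar j|_1 \asymp \mu$, of which there are $\asymp \mu^{d-1}$, and tests the identity on the corresponding finite-dimensional subspace; this reduces matters to the lower bounds for $x_n(id^m_{p_1,1})$ collected in Appendix A, and the logarithmic factor $(\log n)^{(d-1)(\dots)}$ arises from summing the $\mu^{d-1}$ parallel blocks in precisely the manner carried out in Subsection \ref{sequence3} for Theorem \ref{main}. For $2 < p_1 < \infty$ and small smoothness $t < \tfrac1{p_1}$ the relevant one-dimensional bound is $x_m(id^m_{p_1,1}) \asymp m^{-1/2}$ up to the critical level, which produces the $n^{-tp_1/2}$ rate; for the remaining ranges one uses $x_m(id^m_{p_1,1}) \asymp m^{\max(0,1/2-1/p_1)-\dots}$ in the appropriate regime.

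The main obstacle I anticipate is matching the lower bound to the upper bound in the \emph{two extreme sub-cases with a logarithm}: showing that when $p_1 \le 2$ and $t > \tfrac1{p_1}-\tfrac12$ the log power is genuinely $(d-1)(t+\tfrac12-\tfrac1{p_1})$ and not smaller, and likewise in the $2<p_1<\infty$, $t<\tfrac1{p_1}$ case. The upper bound inherited from $L_2$ already has this log power, so one must construct a testing subspace on the $b$-side that simultaneously (a) has the right "mixed" combinatorial structure giving $\mu^{d-1}$ blocks and (b) maps into $s^{0,\Omega}_{1,2}f$ with norm control that does \emph{not} lose the log factor — this is where the $f$-space (rather than a $b$-space) target is essential, since the inner $\ell_2$ summation over levels is what keeps the constant from degrading. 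I would borrow the block construction and the counting argument verbatim from the proof of Theorem \ref{main}, only replacing the $p_2$-summation in the target by the $F$-space norm and invoking $S^0_{1,2}F \hookrightarrow L_1$ at the very end; the case distinction $t \lessgtr \tfrac1{p_1}-\tfrac12$ resp.\ $t \lessgtr \tfrac1{p_1}$ corresponds exactly to whether the dominant contribution comes from a single level or is spread across $\asymp \log n$ levels, as in Proposition \ref{isotrop}. Once the block optimization is set up, the four cases follow by the same computation as in Subsection \ref{sequence3}, with no new ideas needed beyond the two embeddings $L_2 \hookrightarrow L_1$ and $S^0_{1,2}F \hookrightarrow L_1$.
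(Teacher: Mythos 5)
Your upper bound via the factorization $S^t_{p_1,p_1}B((0,1)^d)\to L_2((0,1)^d)\hookrightarrow L_1((0,1)^d)$ is indeed a legitimate shortcut in the last three cases, where the rates coincide with the $p_2=2$ rates of Theorem \ref{main}. But it fails in the first case: for $p_1<2$ and $(\frac{1}{p_1}-1)_+<t<\frac{1}{p_1}-\frac{1}{2}$ the space $S^t_{p_1,p_1}B((0,1)^d)$ does not even embed boundedly into $L_2((0,1)^d)$ (compactness requires $t>\frac{1}{p_1}-\frac{1}{2}$, cf.\ Prop.\ \ref{emb}), so Theorem \ref{main} with $p_2=2$ is not applicable and the upper bound is not ``free''. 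This is precisely where the paper needs the embedding $S^0_{1,2}F((0,1)^d)\hookrightarrow L_1((0,1)^d)$: the upper estimates come from the chain $S^t_{p_1,p_1}B\hookrightarrow S^0_{1,2}F\hookrightarrow L_1$ combined with the sequence-space results for the target $s^{0,\Omega}_{1,2}f$ (Thms.\ \ref{the5-5}, \ref{the3-1}, \ref{the3-2} with $p=1$), whose hypothesis $t>(\frac{1}{p_1}-1)_+$ matches the statement.

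The more serious gap is in the lower bounds. You invoke $S^0_{1,2}F\hookrightarrow L_1$ ``in the opposite direction'', but the ideal property only yields $x_n(id:\ \cdot\to L_1)\lesssim x_n(id:\ \cdot\to S^0_{1,2}F)$; since $L_1$ does not embed into $S^0_{1,2}F$, a lower bound for the sequence-space target $s^{0,\Omega}_{1,2}f$ gives no information about the target $L_1$. A block-testing argument for the $L_1$ target can only exploit an embedding of $L_1$ into a computable space, and the available one is $L_1\hookrightarrow S^0_{1,\infty}B$; on a level set $|\bar\nu|_1=\mu$ the $s^{0}_{1,\infty}b$-quasi-norm is a supremum over the $\asymp\mu^{d-1}$ blocks, which loses exactly the combinatorial factor that would produce the log power $(d-1)(t+\frac12-\frac{1}{p_1})$ — so the obstacle you yourself flag is not resolved by your plan. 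The paper proceeds differently: in the no-log case ($p_1<2$, $t<\frac{1}{p_1}-\frac12$) it uses $L_1\hookrightarrow S^0_{1,\infty}B$ together with a one-dimensional reduction via extension/trace operators and the lifting Lemma \ref{lift2}, invoking the univariate asymptotics $x_n(B^{t+r}_{p_1,p_1}(0,1)\to B^r_{1,\infty}(0,1))\asymp n^{-t}$ of Lubitz and Caetano; in the cases carrying the log factor it uses the interpolation property of Weyl numbers (Thm.\ \ref{inter}) with $\|f|L_p\|\le\|f|L_1\|^{1-\theta}\,\|f|L_2\|^{\theta}$, $1<p<2$, exploiting that the known rates for the targets $L_p$ and $L_2$ coincide, which forces the same rate from below for $L_1$. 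Neither of these two ideas appears in your proposal, so the lower estimates remain unproved.
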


\begin{remark}
\rm
The most interesting case is given by $p_1=1$. It follows that 
we have
\beqq
x_n(id: ~S^t_{1,1}B((0,1)^d)\to L_1((0,1)^d))
\asymp 
\left\{\begin{array}{lll}
 n^{- t} &  \text{if}\quad   t< \frac{1}{2},\\
n^{-t}(\log n)^{(d-1)(t-\frac{1}{2})} & \text{if}\quad  t> \frac{1}{2},\
\end{array}\right.
\eeqq
for all $n \ge 2$.
We are not aware of any other result concerning $s$-numbers 
(Kolmogorov numbers, approximation numbers, ...) where the exact order of 
$s_{n} (id: S^{t}_{1,1}B((0,1)^d) \to L_1((0,1)^d)) $ if $n$ tends to infinity, has been found.
A few more results concerning approximation and Kolmogorov numbers are known in case 
of the embeddings $id: S^{t}_{p_1,p_1}B((0,1)^d) \to L_1((0,1)^d) $, $p_1 >1$, 
and $id: S^{t}_{1,1}B((0,1)^d) \to L_{p_2}((0,1)^d) $, $1 < p_2 < \infty$.
E.g., in \cite{Rom9} Romanyuk has proved for $2 \le p_1 < \infty$ and $t>0$
\[
a_n(id: ~S^t_{p_1,p_1}B((0,1)^d)\to L_1((0,1)^d))
\asymp 
n^{-t}(\log n)^{(d-1)(t  +\frac{1}{2} - \frac{1}{p_1})} 
\]
for all $n \ge 2$.
\end{remark}


\subsection{A version of H\"older-Zygmund spaces (related to tensor products) as target spaces}
\label{extrem2}


As a supplement we investigate the Weyl numbers of the embeddings 
$id: \, S^{t}_{p_1,p_1}B ((0,1)^d) \to {\mathcal Z}_{\mix}^s ((0,1)^d)$,
where the spaces ${\mathcal Z}_{\mix}^s ((0,1)^d)$ are versions of 
H\"older-Zygmund spaces. Let $j \in \{1, \, \ldots \, d\}$.
For $m \in \N$, $h_j \in \re$ and $x \in \R$ we put
\[
\Delta_{h_j,j}^m f(x):= \sum_{\ell =0}^m (-1)^{m-\ell} \, \binom{m}{\ell} \, f(x_1, \ldots \, , x_{j-1}, x_j + \ell h_j, x_{j+1}, \ldots \, , x_d)\, .
\]
This is the $m$-th order difference in direction $j$. Mixed differences are defined as follows.
Let $e$ be a non-trivial subset of $\{1, \ldots \, d\}$.
For $ h  \in \R$ we define
\[
\Delta_{ h ,e}^m := \prod_{j \in e} \Delta_{h_j,j}^m \, . 
\]
Of course, here $\Delta_{h_j,j}^m \, \cdot \, \Delta_{h_\ell,\ell}^m $
has to be interpreted as $\Delta_{h_j,j}^m \, \circ \, \Delta_{h_\ell,\ell}^m $.

\begin{definition}\label{mix}
Let $s >0$. Let $m \in \N$ s.t. $m-1 \le s < m$. Then $f \in \cz^s_\mix ((0,1)^d)$ if 
\[
\| \, f\, |\cz^s_\mix ((0,1)^d)\|:= \| \, f\, |C((0,1)^d)\| + \max_{e \subset \{1, \ldots  ,d\}} \, \sup_{\|h\|_\infty \le 1} \prod_{j \in e} |h_j|^{-s}\, 
\sup_{x \in \Omega_{m,e,h}} \Big|\Delta_{ h ,e}^m f(x)\Big| < \infty\, , 
\]
where
\[
\Omega_{m,e,h} := \{x \in (0,1)^d: \quad (x_1+ \varepsilon _1 \ell_1 h_1, \, \ldots \, , x_d + 
\varepsilon_d \ell_d h_d)\in (0,1)^d \quad \forall \bar{\ell} \in \N_0^d\, , \: \| \, \bar{\ell}\, \|_\infty \le m \}\, ,
\]
and
\[
\varepsilon_j := \left\{ \begin{array}{lll}
  1 &\qquad & \mbox{if} \quad j \in e\, , 
  \\
  0 && \mbox{if} \quad j \not\in e\, . 
\end{array} \right.
\]
\end{definition}

A few properties of these spaces are obvious:
 \begin{itemize}
 \item Let $d=1$ and $m=1$, i.e., $0< s < 1$. Then $\cz^s_\mix (0,1)$ is the classical space of H\"older-continuous functions of order $s$.
 \item Let $d=1$, $s=1$ and $m=2$. Then $\cz^1_\mix (0,1)$ is the classical Zygmund space.
 \item If $f(x) = f_1 (x_1) \cdot \, \ldots \cdot f_d (x_d)$ with $f_j \in \cz_\mix (0,1)$, $j=1, \ldots, d$, then
 $f \in \cz^s_\mix ((0,1)^d)$ follows and 
 \[
 \| \, f\, |\cz^s_\mix ((0,1)^d)\| \asymp \prod_{j=1}^d \| \, f_j\, |\cz^s_\mix (0,1)\|\, .
 \]
 \item Let $f \in \cz^s_\mix ((0,1)^d)$ and define $g(x'):= f(x',0)$, where $x=(x',x_d)$, $x' \in \re^{d-1}$. Then
 $g \in \cz^s_\mix ((0,1)^{d-1})$ follows.
 \item We define $ \cz^s_\mix (\R)$ by replacing $(0,1)^d$ by $\R$ in the Def. \ref{mix}. 
 Let $E: \cz^s(0,1) \to \cz^s(\mathbb{R})$ be a linear and bounded extension operator such that 
 $E$ maps $C(0,1)$ into itself. Then
 $E \otimes \ldots \otimes E$ ($d$-fold tensor product) is well-defined on $C((0,1)^d)$ and maps this space into itself.
 Observe that $\Delta_{ h ,e}^m f(x) $ can be written as the $|e|$-fold iteration of a directional difference. As a consequence
 we obtain that $\ce_d:= E \otimes \ldots \otimes E$ maps $\cz^s_\mix ((0,1)^d)$ into itself.
 
 \end{itemize}
 
 Less obvious is the following lemma, see \cite[Rem.~2.3.4/3]{ST} or \cite{U1}.

 \begin{lemma}\label{zyg}
 Let $s>0$. Then
 \[
 \cz^s_\mix((0,1)^d) = S^s_{\infty,\infty}B((0,1)^d)
 \]
 holds in the sense of equivalent norms.
 \end{lemma}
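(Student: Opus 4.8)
The plan is to reduce the claimed identity on the cube to its analogue on the whole space, namely $\cz^s_\mix(\R)=S^s_{\infty,\infty}B(\R)$ with equivalent norms, and then to transfer this to $\Omega=(0,1)^d$ by extension and restriction, using that both scales on $\Omega$ are defined by restriction of the corresponding spaces on $\R$. The substantive part is the $\R$-version. Here I would invoke the characterisation of the Besov spaces of dominating mixed smoothness by mixed moduli of smoothness: for $0<p,q\le\infty$, $s>(1/p-1)_+$ and any integer $m>s$,
\[
\| \, f\, |S^s_{p,q}B(\R)\| \asymp \| \, f\, |L_p(\R)\| + \sum_{\emptyset\neq e\subset\{1,\ldots,d\}} \Big( \int_{[0,1]^{|e|}} \Big(\prod_{j\in e}t_j^{-s}\, \sup_{|h_j|\le t_j,\ j\in e}\| \, \Delta_{h,e}^m f\, |L_p(\R)\|\Big)^q \prod_{j\in e}\frac{dt_j}{t_j}\Big)^{1/q}\, ,
\]
with the usual modification if $q=\infty$; see \cite{ST,U1}. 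For a self-contained argument one bounds each dyadic block of the Littlewood--Paley decomposition by the mapping properties of its building blocks (upper estimate) and uses a telescoping/reconstruction argument together with a Marchaud-type inequality, applied separately to every coordinate subset $e$ (lower estimate); I would regard this characterisation as the main obstacle in a from-scratch proof of the lemma.

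Now I specialise to $p=q=\infty$. The integrals over $[0,1]^{|e|}$ turn into suprema, and, exploiting that $t\mapsto\sup_{|h_j|\le t_j}\| \, \Delta_{h,e}^m f\, |L_\infty\|$ is nondecreasing in each $t_j$ while $\prod_{j\in e}t_j^{-s}$ is decreasing, one checks that
\[
\sup_{t\in[0,1]^{|e|}}\ \prod_{j\in e}t_j^{-s}\ \sup_{|h_j|\le t_j}\| \, \Delta_{h,e}^m f\, |L_\infty\| \ \asymp\ \sup_{0<|h_j|\le 1,\ j\in e}\ \prod_{j\in e}|h_j|^{-s}\ \| \, \Delta_{h,e}^m f\, |L_\infty\|\, .
\]
Since there are only finitely many subsets $e$, the sum over $e$ is equivalent to the maximum over $e$; and since $s>0$, every element of $S^s_{\infty,\infty}B(\R)$ coincides a.e.\ with a continuous function, so $\| \, f\, |L_\infty(\R)\|$ may be replaced by $\| \, f\, |C(\R)\|$. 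As on $\R$ the set $\Omega_{m,e,h}$ of Definition \ref{mix} equals $\R$, the resulting expression is exactly the $\cz^s_\mix(\R)$-norm, which gives $S^s_{\infty,\infty}B(\R)=\cz^s_\mix(\R)$.

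To pass to $\Omega$ I treat the two inclusions separately. For $\cz^s_\mix(\Omega)\hookrightarrow S^s_{\infty,\infty}B(\Omega)$, take $f\in\cz^s_\mix(\Omega)$ and apply the tensor extension operator $\ce_d=E\otimes\ldots\otimes E$ from the list preceding the lemma; it maps $\cz^s_\mix(\Omega)$ boundedly into $\cz^s_\mix(\R)=S^s_{\infty,\infty}B(\R)$, and since $(\ce_d f)|_\Omega=f$, restriction gives $\| \, f\, |S^s_{\infty,\infty}B(\Omega)\|\ls\| \, \ce_d f\, |S^s_{\infty,\infty}B(\R)\|\ls\| \, f\, |\cz^s_\mix(\Omega)\|$. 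For the converse, fix a bounded linear extension operator from $S^s_{\infty,\infty}B(\Omega)$ to $S^s_{\infty,\infty}B(\R)$ that also maps $C(\Omega)$ into $C(\R)$ — such an operator, valid for all $s>0$, is part of the standard theory of these spaces, see Appendix B — apply it to $f\in S^s_{\infty,\infty}B(\Omega)$, call the image $\widetilde f$, and restrict $\widetilde f$ to $\Omega$. For every admissible $e,h$ and every $x\in\Omega_{m,e,h}$ all evaluation points of $\Delta_{h,e}^m f(x)$ lie in $\Omega$, hence agree with those of $\widetilde f$, so $\sup_{x\in\Omega_{m,e,h}}|\Delta_{h,e}^m f(x)|\le\sup_{x\in\R}|\Delta_{h,e}^m\widetilde f(x)|$; together with $\| \, f\, |C(\Omega)\|\le\| \, \widetilde f\, |C(\R)\|$ this yields $\| \, f\, |\cz^s_\mix(\Omega)\|\le\| \, \widetilde f\, |\cz^s_\mix(\R)\|\ls\| \, f\, |S^s_{\infty,\infty}B(\Omega)\|$. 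Apart from the characterisation on $\R$, the only point requiring care is the existence of a continuity-preserving extension operator that works simultaneously for all $s>0$; everything else is routine bookkeeping.
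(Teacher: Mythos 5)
Your argument is correct and is essentially the paper's: the paper offers no proof beyond quoting the mixed-difference characterization of $S^s_{\infty,\infty}B$ from \cite[Rem.~2.3.4/3]{ST} and \cite{U1}, which is exactly the ingredient you also take from the literature, and your transfer from $\R$ to $\Omega=(0,1)^d$ via extension and restriction is routine and sound. One simplification: for the inclusion $S^s_{\infty,\infty}B(\Omega)\hookrightarrow \cz^s_\mix(\Omega)$ you do not need a linear, continuity-preserving extension operator (and Appendix B does not provide one); since the space on $\Omega$ is defined as a restriction space with quotient quasi-norm, you may simply pick some $g\in S^s_{\infty,\infty}B(\R)$ with $g|_\Omega=f$ and $\| g|S^s_{\infty,\infty}B(\R)\|\le 2\, \| f|S^s_{\infty,\infty}B(\Omega)\|$, and $g$ is automatically continuous because $S^s_{\infty,\infty}B(\R)\hookrightarrow C(\R)$ for $s>0$, after which your pointwise comparison of differences on $\Omega_{m,e,h}$ goes through unchanged.
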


Essentially by the same methods as used for the proof of Thm. \ref{main} one obtains the following. 

\begin{theorem}\label{satz04}
Let $s>0$ and $t> s+ \frac 1{p_1} $. Then it holds
\begin{eqnarray*}
x_{n} (id: S^{t}_{p_1,p_1}B((0,1)^d) & \to & \cz^s_\mix ((0,1)^d)) 
\\
 & \asymp & 
\left\{
\begin{array}{lll}
\frac{(\log n)^{(d-1)(t-s -\frac{1}{p_1} )}}{n^{t-s-\frac{1}{2}}} & \quad & \mbox{if}\quad 0 < p_1 \le 2 \, , 
\\
&& \\
\frac{(\log n)^{(d-1)(t -s - \frac{1}{p_1} )}}{n^{t-s -\frac{1}{p_1}}} 
& \quad & \mbox{if}\quad 2 \le p_1 \le \infty \, , 
\nonumber
\end{array}
\right.
\end{eqnarray*}
for all $n\geq 2$.
\end{theorem}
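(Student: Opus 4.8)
The plan is to reduce the embedding $id: S^t_{p_1,p_1}B((0,1)^d) \to \cz^s_\mix((0,1)^d)$ to an already understood one. Using Lemma \ref{zyg}, we identify $\cz^s_\mix((0,1)^d) = S^s_{\infty,\infty}B((0,1)^d)$ in the sense of equivalent norms, so that
\[
x_n\big(id: S^t_{p_1,p_1}B((0,1)^d) \to \cz^s_\mix((0,1)^d)\big) \asymp x_n\big(id: S^t_{p_1,p_1}B((0,1)^d) \to S^s_{\infty,\infty}B((0,1)^d)\big)\, .
\]
Next I would use a lifting/commutative-diagram argument: a differential (lifting) operator of mixed order $s$ acts as an isomorphism $S^{t}_{p_1,p_1}B \to S^{t-s}_{p_1,p_1}B$ and $S^s_{\infty,\infty}B \to S^0_{\infty,\infty}B = \cz^0_\mix$ (at least up to equivalent norms, using the tensor-structure and the wavelet characterization recalled in Appendix B). Composing, the problem becomes estimating $x_n(id: S^{t-s}_{p_1,p_1}B((0,1)^d) \to S^0_{\infty,\infty}B((0,1)^d))$, i.e., the case $s=0$ with smoothness parameter $t-s > 1/p_1$.

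The second step is to treat the case $s = 0$, i.e., to establish the asymptotics for $x_n(id: S^{\tau}_{p_1,p_1}B((0,1)^d) \to S^0_{\infty,\infty}B((0,1)^d))$ with $\tau := t-s > 1/p_1$. Here I would pass to the sequence-space level via the wavelet isomorphisms of Section \ref{domino}: one gets $id^*: s^{\tau,\Omega}_{p_1,p_1}b \to s^{0,\Omega}_{\infty,\infty}b$, decompose $id^* = \sum_{\mu \ge 0} id^*_\mu$ into the level-$\mu$ blocks (blocks of size $D_\mu \asymp \mu^{d-1}2^\mu$), and estimate each block by the finite-dimensional numbers $x_{n_\mu}(id^{D_\mu}_{p_1,\infty})$ collected in Appendix A. The target being an $\ell_\infty$-type space (no Littlewood-Paley structure is needed now, only the $\ell_\infty$ norm over dyadic levels), the block structure is simpler than in Theorem \ref{main}: one has $\|id^*_\mu\| \asymp 2^{-\mu(\tau - 1/p_1)}\mu^{(d-1)(1/p_1)}$ up to the usual correction, and one optimizes the splitting $n-1 = \sum_{\mu=0}^L (n_\mu - 1)$ exactly as in Subsection \ref{sequence3}. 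For $p_1 \le 2$ the relevant finite-dimensional behaviour is $x_k(id^m_{p_1,\infty}) \asymp$ $(\log(m/k+1)/k)^{1/2}$ in the range of interest, producing the exponent $\tau - 1/2$ on $n$; for $p_1 \ge 2$ it is governed by $k^{-1}$-type decay giving the exponent $\tau - 1/p_1$; in both cases the logarithmic power works out to $(d-1)(\tau - 1/p_1 + 1/2)$ before re-translating $\tau = t-s$, which gives $(d-1)(t - s - 1/p_1)$ after accounting for the shift coming from $\beta$ in \eqref{ws-05}. Carrying $s$ back through the lifting adds $s$ to the polynomial exponent, yielding $n^{-(t-s-1/2)}$ resp. $n^{-(t-s-1/p_1)}$, matching the claim.

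For the lower bound I would, as indicated in the sketch after Theorem \ref{main}, pick a suitable finite-dimensional subspace of $S^t_{p_1,p_1}B((0,1)^d)$ supported on a single dyadic block $\bar{\mu}$ with $|\bar{\mu}|_1 = \mu$ and $\mu^{d-1} \asymp D$, on which $id$ restricts (after renormalization) to $id^{D_\mu}_{p_1,\infty}$ up to the scalar factor $2^{-\mu(t-s-1/p_1)}\mu^{(d-1)s}$ coming from the two smoothness weights; then the monotonicity and the known lower bounds for $x_n(id^m_{p_1,\infty})$ from Appendix A give the matching lower estimate. Choosing $\mu$ so that $n \asymp D_\mu$ realizes the correct $n$-asymptotics; using instead all levels $|\bar{\mu}|_1 \le \mu$ with appropriate weights produces the logarithmic factor.

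The main obstacle is the optimization of the block decomposition in the $s=0$ sub-step, i.e., the correct choice of $J$, $L$ and the $n_\mu$ when the target is an $\ell_\infty$-sum over $\asymp \mu^{d-1}$ dyadic sub-blocks on level $\mu$; this is precisely the point where the "standard" part of the argument stops and genuine work begins, paralleling Subsection \ref{sequence3}. A secondary technical point is to make the lifting argument rigorous on the cube $(0,1)^d$ rather than on $\R$ or $\tor^d$ — here one uses the extension operator $\ce_d = E \otimes \dots \otimes E$ and the fact (noted in the bullet list preceding Lemma \ref{zyg}) that it maps $\cz^s_\mix((0,1)^d)$ into itself, together with the analogous extension for $S^t_{p_1,p_1}B$, so that the cube-case numbers are squeezed between the $\R$-case numbers from above and below. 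Everything else — passing to sequence spaces, invoking Appendix A, and the re-translation $\tau = t - s$ — is routine.
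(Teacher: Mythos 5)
Your overall route is the paper's own: identify $\cz^s_\mix((0,1)^d)=S^s_{\infty,\infty}B((0,1)^d)$ via Lemma \ref{zyg}, pass to the coefficient level by the wavelet isomorphism (the counterpart of Lemma \ref{weyl}), remove $s$ by a lift, and then run the block decomposition of Subsection \ref{sequence3} with the finite-dimensional Weyl numbers $x_k(id^{D_\mu}_{p_1,\infty})$ from Appendix A; this is precisely Lemma \ref{erg-4}, Theorem \ref{erg-1} and Corollary \ref{erg-2} in the paper. One structural remark: the paper performs the lift on the sequence side with $J_\sigma$ (Lemma \ref{lift2}), a trivial isomorphism of $s^{t,\Omega}_{p,q}a$ onto $s^{t-\sigma,\Omega}_{p,q}a$, which completely sidesteps the difficulty you worry about (making a differential lifting rigorous on the cube), so the extension-operator machinery you invoke for that purpose is not needed.

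Several of the quantitative inputs you quote are, however, incorrect, and taken literally they would not yield the stated rates. By Appendix A, property (a), for $2k\le m$ one has $x_k(id^m_{p_1,\infty})\asymp k^{\frac12-\frac{1}{p_1}}$ if $0<p_1\le 2$; your formula $\bigl(\log(m/k+1)/k\bigr)^{1/2}$ is the behaviour of other widths and coincides with the Weyl numbers only at $p_1=1$ --- feeding it into the block optimization produces the exponent $t-s+\frac{1}{p_1}-\frac12$ on $n$ instead of the claimed $t-s-\frac12$. For $2\le p_1\le\infty$ one has $x_k(id^m_{p_1,\infty})\asymp 1$, so there is no ``$k^{-1}$-type decay''; the rate $n^{-(t-s)+\frac{1}{p_1}}$ comes solely from the factors $2^{\mu(\frac{1}{p_1}-(t-s))}$ of Lemma \ref{erg-4} in the scheme of Theorem \ref{the1-1}. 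Likewise, Corollary \ref{ba2-1} gives $\|id^*_\mu\|\lesssim 2^{\mu(\frac{1}{p_1}-(t-s))}$ with no factor $\mu^{(d-1)/p_1}$, because the outer index of the target space is $\infty$; and the logarithmic exponent at the sequence level is $(d-1)\bigl(t-s-\frac{1}{p_1}\bigr)$ directly (Theorem \ref{erg-1}), not $(d-1)\bigl(t-s-\frac{1}{p_1}+\frac12\bigr)$ subsequently ``corrected by $\beta$''. Finally, for the lower bound the paper tests on the whole hyperbolic layer $|\bar{\nu}|_1=\mu$ of dimension $D_\mu\asymp \mu^{d-1}2^\mu$ (via \eqref{eqlow1} and Lemma \ref{erg-4}), which already produces the logarithmic factor; a single block of size $\asymp 2^\mu$ does not. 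With the Appendix A values as actually stated, your scheme is exactly the paper's proof and does give Theorem \ref{satz04}.
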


\begin{remark}\label{nochmal}
 \rm
(i) Recall that $S^{t}_{p_1,p_1}B((0,1)^d)$ is compactly embedded into $\cz^s_\mix ((0,1)^d)$ if and only if 
$t > s+1/p_1$, see \cite{Vybiral}.
\\
(ii)
Observe, that Thm. \ref{satz04} is not the limit of Thm. \ref{satz01} for $s \downarrow 0$. 
There, in Thm. \ref{satz01}, is an additional factor $(\log n)^{(d-1)/2}$
as many times in this field.
\\
(iii) If we replace $\cz^s_\mix ((0,1)^d)) $ by $S^s_{\infty,\infty}B((0,1)^d)$ in Theorem \ref{satz04}, then the restriction 
$s>0 $ becomes superfluous.
\end{remark}

Finally, we wish to mention that these methods also apply in case of approximation numbers. As a result we get the following.

\begin{theorem}\label{ap-cor}
Let $n\in \mathbb{N}, n\geq 2$ and $s>0$. Then we have
\beqq
a_{n} (id: S^{t}_{p_1,p_1}B((0,1)^d) &\to& \cz^{s}_{\mix}((0,1)^d))
\\
&\asymp&
\left\{
\begin{array}{lll}
\frac{(\log n)^{(d-1)(t-s - \frac{1}{p_1} )}}{n^{t-s -\frac{1}{p_1}}} 
& \mbox{if}\ \ 2 \le p_1 \le \infty\, , \quad t-s> \frac{1}{p_1} \, ,
\\
&
\\
\frac{(\log n)^{(d-1)(t-s- \frac{1}{p_1} )}}{n^{t-s-\frac{1}{2}}} & \mbox{if}\ \ 1 \le p_1 < 2\, , \quad 
t-s>1\, , 
\\
&
\\
\frac{(\log n)^{(d-1)(t-s- \frac{1}{p_1} )}}{n^{\frac{p_1'}{2}(t-s-\frac{1}{p_1})}} & \mbox{if}\ \ 1 < p_1 < 2\, , \quad 
1>t-s>\frac{1}{p_1}\,  ,
\end{array}
\right.
\eeqq
for all $n \ge 2$. Here $p_1'$ is the conjugate of $p_1$.
\end{theorem}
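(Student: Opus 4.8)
The plan is to run the scheme developed for Theorem~\ref{main} and already used to prove Theorem~\ref{satz04}, the only change being that throughout the argument the Weyl numbers of the finite-dimensional identities $id_{p_1,\infty}^{m}$, $m\in\N$ (cf.~\eqref{idlp}), are replaced by their approximation numbers, and the sequence-space estimates of Section~\ref{sequence} by their approximation-number counterparts. First I would invoke Lemma~\ref{zyg} to pass to the target space $S^{s}_{\infty,\infty}B((0,1)^d)$; here the hypothesis $s>0$ enters \emph{only} through this identification, cf.~Remark~\ref{nochmal}(iii). Wavelet isomorphisms (valid for source and target alike) then reduce everything to the approximation numbers of the sequence-space embedding $id^*\colon s^{t,\Omega}_{p_1,p_1}b \to s^{s,\Omega}_{\infty,\infty}b$. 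Writing $id^*=\sum_{\mu\ge 0}id^*_\mu$, where $id^*_\mu$ collects the coefficients on the level $|\bar j|_1=\mu$, one notes that this level consists of $\asymp\mu^{d-1}$ blocks, each of cardinality $\asymp 2^{\mu}$, on which the source carries an $\ell_{p_1}$-structure and the target an $\ell_{\infty}$-structure; hence, with $D_\mu\asymp 2^{\mu}\mu^{d-1}$,
\[
a_n(id^*_\mu)\asymp 2^{-\mu(t-s-\frac1{p_1})}\,a_n(id_{p_1,\infty}^{D_\mu})\,,\qquad \|id^*_\mu\|\asymp 2^{-\mu(t-s-\frac1{p_1})}\,,
\]
the exponent $t-s-\frac1{p_1}>0$ being exactly the threshold from Proposition~\ref{emb} (cf.~Remark~\ref{nochmal}(i)).

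For the estimate from above I would use, for $n-1=\sum_{\mu=0}^{L}(n_\mu-1)$, the standard splitting
\[
a_n(id^*)\le \sum_{\mu=0}^{J}a_{n_\mu}(id^*_\mu)+\sum_{\mu=J+1}^{L}a_{n_\mu}(id^*_\mu)+\sum_{\mu=L+1}^{\infty}\|id^*_\mu\|\,,
\]
insert the well-known two-sided estimates for $a_n(id_{p_1,\infty}^{D_\mu})$, and then choose $J$, $L$ and $(n_\mu)$ optimally; this optimization — the analogue of the analysis carried out in Subsection~\ref{sequence3} — is the substantial part of the proof. The three cases of the theorem correspond to which branch of $a_n(id_{p_1,\infty}^{D_\mu})$ governs the optimal balance: for $2\le p_1\le\infty$ one is led to the exponent $t-s-\frac1{p_1}$; for $p_1<2$ and $t-s>1$ to the exponent $t-s-\frac12$; and for $p_1<2$ and $\frac1{p_1}<t-s<1$ to $\frac{p_1'}{2}(t-s-\frac1{p_1})$. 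In all cases the $\asymp\mu^{d-1}$ blocks per level produce the factor $(\log n)^{(d-1)(t-s-\frac1{p_1})}$.

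For the matching lower bounds I would distinguish. In the first two cases the exponent of $n$ and the power of $\log n$ coincide with those of Theorem~\ref{satz04}; since $x_n\le a_n$ (the approximation numbers are the largest $s$-numbers), Theorem~\ref{satz04} already supplies the desired lower bound. In the third case one has $\frac{p_1'}{2}(t-s-\frac1{p_1})<t-s-\frac12$, so Theorem~\ref{satz04} is not enough and a sharper estimate is needed: restricting $id^*$ to a single level $\mu\asymp\log n$ and invoking the known lower bound for $a_n(id_{p_1,\infty}^{D_\mu})$ yields the correct power of $n$, while passing to a dyadic window of levels $\mu\in[\mu_0,2\mu_0]$ — the subspaces already used for the lower bounds in Section~\ref{sequence} — supplies the extra factor $(\log n)^{(d-1)(t-s-\frac1{p_1})}$.

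The main obstacle is the parameter optimization in the upper bound. For Weyl numbers the inequality \eqref{kuehn}, $n^{1/2}x_n\le\pi_2$, forces $x_n(id_{p_1,\infty}^{m})$ to decay at least like $n^{-1/2}$, which is why Theorem~\ref{satz04} has a single formula when $p_1\le 2$; for approximation numbers this ceiling disappears and $a_n(id_{p_1,\infty}^{m})$ with $p_1<2$ has a genuinely richer behaviour, so the balance among the three sums above has to be reworked. It is precisely this reworking that brings in the new small-smoothness regime $\frac1{p_1}<t-s<1$ with rate $n^{-\frac{p_1'}{2}(t-s-\frac1{p_1})}$, a regime with no counterpart in Theorem~\ref{satz04}.
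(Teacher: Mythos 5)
Your roadmap coincides with the paper's own proof: pass to $S^s_{\infty,\infty}B$ via Lemma \ref{zyg}, reduce by wavelets and a lift (Lemmas \ref{lift2}, \ref{weyl}) to $id^*\colon s^{t,\Omega}_{p_1,p_1}b\to s^{0,\Omega}_{\infty,\infty}b$, use $a_n(id_\mu^*)\asymp 2^{-\mu(t-s-\frac1{p_1})}a_n(id_{p_1,\infty}^{D_\mu})$ with $a_n(id_{p_1,\infty}^{m})\asymp\min(1,m^{1-\frac1{p_1}}n^{-\frac12})$ for $1<p_1<2$ (Gluskin/Vybiral), and get the lower bounds in the first two cases from $x_n\le a_n$ and Theorem \ref{satz04}. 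However, the decisive step — the parameter optimization in the upper bound — is only asserted, and it is exactly the part that has no counterpart in Subsection \ref{sequence3}. For $1\le p_1<2$, $t-s>1$ the paper takes $n_\mu=D_\mu 2^{(J-\mu)\lambda}$ with $1<\lambda<\frac12+\frac{t-s}{2}$ (this is where $t-s>1$ enters), while in the new regime $1<p_1<2$, $\frac1{p_1}<t-s<1$ it needs the non-obvious choice $L=\big[\frac{p_1'}{2}J+(d-1)p_1'(\frac1{p_1}-\frac12)\log J\big]$, cf.\ \eqref{ws000}, together with $n_\mu=[D_\mu 2^{(\mu-L)\beta+J-\mu}]$ and $\beta>0$ chosen so that $-(t-s)+1-\frac\beta2>0$; only then do the middle sum and the tail $\sum_{\mu>L}\|id_\mu^*\|\lesssim 2^{-L(t-s-\frac1{p_1})}$ both come out as $n^{-\frac{p_1'}{2}(t-s-\frac1{p_1})}(\log n)^{(d-1)(t-s-\frac1{p_1})}$. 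Saying "the optimal balance gives the three exponents" leaves precisely the genuinely new case of the theorem unproved, so you should carry out this balancing explicitly.

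On the lower bound in the small-smoothness case, your single-level idea already suffices and the "dyadic window of levels $[\mu_0,2\mu_0]$" is both unnecessary and not what Section \ref{sequence} does (the lower bounds there restrict to one level $|\bar\nu|_1=\mu$; across a window the weights $2^{-\mu(t-s-\frac1{p_1})}$ vary by a power of $n$, so that step would need repair). Indeed, since the single level has $D_\mu\asymp 2^\mu\mu^{d-1}$ coordinates, choosing $n=[D_\mu^{2/p_1'}]$ gives $a_n(id_{p_1,\infty}^{D_\mu})\asymp 1$, hence $a_n(id^*)\gtrsim a_n(id_\mu^*)\asymp 2^{-\mu(t-s-\frac1{p_1})}$, and rewriting $2^\mu\asymp n^{p_1'/2}(\log n)^{-(d-1)}$ produces the power of $n$ \emph{and} the factor $(\log n)^{(d-1)(t-s-\frac1{p_1})}$ in one stroke — this is exactly the paper's argument.
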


\begin{remark}
\rm
As in Remark \ref{nochmal},  if we replace $\cz^s_\mix ((0,1)^d)) $ by $S^s_{\infty,\infty}B((0,1)^d)$ in Theorem \ref{ap-cor}, 
then the restriction 
$s>0 $ becomes superfluous.
\end{remark}


\subsection{A comparison with entropy numbers}
\label{entropy}


There are good reasons to compare Weyl numbers with entropy numbers. 
Both, entropy and Weyl numbers, are tools to control the 
behaviour of eigenvalues of linear operators. 

Let us recall the definition of entropy numbers.

\begin{definition}
Let $T: \, X \to Y$ be a bounded linear operator between complex
quasi-Banach spaces, and let $n \in \N$. Then the $n$-th (dyadic) entropy
number of T is defined as
$$
e_n (T: \, X \to Y):=\inf\{ \varepsilon
>0: T(B_X) \text{ can be covered by } 2^{n-1}
\text{ balls in } Y \text{ of radius } \varepsilon\}\, ,
$$
where $B_X:= \{x \in X: \: \|x\|_X \le 1\}$ denotes the closed
unit ball of $X$.
\end{definition}

In particular, $T: \, X \to Y$ is compact if and only if
$\lim_{n \to \infty} e_n (T) = 0 \,$. For details and basic
properties like multiplicativity, additivity, behaviour under
interpolation etc. we refer to the monographs
 \cite{CS,ET,Koe,Pi1}.
Most important for us is the Carl-Triebel inequality
which states
\beqq
|\lambda_n (T)| \le \sqrt{2}\, e_n (T)\, ,
\eeqq
cf. Carl, Triebel \cite{CT} (see also the monographs \cite{CS} and \cite{ET}).

Entropy numbers of embeddings $id:\ S^t_{p_1,p_1}B((0,1)^d) \hookrightarrow L_{p_2}((0,1)^d)$
have been investigated in Vybiral \cite{Vybiral}. The picture is less complete than in case of Weyl numbers.
Only for sufficiently large smoothness the behaviour is exactly known.
For $0 < p_1 \le \infty$ and $1 < p_2 < \infty$ we have
\[
e_n(id) \asymp n^{-t}(\log n)^{(d-1)(t+\frac{1}{2}-\frac{1}{p_1})}\qquad 
\text{if } \quad t>\max \Big(0, \frac{1}{p_1}-\frac{1}{2}, \frac{1}{p_1}-\frac{1}{p_2}\Big)\, ,  \qquad n \ge 2\, .
\]

\begin{minipage}[t]{9cm}
$$
\begin{tikzpicture}
\fill (0,0) circle (1.5pt);
\draw[->, ](0,0) -- (7,0);
\draw[->, ] (0,0) -- (0,6.8);
\draw (3,0)-- (3,6);
\draw (0,0) -- (3,3);
\draw (0,3) -- (6,3);
\draw (0,6) -- (4,6);
\node[below] at (0,0) {$0$};
\node [below] at (3,0) {$\frac{1}{2}$};
\node [left] at (0,3) {$\frac{1}{2}$};
\node [left] at (0,6) {$ 1$};
\node [left] at (0,6.6) {$\frac{1}{p_2}$};
\node [below] at (6,0) {$1$};
\draw (3, -0.05) -- (3, 0.05);
\node [below] at (7,0) {$\frac{1}{p_1}$};
\node [] at (5,4) {$I^*: ~ e_n \asymp x_n$};
\node [] at (5,5) {$I$};
\node [] at (5,1) {$\lim\limits_{n \to \infty}\, \frac{e_n}{x_n} = 0 $};
\node [] at (5,2) {$II$};
\node [] at (1,5) {$V$};
\node [] at (1.5,4) {$\lim\limits_{n \to \infty}\, \frac{x_n}{e_n} = 0 $};
\node [right] at (2,3/2) {$III$};
\node [] at (1.8,0.4) {$\lim\limits_{n \to \infty}\, \frac{e_n}{x_n} = 0 $};
\node [above] at (1,2.5) {$IV$};
\node [] at (1.05,2.15) {$\lim\limits_{n \to \infty}\, \frac{x_n}{e_n} = 0 $};
\node [right] at (3,-0.8) {Figure 2};
\end{tikzpicture}
$$
\end{minipage}
\hfill
\begin{minipage}[t]{4cm}
{~}\\ 
We use Figure 2 to explain the different behaviour of entropy and Weyl numbers.
Weyl numbers are essentially smaller than entropy numbers in regions IV and V, entropy numbers are 
essentially smaller than Weyl numbers in regions II and III, and they show a similar behaviour in region I$^*$.
\end{minipage}

\begin{remark}
 \rm
(i) Further estimates of the decay of entropy numbers related to embeddings 
$id:\ S^t_{p_1,q_1}B((0,1)^d) \hookrightarrow L_{p_2}((0,1)^d)$ 
($id:\ S^t_{p_1} W((0,1)^d) \hookrightarrow L_{p_2}((0,1)^d)$)
can be found in Belinsky \cite{Be}, Dinh D\~ung \cite{DD}, and Temlyakov \cite{Tem2}.
\\
(ii) There are many contributions dealing with  the behaviour of 
$d_n(id:\ S^t_{p_1,q_1}B((0,1)^d) \hookrightarrow L_{p_2}((0,1)^d))$ (Kolmogorov numbers)
and 
$a_n(id:\ S^t_{p_1,q_1}B((0,1)^d) \hookrightarrow L_{p_2}((0,1)^d))$. 
However, the picture is much less complete than in case of Weyl numbers.
We refer to Bazarkhanov \cite{Baz7} for the most recent publication in this direction.
The topic itself has been investigated at various places over the last 30 years, see, e.g.,  
Temylakov \cite{Tem2,Tem}, Galeev \cite{Ga2,Ga,Ga1} and Romanyuk \cite{Rom6,Rom7,Rom4,Rom8,Rom1,Rom2,Rom5,Rom3,Rom9}.
\end{remark}


\section{Weyl numbers - basic properties}\label{basic}


Weyl numbers are special $s$-numbers. 
For later use we recall this general notion following Pietsch \cite[2.2.1]{Pi3}
(note that this differs slightly from earlier definitions in the literature).\\

Let $X,Y,X_0,Y_0$ be quasi-Banach spaces.
As usual, $\mathcal L(X,Y)$ denotes the space of all continuous linear operators from $X$ to $Y$. Finally, let $Y$ be $p$-Banach space for some $p \in (0,1]$, i.e.,
\be\label{dreieck}
\|\, x+y\, |Y\|^{p} \le \|\, x\, |Y\|^{p} + \|\, y \, |Y\|^{p} \qquad \mbox{for all}\quad x,\, y \in Y\, .
\ee
An $s$-function is a map $s$ assigning to every operator $T\in \mathcal L(X,Y)$ a scalar sequence $(s_n(T))$ such that the following conditions are satisfied:
\begin{description}
\item[(a)] $\|T\|=s_1(T)\geq s_2(T)\geq...\geq 0 $ for all $T\in \mathcal L(X,Y)$;
\item[(b)] $ s_{n+m-1}^p(S+T)\leq s_n^p(S)+ s_m^p(T) $ for $S,T\in \mathcal L(X,Y)$ and $m,n=1,2, \ldots \, $;
\item[(c)] $s_n(BTA)\leq \|B\| \, \cdot \, s_n(T) \, \cdot \, \|A\|$ for $A\in \mathcal L(X_0,X)$, $T\in \mathcal L(X,Y)$, $B\in \mathcal L(Y,Y_0)$;
\item[(d)] $s_n(T)=0$ if $\text{rank}(T)<n$ for all $n\in \N$; 
\item[(e)] $s_n(id: \ell_2^n\to \ell_2^n)=1$ for all $n\in \N$.
\end{description}
We will refer to $(a)$ as monotonicity, to $(b)$ as additivity, to $(c)$ as ideal property, to $(d)$ as the rank property
and to $(e)$ as normalization (norm-determining property) of the $s$-numbers.
\\
Sometimes a further property is of some use.
Let $Z$ be a quasi-Banach space.
An $s$-function is called multiplicative if
\\
${\bf (f)}$ $ s_{n+m-1}(ST)\leq s_n(S)\, s_m(T) $ for $T\in \mathcal L(X,Y)$, $S\in \mathcal L(Y,Z)$ and $m,n=1,2, \ldots \, $.

\subsection*{Examples}

The following numbers are $s$-numbers:
\begin{enumerate}
\item Kolmogorov numbers are multiplicative $s$-numbers, see, e.g., \cite[Thm. 11.9.2]{Pi1}.
\item 
Approximation numbers are multiplicative $s$-numbers, see, e.g., \cite[2.3.3]{Pi3}.
\item The $n$-th Gelfand number of the linear operator $T \in \mathcal L(X,Y)$ is defined to be 
$$ c_n (T) := \inf\Big\{\|\, T\, J_M^X\, \|: \quad {\rm codim\,}(M)< n\Big\},$$
where $J_M^X:M\to X$ refers to the canonical injection of $M$ into $X$. 
Gelfand numbers are multiplicative $s$-numbers, see, e.g., \cite[Prop.~2.4.8]{Pi3}.
\item Weyl numbers are multiplicative $s$-numbers, see \cite[2.4.14,~2.4.17]{Pi3}.
\end{enumerate}

Entropy numbers do not belong to the class of $s$-numbers since they do not satisfy $(d)$.

\begin{remark} \label{weylextra}
\rm
There is an alternative way to calculate the $n$-th Weyl number. Indeed, for $T \in \mathcal L(X,Y)$ it holds
$$ x_n(T):=\sup \Big\{c_n(TA):\quad A\in \mathcal L(\ell_2,X),\ \|A\|\leq 1\Big\}\, , $$
see Pietsch \cite{Pi2}.
\end{remark}

\subsection*{Interpolation properties of Weyl numbers}

For later use we add the following assertion concerning interpolation properties of Weyl numbers.

\begin{theorem}\label{inter}
Let $0<\theta<1$.
Let $X,Y, Y_0,Y_1$ be a quasi-Banach spaces. 
Further we assume  $Y_0\cap Y_1\hookrightarrow  Y$ and the existence of  a positive constant $C$ such that
\be\label{weylextra3}
\| y|Y\| \leq C \, \| y|Y_0\|^{1-\theta}\|y|Y_1\|^{\theta}\qquad  \text{for all}\quad  y\in Y_0\cap Y_1.
\ee
Then, if 
\[
T\in  \mathcal{L}(X,Y_0) \cap  \mathcal{L}(X,Y_1) \cap  \cl (X,Y)
\]
we obtain 
\beqq
x_{n+m-1}(T:~X\to Y)\le C\,  x_n^{1-\theta}(T:~ X\to Y_0)\, x_m^{\theta}(T:~X\to Y_1)
\eeqq
for all $n,m \in \N$. Here $C$ is the same constant as in \eqref{weylextra3}.
\end{theorem}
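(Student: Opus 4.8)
The plan is to prove this directly from the definition of Weyl numbers together with the additivity axiom (b) for the Gelfand numbers, exploiting the alternative description of $x_n$ via Gelfand numbers recorded in Remark \ref{weylextra}. The key point is that a Weyl number is a supremum over operators $A: \ell_2 \to X$ of norm at most $1$ of the corresponding Gelfand number $c_n(TA)$, so it suffices to establish the claimed inequality for $c_n$ applied to a fixed such composition and then pass to the supremum.

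First I would fix $A \in \mathcal L(\ell_2,X)$ with $\|A\| \le 1$ and consider the three operators $TA: \ell_2 \to Y$, $TA: \ell_2 \to Y_0$, $TA: \ell_2 \to Y_1$. The hypothesis \eqref{weylextra3} together with $Y_0 \cap Y_1 \hookrightarrow Y$ says that the identity map from $Y_0 \cap Y_1$ (equipped with, say, the norm $\max(\|\cdot|Y_0\|,\|\cdot|Y_1\|)$) into $Y$ satisfies a logarithmic-convexity bound. The standard mechanism here is: given rank approximations or finite-codimensional subspaces realizing $c_n(TA: \ell_2 \to Y_0)$ and $c_m(TA: \ell_2 \to Y_1)$ up to $\varepsilon$, one builds a finite-codimensional subspace of $\ell_2$ of codimension $< n+m-1$ on which $\|TAx|Y_0\|$ and $\|TAx|Y_1\|$ are simultaneously controlled, and then \eqref{weylextra3} upgrades this to control of $\|TAx|Y\|$ with the multiplicative constant $C$ and the interpolation exponents. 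Concretely, if $M_0$ has $\mathrm{codim}(M_0) < n$ with $\|TA J_{M_0}^{\ell_2}\| \le c_n(TA:\ell_2\to Y_0) + \varepsilon$ and $M_1$ has $\mathrm{codim}(M_1) < m$ with $\|TA J_{M_1}^{\ell_2}\| \le c_m(TA:\ell_2\to Y_1) + \varepsilon$, then $M := M_0 \cap M_1$ has $\mathrm{codim}(M) < n+m-1$, and for $x \in M$ with $\|x\| \le 1$,
\[
\|TAx|Y\| \le C\, \|TAx|Y_0\|^{1-\theta}\, \|TAx|Y_1\|^{\theta} \le C\,\big(c_n(TA:\ell_2\to Y_0)+\varepsilon\big)^{1-\theta}\big(c_m(TA:\ell_2\to Y_1)+\varepsilon\big)^{\theta}\, ,
\]
so that $c_{n+m-1}(TA:\ell_2\to Y) \le C\, \big(c_n(TA:\ell_2\to Y_0)+\varepsilon\big)^{1-\theta}\big(c_m(TA:\ell_2\to Y_1)+\varepsilon\big)^{\theta}$; letting $\varepsilon \downarrow 0$ removes the $\varepsilon$'s.

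Finally I would take the supremum over all admissible $A$: since $c_n(TA:\ell_2\to Y_0) \le x_n(T:X\to Y_0)$ and $c_m(TA:\ell_2\to Y_1) \le x_m(T:X\to Y_1)$ for every such $A$, the displayed bound gives
\[
c_{n+m-1}(TA:\ell_2\to Y) \le C\, x_n^{1-\theta}(T:X\to Y_0)\, x_m^{\theta}(T:X\to Y_1)\, ,
\]
and taking the supremum over $A$ on the left, using $x_{n+m-1}(T:X\to Y) = \sup_A c_{n+m-1}(TA:\ell_2\to Y)$ from Remark \ref{weylextra}, yields the claim. I expect the main obstacle to be the careful bookkeeping in the codimension count — verifying that the intersection $M_0 \cap M_1$ indeed has codimension strictly less than $n+m-1$ and that the canonical injections compose correctly so that the Gelfand-number estimates transfer — but this is exactly the additivity argument already built into axiom (b) for Gelfand numbers, so one may alternatively just invoke (b) for $c_n$ together with the factorization of the identity $Y_0 \cap Y_1 \to Y$ through \eqref{weylextra3}, which is the cleanest route.
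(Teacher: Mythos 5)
Your argument is correct and is essentially the paper's own proof: the paper first establishes the Gelfand-number interpolation inequality (Theorem \ref{inte2}) by exactly your intersection-of-finite-codimensional-subspaces argument, and then deduces the Weyl-number statement by applying it to $TA$ and taking the supremum over $A\in\mathcal L(\ell_2,X)$, $\|A\|\le 1$, via Remark \ref{weylextra}. Only your closing aside that one could "just invoke axiom (b)" is off the mark (additivity concerns sums $S+T$, not multiplicative interpolation), but your main argument does not rely on it.
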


\begin{remark}
 \rm
Interpolation properties of Kolmogorov and Gelfand numbers have been studied by Triebel \cite{Tr70}.
Theorem \ref{inter} and Theorem \ref{inte2} below show that Gelfand and Weyl numbers share the same interpolation properties. 
\end{remark}


\section{Tensor product Besov spaces and spaces of dominating mixed smoothness}
\label{domino}


As mentioned before tensor product Besov spaces can be interpreted as special cases of the scale of  Besov spaces of 
dominating mixed smoothness.
For us it will be convenient to introduce these classes  of dominating mixed smoothness by means of wavelets.
In the Appendix B below we recall the probably better known Fourier-analytic definition. 
In addition we shall introduce Lizorkin-Triebel spaces of dominating mixed smoothness. They will be used  in our proofs of the main results for Besov spaces. 

Let $\bar{\nu} =(\nu_1, \ldots ,\nu_d) \in \mathbb{N}_0^d$ and $\bar{m}= (m_1, \ldots , m_d)\in \mathbb{Z}^d$. 
Then we put $2^{-\bar{\nu}}\bar{m}=(2^{-\nu_1}m_1,...,2^{-\nu_d}m_d)$ and
\[
Q_{\bar{\nu},\bar{m}} := \Big\{x\in \R: \quad 2^{-\nu_\ell} \, m_\ell < x_\ell < 2^{-\nu_\ell}\, (m_\ell+1)\, , \: \ell = 1, \, \ldots \, , d\Big\} \, .
\]
By $\chi_{{\bar{\nu},\bar{m}}}(x)$ we denote the characteristic function of $Q_{\bar{\nu},\bar{m}}$. 
First we have to introduce some sequence spaces.

\begin{definition}\label{sequence1}
If $0<p,q\leq \infty$, $t\in \mathbb{R}$ and
$\lambda:=\lbrace \lambda_{\bar{\nu},\bar{m}}\in\mathbb{C}:\bar{\nu}\in \mathbb{N}_0^d,\ \bar{m}\in \mathbb{Z}^d \rbrace$,
then we define
$$s_{p,q}^t b := \Big\lbrace\lambda: \| \lambda|s_{p,q}^t b\| =
\Big(\sum_{\bar{\nu}\in \mathbb{N}_0^d}2^{|\bar{\nu}|_1 (t-\frac{1}{p})q}\big(\sum_{\bar{m}\in \mathbb{Z}^d}|\lambda_{\bar{\nu},\bar{m}} 
|^p\big)^{\frac{q}{p}}\Big)^{\frac{1}{q}}<\infty \Big\rbrace$$
and, if $p< \infty$, 
$$s_{p,q}^tf=\Big\lbrace\lambda: \| \lambda|s_{p,q}^tf\| =
\Big\|\Big(\sum_{\bar{\nu}\in \mathbb{N}_0^d}\sum_{\bar{m}\in \mathbb{Z}^d}|2^{|\bar{\nu}|_1 t}
\lambda_{\bar{\nu},\bar{m}}\chi_{\bar{\nu},\bar{m}}(.) 
|^q\Big)^{\frac{1}{q}}\Big|L_p(\mathbb{R}^d)\Big\|<\infty \Big\rbrace$$
with the usual modification for $p$ or/and q equal to $\infty$.
\end{definition}

\begin{remark}\label{lift}
\rm Let $\sigma \in \re$.
For later use we mention that the mapping 
\be\label{lift1}
J_\sigma:~ (\lambda_{\bar{\nu},\bar{m}})_{\bar{\nu},\bar{m}} \mapsto (2^{\sigma |\bar{\nu}|_1}\, \lambda_{\bar{\nu},\bar{m}})_{\bar{\nu},\bar{m}} 
\ee
yields an isomorphism of $s^t_{p,q}a$ onto $s^{t-\sigma}_{p,q}a$, $a \in \{b,f\}$.
\end{remark}

Now we recall wavelet bases of Besov and Lizorkin-Triebel spaces of dominating mixed smoothness.
Let $N \in \N$. Then there exists $\psi_0, \psi_1 \in C^N(\re) $, compactly supported, 
\[
\int_{-\infty}^\infty t^m \, \psi_1 (t)\, dt =0\, , \qquad m=0,1,\ldots \, , N\, , 
\]
such that
$\{ 2^{j/2}\, \psi_{j,m}: \quad j \in \N_0, \: m \in \zz\}$, where
\[
\psi_{j,m} (t):= \left\{ \begin{array}{lll}
\psi_0 (t-m) & \qquad & \mbox{if}\quad j=0, \: m \in \zz\, , 
\\
\sqrt{1/2}\, \psi_1 (2^{j-1}t-m) & \qquad & \mbox{if}\quad j\in \N\, , \: m \in \zz\, , 
            \end{array} \right.
\]
is an orthonormal basis in $L_2 (\re)$, see \cite{woj}. 
We put
\[
\Psi_{\bar{\nu}, \bar{m}} (x) := \prod_{\ell=1}^d \psi_{\nu_\ell, m_\ell} (x_\ell)\, . 
\]
Then 
\[
\Psi_{\bar{\nu}, \bar{m}}\, , \qquad \bar{\nu} \in \N_0^d, \, \bar{m} \in \Z\, ,
\]
is a tensor product wavelet basis of $L_2 (\R)$. Vybiral \cite{Vybiral} has proved the following.

\begin{lemma}\label{wavelet}
Let $0< p,q \le\infty$ and $t\in \re$. 
\\
{\rm (i)}
There exists $N=N(t,p) \in \N$ s.t. the mapping 
\beqq
{\mathcal W}: \quad f \mapsto (2^{|\bar{\nu}|_1}\langle f, \Psi_{\bar{\nu},\bar{m}} \rangle)_{\bar{\nu} \in \N_0^d\, , \, \bar{m} \in \Z} 
\eeqq
is an isomorphism of $S^t_{p,q}B(\R)$ onto $s^t_{p,q}b$.
\\
{\rm (ii)} Let $p <\infty$. Then
there exists $N=N(t,p,q) \in \N$ s.t. the mapping ${\mathcal W}$
is an isomorphism of $S^t_{p,q}F(\R)$ onto $s^t_{p,q}f$.
\end{lemma}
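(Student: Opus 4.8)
The lemma is due to Vybiral \cite{Vybiral}; here is how the argument proceeds. The plan is to route the proof, for both scales $A\in\{B,F\}$, through two standard descriptions of the dominating mixed smoothness spaces $S^t_{p,q}A(\R)$ built on the Fourier-analytic definition recalled in Appendix~B: a \emph{local means} characterization and an \emph{atomic decomposition}. The local means theorem gives the bound $\|{\mathcal W}f\,|\,s^t_{p,q}a\|\ls\|f\,|\,S^t_{p,q}A(\R)\|$; the atomic decomposition theorem gives the reverse bound for the reconstruction operator; and the orthonormality of the wavelet system glues these together into the isomorphism statement. Throughout, one must take the number $N$ of continuous derivatives and vanishing moments of $\psi_1$ large enough that the smoothness and moment hypotheses of these two theorems are met — this is precisely the origin of the dependence $N=N(t,p)$ in (i) and $N=N(t,p,q)$ in (ii), the index $q$ entering in the $F$-case through the inner $\ell_q$-sum.

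For the analysis estimate I would observe that, up to a fixed multiplicative constant, $2^{|\bar\nu|_1}\langle f,\Psi_{\bar\nu,\bar m}\rangle$ is the value at the lattice point $2^{-\bar\nu}\bar m$ of a local mean of $f$ with kernel $k_{\bar\nu}$, the tensor product over $\ell=1,\dots,d$ of dilated reflections of $\psi_1$ (replaced by $\psi_0$ in the coordinates with $\nu_\ell=0$). Since $\psi_1\in C^N$ has $N$ vanishing moments, these kernels meet the Tauberian and moment requirements of the local means characterization of $S^t_{p,q}A(\R)$. The passage from the continuous local means to their samples on the lattice $2^{-\bar\nu}\Z$, and then to the sequence quasi-norm $\|\,\cdot\,|\,s^t_{p,q}a\|$, is achieved by a dominating-mixed Peetre maximal function estimate together with an iterated, coordinatewise application of the Fefferman--Stein vector-valued maximal inequality in the relevant mixed norm ($\ell_q$ of $L_p$ for $B$, $L_p$ of $\ell_q$ for $F$).

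For the synthesis estimate I would use that $\{2^{|\bar\nu|_1/2}\Psi_{\bar\nu,\bar m}\}$ is an orthonormal basis of $L_2(\R)$, so that the reconstruction map $S:\lambda\mapsto\sum_{\bar\nu,\bar m}\lambda_{\bar\nu,\bar m}\Psi_{\bar\nu,\bar m}$ (understood in the sense of tempered distributions) satisfies ${\mathcal W}\circ S=\mathrm{id}$ on $s^t_{p,q}a$ by the exact orthogonality relations, and $S\circ{\mathcal W}=\mathrm{id}$ on $S^t_{p,q}A(\R)$ by testing against Schwartz functions together with the usual density and duality arguments (the latter needed for the cases involving $p$ or $q$ equal to $\infty$). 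After the normalizing rescaling by $2^{-|\bar\nu|_1 t}$, the functions $\Psi_{\bar\nu,\bar m}$ are, up to a fixed constant, atoms supported in a fixed dilate of the rectangle $Q_{\bar\nu,\bar m}$, with the required derivative bounds and with vanishing moments in every direction $\ell$ for which $\nu_\ell\ge 1$; hence the atomic decomposition theorem for $S^t_{p,q}A(\R)$ delivers $\|S\lambda\,|\,S^t_{p,q}A(\R)\|\ls\|\lambda\,|\,s^t_{p,q}a\|$. Combining the two estimates with the two identities above shows that ${\mathcal W}$ is a bounded bijection onto $s^t_{p,q}a$ with bounded inverse $S$, i.e. an isomorphism.

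The hard part is not the wavelet bookkeeping but the two underlying theorems in the dominating mixed setting. Establishing the local means characterization and the atomic decomposition of $S^t_{p,q}A(\R)$ requires a dominating-mixed Peetre maximal inequality and a $d$-fold iteration of the Fefferman--Stein inequality, careful control of the mixed norms across the $d$ coordinate directions, and an exact accounting of how large $N$ must be in terms of $t$, $1/p$ (and $q$). All of this is carried out in \cite{Vybiral}; an alternative route would be to adapt the isotropic wavelet arguments (local means plus atomic/molecular decompositions) coordinate by coordinate.
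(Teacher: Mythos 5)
Your proposal is correct in outline and follows essentially the same route as the source the paper relies on: the paper itself gives no proof of this lemma but cites Vybiral \cite{Vybiral}, where the wavelet isomorphism is established exactly as you describe, by combining a local means characterization (for the analysis bound on $\mathcal W$) with an atomic decomposition (for the synthesis bound), with the Peetre maximal function and iterated Fefferman--Stein inequalities doing the work in the dominating mixed setting and $N$ chosen large in terms of $t,p$ (and $q$ in the $F$-case). Your sketch leaves those two underlying theorems as black boxes, but you say so explicitly, and the glueing via the orthogonality relations (including the $2^{-|\bar\nu|_1}$ normalization of $\|\Psi_{\bar\nu,\bar m}\|_2^2$, which makes $\mathcal W\circ S=\mathrm{id}$ exact) is handled correctly.
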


\subsection*{Spaces on $\Omega$}

We put $\Omega := (0,1)^d$.
For us it will be convenient to define spaces on $\Omega $ by restrictions.
We shall need the set $D'(\Omega)$, consisting of all complex-valued distributions on $\Omega$.

\begin{definition} \label{defomega}
{\rm (i)} Let $ 0< p,q\leq \infty$ and $t\in \re$. Then
   $S^{t}_{p,q}B((0,1)^d)$ is the space of all $f\in D'(\Omega)$ such that there exists a distribution $g\in
   S^{t}_{p,q}B(\R)$ satisfying $f = g|_{\Omega}$. It is endowed with the quotient norm
   $$
      \|\, f \, |S^{t}_{p,q}B((0,1)^d)\| = \inf \Big\{ \|g|S^{t}_{p,q}B(\R)\|~: \quad g|_{\Omega} =
      f \Big\}\,.
   $$
{\rm (ii)} Let $ 0< p < \infty$, $0< q\leq \infty$ and $t\in \re$. Then
   $S^{t}_{p,q}F((0,1)^d)$ is the space of all $f\in D'(\Omega)$ such that there exists a distribution $g\in
   S^{t}_{p,q}F(\R)$ satisfying $f = g|_{\Omega}$. It is endowed with the quotient norm
   $$
      \|\, f \, |S^{t}_{p,q}F((0,1)^d)\| = \inf \Big\{ \|g|S^{t}_{p,q}F(\R)\|~: \quad g|_{\Omega} =
      f \Big\}\,.
   $$   
\end{definition}

Several times we shall work with the following consequence of this definition in combination with Lemma \ref{wavelet}.
Let $t,p$ and $q$ be fixed.
Let the wavelet basis $\Psi_{\bar{\nu}, \bar{m}}$ be admissible in the sense of Lemma \ref{wavelet}.
We put
\be\label{ws-40}
A_{\bar{\nu}}^{\Omega}:= 
\Big\{\bar{m}\in \mathbb{Z}^d: \quad \supp \Psi_{\bar{\nu},\bar{m}} \cap\Omega \neq \emptyset\Big\}\, ,\qquad \bar{\nu}\in\mathbb{N}_0^d\, .
\ee
For given $ f \, \in S_{p,q}^t A(\Omega)$, $A\in \{B,F\}$, let $\ce f$ be an element of $ S_{p,q}^t A(\R)$ s.t. 
\[
\| \, \ce f \, | S_{p,q}^t A(\R)\| \le 2 \, \| \, f \, | S_{p,q}^t A(\Omega)\|
\qquad \mbox{and} \qquad (\ce f)_{|_\Omega} = f \, .
\]
We define
\[
g:= \sum_{\bar{\nu} \in \N_0^d} \sum_{\bar{m} \in A_{\bar{\nu}}^{\Omega}} 2^{|\bar{\nu}|_1} \, \langle \ce f, \Psi_{\bar{\nu},\bar{m}} \rangle \, \Psi_{\bar{\nu}, \bar{m}}\, .
\]
Then it follows that $g \in S_{p,q}^t A(\R)$, $g_{|_\Omega} = f $, 
\[
\supp g \subset \{x \in \R: ~ \max_{j=1, \ldots \, , d} |x_j| \le c_1\} \quad \mbox{and} \qquad 
\| \, g \, | S_{p,q}^t A(\R)\| \le c_2 \, \| \, f \, | S_{p,q}^t A(\Omega)\|\, .
\]
Here $c_1,c_2$ are independent of $f$.


\subsection*{Tensor products of Besov spaces}


Tensor products of Besov spaces have been investigated in \cite{Ha}, \cite{SUt} and \cite{SUspline}.
We recall some results from \cite{SUt} and \cite{SUspline}.
For the basic notions of tensor products used here we refer to \cite{LiCh} and \cite{DF}. By $\alpha_p$ we denote the $p$-nuclear norm
and by $\gamma_p$ the projective tensor $p$-norm.

\begin{theorem}[Tensor products of Besov spaces on the interval]\label{thm2l}\quad\\
Let $d\ge 1$ and let $t \in \re$.\\ 
{\rm (i)}
Let $1<p<\infty$. Then the following formula
\begin{eqnarray*} 
B^{t}_{p,p}(0,1)\otimes_{\alpha_p} S^{t}_{p,p}B((0,1)^d)
 & = & S^{t}_{p,p}B((0,1)^d) \otimes_{\alpha_p} B^{t}_{p,p}(0,1) 
\\
& = & S^{t}_{p,p}B((0,1)^{d+1})
\end{eqnarray*}
holds true in the sense of equivalent norms.\\
{\rm (ii)} Let $0 <p\le 1$. Then the following formula
\begin{eqnarray*} B^{t}_{p,p}(0,1)\otimes_{\gamma_p} S^{t}_{p,p}B((0,1)^d) 
& = & S^{t}_{p,p}B((0,1)^d) \otimes_{\gamma_p} B^{t}_{p,p}(0,1)
\\
& = & S^{t}_{p,p}B((0,1)^{d+1})
\end{eqnarray*} holds true in the sense of equivalent quasi-norms.
\end{theorem}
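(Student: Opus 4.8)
The plan is to reduce the tensor product identity to a statement about the corresponding sequence spaces, using the wavelet isomorphism $\mathcal{W}$ of Lemma \ref{wavelet} together with its restriction version for spaces on $\Omega$. First I would recall (from \cite{SUt}, \cite{SUspline}) that for $1<p<\infty$ the scalar sequence space $s^0_{p,p}b$ restricted to the index set $A^\Omega_{\bar\nu}$ — call it $s^{0,\Omega}_{p,p}b$ — is, up to the diagonal lift $J_t$ of Remark \ref{lift}, just a weighted $\ell_p$-space over $\mathbb{N}_0^d\times\mathbb{Z}^d$, and that for $\ell_p$-spaces with $1\le p<\infty$ one has the isometric (up to equivalence of norms) identity $\ell_p(I)\otimes_{\alpha_p}\ell_p(J)=\ell_p(I\times J)$; for $0<p\le 1$ the corresponding statement holds with $\gamma_p$ in place of $\alpha_p$, since on $\ell_p$ the $p$-nuclear and projective tensor $p$-norms coincide with the natural $\ell_p$-norm on the product index set. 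This is the algebraic/Banach-space heart of the matter and I would invoke it as a known fact about $\ell_p$.

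Next I would transfer this back to the function-space level. The key point is that the wavelet map $\mathcal{W}$ (composed with the lift $J_t$ that removes the smoothness weight) gives a commuting diagram: it is an isomorphism $B^t_{p,p}(0,1)\to s^{0}_{p,p}b$ (the $d=1$ case) and $S^t_{p,p}B((0,1)^d)\to s^{0,\Omega}_{p,p}b$, and crucially it is compatible with tensor products because the multivariate wavelets $\Psi_{\bar\nu,\bar m}$ are literally tensor products of the univariate ones, $\Psi_{\bar\nu,\bar m}=\prod_\ell\psi_{\nu_\ell,m_\ell}$. Thus $\mathcal{W}\otimes\mathcal{W}$ carries the algebraic tensor product $B^t_{p,p}(0,1)\otimes S^t_{p,p}B((0,1)^d)$ isomorphically onto $s^0_{p,p}b\otimes s^{0,\Omega}_{p,p}b$, and one checks that the tensor norm $\alpha_p$ (resp. $\gamma_p$) is transported to within constants to the corresponding tensor norm on the sequence side — this uses that $\mathcal{W}$ and $\mathcal{W}^{-1}$ are bounded, and that the $p$-nuclear / projective tensor $p$-norms are themselves "tensor norms" in the sense of \cite{DF}, hence behave functorially under bounded maps on each factor. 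Combining with the $\ell_p$ identity of the first paragraph and re-indexing $\mathbb{N}_0^d\times\mathbb{Z}^d\times(\mathbb{N}_0\times\mathbb{Z})\cong\mathbb{N}_0^{d+1}\times\mathbb{Z}^{d+1}$ yields $s^{0,\Omega}_{p,p}b$ for $d+1$ variables, which $\mathcal{W}^{-1}\circ J_{-t}$ sends back to $S^t_{p,p}B((0,1)^{d+1})$. The symmetry of the statement (the two orderings of the tensor factors giving the same space) is immediate from the commutativity of the product index set.

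The main obstacle is the precise bookkeeping on the domain $\Omega=(0,1)^d$ rather than $\R$: the index set $A^\Omega_{\bar\nu}$ of wavelets whose support meets the unit cube must be shown to factor as a product, $A^\Omega_{(\nu_1,\dots,\nu_{d+1})}=A^{(0,1)}_{\nu_1}\times\cdots\times A^{(0,1)}_{\nu_{d+1}}$, which it does because $\supp\Psi_{\bar\nu,\bar m}=\prod_\ell\supp\psi_{\nu_\ell,m_\ell}$ and a product of intervals meets $(0,1)^{d+1}$ iff each factor meets $(0,1)$. I would also need to confirm that the extension/restriction procedure described after Definition \ref{defomega} is compatible with the tensor structure — i.e. that the extension operator $\mathcal{E}$ for the $(d+1)$-fold cube can be taken as the $(d+1)$-fold tensor product of the univariate extension, exactly as in the $\cz^s_\mix$ discussion preceding Lemma \ref{zyg}. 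Once these compatibilities are in place, the rest is a routine chase through the isomorphisms and the known $\ell_p$-tensor identities, with the distinction between $1<p<\infty$ (use $\alpha_p$, equivalence of norms) and $0<p\le 1$ ($\gamma_p$, equivalence of quasi-norms) entering only through which classical $\ell_p$ tensor-norm statement is cited.
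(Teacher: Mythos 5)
First, note that the paper does not prove Theorem \ref{thm2l} at all: it is recalled from \cite{SUt} and \cite{SUspline} (with the quasi-Banach case going back to \cite{Ha}). Your strategy -- discretize both factors by a tensor-product basis, invoke the classical identities $\ell_p(I)\otimes_{\alpha_p}\ell_p(J)=\ell_p(I\times J)$ for $1\le p<\infty$ and $\ell_p(I)\otimes_{\gamma_p}\ell_p(J)=\ell_p(I\times J)$ for $0<p\le 1$, use the uniformity (metric mapping property) of $\alpha_p$, $\gamma_p$ to transport the tensor norm, and observe that both the weight $2^{|\bar\nu|_1(t-1/p)}$ and the index sets $A^\Omega_{\bar\nu}$ factorize coordinatewise -- is exactly the route taken in the cited sources, so in spirit your proposal is the intended proof.

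There is, however, one genuine gap in the execution. You assert that the wavelet map is an isomorphism of $S^t_{p,p}B((0,1)^d)$ \emph{onto} $s^{t,\Omega}_{p,p}b$. Lemma \ref{wavelet} gives this only on $\re^d$; on the cube the construction described after Definition \ref{defomega} yields merely a retract: the analysis map $f\mapsto\big(2^{|\bar\nu|_1}\langle \ce_d f,\Psi_{\bar\nu,\bar m}\rangle\big)_{\bar m\in A^\Omega_{\bar\nu}}$ depends on the chosen extension and is a coretraction, while the synthesis-plus-restriction map $s^{t,\Omega}_{p,p}b\to S^t_{p,p}B(\Omega)$ has a nontrivial kernel (wavelets whose supports only graze $\Omega$), so it is not injective. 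This is precisely why the paper itself (Lemma \ref{weyl}, following \cite{Vybiral}) never claims such an isomorphism but only transfers $s$-numbers through a commutative diagram. Your argument can be repaired in either of two standard ways: (a) replace the restricted $\re^d$-wavelets by genuinely interval-adapted systems (boundary-corrected wavelets or the B-spline quasi-interpolant representations of \cite{SUspline}, with the order chosen large enough for the given $t$), which do give isomorphisms onto sequence spaces over product index sets; or (b) keep your construction but run the whole argument at the level of retracts: choose $\ce_{d+1}=E\otimes\ce_d$ so that analysis, synthesis and the induced projections tensorize, and use the uniformity of $\alpha_p$ (resp.\ of $\gamma_p$ in the quasi-Banach setting, for which \cite{Ha} is needed) to identify the range of the tensorized projection in $s^{t,\Omega}_{p,p}b\otimes s^{t,\Omega}_{p,p}b=s^{t,\Omega}_{p,p}b$ (over the $(d+1)$-dimensional index set) with $S^t_{p,p}B((0,1)^{d+1})$. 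With either repair, the rest of your outline (weight and index-set factorization, symmetry of the two orderings, the split $1<p<\infty$ versus $0<p\le1$) goes through as you describe.
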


\begin{remark}
 \rm
For easier notation we put $\gamma_p := \alpha_p$ if $ 1 < p< \infty$.
One can iterate the process of taking tensor products.
Defining for $m>2$
\[
X_1 \otimes_{\gamma_p} X_2 \otimes_{\gamma_p} \, \ldots \,
\otimes_{\gamma_p} X_m := X_1 \otimes_{\gamma_p} \Big( \ldots
X_{m-2} \otimes_{\gamma_p} \, (X_{m-1} \otimes_{\gamma_p} X_m )\Big)
\]
we obtain an interpretation of $S^{t}_{p,p} B((0,1)^d)$, $0< p <\infty$, as an
iterated tensor product of univariate Besov spaces, namely
\[
S^{t}_{p,p} B((0,1)^d) = B^{t}_{p,p}(0,1)\otimes_{\gamma_p} \, \ldots \, \otimes_{\gamma_p} B^{t}_{p,p}(0,1)\, , \qquad 0 < p< \infty\, .
\]
The iterated tensor products, considered in this paper, do not depend on the order of the tuples which are formed during
the process of calculating $X_1 \otimes_{\gamma_p} X_2 \otimes_{\gamma_p} \, \ldots \, \otimes_{\gamma_p} X_m $, i.e.,
\[
( X_1 \otimes_{\gamma_p} X_2) \otimes_{\gamma_p} \, X_3 =
 X_1 \otimes_{\gamma_p} ( X_2 \otimes_{\gamma_p} \, X_3 )\, .
\]
\end{remark}

\noindent
Consequently, if $p < \infty$, we may deal with $S^{t}_{p,p} B((0,1)^d)$ instead of 
$B^{t}_{p,p}(0,1)\otimes_{\gamma_p} \, \ldots \, \otimes_{\gamma_p} B^{t}_{p,p}(0,1)$.


\section{Weyl numbers of embeddings of sequence spaces}\label{sequence}


In this section we will estimate the behavior of Weyl numbers of the identity mapping
\[ 
id^*: \quad s_{p_0,p_0}^{t,\Omega}b\to s_{p,2}^{0,\Omega}f \, .
\]
Here we assume that $p_0$ varies in $(0,\infty]$ and $p$ in $(0,\infty)$.


\subsection{Preparations}\label{prepa}


For technical reasons we need a few more sequence spaces. Recall, $A_{\bar{\nu}}^{\Omega} $ has been defined in \eqref{ws-40}.

\begin{definition}
If $0<p\leq \infty$, $0< q\leq \infty$, $t\in \mathbb{R}$ and
\[
\lambda=\lbrace \lambda_{\bar{\nu},\bar{m}}\in\mathbb{C}:\bar{\nu}\in \mathbb{N}_0^d,\ \bar{m}\in A_{\bar{\nu}}^{\Omega} \rbrace \, ,
\]
then we define
\[
s_{p,q}^{t,\Omega}b : =\Big\lbrace\lambda: \quad \| \lambda|s_{p,q}^{t,\Omega}b\| =\Big(\sum_{\bar{\nu}\in \mathbb{N}_0^d}2^{|\bar{\nu}|_1(t-\frac{1}{p})q}\big(\sum_{\bar{m}\in A_{\bar{\nu}}^{\Omega}}
|\lambda_{\bar{\nu},\bar{m}} |^p\big)^{\frac{q}{p}}\Big)^{\frac{1}{q}}<\infty \Big\rbrace
\]
and, if $p< \infty$, 
\[
s_{p,q}^{t,\Omega}f :=\Big\lbrace\lambda: \quad \| \lambda|s_{p,q}^{t,\Omega}f\| =\Big\|\Big(\sum_{\bar{\nu}\in \mathbb{N}_0^d}\sum_{\bar{m}\in A_{\bar{\nu}}^{\Omega}}|2^{|\bar{\nu}|_1 t}\lambda_{\bar{\nu},\bar{m}}
\chi_{\bar{\nu},\bar{m}}(.) |^q\Big)^{\frac{1}{q}}\Big|L_p(\mathbb{R}^d)\Big\|<\infty \Big\rbrace\, .
\]
\end{definition}

In addition we need the following sequence of subspaces.

\begin{definition}
If $0<p\leq \infty$, $0< q\leq \infty$, $t\in \mathbb{R}$, $\mu\in \mathbb{N}_0$ and 
$$\lambda=\lbrace \lambda_{\bar{\nu},\bar{m}}\in\mathbb{C}:\bar{\nu}\in \mathbb{N}_0^d,\ |\bar{\nu}|_1=\mu,\ \bar{m}\in A_{\bar{\nu}}^{\Omega} \rbrace \, , $$
then we define
$$(s_{p,q}^{t,\Omega}b)_{\mu}=\Big\lbrace\lambda: \| \lambda|(s_{p,q}^{t,\Omega}b)_{\mu}\| =
\Big(\sum_{|\bar{\nu}|_1=\mu}2^{|\bar{\nu}|_1(t-\frac{1}{p})q}\big(\sum_{\bar{m}\in A_{\bar{\nu}}^{\Omega}}|\lambda_{\bar{\nu},\bar{m}} |^p\big)^{\frac{q}{p}}\Big)^{\frac{1}{q}}<\infty \Big\rbrace$$
and, if $p< \infty$, 
$$(s_{p,q}^{t,\Omega}f)_{\mu}=\Big\lbrace\lambda: \| \lambda|(s_{p,q}^{t,\Omega}f)_{\mu}\| =
\Big\|\Big(\sum_{|\bar{\nu}|_1=\mu}\sum_{\bar{m}\in A_{\bar{\nu}}^{\Omega}}|2^{|\bar{\nu}|_1t}\lambda_{\bar{\nu},\bar{m}}\chi_{\bar{\nu},\bar{m}}(.) 
|^q\Big)^{\frac{1}{q}}\Big|L_p(\mathbb{R}^d)\Big\|<\infty \Big\rbrace \, .$$
\end{definition}

To avoid repetitions we shall use $s^{t}_{p,q}a$, $s^{t,\Omega}_{p,q}a$, $(s^{t,\Omega}_{p,q}a)_\mu$
with $a \in \{b,f\}$ in case that an assertion holds for both scales simultaneously.
Here in this paper we do not deal with the spaces $s^{t,\Omega}_{\infty,q}f$ and $(s^{t,\Omega}_{\infty,q}f)_\mu $.
But we will use the convention that, whenever $s^{t,\Omega}_{\infty,q}a$ or $(s^{t,\Omega}_{\infty,q}a)_\mu $ occur, this has to 
be interpreted as $s^{t,\Omega}_{\infty,q}b$ and $(s^{t,\Omega}_{\infty,q}b)_\mu $. 
The two following elementary lemmas are taken from \cite[Lemma 3.10]{Vybiral} and \cite[Lemma 6.4.2]{Hansen}.

\begin{lemma}\label{ba1}
\begin{enumerate}
\item We have
\[
 |A_{\bar{\nu}}^{\Omega}| \asymp 2^{|\bar{\nu}|_1},\qquad D_{\mu}:= \sum_{|\bar{\nu}|_1=\mu} |A_{\bar{\nu}}^{\Omega}| \asymp \mu^{d-1}2^{\mu} 
\]
with equivalence constants independent of $\bar{\nu}\in \mathbb{N}_0^d$ and $\mu\in \mathbb{N}_0$. 
\item Let $0<p < \infty$ and $t\in \mathbb{R}$. Then
$$ s_{p,p}^{t,\Omega}f = s_{p,p}^{t,\Omega}b$$
and 
\[
(s_{p,p}^{t,\Omega}f)_{\mu}=(s_{p,p}^{t,\Omega}b)_{\mu} =2^{\mu(t-\frac{1}{p})}\ell_p^{D_{\mu}},\ \ \mu\in \mathbb{N}_0\, ,
\]
with the obvious interpretation for the quasi-norms.
\end{enumerate}
\end{lemma}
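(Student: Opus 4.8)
The plan is to verify the two parts separately, each by unwinding the relevant definitions. For part (1), I would first count $|A_{\bar\nu}^\Omega|$. Recall from \eqref{ws-40} that $A_{\bar\nu}^\Omega$ is the set of $\bar m \in \zz^d$ with $\supp\Psi_{\bar\nu,\bar m} \cap \Omega \neq \emptyset$. Since $\Psi_{\bar\nu,\bar m}(x) = \prod_{\ell=1}^d \psi_{\nu_\ell,m_\ell}(x_\ell)$ and each $\psi_0,\psi_1$ is compactly supported, the support of $\psi_{\nu_\ell,m_\ell}$ is an interval of length $\asymp 2^{-\nu_\ell}$ centered near $2^{-\nu_\ell}m_\ell$; the number of $m_\ell$ for which this interval meets $(0,1)$ is therefore $\asymp 2^{\nu_\ell}$ (with constants depending only on the diameter of $\supp\psi_0$, $\supp\psi_1$, hence only on $N$). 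Taking the product over $\ell=1,\dots,d$ gives $|A_{\bar\nu}^\Omega| \asymp \prod_\ell 2^{\nu_\ell} = 2^{|\bar\nu|_1}$, with constants independent of $\bar\nu$. For $D_\mu$ I would then sum: $D_\mu = \sum_{|\bar\nu|_1=\mu}|A_{\bar\nu}^\Omega| \asymp 2^\mu \cdot \#\{\bar\nu\in\N_0^d: |\bar\nu|_1=\mu\}$, and the last cardinality is $\binom{\mu+d-1}{d-1} \asymp \mu^{d-1}$ for $\mu\ge 1$ (and $=1$ for $\mu=0$, which is absorbed into $\asymp \mu^{d-1}$ once we restrict to $\mu\ge 1$, or handled trivially). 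This yields $D_\mu \asymp \mu^{d-1}2^\mu$.

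For part (2), the point is simply that when $p=q$ the $f$-norm and the $b$-norm coincide. Starting from the definition of $s_{p,p}^{t,\Omega}f$, I would use that the cubes $Q_{\bar\nu,\bar m}$ for fixed $\bar\nu$ are pairwise disjoint with $|Q_{\bar\nu,\bar m}| = 2^{-|\bar\nu|_1}$, so that
\[
\Big\| \Big(\sum_{\bar\nu}\sum_{\bar m\in A_{\bar\nu}^\Omega} |2^{|\bar\nu|_1 t}\lambda_{\bar\nu,\bar m}\chi_{\bar\nu,\bar m}|^p\Big)^{1/p} \Big| L_p\Big\|^p
= \sum_{\bar\nu}\sum_{\bar m\in A_{\bar\nu}^\Omega} 2^{|\bar\nu|_1 tp}\, |\lambda_{\bar\nu,\bar m}|^p\, 2^{-|\bar\nu|_1},
\]
which is exactly $\|\lambda|s_{p,p}^{t,\Omega}b\|^p$ after recognizing $2^{|\bar\nu|_1 tp - |\bar\nu|_1} = 2^{|\bar\nu|_1(t-1/p)p}$. (I should be a bit careful that the $Q_{\bar\nu,\bar m}$ are genuinely disjoint for fixed $\bar\nu$, which is immediate from their definition as dyadic rectangles at level $\bar\nu$; the wavelet supports overlap, but the characteristic functions $\chi_{\bar\nu,\bar m}$ in the $f$-norm are those of the dyadic cubes, not of the supports.) The same computation restricted to the single level $|\bar\nu|_1=\mu$ gives $(s_{p,p}^{t,\Omega}f)_\mu = (s_{p,p}^{t,\Omega}b)_\mu$, and since on that level the norm is $\big(\sum_{|\bar\nu|_1=\mu}2^{\mu(t-1/p)p}\sum_{\bar m\in A_{\bar\nu}^\Omega}|\lambda_{\bar\nu,\bar m}|^p\big)^{1/p} = 2^{\mu(t-1/p)}\big(\sum_{|\bar\nu|_1=\mu,\,\bar m\in A_{\bar\nu}^\Omega}|\lambda_{\bar\nu,\bar m}|^p\big)^{1/p}$, this is precisely the $\ell_p^{D_\mu}$-norm of the coefficient vector scaled by $2^{\mu(t-1/p)}$, i.e. $(s_{p,p}^{t,\Omega}f)_\mu = 2^{\mu(t-1/p)}\ell_p^{D_\mu}$.

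Neither part presents a genuine obstacle — this is a bookkeeping lemma. The only place requiring mild care is the counting in part (1): one must make sure the equivalence constants for $|A_{\bar\nu}^\Omega| \asymp 2^{|\bar\nu|_1}$ are uniform in $\bar\nu$ (they depend only on the fixed wavelet, hence on $N$), and one must not confuse the number of indices $\bar\nu$ with $|\bar\nu|_1=\mu$ (which grows polynomially, giving the $\mu^{d-1}$ factor) with the number of translates at each level (which grows like $2^\mu$). Since both statements are quoted from \cite[Lemma 3.10]{Vybiral} and \cite[Lemma 6.4.2]{Hansen}, one could alternatively just cite these; I would include the short computation above for completeness.
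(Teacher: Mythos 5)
Your proof is correct. Note that the paper itself gives no argument for this lemma at all: it simply cites \cite[Lemma 3.10]{Vybiral} and \cite[Lemma 6.4.2]{Hansen}, so your direct verification (counting the translates whose supports meet $(0,1)^d$ level by level, multiplying by the number $\binom{\mu+d-1}{d-1}\asymp\mu^{d-1}$ of $\bar\nu$ with $|\bar\nu|_1=\mu$, and for part (ii) integrating the $p$-th powers of the disjoint dyadic characteristic functions, each of measure $2^{-|\bar\nu|_1}$) is exactly the standard computation behind those references and fills in what the paper leaves to the literature. Your side remarks are also well placed: the $\chi_{\bar\nu,\bar m}$ in the $f$-quasi-norm are indicators of the dyadic boxes $Q_{\bar\nu,\bar m}$, not of the wavelet supports, and the case $\mu=0$ (where $\mu^{d-1}$ degenerates) is correctly dismissed as the usual harmless abuse of notation.
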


\begin{lemma}\label{ba2}
\begin{enumerate}
\item Let $0<p_0, p \leq \infty$ and $0<q\leq\infty$. Then 
\[
\| \, id^*_{\mu} \, : (s^{t,\Omega}_{p_0,q}a)_\mu \to (s^{t,\Omega}_{p,q}a)_\mu\|
\asymp 2^{\mu(\frac{1}{p_0}-\frac{1}{p})_+}
\]
with equivalence constants independent of $\mu \in \N_0$.
\item Let $0<q_0,q\leq \infty$ and $0<p\leq \infty$. Then
$$ \| id^*_{\mu}: (s_{p,q_0}^{t,\Omega}a)_{\mu}\to (s_{p,q}^{t,\Omega}a)_{\mu} \|\asymp \mu^{(d-1)(\frac{1}{q}-\frac{1}{q_0})_+}$$
with equivalence constants independent of $\mu \in \N_0$.
\end{enumerate}
\end{lemma}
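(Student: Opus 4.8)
The plan is to reduce both assertions to the product structure of the spaces $(s^{t,\Omega}_{p,q}a)_\mu$ and to the behaviour of the scalar identities $id^m_{p_0,p}:\ell^m_{p_0}\to\ell^m_p$ together with the finite-$\ell_q$-sum comparison. First I would dispose of the lifting: by Remark \ref{lift} the operator $J_t$ is an isometry-type isomorphism reducing the general smoothness $t$ to the case $t=0$, so it suffices to treat $(s^{0,\Omega}_{p,q}a)_\mu$; also, by the convention fixed just before the lemma, the case $p=\infty$ (or $q_0,q$ at the endpoints) is always read via the $b$-scale. For part (1), I would first observe that for a fixed $\bar\nu$ with $|\bar\nu|_1=\mu$ the inner part of the $b$- or $f$-quasi-norm, restricted to the index block $\bar m\in A^\Omega_{\bar\nu}$, is (up to the fixed weight) just an $\ell^{|A^\Omega_{\bar\nu}|}_{p_0}$-, resp. $\ell^{|A^\Omega_{\bar\nu}|}_{p}$-quasi-norm; by Lemma \ref{ba1}(1) we have $|A^\Omega_{\bar\nu}|\asymp 2^\mu$ uniformly in such $\bar\nu$. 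Hence on each block the norm of $id^*_\mu$ equals the norm of $id^{|A^\Omega_{\bar\nu}|}_{p_0,p}$, which is $1$ if $p_0\le p$ and $|A^\Omega_{\bar\nu}|^{1/p-1/p_0}\asymp 2^{\mu(1/p-1/p_0)}$ if $p_0> p$; in both cases this is $\asymp 2^{\mu(1/p_0-1/p)_+}$. Since the outer sum over $\{\bar\nu:|\bar\nu|_1=\mu\}$ has the same structure in source and target (same $q$, same weights $2^{|\bar\nu|_1(t-1/p_\bullet)}$ only through the $1/p_\bullet$ discrepancy, already absorbed), taking the supremum over the finitely many blocks gives the claimed two-sided estimate with constants depending only on the equivalence constants in Lemma \ref{ba1}(1).

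For part (2) the roles of the indices are swapped: now $p$ and the weights are fixed, and only the fine index $q$ changes. Here I would use that $(s^{0,\Omega}_{p,q}a)_\mu$ carries, on the ``$\bar\nu$-level'', an outer $\ell_q$-(quasi)-norm over an index set of cardinality $\asymp\mu^{d-1}$ — namely $\#\{\bar\nu\in\N_0^d:|\bar\nu|_1=\mu\}\asymp\mu^{d-1}$ — with the inner $\ell_p$ (for $a=b$) or $L_p$ (for $a=f$) structure identical in source and target. For the $b$-scale this is literally the identity $\ell^{M}_{q_0}\to\ell^M_q$ with $M\asymp\mu^{d-1}$, whose norm is $1$ for $q_0\le q$ and $M^{1/q-1/q_0}\asymp\mu^{(d-1)(1/q-1/q_0)}$ for $q_0>q$; that is $\asymp\mu^{(d-1)(1/q-1/q_0)_+}$. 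For the $f$-scale one argues the same way after pulling the $L_p$-norm outside: for $q_0\le q$ the embedding $\ell_{q_0}\hookrightarrow\ell_q$ applied pointwise in $x$ (and monotonicity of the $L_p$-norm) gives norm $\le 1$, and the reverse inequality comes from testing on a single $\bar\nu$; for $q_0> q$ one applies Hölder's inequality in the (finite) $\bar\nu$-sum pointwise with exponent $\tfrac{q_0}{q_0-q}$, picking up the factor $M^{1/q-1/q_0}\asymp\mu^{(d-1)(1/q-1/q_0)}$, and the lower bound again follows by testing on suitable sequences (e.g.\ constant modulus on all $\bar\nu$ with $|\bar\ni|_1=\mu$... — more precisely, choosing $\lambda_{\bar\nu,\bar m}$ independent of $\bar\nu$ on a single $\bar m$-index per block).

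The only mildly delicate point, and the one I would be most careful about, is the $f$-scale part of (2): one must check that the pointwise Hölder argument does not lose anything in the $L_p$-norm and that the matching lower bound is attained, i.e.\ that the extremal configuration for $\ell_{q_0}^M\to\ell_q^M$ can be realized by an admissible sequence $\lambda\in(s^{0,\Omega}_{p,q_0}f)_\mu$ whose image has the claimed norm — this is where the disjointness of the cubes $Q_{\bar\nu,\bar m}$ for distinct $\bar\nu$ is not available, so one instead uses that the characteristic functions $\chi_{\bar\nu,\bar m}$, $|\bar\nu|_1=\mu$, all have comparable ($\asymp 2^{-\mu}$) measure and that we may choose for each $\bar\nu$ a single $\bar m(\bar\nu)\in A^\Omega_{\bar\nu}$ with the corresponding cubes, say, all containing a fixed point or arranged to make the $L_p$-norm computable up to constants. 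Everything else is routine bookkeeping with the cardinalities from Lemma \ref{ba1}(1); no genuinely new idea beyond the scalar facts about $id^m_{p_0,p}$ is needed, which is why the two lemmas are labelled ``elementary''.
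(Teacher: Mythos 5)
Your reduction works for the $b$-scale, where $(s^{t,\Omega}_{p,q}b)_\mu$ really is an $\ell_q$-direct sum over $\{\bar\nu:|\bar\nu|_1=\mu\}$ of weighted $\ell_{p}^{|A^\Omega_{\bar\nu}|}$-blocks, so the norm of the diagonal identity is the supremum of the block norms (and, once you multiply the block norm of $id^{|A^\Omega_{\bar\nu}|}_{p_0,p}$ by the weight ratio $2^{\mu(1/p_0-1/p)}$ — which you only gesture at with ``already absorbed'' — you indeed get $2^{\mu(1/p_0-1/p)_+}$). But for the $f$-scale in part (1) the argument has a genuine gap: the layer space is an $L_{p_0}(\ell_q)$- resp. $L_{p}(\ell_q)$-quasi-norm in which rectangles $Q_{\bar\nu,\bar m}$ belonging to \emph{different} $\bar\nu$ overlap, so the outer structure is \emph{not} the same in source and target (the outer integrability changes from $p_0$ to $p$), the operator is not a diagonal sum of blocks, and ``sup over blocks'' proves nothing. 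The nontrivial direction $p_0<p$ is a vector-valued Nikolskii/Plancherel--Polya type inequality on the hyperbolic layer; it is false for arbitrary overlapping sets of measure $2^{-\mu}$ (one can make the ratio of $L_p$- to $L_{p_0}$-norms exceed $2^{\mu(1/p_0-1/p)}$ by an unbounded factor for suitably overlapping sets), so it really needs the specific dyadic structure, e.g.\ the Sobolev/Jawerth--Franke embedding $s^{t}_{p_0,q}f\hookrightarrow s^{t-1/p_0+1/p}_{p,q}f$ used in Lemma \ref{ba3} (quoted there from \cite{ST,HV}), or a maximal-function argument — this cannot be replaced by the cardinality bookkeeping of Lemma \ref{ba1}. (The easy direction $p_0>p$ follows from H\"older on the bounded support, which is again not what your block argument says.)

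There is a second concrete problem in the $f$-scale part of (2), precisely at the point you flag as delicate: your proposed extremal configurations do not work. If you pick one $\bar m(\bar\nu)$ per $\bar\nu$ with all rectangles containing a fixed point, the overlap count $N(x)$ is of size $\mu^{d-1}$ only on a set of measure about $2^{-\mu}$, and a short computation shows $\|N^{1/q}\|_{L_p}\asymp\|N^{1/q_0}\|_{L_p}$, so the ratio is $\asymp 1$; if instead you arrange the chosen rectangles disjointly, then at every point at most one term is nonzero and the inner $\ell_q$- and $\ell_{q_0}$-sums coincide, again ratio $1$. The correct test sequence is the ``all ones on the layer'' sequence $\lambda_{\bar\nu,\bar m}=1$ for all $|\bar\nu|_1=\mu$, $\bar m\in A^\Omega_{\bar\nu}$: every $x\in\Omega$ lies in exactly one cube per $\bar\nu$, so the inner sum equals $\#\{\bar\nu:|\bar\nu|_1=\mu\}\asymp\mu^{d-1}$ pointwise on $\Omega$, giving $\|\lambda|(s^{t,\Omega}_{p,q}f)_\mu\|\gtrsim 2^{\mu t}\mu^{(d-1)/q}$ and $\|\lambda|(s^{t,\Omega}_{p,q_0}f)_\mu\|\lesssim 2^{\mu t}\mu^{(d-1)/q_0}$, hence the lower bound $\mu^{(d-1)(1/q-1/q_0)}$. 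Your upper bound in (2) (pointwise H\"older over the at most $\asymp\mu^{d-1}$ nonzero terms, then monotonicity of $L_p$) is fine, as are both bounds in the $b$-scale. Note finally that the paper does not prove this lemma at all but cites \cite[Lemma 3.10]{Vybiral} and \cite[Lemma 6.4.2]{Hansen}; any self-contained proof has to supply exactly the two ingredients missing above.
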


\begin{corollary}\label{ba2-1}
Let $0<p_0,p,q_0,q\leq \infty$ and $t\in \mathbb{R}$. Then
\[
\| \, id^*_{\mu} \, : (s^{t,\Omega}_{p_0,q_0}a)_\mu \to (s^{0,\Omega}_{p,q}a)_\mu\|
\lesssim 2^{\mu\big(-t+(\frac{1}{p_0}-\frac{1}{p})_+\big)}\mu^{(d-1)(\frac{1}{q} - \frac{1}{q_0})_+},
\]
with a constant behind $\lesssim$ independent of $\mu$.
\end{corollary}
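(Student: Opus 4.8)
The plan is to derive Corollary \ref{ba2-1} directly from Lemma \ref{ba2} by factoring the identity $id^*_\mu : (s^{t,\Omega}_{p_0,q_0}a)_\mu \to (s^{0,\Omega}_{p,q}a)_\mu$ through an intermediate space and using the ideal property (c) of $s$-numbers together with the elementary observation that for any $s$-function $s_1(T) = \|T\|$, so in particular $\|ST\| \le \|S\|\,\|T\|$ for the operator norm. First I would use Remark \ref{lift}, or rather the scaling built into the definitions, to reduce the smoothness parameter: the map $J_{-t}$ (notation of \eqref{lift1}) is an isometric isomorphism from $(s^{t,\Omega}_{p_0,q_0}a)_\mu$ onto $(s^{0,\Omega}_{p_0,q_0}a)_\mu$ up to the factor $2^{-\mu t}$, because on the subspace where $|\bar\nu|_1 = \mu$ the weight $2^{\sigma|\bar\nu|_1}$ is the constant $2^{\sigma\mu}$. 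Hence
\[
\| \, id^*_\mu : (s^{t,\Omega}_{p_0,q_0}a)_\mu \to (s^{0,\Omega}_{p,q}a)_\mu \|
= 2^{-\mu t}\, \| \, id^*_\mu : (s^{0,\Omega}_{p_0,q_0}a)_\mu \to (s^{0,\Omega}_{p,q}a)_\mu \|\, .
\]
This isolates the factor $2^{-\mu t}$ in the claimed bound.

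Next I would estimate the remaining norm $\| \, id^*_\mu : (s^{0,\Omega}_{p_0,q_0}a)_\mu \to (s^{0,\Omega}_{p,q}a)_\mu \|$ by writing $id^*_\mu$ as the composition
\[
(s^{0,\Omega}_{p_0,q_0}a)_\mu \;\longrightarrow\; (s^{0,\Omega}_{p,q_0}a)_\mu \;\longrightarrow\; (s^{0,\Omega}_{p,q}a)_\mu\, ,
\]
i.e.\ first change the inner ($\ell_p$-type) index from $p_0$ to $p$ keeping $q_0$ fixed, then change the outer ($\ell_q$-type) index from $q_0$ to $q$ keeping $p$ fixed. Each of these two factors is exactly one of the identities whose norm is computed in Lemma \ref{ba2}: part 1 gives $\| \, id^*_\mu : (s^{t,\Omega}_{p_0,q_0}a)_\mu \to (s^{t,\Omega}_{p,q_0}a)_\mu \| \asymp 2^{\mu(1/p_0 - 1/p)_+}$ and part 2 gives $\| \, id^*_\mu : (s^{t,\Omega}_{p,q_0}a)_\mu \to (s^{t,\Omega}_{p,q}a)_\mu \| \asymp \mu^{(d-1)(1/q - 1/q_0)_+}$, and these norms are independent of the smoothness label, so the value $t=0$ is irrelevant there. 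Submultiplicativity of the operator norm then yields
\[
\| \, id^*_\mu : (s^{0,\Omega}_{p_0,q_0}a)_\mu \to (s^{0,\Omega}_{p,q}a)_\mu \|
\lesssim 2^{\mu(1/p_0 - 1/p)_+}\, \mu^{(d-1)(1/q - 1/q_0)_+}
\]
with a constant independent of $\mu$. Combining with the $2^{-\mu t}$ factor from the first step gives the asserted inequality.

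One small technical point to handle carefully is the convention, stated just before Lemma \ref{ba1}, that $f$-spaces with first index $\infty$ are to be read as the corresponding $b$-spaces; so when $p_0 = \infty$ or $p = \infty$ the intermediate space $(s^{0,\Omega}_{p,q_0}a)_\mu$ and the application of Lemma \ref{ba2} must be interpreted in that light, but since Lemma \ref{ba2} is itself stated for $0 < p_0, p \le \infty$ this causes no difficulty. I do not expect any genuine obstacle here: the corollary is a purely formal consequence of the two lemmas, and the only thing to get right is the bookkeeping of which index is being moved in which factor and the extraction of the smoothness weight. If one prefers to avoid invoking Remark \ref{lift}, one can instead simply track the constant $2^{\mu t}$ explicitly through the definition of $(s^{t,\Omega}_{p,q}a)_\mu$, since on the $\mu$-th block every occurrence of $2^{|\bar\nu|_1 t}$ equals $2^{\mu t}$; the argument is then self-contained.
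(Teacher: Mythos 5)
Your proposal is correct and takes essentially the same route as the paper, which simply declares the corollary an immediate consequence of Lemma \ref{ba2}: on the block $|\bar{\nu}|_1=\mu$ the smoothness weight is the constant $2^{\mu t}$, giving the factor $2^{-\mu t}$, and the two index changes are composed via Lemma \ref{ba2} and the submultiplicativity of operator norms. Your handling of the $p=\infty$ convention for the $f$-scale is also consistent with the paper's stated convention.
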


\begin{proof}
This is an immediate consequence of Lemma \ref{ba2}. 
\end{proof}

Sometimes the previous estimate can be improved.

\begin{lemma}\label{ba3} Let $0<p_0<p<\infty$, $0<q_0,q\leq \infty$ and $t \in \re$. Then
$$ \|id^*_{\mu}: (s_{p_0,q_0}^{t,\Omega}f)_{\mu}\to (s_{p,q}^{0,\Omega}f)_{\mu} \| \lesssim 2^{\mu(-t+\frac{1}{p_0}-\frac{1}{p})}. $$
\end{lemma}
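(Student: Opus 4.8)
The strategy is to peel off the smoothness by means of the lifting isomorphisms $J_\sigma$ of Remark \ref{lift}, thereby reducing the assertion to a single Sobolev-type embedding for Lizorkin--Triebel sequence spaces, in which the fine indices $q_0,q$ play no role at all.

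First I would record the elementary scaling on the level-$\mu$ subspaces: since the weight $2^{|\bar\nu|_1\tau}$ equals the constant $2^{\mu\tau}$ on $\{\bar\nu\in\N_0^d:\ |\bar\nu|_1=\mu\}$, for every $\tau\in\re$ one has $\|\lambda\,|(s^{\tau,\Omega}_{p,q}f)_\mu\| = 2^{\mu\tau}\,\|\lambda\,|(s^{0,\Omega}_{p,q}f)_\mu\|$, i.e.\ $(s^{\tau,\Omega}_{p,q}f)_\mu$ is $(s^{0,\Omega}_{p,q}f)_\mu$ up to the scalar factor $2^{-\mu\tau}$ in the quasi-norm (compare Lemma \ref{ba1} and \eqref{lift1}). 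Applying this at the source with $\tau=t$, at the target with $\tau=0$, and then re-inserting the common shift $\sigma:=\tfrac1{p_0}-\tfrac1p>0$ at the source, one obtains
\[
\|\,id^*_\mu: (s^{t,\Omega}_{p_0,q_0}f)_\mu\to (s^{0,\Omega}_{p,q}f)_\mu\|
= 2^{-\mu(t-\sigma)}\,\|\,id^*_\mu: (s^{\sigma,\Omega}_{p_0,q_0}f)_\mu\to (s^{0,\Omega}_{p,q}f)_\mu\| .
\]
Hence it suffices to show that $\|\,id^*_\mu: (s^{\sigma,\Omega}_{p_0,q_0}f)_\mu\to (s^{0,\Omega}_{p,q}f)_\mu\|$ is bounded uniformly in $\mu\in\N_0$; then the right-hand side is $\lesssim 2^{-\mu(t-\sigma)} = 2^{\mu(-t+1/p_0-1/p)}$, which is exactly the claim.

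For the uniform bound I would pass to the full sequence spaces. Extending a sequence supported on the level $\{|\bar\nu|_1=\mu\}$ (and on $\bar m\in A^\Omega_{\bar\nu}$) by zero identifies $(s^{\tau,\Omega}_{p,q}f)_\mu$ isometrically with the subspace of all $\lambda\in s^\tau_{p,q}f$ supported on that level, since the two defining quasi-norms then coincide summand by summand. Consequently the desired uniform bound follows from the boundedness of the Sobolev-type embedding
\[
id^*:\quad s^{\sigma}_{p_0,q_0}f\ \hookrightarrow\ s^{0}_{p,q}f\, ,\qquad \sigma=\tfrac1{p_0}-\tfrac1p\, ,\quad 0<p_0<p<\infty\, ,
\]
which is valid for \emph{all} $0<q_0,q\le\infty$. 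This is the sequence-space counterpart of the limiting Sobolev embedding $S^{\sigma}_{p_0,q_0}F(\R)\hookrightarrow S^{0}_{p,q}F(\R)$ (the differential dimensions agree, $\sigma-\tfrac1{p_0}=-\tfrac1p$, and $\sigma>0$), and it is inherited from the latter through the wavelet isomorphisms of Lemma \ref{wavelet}, with $N$ chosen admissibly for both spaces. Chasing the constants back through the two displays above finishes the proof.

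The only genuine obstacle is the embedding $s^{\sigma}_{p_0,q_0}f\hookrightarrow s^{0}_{p,q}f$, and specifically its independence of $q_0$ and $q$; this is precisely what makes Lemma \ref{ba3} an improvement over Corollary \ref{ba2-1}, which would carry the spurious logarithmic factor $\mu^{(d-1)(1/p_0-1/p)}$. A self-contained argument must exploit the spatial staggering of the cubes $Q_{\bar\nu,\bar m}$ with $|\bar\nu|_1=\mu$: although each point lies in $\asymp\mu^{d-1}$ of them, the crude bound by $\mu^{d-1}$ copies of the scalar estimate $\ell_{p_0}^{D_\mu}\to\ell_{p}^{D_\mu}$ (Lemma \ref{ba1}) is wasteful, because the set on which the covering multiplicity exceeds $k$ has measure only $\asymp\mu^{d-1}2^{-\mu}$ times a factor geometrically small in $k$, so summation over $k$ converges and no logarithm survives. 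Everything else in the proof is the bookkeeping with the lifts $J_\sigma$ described above.
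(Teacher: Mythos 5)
Your proposal is correct and takes essentially the same route as the paper: both arguments rest on the $q$-independent Sobolev-type embedding for Lizorkin--Triebel sequence spaces of dominating mixed smoothness (the paper invokes $s^{t,\Omega}_{p_0,q_0}f\hookrightarrow s^{t-\frac{1}{p_0}+\frac{1}{p},\Omega}_{p,q}f$, citing \cite[Thm.~2.4.1]{ST} and \cite{HV}), combined with the observation that on the level $|\bar{\nu}|_1=\mu$ the smoothness weight is the constant $2^{\mu t}$ and can be factored out; your normalization via $J_\sigma$ to $\sigma=\frac{1}{p_0}-\frac{1}{p}$ versus the paper's keeping smoothness $t$ is only cosmetic. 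Your closing sketch of a ``self-contained'' proof of that embedding via covering multiplicities is too vague to serve as a proof, but it is not needed, since, exactly as in the paper, the known embedding may simply be cited.
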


\begin{proof}
This assertion is contained in \cite{Hansen}. Since this phd is not published we give a proof.
Let $\lambda $ be a sequence such that 
$\lambda_{\bar{\nu}, \bar{m}} =0 $
if $|\bar{\nu}|_1 \neq \mu$.
Since $p_0<p$ the Sobolev-type embedding yields
$$s^{t,\Omega}_{p_0,q_0}f\hookrightarrow s_{p,q}^{t-\frac{1}{p_0}+\frac{1}{p},\Omega}f \, , $$
see \cite[Thm.~2.4.1]{ST}($d=2$) or \cite{HV}, we have
\beqq
\| \lambda|(s_{p,q}^{0,\Omega}f)_{\mu} \| & = & \| \lambda|s_{p,q}^{0,\Omega}f \|
=2^{\mu(-t+\frac{1}{p_0}-\frac{1}{p})}\| \lambda|s_{p,q}^{t-\frac{1}{p_0}+\frac{1}{p},\Omega}f \|
\\
& \lesssim & 2^{\mu(-t+\frac{1}{p_0}-\frac{1}{p})}\| \lambda|s_{p_0,q_0}^{t,\Omega}f \|
= 2^{\mu(-t+\frac{1}{p_0}-\frac{1}{p})}\| \lambda|(s_{p_0,q_0}^{t,\Omega}f )_{\mu}\|\, .
\eeqq
This proves the claim.
\end{proof}


\subsection{Weyl numbers of embeddings of sequence spaces related to spaces of dominating mixed smoothness - preparations}
\label{sequence2}


For $\mu \in \N_0$ we define 
$$ id_{\mu}^* :\ s_{p_0,p_0}^{t,\Omega}b\to s_{p,2}^{0,\Omega}f\, , $$
where 
$$
(id_{\mu}^*\lambda)_{\bar{\nu},\bar{m}} := \begin{cases}
\lambda_{\bar{\nu},\bar{m}}&\text{if}\ |\bar{\nu}|_1=\mu,\\
0&\text{otherwise}.
\end{cases}
$$
The main idea of our proof is the following splitting 
of $id^* :\ s_{p_0,p_0}^{t,\Omega}b\to s_{p,2}^{0,\Omega}f$
 into a sum of identities between building blocks
\be\label{ws-15}
id^* = \sum_{\mu=0}^{\infty}\, id_{\mu}^* = \sum_{\mu=0}^{J}\, id_{\mu}^* + \sum_{\mu=J+1}^{L}\, id_{\mu}^* + \sum_{\mu=L+1}^{\infty}\, id_{\mu}^*,
\ee
where $ J$ and $L$ are at our disposal. These numbers $J$ and $L$ will be chosen in dependence on the parameters.
Let us mention that a similar splitting has been used by Vybiral \cite{Vybiral} for the estimates of related entropy numbers.\\
The additivity and the monotonicity of the Weyl numbers and the quasi-triangle inequality \eqref{dreieck} yield
\be\label{ws-16}
x^{\rho}_n(id^*)\leq \sum_{\mu=0}^{J}x^{\rho}_{n_{\mu}}(id_{\mu}^*)+\sum_{\mu=J+1}^{L}x^{\rho}_{n_{\mu}}(id_{\mu}^*)+\sum_{\mu=L+1}^{\infty} \|id_{\mu}^*\|^{\rho},
\qquad \rho :=\min (1,p)\, , 
\ee
where $n-1 = \sum_{\mu=0}^{L}(n_{\mu}-1)$. Of course, $\|id_{\mu}^*\| = \| \, id_{\mu}^* \, : (s_{p_0,p_0}^{t,\Omega}f)_{\mu}\to (s_{p,2}^{0,\Omega}f)_{\mu}\|$. 
For brevity we put 
$$\alpha=t-\bigg(\frac{1}{p_0}-\frac{1}{p}\bigg)_+ \, .$$
Then by Corollary \ref{ba2-1}, we have
$$ \|id_{\mu}^* \|\lesssim 2^{-\mu\alpha}\, \mu^{(d-1)(\frac{1}{2}-\frac{1}{p_0})_+}, $$
which results in the estimate
\be\label{ws-17}
\sum_{\mu=L+1}^{\infty} \|id_{\mu}^*\|^{\rho}\lesssim 2^{-L\alpha{\rho}}L^{(d-1){\rho}(\frac{1}{2}-\frac{1}{p_0})_+} \, .
\ee
Now we choose $n_{\mu}$ 
\be\label{ws-18}
n_{\mu} := D_{\mu}+1,\qquad \mu=0,1,....,J \, .
\ee
Then we get
\be\label{ws-23}
\sum_{\mu=0}^{J}n_{\mu} \, \asymp\, \sum_{\mu=0}^{J}\mu^{(d-1)}2^{\mu} \, \asymp\, J^{d-1}2^J
\ee
and $x_{n_{\mu}}(id_{\mu}^*)=0 $, see the rank property of the $s$-numbers, 
which implies 
\be\label{ws-19}
\sum_{\mu=0}^{J}x^{\rho}_{n_{\mu}}(id_{\mu}^*)=0 \, .
\ee
Summarizing \eqref{ws-16}-\eqref{ws-19} we have found
\begin{equation}\label{sum1}
x^{\rho}_n(id^*)\lesssim \sum_{\mu=J+1}^{L} \, x^{\rho}_{n_{\mu}}(id_{\mu}^*)+ 2^{-L\alpha{\rho}}L^{(d-1){\rho}(\frac{1}{2}-\frac{1}{p_0})_+}\, .
\end{equation}
Now we turn to the problem to reduce the estimates for the Weyl numbers $x_{n_{\mu}}(id_{\mu}^*)$ to estimates for 
$x_{n}(id_{p_0,p}^m)$. 

\begin{proposition}\label{wichtig1}
Let $0<p_0\leq \infty$ and $t\in \mathbb{R}$. Then we have the following assertions.
\begin{enumerate}
\item If $0< p\leq 2$, then 
\begin{equation}\label{case1}
\mu^{(d-1)(-\frac{1}{p}+\frac{1}{2})}2^{\mu(-t+\frac{1}{p_0}-\frac{1}{p})} \, x_n(id_{p_0,p}^{D_{\mu}} )
\lesssim x_n(id_{\mu}^*) \lesssim 2^{\mu(-t+\frac{1}{p_0}-\frac{1}{2})}\, x_n(id_{p_0,2}^{D_\mu}).
\end{equation}
\item If $2\leq p<\infty$, then 
\begin{equation}\label{case2}
2^{\mu(-t+\frac{1}{p_0}-\frac{1}{2})}\, x_n(id_{p_0,2}^{D_{\mu}} )\lesssim x_n(id_{\mu}^*)\lesssim \mu^{(d-1)(\frac{1}{2}-\frac{1}{p})}2^{\mu(-t+\frac{1}{p_0}-\frac{1}{p})}
\, x_n(id_{p_0,p}^{D_\mu})\, .
\end{equation}
\end{enumerate}
\end{proposition}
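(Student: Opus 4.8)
The plan is to deduce everything from the ideal property of Weyl numbers (property~(c) of $s$-numbers) together with the elementary norm estimates of Lemma~\ref{ba1} and Lemma~\ref{ba2}, by factoring $id_\mu^*$ through a ``diagonal'' building block. First I would record that $id_\mu^*$ only sees the block $|\bar{\nu}|_1=\mu$: composing with the norm-one coordinate projection of $s^{t,\Omega}_{p_0,p_0}b$ onto $(s^{t,\Omega}_{p_0,p_0}b)_\mu$ and the norm-one inclusion $(s^{0,\Omega}_{p,2}f)_\mu\hookrightarrow s^{0,\Omega}_{p,2}f$ shows, via the ideal property, that
\[
x_n(id_\mu^*)=x_n\big(id:\ (s^{t,\Omega}_{p_0,p_0}b)_\mu\to (s^{0,\Omega}_{p,2}f)_\mu\big)\, .
\]
By Lemma~\ref{ba1}(2) the source equals $2^{\mu(t-1/p_0)}\ell_{p_0}^{D_\mu}$ (for $p_0=\infty$ this identification is immediate from the definition), and likewise $(s^{0,\Omega}_{r,r}f)_\mu=(s^{0,\Omega}_{r,r}b)_\mu=2^{-\mu/r}\ell_r^{D_\mu}$ for $0<r<\infty$. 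Applying the ideal property to the scalar multiples of identities relating these blocks to the spaces $\ell_r^{D_\mu}$, the Weyl number of the identity $(s^{t_1,\Omega}_{r_1,r_1}b)_\mu\to (s^{t_2,\Omega}_{r_2,r_2}b)_\mu$ equals $2^{\mu(t_2-1/r_2)-\mu(t_1-1/r_1)}\,x_n(id_{r_1,r_2}^{D_\mu})$.

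For the upper bounds I would factor $id_\mu^*=B\circ A$ through a diagonal block whose outer index is chosen so that one of the two ensuing comparisons is ``cheap''. If $0<p\le 2$, take $A:\ (s^{t,\Omega}_{p_0,p_0}b)_\mu\to (s^{0,\Omega}_{2,2}b)_\mu=(s^{0,\Omega}_{2,2}f)_\mu$ and $B:\ (s^{0,\Omega}_{2,2}f)_\mu\to (s^{0,\Omega}_{p,2}f)_\mu$; then $x_n(A)=2^{\mu(-t+1/p_0-1/2)}x_n(id_{p_0,2}^{D_\mu})$ by the previous step, while $\|B\|\asymp 2^{\mu(1/2-1/p)_+}\asymp 1$ by Lemma~\ref{ba2}(1), and $x_n(BA)\le\|B\|\,x_n(A)$ gives the right-hand inequality of \eqref{case1}. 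If $2\le p<\infty$, instead take $A:\ (s^{t,\Omega}_{p_0,p_0}b)_\mu\to (s^{0,\Omega}_{p,p}b)_\mu=(s^{0,\Omega}_{p,p}f)_\mu$, so $x_n(A)=2^{\mu(-t+1/p_0-1/p)}x_n(id_{p_0,p}^{D_\mu})$, together with $B:\ (s^{0,\Omega}_{p,p}f)_\mu\to (s^{0,\Omega}_{p,2}f)_\mu$, for which $\|B\|\asymp\mu^{(d-1)(1/2-1/p)_+}=\mu^{(d-1)(1/2-1/p)}$ by Lemma~\ref{ba2}(2); this yields the right-hand inequality of \eqref{case2}.

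For the lower bounds I would post-compose $id_\mu^*$ with a change of the target fine index that is cheap in the relevant regime, and recognise the composition as a dilate of $id_{p_0,p}^{D_\mu}$ or $id_{p_0,2}^{D_\mu}$. If $0<p\le 2$, put $C:=id:\ (s^{0,\Omega}_{p,2}f)_\mu\to (s^{0,\Omega}_{p,p}f)_\mu$, so $\|C\|\asymp\mu^{(d-1)(1/p-1/2)_+}=\mu^{(d-1)(1/p-1/2)}$ by Lemma~\ref{ba2}(2); since $C\circ id_\mu^*=id:\ (s^{t,\Omega}_{p_0,p_0}b)_\mu\to (s^{0,\Omega}_{p,p}f)_\mu$ has Weyl number $2^{\mu(-t+1/p_0-1/p)}x_n(id_{p_0,p}^{D_\mu})$, dividing $x_n(C\circ id_\mu^*)\le\|C\|\,x_n(id_\mu^*)$ by $\|C\|$ gives the left-hand inequality of \eqref{case1}. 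If $2\le p<\infty$, put instead $C:=id:\ (s^{0,\Omega}_{p,2}f)_\mu\to (s^{0,\Omega}_{2,2}f)_\mu=(s^{0,\Omega}_{2,2}b)_\mu$, with $\|C\|\asymp 2^{\mu(1/p-1/2)_+}\asymp 1$ by Lemma~\ref{ba2}(1); then $C\circ id_\mu^*=id:\ (s^{t,\Omega}_{p_0,p_0}b)_\mu\to (s^{0,\Omega}_{2,2}b)_\mu$ has Weyl number $2^{\mu(-t+1/p_0-1/2)}x_n(id_{p_0,2}^{D_\mu})$, which is therefore $\lesssim x_n(id_\mu^*)$, i.e.\ the left-hand inequality of \eqref{case2}.

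I do not expect a genuine obstacle: the argument is bookkeeping with the ideal property of $s$-numbers and the norm estimates of Lemma~\ref{ba1}--\ref{ba2}. The one point of care is the choice of intermediate fine index --- comparing the target block $(s^{0,\Omega}_{p,2}f)_\mu$ with the diagonal block $(s^{0,\Omega}_{p,p}f)_\mu\cong\ell_p^{D_\mu}$ when $p\ge 2$ and with $(s^{0,\Omega}_{2,2}f)_\mu\cong\ell_2^{D_\mu}$ when $p\le 2$, so that in each regime one comparison costs only a bounded factor while the other contributes the logarithmic factor $\mu^{(d-1)|1/2-1/p|}$ --- and, relatedly, keeping the exponents and the signs inside $(\cdot)_+$ straight.
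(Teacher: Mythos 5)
Your proposal is correct and follows essentially the same route as the paper: factor the block operator through the diagonal block $(s^{0,\Omega}_{\delta,\delta}f)_\mu$ with $\delta=\max(p,2)$ for the upper bounds and post-compose towards $(s^{0,\Omega}_{\gamma,\gamma}f)_\mu$ with $\gamma=\min(p,2)$ for the lower bounds, identify these diagonal blocks with dilated $\ell_r^{D_\mu}$ spaces via Lemma \ref{ba1}(ii), and invoke the ideal property together with the norm estimates of Lemma \ref{ba2} (the paper uses its Corollary \ref{ba2-1} in the same role). The only cosmetic difference is that you make the reduction of $id_\mu^*$ to the block-to-block identity explicit via the norm-one projection and inclusion, which the paper leaves implicit.
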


\begin{proof}
{\em Step 1.} Estimate from above.
We define $\delta:=\max(p,2)$ and consider the following diagram:

\tikzset{node distance=4cm, auto}

\begin{center}
\begin{tikzpicture}
 \node (H) {$(s^{t,\Omega}_{p_0,p_0}b)_\mu$};
 \node (L) [right of =H] {$(s^{0,\Omega}_{p,2}f)_\mu $};
 \node (L2) [right of =H, below of =H, node distance = 2cm ] {$ (s^{0,\Omega}_{\delta,\delta}f)_\mu$};
 \draw[->] (H) to node {$id_\mu^*$} (L);
 \draw[->] (H) to node [swap] {$id^2$} (L2);
 \draw[->] (L2) to node [swap] {$id^1$} (L);
 \end{tikzpicture}
\end{center}
Using property $(c)$ of the $s$-numbers we conclude
\[ 
x_n(id_{\mu}^*)\leq \| \, id^1 \, \| \, x_n(id^2) \, .
\]
By Corollary \ref{ba2-1}, we have
$$ \| id^1 \| \lesssim \mu^{(d-1)(\frac{1}{2}-\frac{1}{\delta})} \, . $$
From Lemma \ref{ba1} (ii), we derive
$$ x_n(id^2)\lesssim 2^{\mu( -t+\frac{1}{p_0} -\frac{1}{\delta})}\, x_n(id_{p_0,\delta}^{D_\mu})\, ,$$
taking into account property $(c)$ of the $s$-numbers and the commutative diagram
\beqq
\begin{CD}
2^{\mu(t-\frac{1}{p_0})}\, (\ell_{p_0}^{D_\mu}) @ > id^3 >> \ell_{p_0}^{D_\mu}\\
@V id^2 VV @VV id_{p_0,\delta}^{D_\mu} V\\
2^{-\frac{\mu}{\delta}}\, (\ell_{\delta}^{D_\mu}) @ << id^4 < \ell_{\delta}^{D_\mu} \, ,
\end{CD}
\eeqq
i.e., $id^2= id^4 \circ id_{p_0,\delta}^{D_\mu} \circ id^3 $,
\[
\|\, id^3\, \|= 2^{-\mu(t-\frac{1}{p_0})}\, \qquad \mbox{and}\qquad 
\|\, id^4\, \|= 2^{\frac{\mu}{\delta}}\,.
\]
Altogether this implies
\beqq
 x_n(id_{\mu}^*)\lesssim \mu^{(d-1)(\frac{1}{2}-\frac{1}{\delta})} \, 2^{\mu( -t+\frac{1}{p_0} -\frac{1}{\delta})} \, 
x_n(id_{p_0,\delta}^{D_\mu})
\, .
\eeqq
{\em Step 2.}
Now we turn to the estimate from below. We define
$ \gamma :=\min(p,2)$ and use the following commutative diagram

\tikzset{node distance=4cm, auto}

\begin{center}
\begin{tikzpicture}
 \node (H) {$(s^{t,\Omega}_{p_0,p_0}b)_\mu $};
 \node (L) [right of =H] {$(s^{0,\Omega}_{\gamma,\gamma}f)_\mu$};
 \node (L2) [right of =H, below of =H, node distance = 2cm ] {$(s^{0,\Omega}_{p,2}f)_\mu$};
 \draw[->] (H) to node {$id^2$} (L);
 \draw[->] (H) to node [swap] {$id_\mu^*$} (L2);
 \draw[->] (L2) to node [swap] {$id^1$} (L);
 \end{tikzpicture}
\end{center}

\noindent
This time we have $ x_n(id_2)\leq \|\, id^1\, \|\, x_n(id_{\mu}^*) $ and by Corollary \ref{ba2-1}, we get
$$ \| \, id^1\, \| \lesssim \mu^{(d-1)(\frac{1}{\gamma}-\frac{1}{2})}\, . $$
Similarly as in Step 1 Lemma \ref{ba1} (ii) yields
\[ 
x_n(id^2)\gtrsim 2^{\mu( -t+\frac{1}{p_0} -\frac{1}{\gamma})} \, x_n (id_{p_0,\gamma}^{D_{\mu}})\, . 
\]
Inserting this in our previous estimate we find
\beqq
x_n(id_{\mu}^*)\gtrsim \mu^{(d-1)(\frac{1}{2}-\frac{1}{\gamma})} \, 2^{\mu( -t+\frac{1}{p_0} -\frac{1}{\gamma})}
\, x_n (id_{p_0,\gamma}^{D_{\mu}})\, .
\eeqq
The proof is complete.
\end{proof}

\begin{proposition}
Let $0<p_0\leq \infty$ and $t\in \mathbb{R}$. Then we have the following assertions.
\begin{enumerate}
\item If $0< p\leq 2$, then 
\begin{equation}\label{case3}
 x_n(id_{\mu}^*)\lesssim 2^{\mu(-t+\frac{1}{p_0}-\frac{1}{p})}\, x_n(id_{p_0,p}^{D_\mu})\, .
\end{equation}
\item If $2\leq p<\infty$, then 
\begin{equation}\label{case4}
2^{\mu( -t+\frac{1}{p_0} -\frac{1}{p})}\, x_n(id_{p_0,p}^{D_\mu}) \lesssim x_n(id_{\mu}^*)\, .
\end{equation}
\end{enumerate}
\end{proposition}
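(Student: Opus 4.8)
The plan is to refine the routing used for Proposition~\ref{wichtig1}. There the extra power of $\mu$ in \eqref{case1}--\eqref{case2} arose because $id_\mu^*$ was sent through the block $(s^{0,\Omega}_{\delta,\delta}f)_\mu$ with $\delta=\max(p,2)$. Instead I would route it through the ``honest'' Lebesgue-type block $(s^{0,\Omega}_{p,p}f)_\mu=(s^{0,\Omega}_{p,p}b)_\mu$, using that the change of the microscopic parameter between $p$ and $2$ inside $(s^{0,\Omega}_{p,\cdot}f)_\mu$ costs nothing — with equivalence constants independent of $\mu$ — precisely when it is performed in the direction prescribed by the sign of $1/2-1/p$. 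This is exactly what Lemma~\ref{ba2}(2) yields, and the two admissible directions correspond to the two cases of the proposition.

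The first step is to record the basic rescaling identity, which is already implicit in the proof of Proposition~\ref{wichtig1}: by Lemma~\ref{ba1}(ii) the blocks $(s^{t,\Omega}_{p_0,p_0}b)_\mu$ and $(s^{0,\Omega}_{p,p}f)_\mu$ are, respectively, $\ell_{p_0}^{D_\mu}$ and $\ell_{p}^{D_\mu}$ with their quasi-norms rescaled by the explicit factors $2^{\mu(t-1/p_0)}$ and $2^{-\mu/p}$; feeding the corresponding diagonal rescalings into the ideal property $(c)$ of the $s$-numbers (exactly as with $id^3,id^4$ in the proof of Proposition~\ref{wichtig1}) gives
$$
x_n\bigl(id_\mu^*:(s^{t,\Omega}_{p_0,p_0}b)_\mu\to(s^{0,\Omega}_{p,p}f)_\mu\bigr)\asymp 2^{\mu(-t+\frac{1}{p_0}-\frac{1}{p})}\,x_n(id_{p_0,p}^{D_\mu})\,.
$$
For $p_0=\infty$ this identification of the $b$-block with a rescaled $\ell_{p_0}^{D_\mu}$ still follows at once from the definitions, so the endpoint requires no separate treatment.

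For assertion (i), $0<p\le 2$, I would factor $id_\mu^*=id^1\circ id^2$ with $id^2:(s^{t,\Omega}_{p_0,p_0}b)_\mu\to(s^{0,\Omega}_{p,p}f)_\mu$ and $id^1:(s^{0,\Omega}_{p,p}f)_\mu\to(s^{0,\Omega}_{p,2}f)_\mu$. Since $p\le 2$, Lemma~\ref{ba2}(2) gives $\|id^1\|\asymp\mu^{(d-1)(\frac12-\frac1p)_+}\asymp 1$, and property $(c)$ together with the identity above yields $x_n(id_\mu^*)\le\|id^1\|\,x_n(id^2)\lesssim 2^{\mu(-t+\frac{1}{p_0}-\frac{1}{p})}x_n(id_{p_0,p}^{D_\mu})$, which is \eqref{case3}. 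For assertion (ii), $2\le p<\infty$, I would instead compose in the opposite order, $id^1\circ id_\mu^*:(s^{t,\Omega}_{p_0,p_0}b)_\mu\to(s^{0,\Omega}_{p,2}f)_\mu\to(s^{0,\Omega}_{p,p}f)_\mu$, where now $\|id^1\|\asymp\mu^{(d-1)(\frac1p-\frac12)_+}\asymp 1$ because $p\ge 2$; since the composite $id^1\circ id_\mu^*$ is again the rescaled $id_{p_0,p}^{D_\mu}$ from the identity above, property $(c)$ gives $2^{\mu(-t+\frac{1}{p_0}-\frac{1}{p})}x_n(id_{p_0,p}^{D_\mu})\asymp x_n(id^1\circ id_\mu^*)\le\|id^1\|\,x_n(id_\mu^*)\lesssim x_n(id_\mu^*)$, which is \eqref{case4}. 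I do not expect any real obstacle: the argument is a short combination of Lemma~\ref{ba1}, Lemma~\ref{ba2} and the ideal property, and the only point requiring care is to use the fine-index change $p\leftrightarrow 2$ in the direction where its cost $\mu^{(d-1)|\frac1p-\frac12|}$ drops out — which is exactly what the hypotheses $p\le 2$ and $p\ge 2$, respectively, guarantee.
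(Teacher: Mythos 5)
Your proposal is correct and is essentially the paper's own argument: the same factorization of $id_\mu^*$ through the block $(s^{0,\Omega}_{p,p}f)_\mu$, the identification of the blocks with rescaled $\ell_{p_0}^{D_\mu}$, $\ell_p^{D_\mu}$ via Lemma \ref{ba1}(ii), the observation that the fine-index change $p\leftrightarrow 2$ is free in the direction dictated by $p\le 2$ resp. $p\ge 2$ (Lemma \ref{ba2}/Corollary \ref{ba2-1}), and the ideal property of $s$-numbers. No gaps.
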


\begin{proof} {\em Step 1.} Proof of (i).
We consider the following diagram

\tikzset{node distance=4cm, auto}

\begin{center}
\begin{tikzpicture}
 \node (H) {$(s^{t,\Omega}_{p_0,p_0}b)_\mu$};
 \node (L) [right of =H] {$(s^{0,\Omega}_{p,2}f)_\mu$};
 \node (L2) [right of =H, below of =H, node distance = 2cm ] {$ (s^{0,\Omega}_{p,p}f)_\mu $};
 \draw[->] (H) to node {$id_\mu^*$} (L);
 \draw[->] (H) to node [swap] {$id^2$} (L2);
 \draw[->] (L2) to node [swap]{$id^1$} (L);
 \end{tikzpicture}
\end{center}

%
\noindent
This implies $ x_n(id_{\mu}^*)\leq \| \, id^1 \, \| \, x_n(id^2) $.
Corollary \ref{ba2-1} yields $ \| id^1 \| \lesssim 1$
and from Lemma \ref{ba1} we derive
$$ x_n(id^2)\lesssim 2^{\mu( -t+\frac{1}{p_0} -\frac{1}{p})}\, x_n(id_{p_0,p}^{D_\mu}) \, .$$
Altogether we have found
$$
 x_n(id_{\mu}^*)\lesssim 2^{\mu( -t+\frac{1}{p_0} -\frac{1}{p})}\, x_n(id_{p_0,p}^{D_\mu})\, .
$$
{\em Step 2.} Proof of (ii). We use the following diagram

\tikzset{node distance=4cm, auto}

\begin{center}
\begin{tikzpicture}
 \node (H) {$(s^{t,\Omega}_{p_0,p_0}b)_\mu $};
 \node (L) [right of =H] {$(s^{0,\Omega}_{p,p}f)_\mu $};
 \node (L2) [right of =H, below of =H, node distance = 2cm ] {$(s^{0,\Omega}_{p,2}f)_\mu$};
 \draw[->] (H) to node {$id^2$} (L);
 \draw[->] (H) to node [swap] {$id^*_\mu$} (L2);
 \draw[->] (L2) to node [swap]{$id^1$} (L);
 \end{tikzpicture}
\end{center}

\noindent
Because of $ x_n(id^2)\leq \| \, id^1\, \| \, x_n(id_{\mu}^*) $, $ \| \, id^1\, \| \lesssim 1$, see Corollary \ref{ba2-1}, 
and 
$$ x_n(id^2)\gtrsim 2^{\mu( -t+\frac{1}{p_0} -\frac{1}{p})} \, x_n(id_{p_0,p}^{D_\mu}) \, , $$
(see Lemma \ref{ba1}), we obtain
$$
x_n(id_{\mu}^*)\gtrsim 2^{\mu( -t+\frac{1}{p_0} -\frac{1}{p})}\, x_n(id_{p_0,p}^{D_\mu})\, . 
$$
The proof is complete.
\end{proof}

We need a few more results of the above type.

\begin{lemma}
Let $0<p_0,p<\infty$ and $0<\epsilon<p$. Then
\begin{equation}\label{case5}
x_n(id_{\mu}^*)\lesssim 2^{\mu(-t+\frac{1}{p_0}-\frac{1}{p})} \, x_n(id_{p_0,p-\epsilon}^{D_{\mu}})\, .
\end{equation}
\end{lemma}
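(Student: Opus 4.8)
The plan is to mimic the two-step commutative-diagram argument from the previous propositions, but to factor through an intermediate Lizorkin-Triebel sequence space of summability index $p - \epsilon$ rather than $p$ itself. First I would set up the diagram
\[
(s^{t,\Omega}_{p_0,p_0}b)_\mu \;\xrightarrow{\ id^2\ }\; (s^{0,\Omega}_{p-\epsilon,p-\epsilon}f)_\mu \;\xrightarrow{\ id^1\ }\; (s^{0,\Omega}_{p,2}f)_\mu\, ,
\]
where $id^2$ is the canonical embedding and $id^1 = id^*_\mu$ restricted to the appropriate subspace. By the ideal property $(c)$ of the $s$-numbers we get $x_n(id^*_\mu) \le \|id^1\|\, x_n(id^2)$.

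Second, I would estimate the two factors separately. For $\|id^1\|$ the key point is that we are \emph{lowering} the summability index from $p-\epsilon$ to $p$ on a finite-dimensional block; since $p - \epsilon < p$, on each level $|\bar\nu|_1 = \mu$ the inclusion $\ell_{p-\epsilon}^{A_{\bar\nu}^\Omega} \hookrightarrow \ell_p^{A_{\bar\nu}^\Omega}$ has norm $1$, and the passage from the $f$-type to the $f$-type norm with fine index $p-\epsilon$ versus $2$ costs only a factor $\mu^{(d-1)(\cdots)_+}$ via Lemma \ref{ba2}(ii). Actually the cleanest route is to invoke Corollary \ref{ba2-1} directly: it gives $\|id^1\| \lesssim \mu^{(d-1)(\frac12 - \frac{1}{p-\epsilon})_+}$, which is bounded by a constant times $\mu^{(d-1)/2}$ — but since we want a clean statement without log-powers, I would instead note that for the \emph{upper} estimate we may choose the factorization so that the decisive inequality is Lemma \ref{ba3}: with $p_0' := p - \epsilon < p$ the Sobolev-type embedding $s^{t,\Omega}_{p_0,p_0}f \hookrightarrow s^{t - 1/p_0 + 1/(p-\epsilon),\Omega}_{p-\epsilon, p-\epsilon}f$ already absorbs the shift, so the log-free bound $\|id^*_{\mu}: (s^{t,\Omega}_{p_0,q_0}f)_\mu \to (s^{0,\Omega}_{p,q}f)_\mu\| \lesssim 2^{\mu(-t + 1/p_0 - 1/p)}$ for the full identity is available and carries no $\mu$-power. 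For $x_n(id^2)$ I would use Lemma \ref{ba1}(ii), which identifies $(s^{t,\Omega}_{p_0,p_0}b)_\mu$ with $2^{\mu(t-1/p_0)}\ell_{p_0}^{D_\mu}$ and $(s^{0,\Omega}_{p-\epsilon,p-\epsilon}f)_\mu$ with $2^{-\mu/(p-\epsilon)}\ell_{p-\epsilon}^{D_\mu}$, together with property $(c)$ applied to the diagonal scaling operators $id^3, id^4$ exactly as in the proof of Proposition \ref{wichtig1}; this yields
\[
x_n(id^2) \lesssim 2^{\mu(-t + \frac{1}{p_0} - \frac{1}{p-\epsilon})}\, x_n(id_{p_0,p-\epsilon}^{D_\mu})\, .
\]

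Third, I would combine the pieces. Multiplying the two estimates gives $x_n(id^*_\mu) \lesssim 2^{\mu(-t + 1/p_0 - 1/(p-\epsilon))} x_n(id_{p_0,p-\epsilon}^{D_\mu})$ up to a $\mu$-power, and then I would clean up the exponent: writing $\frac{1}{p-\epsilon} = \frac{1}{p} + \big(\frac{1}{p-\epsilon} - \frac{1}{p}\big)$ and absorbing the bounded correction $2^{-\mu(1/(p-\epsilon) - 1/p)} \le 1$ into the main term, one arrives at the claimed bound $x_n(id^*_\mu) \lesssim 2^{\mu(-t + 1/p_0 - 1/p)} x_n(id_{p_0,p-\epsilon}^{D_\mu})$ — note the exponent of $2^\mu$ is now exactly $-t + 1/p_0 - 1/p$ while the Weyl number on the right is still taken between $\ell_{p_0}^{D_\mu}$ and $\ell_{p-\epsilon}^{D_\mu}$, which is the whole point of the lemma (it trades a worse decay exponent for a smaller target index, useful because $x_n(id_{p_0,p-\epsilon}^{D_\mu})$ behaves better than $x_n(id_{p_0,p}^{D_\mu})$ in the relevant range). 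I would check the $\mu$-power does not actually appear by routing the norm estimate through Lemma \ref{ba3} rather than Corollary \ref{ba2-1}. The main obstacle is precisely this bookkeeping: ensuring that the factorization is chosen so that no spurious $\mu^{(d-1)\cdot}$ factor survives and that the conversion between the Besov block $(s^{t,\Omega}_{p_0,p_0}b)_\mu$ and the $f$-scale needed for Lemma \ref{ba3} is legitimate — which it is, by Lemma \ref{ba1}(ii), since on a fixed level the $b$- and $f$-norms with equal indices coincide.
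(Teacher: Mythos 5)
Your overall architecture is the paper's: factor $id_\mu^*$ through a block with integrability $p-\epsilon$, use the ideal property, identify the outer blocks with weighted $\ell$-spaces via Lemma \ref{ba1}(ii), and let Lemma \ref{ba3} supply the log-free norm bound (the paper merely keeps a free smoothness $r$ on the intermediate space, which cancels at the end; your choice of smoothness $0$ would do just as well). The execution, however, goes wrong at the estimate of $\|id^1\|$. The map $id^1\colon (s^{0,\Omega}_{p-\epsilon,p-\epsilon}f)_\mu \to (s^{0,\Omega}_{p,2}f)_\mu$ is \emph{not} bounded up to a $\mu$-power: because of the built-in weights, $(s^{0,\Omega}_{q,q}f)_\mu = 2^{-\mu/q}\ell_q^{D_\mu}$ (you use exactly this for $x_n(id^2)$, so the two halves of your argument are inconsistent), lowering the integrability index from $p-\epsilon$ to $p$ costs the exponential factor $2^{\mu(\frac{1}{p-\epsilon}-\frac1p)}$; your quotation of Corollary \ref{ba2-1} drops precisely the factor $2^{\mu(\frac{1}{p-\epsilon}-\frac1p)_+}$. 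The correct move, and the one the paper makes, is to apply Lemma \ref{ba3} to $id^1$ itself (legitimate because $p-\epsilon<p$; no relation between $p_0$ and $p$ is needed), which gives $\|id^1\|\lesssim 2^{\mu(\frac{1}{p-\epsilon}-\frac1p)}$ with no $\mu$-power. Multiplying with $x_n(id^2)= 2^{\mu(-t+\frac1{p_0}-\frac1{p-\epsilon})}x_n(id_{p_0,p-\epsilon}^{D_\mu})$ then yields \eqref{case5} exactly, with nothing left to ``absorb''.

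By contrast, the ba3-routes you actually describe do not work: the Sobolev-type embedding $s^{t,\Omega}_{p_0,p_0}f\hookrightarrow s^{t-\frac1{p_0}+\frac1{p-\epsilon},\Omega}_{p-\epsilon,p-\epsilon}f$ and the log-free bound for the full identity from $p_0$ to $p$ both require the source index to be smaller than the target index, which is not assumed in the lemma (it is applied precisely in the regime $p<p_0$, e.g.\ in Theorems \ref{the4-1} and \ref{the4-2}); moreover a norm bound on $id_\mu^*$ alone would lose the factor $x_n(id_{p_0,p-\epsilon}^{D_\mu})$, which is the whole point. Finally, your Step 3 rests on the intermediate claim $x_n(id_\mu^*)\lesssim 2^{\mu(-t+\frac1{p_0}-\frac1{p-\epsilon})}x_n(id_{p_0,p-\epsilon}^{D_\mu})$ up to a $\mu$-power; this is strictly stronger than \eqref{case5} and is false in general: for $n=1$ and $p_0\le p-\epsilon$ it would give $\|id_\mu^*\|\lesssim 2^{\mu(-t+\frac1{p_0}-\frac1{p-\epsilon})}$ up to $\mu$-powers, whereas testing a single basis coefficient shows $\|id_\mu^*\|\ge 2^{\mu(-t+\frac1{p_0}-\frac1p)}$, an exponentially larger quantity. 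So deducing the lemma by ``absorbing $2^{-\mu(\frac1{p-\epsilon}-\frac1p)}\le 1$'' patches a wrong inequality rather than proving the right one; once $\|id^1\|$ is estimated correctly the exponents combine to $-t+\frac1{p_0}-\frac1p$ on the nose and the proof closes.
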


\begin{proof}
We consider the following diagram

\tikzset{node distance=4cm, auto}

\begin{center}
\begin{tikzpicture}
 \node (H) {$(s^{t,\Omega}_{p_0,p_0}b)_\mu $};
 \node (L) [right of =H] {$(s^{0,\Omega}_{p,2}f)_\mu$};
 \node (L2) [right of =H, below of =H, node distance = 2cm ] {$ (s^{r,\Omega}_{p-\epsilon,p-\epsilon}f)_\mu$};
 \draw[->] (H) to node {$id^*_\mu$} (L);
 \draw[->] (H) to node [swap] {$id^2$} (L2);
 \draw[->] (L2) to node [swap] {$id^1$} (L);
 \end{tikzpicture}
\end{center}

\noindent
Clearly, $x_n(id_{\mu}^* )\leq \| \, id^1 \, \| \, x_n(id^{2})$ and by Lemma \ref{ba3} we have
$$\| \, id^1\, \|\lesssim 2^{\mu(-r+\frac{1}{p-\epsilon}-\frac{1}{p})} \, .$$
Further we know
\[
x_n(id^2) = 2^{\mu(r-\frac{1}{p-\epsilon}-t+\frac{1}{p_0})} \, x_n(id_{p_0,p-\epsilon}^{D_{\mu}})\, .
\]
Inserting the previous inequality in this identity we obtain \eqref{case5}. 
\end{proof}

\begin{lemma}
For all $\mu \in \N_0$ and all $n \in \N$ we have 
\begin{equation}\label{eqlow1}
x_n(id_{\mu}^*)\leq x_n(id^*) \, .
\end{equation}
\end{lemma}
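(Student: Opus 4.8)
The plan is to realise the block operator $id_\mu^*$ as the composition of the full operator $id^*$ with a norm-one coordinate projection, and then to invoke the ideal property $(c)$ of the $s$-numbers.

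First I would fix $\mu\in\N_0$ and introduce the coordinate projection $P_\mu: s^{t,\Omega}_{p_0,p_0}b\to s^{t,\Omega}_{p_0,p_0}b$ defined by $(P_\mu\lambda)_{\bar{\nu},\bar{m}}:=\lambda_{\bar{\nu},\bar{m}}$ if $|\bar{\nu}|_1=\mu$ and $(P_\mu\lambda)_{\bar{\nu},\bar{m}}:=0$ otherwise. Since the quasi-norm of $s^{t,\Omega}_{p_0,p_0}b$ is an $\ell_{p_0}$-type sum over $\bar{\nu}\in\N_0^d$ (with an inner $\ell_{p_0}$-sum over $\bar{m}\in A_{\bar{\nu}}^{\Omega}$), deleting all blocks with $|\bar{\nu}|_1\neq\mu$ can only decrease it; hence $P_\mu$ is well defined and $\|P_\mu\|\le 1$ (in the case $p_0=\infty$ the same holds, with the sum replaced by a supremum over the smaller index set).

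Next I would record the trivial factorisation $id_\mu^*=id^*\circ P_\mu$: for every admissible $\lambda$ the sequence $P_\mu\lambda$ has entries $\lambda_{\bar{\nu},\bar{m}}$ for $|\bar{\nu}|_1=\mu$ and $0$ otherwise, and $id^*$ acts as the formal identity on sequences, so $id^*(P_\mu\lambda)$ is exactly the sequence defining $id_\mu^*\lambda$ as an element of $s^{0,\Omega}_{p,2}f$. Applying property $(c)$ of the $s$-numbers with $A=P_\mu$, $T=id^*$ and $B$ the identity on $s^{0,\Omega}_{p,2}f$ then yields
$$
x_n(id_\mu^*)=x_n\big(id^*\circ P_\mu\big)\le \|P_\mu\|\,x_n(id^*)\le x_n(id^*)
$$
for all $n\in\N$, which is \eqref{eqlow1}.

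There is no genuine obstacle here; the only thing to verify is $\|P_\mu\|\le1$, which is just the monotonicity of the $\ell_{p_0}$-quasi-norm under restriction of the index set. If one prefers to work in the target space, the same conclusion follows from the dual factorisation $id_\mu^*=R_\mu\circ id^*$, where $R_\mu$ is the analogous coordinate projection on $s^{0,\Omega}_{p,2}f$; here $\|R_\mu\|\le1$ because restricting the summation set decreases, pointwise, the function inside the $L_p$-quasi-norm defining the $f$-space, and again property $(c)$ gives the claim.
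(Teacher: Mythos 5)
Your proof is correct and follows essentially the same route as the paper: both realise $id_\mu^*$ as $id^*$ sandwiched with norm-one canonical maps (the paper uses the canonical embedding of the block and the canonical projection onto the block, you use a single coordinate projection on the source, or alternatively on the target) and then apply the ideal property $(c)$ of the $s$-numbers. The verification that the projection has norm at most one is exactly the monotonicity argument you give, so no gap remains.
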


\begin{proof}
We consider the following diagram
\beqq
\begin{CD}
s^{t,\Omega}_{p_0,p_0}b @ > id^* >> s^{0,\Omega}_{p,2}f\\
@A id^1 AA @VV id^2 V\\
(s^{t,\Omega}_{p_0,p_0}b)_\mu @ > id_{\mu}^* >> (s^{0,\Omega}_{p,2}f)_\mu \, .
\end{CD}
\eeqq
Here $id^1$ is the canonical embedding and $id^2$ is the canonical projection.
Since $id_{\mu}^* = id_2 \circ id^* \circ id_1$ the property $(c)$ of the $s$-numbers yields
$$
x_n(id_{\mu}^*) \leq \| \, id^1\, \| \, \|\, id^2 \, \| \, x_n(id^*) = x_n(id^*)\, .
$$
This completes the proof.
\end{proof}


\subsection{Weyl numbers of embeddings of sequence spaces related to spaces of dominating mixed smoothness - results}
\label{sequence3}


Now we are in position to deal with the Weyl numbers of $ id^* :\ s_{p_0,p_0}^{t,\Omega}b\to s_{p,2}^{0,\Omega}f$.
We have to continue with the proof already started in \eqref{ws-15}-\eqref{sum1}.
Therefore we need to distinguish several cases. Always the positions of $p_0$ and $p$ relative to $2$ are of importance.


\subsubsection{The case $0<p_0\leq 2\leq p< \infty$}


\begin{theorem}\label{the1-1} Let $0< p_0\leq 2\leq p< \infty $ and $t>\frac{1}{p_0}-\frac{1}{p}$. Then
\[
x_n(id^*) \asymp n^{-t+\frac{1}{2}-\frac{1}{p}} (\log n)^{(d-1)(t+\frac{1}{p}-\frac{1}{p_0})}\, \, , \qquad n\geq 2\, .
\]
\end{theorem}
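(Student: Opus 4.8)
The plan is to prove the two-sided estimate separately, following the machinery already set up in Subsection \ref{sequence2}. Throughout, recall that in this regime $p_0\le 2\le p$ we have $\alpha=t-(\frac{1}{p_0}-\frac 1p)_+ = t$ (since $\frac{1}{p_0}-\frac 1p$ could be either sign, but the relevant quantity in \eqref{ws-17} is $t-(\frac{1}{p_0}-\frac 1p)_+$; when $p_0\le 2\le p$ and $\frac{1}{p_0}>\frac 1p$ this is $t-\frac{1}{p_0}+\frac 1p>0$ by hypothesis, and the estimates below are written accordingly). For the upper estimate I would start from \eqref{sum1},
\[
x^{\rho}_n(id^*)\lesssim \sum_{\mu=J+1}^{L} x^{\rho}_{n_{\mu}}(id_{\mu}^*)+ 2^{-L\alpha\rho}L^{(d-1)\rho(\frac 12-\frac{1}{p_0})_+},
\]
and bound each $x_{n_\mu}(id_\mu^*)$ using the right-hand inequality in Proposition \ref{wichtig1}(ii), namely $x_{n}(id_{\mu}^*)\lesssim \mu^{(d-1)(\frac 12-\frac 1p)}2^{\mu(-t+\frac{1}{p_0}-\frac 1p)}x_n(id_{p_0,p}^{D_\mu})$. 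Since $p_0\le 2\le p$, the Weyl numbers $x_n(id_{p_0,p}^{m})$ are those of Case $II$ in Appendix A; for $n$ in the range where they still decay we have roughly $x_n(id_{p_0,p}^m)\asymp \min(1, n^{-1/2}\cdot(\text{stuff}))$ with the relevant bound $x_n(id_{p_0,p}^{D_\mu})\lesssim D_\mu^{1/p-1/2+?}\,n^{-1/2}$ type behaviour — I would quote the precise form from Appendix A. The point of the whole proof is then the correct choice of $J$, $L$ and the distribution $n_\mu$; this is the step the authors themselves flag as "the real problem." I expect to choose $J$ so that $2^J J^{d-1}\asymp n$ (so the first block of exact terms consumes about $n$ degrees of freedom), $L$ a constant multiple of $J$ (or $L=cJ$ with $c$ chosen so the tail \eqref{ws-17} matches the target rate $n^{-t+1/2-1/p}(\log n)^{(d-1)(\ldots)}$), and $n_\mu$ geometrically decaying in $\mu-J$ so that $\sum n_\mu\asymp n$ while $\sum_\mu x^\rho_{n_\mu}(id_\mu^*)$ is dominated by its largest term. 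Carrying out the optimization and checking that all three pieces in \eqref{ws-16} are of the same order $n^{-\alpha\rho}(\log n)^{(d-1)\beta\rho}$ with $\alpha=t-\frac 12+\frac 1p$, $\beta=t+\frac 1p-\frac{1}{p_0}$ is the main computational obstacle.

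For the lower estimate I would exploit \eqref{eqlow1}: $x_n(id^*)\ge x_n(id_\mu^*)$ for every $\mu$, together with the left-hand inequality of Proposition \ref{wichtig1}(ii), $x_n(id_\mu^*)\gtrsim 2^{\mu(-t+\frac{1}{p_0}-\frac 12)}x_n(id_{p_0,2}^{D_\mu})$. Now $x_n(id_{p_0,2}^{m})$ with $p_0\le 2$ is again governed by Appendix A (this is essentially Case $I$/$II$, where $x_n(id_{p_0,2}^m)\asymp 1$ for $n\lesssim m$ up to a harmless factor). Choosing $\mu\asymp \log n$ more precisely with $D_\mu\asymp n^{1+\varepsilon}$ (so that $n\le c D_\mu$ and the finite-dimensional Weyl number is still of order a constant or a small power), and then optimizing over the admissible $\mu$, produces the matching lower bound $n^{-t+1/2-1/p}(\log n)^{(d-1)(t+1/p-1/p_0)}$. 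The subtlety here is that a single block $id_\mu^*$ is not enough to recover the $(\log n)$-power with the correct exponent $(d-1)(t+\frac 1p-\frac{1}{p_0})$; one typically needs to use a diagonal/direct-sum lower bound over a range of $\mu$'s near $\log n$, or equivalently embed a suitable $\ell_{p_0}$-block of dimension $\asymp n(\log n)^{d-1}$ into the source and track how the smoothness weight $2^{\mu(t-1/p_0)}$ interacts with the number $D_\mu\asymp\mu^{d-1}2^\mu$ of available indices at level $\mu$. I would assemble this via Lemma \ref{ba1}(ii) (which identifies $(s^{t,\Omega}_{p_0,p_0}b)_\mu=2^{\mu(t-1/p_0)}\ell_{p_0}^{D_\mu}$) and the known lower bounds for $x_n(id_{p_0,p}^m)$, splitting the target space norm via a Littlewood–Paley / $\ell_2$-valued argument to pass from $s^{0,\Omega}_{p,2}f$ back to $\ell_p^{D_\mu}$.

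Finally I would reconcile the two bounds: both give $x_n(id^*)\asymp n^{-t+1/2-1/p}(\log n)^{(d-1)(t+1/p-1/p_0)}$, noting that the condition $t>\frac{1}{p_0}-\frac 1p$ guarantees $\alpha>0$ (so the estimate is genuinely decaying) and that $\beta=t+\frac 1p-\frac{1}{p_0}>0$ as well, so the logarithmic factor is present and not a nuisance term. The main obstacle, to reiterate, is purely the bookkeeping of the block decomposition — getting $J$, $L$, and $(n_\mu)$ to balance the three sums in \eqref{ws-16} against the tail \eqref{ws-17} — since every individual estimate on $x_n(id_\mu^*)$ and on $x_n(id_{p_0,p}^{D_\mu})$ is already available from Propositions \ref{wichtig1}, Lemma \ref{ba1}, and Appendix A.
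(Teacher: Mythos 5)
Your overall architecture is the same as the paper's (the splitting \eqref{ws-15}--\eqref{sum1}, exact inversion on the first $J$ blocks, $n_\mu\asymp D_\mu 2^{(J-\mu)\lambda}$ on the middle range, tail controlled by \eqref{ws-17}, and passage to all $n$ by monotonicity), but the two finite-dimensional inputs you pick are exactly the ones that do \emph{not} give the sharp exponents when $p>2$. For the upper bound you propose the right-hand inequality of Proposition \ref{wichtig1}(ii), which carries the factor $\mu^{(d-1)(\frac12-\frac1p)}$; after summation this factor survives as an extra $(\log n)^{(d-1)(\frac12-\frac1p)}$, so you would only reach the exponent $(d-1)(t+\frac12-\frac1{p_0})$ instead of $(d-1)(t+\frac1p-\frac1{p_0})$. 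The paper's proof avoids precisely this loss: for $p>2$ it uses \eqref{case5}, i.e.\ it routes the block identity through $\ell_{p-\epsilon}$ with $2\le p-\epsilon<p$ by means of the level-wise Sobolev-type embedding of Lemma \ref{ba3}, which produces $x_{n_\mu}(id_\mu^*)\lesssim 2^{\mu(-t+\frac1{p_0}-\frac1p)}x_{n_\mu}(id_{p_0,p-\epsilon}^{D_\mu})$ with no $\mu$-power, and only for $p=2$ does it fall back on \eqref{case2}. Also your guess for the finite-dimensional asymptotics, ``$x_n(id_{p_0,p}^{D_\mu})\lesssim D_\mu^{1/p-1/2+?}n^{-1/2}$'', is the approximation-number (Gluskin-type) behaviour; the Weyl numbers in the regime $0<p_0\le2\le p$ satisfy $x_n(id_{p_0,p}^m)\asymp n^{\frac12-\frac1{p_0}}$ for $2n\le m$ (Appendix A, property (a)), and it is this value that drives the whole computation.

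The lower bound as you set it up would also fall short. Using the left-hand inequality of \eqref{case2} (target $\ell_2$) together with the \emph{correct} value $x_n(id_{p_0,2}^{D_\mu})\asymp n^{\frac12-\frac1{p_0}}$ (your claim that it is $\asymp1$ is false for $p_0<2$; that asymptotics belongs to $2\le p_0$) gives, with $n\asymp D_\mu$, only $x_n(id^*)\gtrsim n^{-t}(\log n)^{(d-1)(t+\frac12-\frac1{p_0})}$, which is strictly weaker than the claimed rate $n^{-t+\frac12-\frac1p}(\log n)^{(d-1)(t+\frac1p-\frac1{p_0})}$ as soon as $p>2$. The paper instead combines \eqref{eqlow1} with \eqref{case4} (target $\ell_p$), chooses $n=[D_\mu/2]$, and uses $x_n(id_{p_0,p}^{D_\mu})\gtrsim D_\mu^{\frac12-\frac1{p_0}}$; since $D_\mu\asymp\mu^{d-1}2^\mu$ already carries the $(d-1)$-st power of the logarithm, a \emph{single} block suffices to produce the full factor $(\log n)^{(d-1)(t+\frac1p-\frac1{p_0})}$ — the multi-block or direct-sum device you anticipate needing is unnecessary. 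So while the bookkeeping scheme you describe is the right one, the proof as proposed would terminate with non-matching upper and lower bounds for all $p>2$, and the missing ideas are precisely the $\epsilon$-detour via Lemma \ref{ba3} on the upper side and the $\ell_p$-target inequality \eqref{case4} on the lower side.
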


\begin{proof}
{\em Step 1.} Estimate from below.
Since $p\geq 2$, from (\ref{eqlow1}) and (\ref{case4}) we derive
$$ x_n(id^*) \gtrsim 2^{\mu(-t+\frac{1}{p_0}-\frac{1}{p})}\, x_n(id_{p_0,p}^{D_\mu})\, .$$
Next we choose $n=[\frac{D_{\mu}}{2}]$ (here $[\, x\, ]$ denotes the integer part of $x\in \re$). Then from 
property (a) in Appendix A we get
\[
x_n(id_{p_0,p}^{D_\mu})\gtrsim (D_{\mu})^{\frac{1}{2}-\frac{1}{p_0}} \asymp (2^{\mu}\, \mu^{d-1})^{\frac{1}{2}-\frac{1}{p_0}}\, ,
\]
which implies
\[
 x_n(id^*) \gtrsim 2^{\mu( -t+\frac{1}{2} -\frac{1}{p})}\, \mu^{(d-1)(\frac{1}{2}-\frac{1}{p_0})}\, .
\]
Because of $ 2^{\mu}\asymp \frac{n}{\log^{d-1}n} $
we conclude
$$x_n(id^*) \gtrsim n^{ -t+\frac{1}{2} -\frac{1}{p}}(\log n)^{(d-1)( t-\frac{1}{p_0} +\frac{1}{p})} \, .$$
{\em Step 2.} Estimate from above. Let $L,J$ and $\alpha$ as in \eqref{ws-15}-\eqref{ws-17}. 
By our assumptions we obviously have
$$2^{-\alpha L}L^{(d-1)(\frac{1}{2}-\frac{1}{p_0})_+} = 2^{L(-t+\frac{1}{p_0}-\frac{1}{p})}\, . $$
For given $J$ we choose $L>J$ large enough such that
\be\label{ws-21}
2^{L(-t+\frac{1}{p_0}-\frac{1}{p})} \lesssim J^{(d-1)(\frac{1}{2}-\frac{1}{p_0})}\, 2^{J(-t+\frac{1}{2}-\frac{1}{p})} \, .
\ee
For the sum in (\ref{sum1}), we define 
\[ 
n_{\mu}: = [D_{\mu}\, 2^{(J-\mu)\lambda}] \leq \dfrac{D_{\mu}}{2},\qquad J+1\leq \mu\leq L\, , 
\]
where $\lambda>1$ is at our disposal. We choose $\lambda$ such that
\begin{equation}\label{cond1}
t-\dfrac{1}{2}+\dfrac{1}{p}>\lambda\bigg(\dfrac{1}{p_0}-\dfrac{1}{2}\bigg)
\end{equation}
which is always possible under the given restrictions.
Then 
\be\label{ws-22}
\sum_{\mu=J+1}^{L}n_{\mu}\asymp J^{d-1}2^J
\ee
follows. If $p>2$, we choose $\epsilon>0$ such that $2\leq p-\epsilon$. From (\ref{case5}) we obtain
$$
x_{n_{\mu}}(id_{\mu}^*) \lesssim 2^{\mu(-t+\frac{1}{p_0}-\frac{1}{p})}\, x_{n_{\mu}}(id_{p_0,p-\epsilon}^{D_\mu}).$$
If $p=2$, then (\ref{case2}) implies
$$x_{n_{\mu}}(id_{\mu}^*) \lesssim 2^{\mu(-t+\frac{1}{p_0}-\frac{1}{2})}\, x_{n_{\mu}}(id_{p_0,2}^{D_\mu})\, .$$
Employing property (a) in Appendix A we obtain
\beqq
x_{n_{\mu}}(id_{\mu}^*)& \lesssim & 2^{\mu(- t+\frac{1}{p_0}-\frac{1}{p})}\, \Big(\mu^{d-1} \, 2^{\mu} \, 2^{(J-\mu)\lambda}\Big)^{\frac{1}{2}-\frac{1}{p_0}}
\\
& = & \mu^{(d-1)(\frac{1}{2}-\frac{1}{p_0})}\, 2^{\mu(- t+\frac{1}{2}-\frac{1}{p})} \, 2^{(J-\mu)\lambda(\frac{1}{2}-\frac{1}{p_0})}\, .
\eeqq
Our special choice of $\lambda$ in \eqref{cond1} yields
\be\label{ws-24}
\sum_{\mu=J+1}^{L} x^{\rho}_{n_{\mu}}(id_{\mu}^*) \lesssim J^{(d-1)\rho(\frac{1}{2}-\frac{1}{p_0})}\, 2^{J\rho(- t+\frac{1}{2}-\frac{1}{p})}\, .
\ee
Inserting \eqref{ws-21} and \eqref{ws-24} into \eqref{sum1} leads to
\[
x^{\rho}_n(id^*) \lesssim J^{(d-1)\rho(\frac{1}{2}-\frac{1}{p_0})}\, 2^{J\rho(- t+\frac{1}{2}-\frac{1}{p})} \, .
\]
Notice
\[
n = 1 + \sum_{\mu=0}^L (n_\mu -1) = 1+ \sum_{\mu=0}^J D_\mu + \sum_{\mu=J+1}^L (
[ D_{\mu}\, 2^{(J-\mu)\lambda}]-1)\asymp J^{d-1}\, 2^J\, ,
\]
see \eqref{ws-18}, \eqref{ws-23} and \eqref{ws-22}.
Hence, our proof works for a certain subsequence $(n_J)_{J=1}^\infty$ of the natural numbers. More exactly, with
\[
n_J := 1 + \sum_{\mu=0}^J D_\mu + \sum_{\mu=J+1}^L (
[ D_{\mu}\, 2^{(J-\mu)\lambda}]-1)\, , \qquad J \in \N\, , 
\] 
and $L=L(J)$ chosen as the minimal admissible value in \eqref{ws-21}
we find
\[
x_{n_J}(id^*) \lesssim J^{(d-1)(\frac{1}{2}-\frac{1}{p_0})}\, 2^{J(- t+\frac{1}{2}-\frac{1}{p})} \, .
\]
We already know
\[
A\, J^{d-1}\, 2^J\le n_J \le B \, J^{d-1}\, 2^J\, , \qquad J \in \N\, ,
\] 
for suitable $A,B>0$. Without loss of generality we assume $B \in \N$.
 Then we conclude from the monotonicity of the Weyl numbers
\[
x_{B \, J^{d-1}\, 2^J}(id^*) \lesssim \log (B \, J^{d-1}\, 2^J)^{(d-1)(\frac{1}{2}-\frac{1}{p_0})}\, 
\Big(\frac{B \, J^{d-1}\, 2^J}{\log^{d-1} (B \, J^{d-1}\, 2^J)}\Big)^{- t+\frac{1}{2}-\frac{1}{p}} \, .
\]
Employing one more times the monotonicity of the Weyl numbers and in addition its polynomial behaviour we can switch 
from the subsequence $(B \, J^{d-1}\, 2^J)_J$ to $n\in \N$ in this formula by possibly changing the constant behind $\lesssim$.
This finishes our proof.
\end{proof}


\subsubsection{The case $2\leq p_0 \le p< \infty$}


\begin{theorem}\label{the2-1}
Let $2\leq p_0 \le p<\infty$ and $t>\frac{1}{p_0}-\frac{1}{p}$. Then
\[
x_n(id^*)\asymp n^{-t+\frac{1}{p_0}-\frac{1}{p}} (\log n)^{(d-1)(t+\frac{1}{p}-\frac{1}{p_0})}\, , \qquad n \geq 2\, .
\]
\end{theorem}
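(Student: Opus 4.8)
The plan is to follow the same two-step scheme (lower bound first, then upper bound via the splitting \eqref{ws-15}–\eqref{sum1}) that was used for Theorem \ref{the1-1}, adapting each step to the regime $2 \le p_0 \le p < \infty$. For the estimate from below, I would combine \eqref{eqlow1} with \eqref{case4} to get $x_n(id^*) \gtrsim 2^{\mu(-t+\frac{1}{p_0}-\frac{1}{p})} x_n(id_{p_0,p}^{D_\mu})$ for every $\mu$. Now, since $2 \le p_0 \le p$, the relevant case of the $\ell_{p_0}^m \to \ell_p^m$ Weyl numbers (property in Appendix A) gives, for $n = [D_\mu/2]$, that $x_n(id_{p_0,p}^{D_\mu}) \gtrsim 1$ (in this regime the $m^{1/p - 1/p_0}$-type factor is absorbed into a constant when $n \asymp m$; the precise exponent from Appendix A should be checked here, but it yields the constant order). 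This produces $x_n(id^*) \gtrsim 2^{\mu(-t+\frac{1}{p_0}-\frac{1}{p})}$, and then using $2^\mu \asymp n/\log^{d-1} n$ and $\mu \asymp \log n$ one converts this into $x_n(id^*) \gtrsim n^{-t+\frac{1}{p_0}-\frac{1}{p}} (\log n)^{(d-1)(t+\frac1p-\frac1{p_0})}$, which is the claimed lower bound.

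For the estimate from above, I would reuse the machinery \eqref{ws-15}–\eqref{sum1}: here $\alpha = t - (\frac{1}{p_0}-\frac{1}{p})_+ = t - \frac{1}{p_0} + \frac{1}{p}$ since $p_0 \le p$, and $(\frac12 - \frac{1}{p_0})_+ = 0$ since $p_0 \ge 2$, so the tail term \eqref{ws-17} is simply $2^{-L\alpha\rho}$ with $\rho = \min(1,p)$. Choose $n_\mu := [D_\mu 2^{(J-\mu)\lambda}] \le D_\mu/2$ for $J+1 \le \mu \le L$ with $\lambda > 1$ to be fixed, pick $L = L(J)$ minimal so that $2^{-L\alpha} \lesssim 2^{J(-t+\frac{1}{p_0}-\frac{1}{p})}$ (the right side being the target main term up to the $\log$ power), and bound the middle sum. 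For the blocks $x_{n_\mu}(id_\mu^*)$ I would invoke \eqref{case2} (valid since $p \ge 2$): $x_{n_\mu}(id_\mu^*) \lesssim \mu^{(d-1)(\frac12-\frac1p)} 2^{\mu(-t+\frac{1}{p_0}-\frac1p)} x_{n_\mu}(id_{p_0,p}^{D_\mu})$. Since $n_\mu \le D_\mu/2$ and $2 \le p_0 \le p$, Appendix A gives $x_{n_\mu}(id_{p_0,p}^{D_\mu}) \lesssim n_\mu^{-1/2 + \ldots}$ of the appropriate order; combined with $n_\mu \asymp \mu^{d-1} 2^\mu 2^{(J-\mu)\lambda}$ this yields a geometric sum in $\mu$ controlled, after choosing $\lambda$ large enough (analogous to \eqref{cond1}), by $\mu^{(d-1)(\cdots)} 2^{J(-t+\frac{1}{p_0}-\frac1p)}$ at $\mu = J$. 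Feeding \eqref{ws-21}-type and \eqref{ws-24}-type bounds into \eqref{sum1} gives $x_n^\rho(id^*) \lesssim J^{(d-1)\rho(t+\frac1p-\frac1{p_0})} 2^{J\rho(-t+\frac1{p_0}-\frac1p)}$ along the subsequence $n_J \asymp J^{d-1} 2^J$, and the usual monotonicity-plus-polynomial-behaviour argument at the end of the proof of Theorem \ref{the1-1} transfers this to all $n \ge 2$.

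The main obstacle I anticipate is pinning down the correct Weyl-number estimate for $id_{p_0,p}^{D_\mu}$ in the regime $2 \le p_0 \le p$ and verifying that the exponents of $2^\mu$ and of $\mu$ match the claimed $\alpha = t - \frac{1}{p_0}+\frac1p$ and $\beta = t + \frac1p - \frac1{p_0}$ exactly; in particular one must check that the $\log$-power $\beta$ comes out as $t + \frac1p - \frac1{p_0}$ rather than $t + \frac12 - \frac1{p_0}$ (contrast regions $III$ vs. $IV,V$ in Theorem \ref{main}), which is what distinguishes this case and forces the use of \eqref{case2} with the $\ell_p$ rather than $\ell_2$ target. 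A secondary point of care is that when $p = p_0$ the Sobolev-type improvement is vacuous, so one should confirm the argument degrades gracefully (it does: the block estimates still hold, and the main term is just $n^{-t}(\log n)^{(d-1)t}$). Everything else is a routine repetition of the bookkeeping already carried out for Theorem \ref{the1-1}.
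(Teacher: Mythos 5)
Your lower bound is fine and coincides with the paper's argument (via \eqref{eqlow1}, \eqref{case4} and property (a) of Appendix A, which indeed gives $x_{[D_\mu/2]}(id_{p_0,p}^{D_\mu})\asymp 1$ here). The upper bound, however, has a genuine gap, and it sits exactly at the point you flagged as "to be checked". In the regime $2\le p_0\le p\le\infty$ Appendix A, property (a), gives $x_n(id_{p_0,p}^{D_\mu})\asymp 1$ for $2n\le D_\mu$: there is \emph{no} decay in $n$ at all, so your claim that $x_{n_\mu}(id_{p_0,p}^{D_\mu})\lesssim n_\mu^{-1/2+\dots}$ is false, and the geometric-gain mechanism of Thm. \ref{the1-1} (choosing $\lambda$ as in \eqref{cond1}) produces nothing. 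With \eqref{case2} and the trivial bound $x_{n_\mu}(id_{p_0,p}^{D_\mu})\lesssim 1$ the middle sum in \eqref{sum1} is only $\lesssim J^{(d-1)\rho(\frac12-\frac1p)}2^{J\rho(-t+\frac1{p_0}-\frac1p)}$, i.e.\ you end up with a parasitic factor $(\log n)^{(d-1)(\frac12-\frac1p)}$. A second error compounds this: for $p_0>2$ one has $(\frac12-\frac1{p_0})_+=\frac12-\frac1{p_0}>0$, not $0$, so the tail term \eqref{ws-17} is $2^{-L\alpha\rho}L^{(d-1)\rho(\frac12-\frac1{p_0})}$ rather than $2^{-L\alpha\rho}$; with $L\asymp J$ this alone already spoils the claimed logarithmic exponent $(d-1)(t+\frac1p-\frac1{p_0})$.

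The paper's proof of Thm. \ref{the2-1} therefore does not use the block splitting \eqref{ws-15}--\eqref{sum1} at all for the upper bound. Instead it approximates $id^*$ by the partial-sum (coordinate projection) operator $S_J$ of rank $\asymp 2^JJ^{d-1}$ and estimates $\|id^*-S_J\|\le\sum_{\mu>J}\|id_\mu^*\|$, where the decisive ingredient is Lemma \ref{ba3} (the Sobolev-type embedding on the $f$-scale, used together with $(s^{t,\Omega}_{p_0,p_0}b)_\mu=(s^{t,\Omega}_{p_0,p_0}f)_\mu$ from Lemma \ref{ba1}): it gives $\|id_\mu^*\|\lesssim 2^{-\mu(t-\frac1{p_0}+\frac1p)}$ \emph{without} the $\mu^{(d-1)(\frac12-\frac1{p_0})}$ factor of Corollary \ref{ba2-1}. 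Summing the geometric series yields $a_n(id^*)\lesssim 2^{-J(t-\frac1{p_0}+\frac1p)}$ for $n\asymp 2^JJ^{d-1}$, and $x_n\le a_n$ plus the usual monotonicity argument finishes the proof; the log power $(d-1)(t+\frac1p-\frac1{p_0})$ then comes solely from $2^J\asymp n/\log^{d-1}n$. If you insist on a block-type argument, the repair is essentially the same observation: replace \eqref{case2} by \eqref{case5} (or directly by the Lemma \ref{ba3} norm bound) so that no $\mu$-power appears, at which point the intermediate blocks give no advantage over the plain tail sum and the argument collapses to the rank-cutoff proof just described.
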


\begin{proof}
{\em Step 1.} Estimate from below. We apply the same arguments as in proof of the previous theorem.
However, notice that $x_n (id^{D_\mu}_{p_0,p})$ has a different behaviour, see property (a) in Appendix A.
With $n=[{D_{\mu}}/{2}]$ and $x_n(id_{p_0,p}^{D_\mu})\gtrsim 1$ we conclude that
\[
x_n(id^*) \gtrsim 2^{\mu(-t+\frac{1}{p_0}-\frac{1}{p})}\, x_n(id_{p_0,p}^{D_\mu})\gtrsim 2^{\mu(-t+\frac{1}{p_0}-\frac{1}{p})}\, ,
\]
see \eqref{eqlow1} and \eqref{case4}. Because of $2^{\mu}\asymp \frac{n}{\log^{d-1}n}$ this results in the estimate
\[
x_n(id^*) \gtrsim n^{ -t+\frac{1}{p_0} -\frac{1}{p}}(\log n)^{(d-1)( t-\frac{1}{p_0} +\frac{1}{p})}\, .
\]
{\em Step 2.} Estimate from above. For $J\in \mathbb{N}$ and $\lambda\in s^{t,\Omega}_{p_0,p_0}b$ we put
\[
S_J\lambda :=\sum_{\mu=0}^{J}\sum_{|\bar{\nu}|_1=\mu}\sum_{\bar{m}\in A_{\bar{\nu}}^{\Omega}}\lambda_{\bar{\nu},\bar{m}}e^{\bar{\nu},\bar{m}}\, , 
\]
where $\{e^{\bar{\nu},\bar{m}}, \bar{\nu}\in \mathbb{N}_0^d,\ \bar{m}\in A_{\bar{\nu}}^{\Omega}\}$ is the canonical orthonormal basic of 
$s^{0,\Omega}_{2,2}b$. Obviously
\[
\| id^*-S_J: s^{t,\Omega}_{p_0,p_0}b \to s^{0,\Omega}_{p,2}f\|
\leq \sum_{\mu=J+1}^{\infty}\| id^*_{\mu}: (s^{t,\Omega}_{p_0,p_0}b)_{\mu} \to (s^{0,\Omega}_{p,2}f)_{\mu}\|.
\]
Using Lem. \ref{ba3} and $(s^{t,\Omega}_{p_0,p_0}b)_{\mu} = (s^{t,\Omega}_{p_0,p_0}f)_{\mu}$ 
we get
\[
\| id^*-S_J: s^{t,\Omega}_{p_0,p_0}b \to s^{0,\Omega}_{p,2}f\|\leq \sum_{\mu=J+1}^{\infty}2^{-\mu(t-\frac{1}{p_0}+\frac{1}{p})}
\lesssim 2^{-J(t-\frac{1}{p_0}+\frac{1}{p})} \, .
\] 
Because of $\text{rank}(S_J)\asymp 2^JJ^{d-1}$ we conclude in case $n=2^JJ^{d-1}$ that 
\[
a_n( id^*)\lesssim 2^{-J(t-\frac{1}{p_0}+\frac{1}{p})}\, .
\]
Since $x_n \leq a_n$
we can complete the proof of the estimate from above by arguing as at the end of the proof of Thm. \ref{the1-1}.
\end{proof}


\subsubsection{The case $2\leq p < p_0\leq \infty$}


\begin{theorem}\label{the4-1}
Let $ 2\leq p < p_0 \leq \infty$ and $t>\frac{ {1}/{p}-{1}/{p_0}}{{p_0}/{2}-1}$. Then
\[ 
x_n(id^*) \asymp n^{-t+\frac{1}{p_0}-\frac{1}{p}} \, (\log n)^{(d-1)(t+\frac{1}{p}-\frac{1}{p_0})} \, , \qquad n \geq 2\, .
\]
\end{theorem}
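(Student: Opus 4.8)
I would follow the two-step pattern already used for Theorems \ref{the1-1} and \ref{the2-1}; the only genuinely new ingredient is the choice of the parameters in the splitting \eqref{ws-15}, which turns out to be governed by the hypothesis $t>\frac{1/p-1/p_0}{p_0/2-1}$. For the estimate from below I would argue exactly as in Theorem \ref{the2-1}: since $p\ge 2$, combining \eqref{eqlow1} and \eqref{case4} gives $x_n(id^*)\gtrsim 2^{\mu(-t+1/p_0-1/p)}\,x_n(id_{p_0,p}^{D_\mu})$ for every $\mu\in\N_0$; taking $n=[D_\mu/2]$ and using property (a) in Appendix A, which gives $x_{[D_\mu/2]}(id_{p_0,p}^{D_\mu})\gtrsim 1$ (the Weyl numbers of $id_{p_0,p}^{D_\mu}$ with $2\le p<p_0$ are still $\gtrsim 1$ at $n\asymp D_\mu$), one obtains $x_n(id^*)\gtrsim 2^{\mu(-t+1/p_0-1/p)}$. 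Since $2^\mu\asymp n/(\log n)^{d-1}$ along this subsequence and $-t+\tfrac1{p_0}-\tfrac1p<0$, this is precisely the asserted lower order for $n=[D_\mu/2]$, and monotonicity of the Weyl numbers (consecutive members of the subsequence differ by a bounded factor) extends it to all $n\ge 2$.

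For the estimate from above I would start from \eqref{ws-15}--\eqref{sum1}. Here $\rho=\min(1,p)=1$ and the number $\alpha$ from \eqref{ws-16}--\eqref{ws-17} equals $t-(\tfrac1{p_0}-\tfrac1p)_+=t$, so \eqref{sum1} reads $x_n(id^*)\lesssim\sum_{\mu=J+1}^{L}x_{n_\mu}(id_\mu^*)+2^{-Lt}L^{(d-1)(1/2-1/p_0)}$. To bound $x_{n_\mu}(id_\mu^*)$ I would apply \eqref{case5} when $p>2$, with $\epsilon>0$ chosen so small that $p-\epsilon\ge 2$ — this is exactly what removes the spurious factor $\mu^{(d-1)(1/2-1/p)}$ that \eqref{case2} would produce — and \eqref{case2} itself when $p=2$; in either case $x_{n_\mu}(id_\mu^*)\lesssim 2^{\mu(-t+1/p_0-1/p)}\,x_{n_\mu}(id_{p_0,v}^{D_\mu})$ with $v\in[2,p]$. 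Next I would fix $L$ as the smallest integer for which $2^{-Lt}L^{(d-1)(1/2-1/p_0)}\lesssim 2^{-J(t+1/p-1/p_0)}$, so that $L\asymp J$, and put $n_\mu:=[D_\mu\,2^{(J-\mu)\lambda}]$ for $J<\mu\le L$ with a parameter $\lambda>1$ still to be fixed; this guarantees $n=1+\sum_{\mu=0}^{L}(n_\mu-1)\asymp J^{d-1}2^J$, just as in Theorem \ref{the1-1}. Inserting the Appendix A estimate for $x_{n_\mu}(id_{p_0,v}^{D_\mu})$ valid for $n_\mu\ll D_\mu$, and using $D_\mu\asymp\mu^{d-1}2^\mu$, the powers of $D_\mu$ cancel and $\sum_{\mu=J+1}^{L}x_{n_\mu}(id_\mu^*)\lesssim 2^{-J(t+1/p-1/p_0)}$, provided $\lambda$ is chosen in an open interval $(1,\Lambda)$ whose right endpoint $\Lambda=\Lambda(t,p,p_0)$ exceeds $1$ if and only if $t>\frac{1/p-1/p_0}{p_0/2-1}$. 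This yields $x_n(id^*)\lesssim 2^{-J(t+1/p-1/p_0)}$ along $n\asymp J^{d-1}2^J$, which is the claimed order, and the passage to arbitrary $n\ge 2$ is carried out by monotonicity together with the polynomial growth of the $x_n$, exactly as at the end of the proof of Theorem \ref{the1-1}; the case $p_0=\infty$ is covered by the same argument, the reduction inequalities remaining valid in the limit.

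The delicate point — and the only real obstacle — is the simultaneous choice of $\lambda$, $L$ and the $n_\mu$ in the upper estimate: one needs $\lambda>1$ so that $\sum_{\mu>J}n_\mu\asymp J^{d-1}2^J$, $n_\mu$ large enough to leave the ``plateau'' on which $x_\bullet(id_{p_0,v}^{D_\mu})$ is comparable to the operator norm, and the summands $x_{n_\mu}(id_\mu^*)$ geometrically decreasing in $\mu$ so that the series in \eqref{sum1} is dominated by its first term. These three requirements can be met at once precisely when $t>\frac{1/p-1/p_0}{p_0/2-1}$; this is where the smoothness hypothesis enters, and it also explains why the complementary range $t<\frac{1/p-1/p_0}{p_0/2-1}$ yields a different exponent and has to be treated in a separate theorem.
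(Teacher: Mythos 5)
Your proposal is correct and takes essentially the same route as the paper's proof: the identical lower bound (the fact $x_{[D_\mu/2]}(id_{p_0,p}^{D_\mu})\gtrsim 1$ you invoke is property (b)(iii), not (a), of Appendix A), and the identical upper bound built on \eqref{sum1} with $n_\mu=[D_\mu\,2^{(J-\mu)\lambda}]$, $\lambda>1$ (the paper's $\beta$), the reductions \eqref{case5} (resp.\ \eqref{case2}/\eqref{case3} for $p=2$) and property (b)(i), your choice of $L$ agreeing with the paper's choice via $\gamma=(t+\frac1p-\frac1{p_0})/t>1$. Only note that for $p>2$ the $\epsilon$ must be taken small enough that $-t+\frac{1}{p_0}-\frac{1}{p}+\frac{1/(p-\epsilon)-1/p_0}{1-2/p_0}<0$ still holds, which your requirement that the interval $(1,\Lambda)$ be nonempty implicitly provides.
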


\begin{proof}
{\em Step 1.} Estimate from below.
Because of $p>2$, \eqref{eqlow1} and \eqref{case4} imply
\[
x_n(id^*) \gtrsim 2^{\mu(-t+\frac{1}{p_0}-\frac{1}{p})}\, x_n(id_{p_0,p}^{D_\mu})\, .
\]
We choose $n=[D_{\mu}/2]$. Then property (b)(part(iii)) in Appendix A yields
$x_n(id_{p_0,p}^{D_{\mu}} )\gtrsim 1$.
Hence
\[ 
x_n(id^*) \gtrsim 2^{\mu(-t +\frac{1}{p_0} -\frac{1}{p})} \, .
\]
Because of $2^{\mu}\asymp \frac{n}{\log^{d-1}n}$ this implies the desired estimate.
\\
{\em Step 2.} Estimate from above. Since $2\leq p< p_0$ we obtain
\[
2^{-\alpha L}\, L^{(d-1)(\frac{1}{2}-\frac{1}{p_0})_+} = 2^{-Lt}\, L^{(d-1)(\frac{1}{2} -\frac{1}{p_0})}\, .
\]
For given $J$ we choose $L$ large enough such that
\begin{equation}\label{ws-25}
2^{-Lt}L^{(d-1)(\frac{1}{2}-\frac{1}{p_0})}\leq 2^{-\gamma Jt}
\end{equation}
for some $\gamma > 1$ (to be chosen later on).
We define 
\[
n_{\mu} := [D_{\mu}\, 2^{(J-\mu)\beta}]\leq D_{\mu},\ \ J+1\leq \mu\leq L,
\]
where the parameter $\beta>1$ will be also chosen later on.
Hence
\[
\sum_{\mu=J+1}^{L} \, n_{\mu}\asymp 2^{J}\, J^{d-1}\, .
\]
The restriction $t>\frac{ {1}/{p}- {1}/{p_0}}{ {p_0}/{2}-1} $ implies
\[
-t + \frac{1}{p_0}-\frac{1}{p} + \frac{ {1}/{p}- {1}/{p_0}}{1- {2}/{p_0}} < 0\, .
\]
If $p>2$ we choose $\epsilon >0$ such that $ 2\leq p-\epsilon$ and
\begin{equation}\label{cond4}
-t+\frac{1}{p_0}-\frac{1}{p} + \frac{\frac{1}{p-\epsilon}-\frac{1}{p_0}}{1-\frac{2}{p_0}} < 0\, .
\end{equation}
In this situation we derive from property (b)(part(i)) in Appendix A
\[
x_{n_{\mu}}(id_{p_0,p-\epsilon}^{D_{\mu}})\lesssim 
\bigg(\dfrac{D_{\mu}}{n_{\mu}}\bigg)^{\frac{1}{r}} \asymp 2^{-\frac{(J-\mu)\beta}{r}},\qquad \dfrac{1}{r}:=
\dfrac{\frac{1}{p-\epsilon}-\frac{1}{p_0}}{1-\frac{2}{p_0}}\, .
\]
The estimate \eqref{case5} guarantees
\beq\label{ws-29-1}
\sum_{\mu=J+1}^{L}x^\rho_{n_{\mu}}(id_{\mu}^*) \lesssim  \sum_{\mu=J+1}^{L} 
2^{\mu\rho(-t+\frac{1}{p_0}-\frac{1}{p})}\, x^\rho_{n_{\mu}}(id_{p_0,p-\epsilon}^{D_{\mu}})\, .
\eeq
In case $p=2$, again property (b)(part(i)) in Appendix A yields
\[
x_{n_{\mu}}(id_{p_0,2}^{D_{\mu}})\lesssim 
\bigg(\dfrac{D_{\mu}}{n_{\mu}}\bigg)^{\frac{1}{2}} \asymp 2^{-\frac{(J-\mu)\beta}{r}},\qquad \dfrac{1}{r}:=
\dfrac{1}{2}\, .
\]
From (\ref{case3}) we obtain
\beq\label{ws-29-2}
\sum_{\mu=J+1}^{L}x^\rho_{n_{\mu}}(id_{\mu}^*) \lesssim \sum_{\mu=J+1}^{L} 
2^{\mu\rho(-t+\frac{1}{p_0}-\frac{1}{2})}\, x^\rho_{n_{\mu}}(id_{p_0,2}^{D_{\mu}})\, .
\eeq
Now (\ref{ws-29-1}) and (\ref{ws-29-2}) yield
\beqq
\sum_{\mu=J+1}^{L}x^\rho_{n_{\mu}}(id_{\mu}^*) & \lesssim & \sum_{\mu=J+1}^{L} 2^{\mu\rho(-t+\frac{1}{p_0}-\frac{1}{p})}\, 
2^{-\frac{(J-\mu)\beta\rho}{r}} 
\\
& = & \sum_{\mu=J+1}^{L}\ 2^{\mu\rho(-t+\frac{1}{p_0}-\frac{1}{p}+\frac{\beta}{r})}\, 2^{-\frac{J\beta\rho}{r}}\, .
\eeqq
The condition (\ref{cond4}) can be rewritten as
 \[
-t+\frac{1}{p_0}-\frac{1}{p}+\frac{1}{r}<0 \, .
 \]
Now we choose $\beta>1$ such that $-t+\frac{1}{p_0}-\frac{1}{p}+\frac{\beta}{r}<0$. 
Then 
\[
\sum_{\mu=J+1}^{L}x^\rho_{n_{\mu}}(id_{\mu}^*)\lesssim 2^{J\rho(-t+\frac{1}{p_0}-\frac{1}{p}+\frac{\beta}{r})}
\, 2^{-\frac{J\beta\rho}{r}} = 2^{J\rho(-t+\frac{1}{p_0}-\frac{1}{p})}
\]
follows.
Inserting this and \eqref{ws-25} into \eqref{sum1} we find
\[
x_n(id^*) \lesssim \Big( 2^{J\rho(-t+\frac{1}{p_0}-\frac{1}{p})}+ 2^{-\gamma Jt\rho}\Big)\, .
\]
Choosing
\[
\gamma :=\dfrac{-t+\frac{1}{p_0}-\frac{1}{p}}{-t} > 1
\]
 then we conclude
\[
x_{n}(id^*)\lesssim 2^{J(-t+\frac{1}{p_0}-\frac{1}{p})}
\]
and this is enough to prove the estimate from above, compare with
the end of the proof of Thm. \ref{the1-1}.
\end{proof}

\begin{theorem}\label{the4-2}
Let $2\leq p< p_0 < \infty$ and $0<t<\frac{ {1}/{p}- {1}/{p_0}}{ {p_0}/{2}-1}$. Then
\[
x_n(id^*)\asymp n^{-\frac{tp_0}{2}}(\log n)^{(d-1)(t+\frac{1}{2}-\frac{1}{p_0})}\, , \qquad n \geq 2\, .
\]
\end{theorem}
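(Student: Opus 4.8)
The plan is to follow the two-sided scheme already used for Theorem~\ref{the1-1} and Theorem~\ref{the4-1}; the point to keep in mind is that, in contrast to the large-smoothness region, under the restriction $t<\frac{1/p-1/p_0}{p_0/2-1}$ the ``right'' number of coordinates to spend in the $\mu$-th block is not $\asymp D_\mu$ but only $\asymp D_\mu^{2/p_0}$, so that essentially the whole budget $n$ has to be allocated to one single block. Throughout, $\rho=\min(1,p)=1$ since $p\ge 2$, so all the $\rho$-sums in \eqref{ws-16}--\eqref{sum1} are ordinary sums.

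\textbf{Estimate from below.} First I would combine \eqref{eqlow1} with the lower estimate in \eqref{case2} (legitimate because $p\ge2$) to obtain, for every $\mu\in\N_0$,
\[
x_n(id^*)\ \ge\ x_n(id_\mu^*)\ \gtrsim\ 2^{\mu(-t+\frac1{p_0}-\frac12)}\,x_n(id_{p_0,2}^{D_\mu})\ =\ 2^{\mu(-t+\frac1{p_0}-\frac12)}\,a_n(id_{p_0,2}^{D_\mu})\, ,
\]
the last equality being Remark~\ref{banachhilbert}(ii). It is essential to pass through $id_{p_0,2}^{D_\mu}$ rather than $id_{p_0,p}^{D_\mu}$: the plateau value of $a_n(id_{p_0,2}^m)$ is $m^{\frac12-\frac1{p_0}}$, and it is the exponent $\frac12$ here which produces the $\frac12$ in the logarithmic term of the assertion. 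Concretely, I would invoke from Appendix~A that $a_n(id_{p_0,2}^m)\asymp m^{\frac12-\frac1{p_0}}$ for $1\le n\lesssim m^{2/p_0}$ (this is where $p_0<\infty$ enters), choose $n:=\lfloor c\,D_\mu^{2/p_0}\rfloor$ with $c$ small, and use $D_\mu\asymp 2^\mu\mu^{d-1}$ to get
\[
x_n(id^*)\ \gtrsim\ 2^{\mu(-t+\frac1{p_0}-\frac12)}\,(2^\mu\mu^{d-1})^{\frac12-\frac1{p_0}}\ =\ 2^{-\mu t}\,\mu^{(d-1)(\frac12-\frac1{p_0})}\, .
\]
For this choice $2^\mu\asymp n^{p_0/2}(\log n)^{-(d-1)}$ and $\mu\asymp\log n$, hence $x_n(id^*)\gtrsim n^{-tp_0/2}(\log n)^{(d-1)(t+\frac12-\frac1{p_0})}$ along the subsequence $n=\lfloor c\,D_\mu^{2/p_0}\rfloor$, $\mu\in\N$. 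Since this subsequence grows only geometrically and the claimed rate is polynomial in $n$ up to logarithmic factors, monotonicity of the Weyl numbers upgrades this to all $n\ge2$, exactly as at the end of the proof of Theorem~\ref{the1-1}.

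\textbf{Estimate from above.} Here I would run the splitting \eqref{ws-15}--\eqref{sum1}. Since $\alpha=t-(\frac1{p_0}-\frac1p)_+=t$ and $(\frac12-\frac1{p_0})_+=\frac12-\frac1{p_0}$, the tail term in \eqref{ws-17} is $\lesssim 2^{-Lt}L^{(d-1)(\frac12-\frac1{p_0})}$. I would choose $J$ by $J^{d-1}2^J\asymp n$ and $L$ by $L^{d-1}2^L\asymp n^{p_0/2}$ (then $L>J$, because $p_0>2$), so that $2^{-Lt}L^{(d-1)(\frac12-\frac1{p_0})}\asymp n^{-tp_0/2}(\log n)^{(d-1)(t+\frac12-\frac1{p_0})}=:T$, which already matches the target. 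For the blocks $J<\mu\le L$ I would use the upper estimate in \eqref{case2}, namely $x_{n_\mu}(id_\mu^*)\lesssim \mu^{(d-1)(\frac12-\frac1p)}2^{\mu(-t+\frac1{p_0}-\frac1p)}x_{n_\mu}(id_{p_0,p}^{D_\mu})$, together with the decay estimate $x_{n_\mu}(id_{p_0,p}^{D_\mu})\lesssim (D_\mu/n_\mu)^{1/r}$, $1/r=\frac{1/p-1/p_0}{1-2/p_0}$, from Appendix~A (property~(b)(i)). The $n_\mu$ are then chosen so that $x_{n_\mu}(id_\mu^*)\lesssim T\,2^{-(L-\mu)\delta}$ for a fixed small $\delta>0$; a short computation shows this forces $n_\mu$ to be increasing in $\mu$, so that $\sum_{\mu=J+1}^{L}n_\mu\asymp n_L$, and --- this is the decisive point --- the restriction $t<\frac{1/p-1/p_0}{p_0/2-1}$ is precisely what makes the exponent of $n$ in $n_L$ equal to $1$ and the accompanying power of $\log n$ nonpositive, so that $\sum_{\mu=J+1}^{L}n_\mu\lesssim n$ and the total budget stays $\asymp n$. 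Inserting these estimates into \eqref{sum1} gives $x_n(id^*)\lesssim \sum_{\mu=J+1}^{L}T\,2^{-(L-\mu)\delta}+T\lesssim T$ along the resulting subsequence of indices, and one passes to all $n\ge2$ by monotonicity and the polynomial behaviour, as before.

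\textbf{Main obstacle.} As the authors emphasise in the introduction, the genuine difficulty is not any single inequality but the coupled choice of $J$, $L$ and $(n_\mu)$: under the small-smoothness condition the middle sum in \eqref{sum1} is no longer geometrically decreasing in $\mu$, so one cannot let $L\to\infty$; one must truncate exactly at $D_L\asymp n^{p_0/2}$ and spread the budget over the $\asymp\log n$ blocks $J<\mu\le L$ in an increasing fashion. Because the relevant exponents then hold with equality, it is important to use the ``$p$-consistent'' estimate \eqref{case2} (which carries the extra factor $\mu^{(d-1)(1/2-1/p)}$) rather than \eqref{case5}, and to track the logarithmic factors carefully --- a blind imitation of the proof of Theorem~\ref{the4-1} via \eqref{case5} would violate the budget $\sum_\mu n_\mu\asymp n$.
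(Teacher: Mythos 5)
Your proposal is correct and follows essentially the same route as the paper's proof: the lower bound is exactly the paper's argument (combine \eqref{eqlow1} with the left-hand inequality of \eqref{case2}, take $n=[D_\mu^{2/p_0}]$ and use the plateau $x_n(id_{p_0,2}^{D_\mu})\gtrsim D_\mu^{\frac12-\frac1{p_0}}$ from Appendix A, property (b)(ii)), and your upper bound --- tail cut at $2^L L^{d-1}\asymp n^{p_0/2}$, the estimate \eqref{case2} combined with property (b)(i), and an allocation $n_\mu$ increasing geometrically up to $n_L\asymp n$ --- is precisely the mechanism of the paper's choices $L=\big[\frac{p_0}{2}J+(d-1)(\frac{p_0}{2}-1)\log J\big]$ and $n_\mu=\big[D_\mu 2^{(\mu-L)\beta+J-\mu}\big]$. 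One small repair in your lower bound: Remark \ref{banachhilbert}(ii) does not give $x_n(id_{p_0,2}^{D_\mu})=a_n(id_{p_0,2}^{D_\mu})$, since the source $\ell_{p_0}^{D_\mu}$ with $p_0>2$ is not a Hilbert space (and Appendix A concerns Weyl, not approximation, numbers); the detour is unnecessary, because property (b)(ii) already provides the required lower bound for the Weyl numbers themselves.
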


\begin{proof}
{\em Step 1.} Estimate from below. From \eqref{case2} and (\ref{eqlow1}) we derive
$$
2^{\mu(-t+\frac{1}{p_0}-\frac{1}{2})}\, x_n(id_{p_0,2}^{D_{\mu}} )\lesssim x_n(id^*)\, .
$$
Now we choose $n=[D_{\mu}^{\frac{2}{p_0}}]$. Then it follows from property (b)(part(ii)) in Appendix A that
\[
x_n(id_{p_0,2}^{D_{\mu}} )\gtrsim D_{\mu}^{\frac{1}{2}-\frac{1}{p_0}} \gtrsim (\mu^{d-1}2^{\mu})^{\frac{1}{2}-\frac{1}{p_0}}\, .
\]
This implies
\[
x_n(id^*) \gtrsim \mu^{(d-1)(\frac{1}{2}-\frac{1}{p_0})} \, 2^{-t\mu} \, .
\]
Rewriting the right-hand side in dependence on $n$ we obtain
\[
x_n(id^*) \gtrsim n^{-\frac{tp_0}{2}}(\log n)^{(d-1)(t+\frac{1}{2}-\frac{1}{p_0})}\, .
\]
{\em Step 2.} Estimate from above.
Since $2\leq p< p_0$ we have 
\[
2^{-\alpha L }\, L^{(d-1)(\frac{1}{2}-\frac{1}{p_0})_+}= 2^{-t L}\, L^{(d-1)(\frac{1}{2}-\frac{1}{p_0})}\, . 
\]
For fixed $J\in \N$ we choose 
\[
L:=\Big[\frac{p_0}{2} J+(d-1)(\frac{p_0}{2}-1)\log J\Big] \, .
\] 
Hence
\[ 
2^{-Lt} = 2^{-t([\frac{p_0}{2} J+(d-1)(\frac{p_0}{2}-1)\log J])}\asymp 2^{-\frac{p_0}{2} Jt}\, J^{(d-1)(t-\frac{tp_0}{2})}
\]
and
\[
L^{{(d-1)(\frac{1}{2}-\frac{1}{p_0})}}= \Big(\Big[\frac{p_0}{2} J+(d-1)(\frac{p_0}{2}-1)\log J\Big]\Big)^{(d-1)(\frac{1}{2}-\frac{1}{p_0})}
\lesssim J^{(d-1)(\frac{1}{2}-\frac{1}{p_0})}\, .
\]
This results in the estimate
\be\label{ws-26}
2^{-Lt}\, L^{(d-1)(\frac{1}{2}-\frac{1}{p_0})}\lesssim 
2^{-\frac{p_0}{2} Jt}\, J^{(d-1)(t-\frac{tp_0}{2}+\frac{1}{2}-\frac{1}{p_0})} \, .
\ee
We define
\[
n_{\mu}:= \big[D_{\mu}\, 2^{\{(\mu-L)\beta+J-\mu \}}\big]\leq D_{\mu}\, , \qquad J+1 \le \mu \le L\, ,
\]
where $\beta >0$ will be fixed later on. Consequently
\be\label{ws-27}
\sum_{\mu=J+1}^{L} n_{\mu} \lesssim 2^{J}\, J^{d-1}\, .
\ee
Employing property (b)(part(i)) in Appendix A we get
\be\label{ws-29}
x_{n_{\mu}}(id_{p_0,p}^{D_{\mu}})\lesssim \Big(\dfrac{D_{\mu}}{n_{\mu}}\Big)^{\frac{1}{r}} \lesssim 2^{-\frac{(\mu-L)\beta+J-\mu}{r}}\, , 
\qquad \frac 1r := \frac{ {1}/{p} -  {1}/{p_0}}{1-  {2}/{p_0}}
\, .
\ee
We continue by applying \eqref{case2}
\beqq
\sum_{\mu=J+1}^{L} x^\rho_{n_{\mu}}(id_{\mu}^*) & \lesssim & 
 \sum_{\mu=J+1}^{L} \mu^{(d-1)\rho(\frac{1}{2}-\frac{1}{p})} \, 2^{\mu\rho(-t+\frac{1}{p_0}-\frac{1}{p})} \, 
x^\rho_{n_{\mu}}(id_{p_0,p}^{D_{\mu}})
\\
& \lesssim & 
\sum_{\mu=J+1}^{L} \mu^{(d-1)\rho(\frac{1}{2}-\frac{1}{p})}\, 2^{\mu\rho(-t+\frac{1}{p_0}-\frac{1}{p})}
\, 2^{-\frac{\{(\mu-L)\beta+J-\mu \}\rho}{r}}
\\
& = & \sum_{\mu=J+1}^{L} \mu^{(d-1)\rho(\frac{1}{2}-\frac{1}{p})}\, 
2^{\mu\rho(-t+\frac{1}{p_0}-\frac{1}{p}+\frac{1}{r}-\frac{\beta}{r})}\, 2^{\frac{(L\beta-J )\rho}{r}}\, .
\eeqq
Because of 
\[ 
t < \frac{ {1}/{p}- {1}/{p_0}}{ {p_0}/{2}-1} \qquad \Longleftrightarrow \qquad -t+\frac{1}{p_0}-\frac{1}{p}+\frac{1}{r}>0
 \]
we can choose $\beta>0$ such that $-t+\frac{1}{p_0}-\frac{1}{p}+\frac{1}{r}-\frac{\beta}{r}>0$.
Then 
\beq\label{ws-28}
\sum_{\mu=J+1}^{L}x^\rho_{n_{\mu}}(id_{\mu}^*)& \lesssim &
L^{(d-1)\rho(\frac{1}{2}-\frac{1}{p})}\, 2^{L\rho(-t+\frac{1}{p_0}-\frac{1}{p}+\frac{1}{r}-\frac{\beta}{r})}\, 
2^{\frac{(L\beta-J)\rho}{r}}
\nonumber
\\
&= & L^{(d-1)\rho(\frac{1}{2}-\frac{1}{p})} \, 2^{L\rho(-t+\frac{1}{p_0}-\frac{1}{p}+\frac{1}{r})}\, 2^{-\frac{J\rho}{r}}
\eeq
follows.
Inserting the definition of $L $ we conclude
\[
L^{(d-1)(\frac{1}{2}-\frac{1}{p})}\lesssim J^{(d-1)(\frac{1}{2}-\frac{1}{p})}
\]
and 
\beqq
2^{L(-t+\frac{1}{p_0}-\frac{1}{p}+\frac{1}{r})}\, 2^{-\frac{J}{r}}
& \lesssim & 
2^{\big[\frac{p_0}{2}J +(d-1)(\frac{p_0}{2}-1)\log J \big]\big[-t+\frac{1}{p_0}-\frac{1}{p}+\frac{1}{r}\big]}2^{-\frac{J}{r}}
\\
& \lesssim & 2^{-\frac{tp_0}{2}J} \, J^{(d-1)(\frac{p_0}{2}-1)(-t+\frac{1}{p_0}-\frac{1}{p}+\frac{1}{r})}
\\
&= & 2^{-\frac{tp_0}{2}J}\, J^{(d-1)(t-\frac{tp_0}{2}-\frac{1}{p_0}+\frac{1}{p})}\, .
\eeqq
Now \eqref{ws-28} yields
\[
\sum_{\mu=J+1}^{L}x^\rho_{n_{\mu}}(id_{\mu}^*)\lesssim J^{(d-1)\rho(t-\frac{tp_0}{2}-\frac{1}{p_0}+\frac{1}{2})}\, 2^{-\frac{tp_0}{2}J\rho}
\, .
\]
This, together with \eqref{ws-26}, has to be inserted into \eqref{sum1}
\[
x_{n}(id^*)\lesssim J^{(d-1)(t-\frac{tp_0}{2}-\frac{1}{p_0}+\frac{1}{2})}\, 2^{-\frac{tp_0}{2}J}\, .
\]
The same type of arguments as at the end of the proof of Thm. \ref{the1-1} complete the proof.
\end{proof}

\begin{remark}\rm
Without going into details we mention the following estimate for the limiting 
case $t=\frac{ {1}/{p}- {1}/{p_0}}{ {p_0}/{2}-1} $. For all $n\ge 2$ we have
\[
n^{-\frac{tp_0}{2}}(\log n)^{(d-1)(t+\frac{1}{2}-\frac{1}{p_0})} \lesssim 
x_n(id^*) \lesssim n^{-\frac{tp_0}{2}}(\log n)^{(d-1)(t+\frac{1}{2}-\frac{1}{p_0})}(\log n)^{\frac{1}{r}+\frac{1}{\rho}}\, ,
\]
where $r$ is as in \eqref{ws-29} and $\rho=\min(1,p)$.
\end{remark}


\subsubsection{The case $0<p_0,p\leq 2$}


We need some preparations.

\begin{lemma}\label{the5-1}
Let $0<p_0,p\leq 2$ and $t>(\frac{1}{p_0}-\frac{1}{p})_+$. Then
\[
n^{-t}(\log n)^{(d-1)(t+\frac{1}{2}-\frac{1}{p_0})_+}\lesssim x_n(id^*)
\]
holds for all $n \ge 2$.
\end{lemma}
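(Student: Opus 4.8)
The plan is to prove the lower bound by restricting $id^*$ to two different families of finite-dimensional building blocks and taking the larger of the two resulting estimates. As in the proofs of Theorems~\ref{the1-1}--\ref{the4-2}, it suffices to establish the bounds along suitable subsequences of $n$ (namely $n\asymp 2^\mu$, respectively $n\asymp D_\mu$); the passage to arbitrary $n\ge 2$ is then carried out exactly as at the end of the proof of Theorem~\ref{the1-1}, using the monotonicity and the polynomial decay of the Weyl numbers.

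For the first, ``no-logarithm'' estimate I would fix $\mu$ and a single $\bar{\nu}$ with $|\bar{\nu}|_1=\mu$, restrict the source of $id^*$ to the sequences supported on $\{(\bar{\nu},\bar{m}):\bar{m}\in A^{\Omega}_{\bar{\nu}}\}$, and project the target onto the same coordinates. Since for a fixed $\bar{\nu}$ the cubes $Q_{\bar{\nu},\bar{m}}$ are pairwise disjoint, the target $f$-norm collapses to a weighted $\ell_p$-norm, and both the inclusion and the projection are norm-one maps; hence property $(c)$ of the $s$-numbers together with Lemma~\ref{ba1} gives
\[
x_n(id^*)\gtrsim 2^{\mu(-t+1/p_0-1/p)}\,x_n\big(id^{m_\mu}_{p_0,p}\big),\qquad m_\mu:=|A^{\Omega}_{\bar{\nu}}|\asymp 2^\mu .
\]
Choosing $\mu$ with $m_\mu\asymp 2n$ and inserting the behaviour of $x_n(id^m_{p_0,p})$ for $0<p_0,p\le 2$ from Appendix~A, namely $x_{[m_\mu/2]}(id^{m_\mu}_{p_0,p})\asymp m_\mu^{1/p-1/p_0}\asymp n^{1/p-1/p_0}$, the powers of $n$ combine to $-t$, so that $x_n(id^*)\gtrsim n^{-t}$ along $n\asymp 2^\mu$.

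For the second estimate, which produces the logarithm, I would instead keep the entire level $\mu$. By \eqref{eqlow1} one has $x_n(id^*)\ge x_n(id^*_\mu)$, and since $p\le 2$ the left inequality in \eqref{case1} yields $x_n(id^*)\gtrsim \mu^{(d-1)(1/2-1/p)}\,2^{\mu(-t+1/p_0-1/p)}\,x_n(id^{D_\mu}_{p_0,p})$. Taking $n:=[D_\mu/2]\asymp 2^\mu\mu^{d-1}$ and using $x_{[D_\mu/2]}(id^{D_\mu}_{p_0,p})\asymp D_\mu^{1/p-1/p_0}\asymp (2^\mu\mu^{d-1})^{1/p-1/p_0}$ from Appendix~A, a short computation collapses everything to $\mu^{(d-1)(1/2-1/p_0)}\,2^{-\mu t}$; rewriting $2^\mu\asymp n(\log n)^{-(d-1)}$ and $\mu\asymp\log n$, this equals $n^{-t}(\log n)^{(d-1)(t+1/2-1/p_0)}$ up to constants. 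Combining the two estimates (and extending from the respective subsequences to all $n\ge 2$) gives $x_n(id^*)\gtrsim n^{-t}\max\{1,(\log n)^{(d-1)(t+1/2-1/p_0)}\}=n^{-t}(\log n)^{(d-1)(t+1/2-1/p_0)_+}$, which is the assertion.

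I expect the delicate point to be the correct pairing of subspaces and, above all, the correct size of $n$ relative to $\mu$ in the second construction: a single block produces the sharp power $n^{-t}$ but no logarithm, while recovering the logarithm forces one to use \emph{all} $\asymp\mu^{d-1}$ blocks of level $\mu$ simultaneously \emph{and} to take $n$ comparable to the full dimension $D_\mu$ rather than a smaller power of it; any other choice of $n$ loses either in the power of $n$ or in the power of $\log n$. This is exactly the ``real problem'' alluded to after Theorem~\ref{main}, and it is what makes the sharp finite-dimensional asymptotics $x_{[m/2]}(id^m_{p_0,p})\asymp m^{1/p-1/p_0}$ for $p_0,p\le 2$ indispensable in both computations.
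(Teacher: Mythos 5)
Your proposal is correct and coincides essentially with the paper's own proof: the no-logarithm bound via restriction to a single block $A_{\bar\nu}^{\Omega}$ (the diagram with the weighted spaces $2^{\mu(t-1/p_0)}\ell_{p_0}^{A_\mu}\to 2^{-\mu/p}\ell_p^{A_\mu}$ and $n=[A_\mu/2]$), and the logarithmic bound via \eqref{eqlow1} and the left inequality in \eqref{case1} with $n=[D_\mu/2]$ and the Appendix A asymptotics $x_{[m/2]}(id^m_{p_0,p})\asymp m^{1/p-1/p_0}$. Nothing essential is missing.
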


\begin{proof}
{\em Step 1. }
We consider the following commutative diagram
\[
\begin{CD}
s^{t,\Omega}_{p_0,p_0}b @ > id^* >> s^{0,\Omega}_{p,2}f \\
@A id^1 AA @VV id^2 V\\
2^{\mu(t-\frac{1}{p_0})}\ell_{p_0}^{A_{\mu}} @ > I_{\mu} >> 2^{\mu(0-\frac{1}{p})} \ell_{p}^{A_{\mu}} \,.
\end{CD}
\]
Here $A_{\mu}=|A_{\bar{\nu}}^{\Omega}|$ for some $\bar{\nu}$ with $|\bar{\nu}|_1=\mu$, $id^1$ is the canonical embedding, 
whereas $id^2$ is the canonical projection. From property $(c)$ of the $s$-numbers we derive 
\[
x_n(I_{\mu})=x_n(id^2\circ id^* \circ id^1)\leq \| id^1\| \, \|id^2\| \, x_n(id^*)=x_n(id^*)\, .
\]
Again the ideal property of the $s$-numbers guarantees
\[
x_n(I_{\mu})= 2^{\mu(-t+\frac{1}{p_0}-\frac{1}{p})} \, x_n(id_{p_0,p}^{A_{\mu}})\, .
\]
We choose $n=[A_{\mu}/2]$. Then property (a) in  Appendix A yields
\[
x_n(I_{\mu})\geq 2^{\mu(-t+\frac{1}{p_0}-\frac{1}{p})} \, x_n(id_{p_0,p}^{A_{\mu}})
\gtrsim 2^{\mu(-t+\frac{1}{p_0}-\frac{1}{p})} \, 2^{\mu(\frac{1}{p}-\frac{1}{p_0})} = 2^{-\mu t} \asymp n^{-t}\, ,
\]
which implies $x_n(id^*)\gtrsim n^{-t}$. This proves the lemma if $t+\frac{1}{2}-\frac{1}{p_0}\le 0$.
\\
{\em Step 2.} From (\ref{eqlow1}) and (\ref{case1}) we have
\[
x_n(id^*)\gtrsim \mu^{(d-1)(\frac{1}{2}-\frac{1}{p})}\, 2^{\mu(-t+\frac{1}{p_0}-\frac{1}{p})}\, x_n(id_{p_0,p}^{D_\mu})\, .
\]
We choose $n:=[D_{\mu}/2]$. Then property (a) in Appendix A leads to
\[
x_n(id_{p_0,p}^{D_{\mu}} )\gtrsim D_{\mu}^{\frac{1}{p}-\frac{1}{p_0}}\gtrsim (\mu^{d-1}2^{\mu})^{\frac{1}{p}-\frac{1}{p_0}}
\, ,
\]
which implies
\[ 
x_n(id^*) \gtrsim \mu^{(d-1)(\frac{1}{2}-\frac{1}{p_0})}\, 2^{-t\mu}\, .
\]
Because of  $ 2^{\mu}\asymp \frac{n}{\log^{d-1}n} $ this yields
\[
x_n(id^*) \gtrsim n^{-t}(\log n)^{(d-1)(t+\frac{1}{2}-\frac{1}{p_0})}\, .
\]
The proof is complete.
\end{proof}

\begin{lemma}\label{the5-2}
If $0<p_0, p\leq 2$ and $t>\frac{1}{p_0}-\frac{1}{2}$. Then
\[
x_n(id^*)\lesssim n^{-t}(\log n)^{(d-1)(t+\frac{1}{2}-\frac{1}{p_0})}
\]
holds for all $n \ge 2$.
\end{lemma}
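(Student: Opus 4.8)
The plan is to adapt the estimate from above in Theorem~\ref{the1-1} (Step~2): the rate asserted here is exactly the one obtained there by setting $p=2$, and the only structural difference is that now $p\le 2$, so the route through $\ell_2$ — that is, \eqref{case1} of Proposition~\ref{wichtig1} — replaces the route \eqref{case2} used there.

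First I would invoke the splitting \eqref{ws-15}--\eqref{sum1}: with $\rho=\min(1,p)$, $\alpha=t-(\frac1{p_0}-\frac1p)_+>0$ (positivity holds under the hypotheses since $p\le 2$ forces $\frac1{p_0}-\frac1p\le\frac1{p_0}-\frac12<t$), $n_\mu=D_\mu+1$ for $0\le\mu\le J$, and $n-1=\sum_{\mu=0}^L(n_\mu-1)$, this gives
\[
x_n^\rho(id^*)\lesssim \sum_{\mu=J+1}^{L}x_{n_\mu}^\rho(id_\mu^*)+2^{-L\alpha\rho},
\]
the logarithmic factor in the tail having disappeared because $(\frac12-\frac1{p_0})_+=0$ for $p_0\le 2$. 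For $J+1\le\mu\le L$ I would put $n_\mu:=[D_\mu\,2^{(J-\mu)\lambda}]\le D_\mu/2$ with a parameter $\lambda>1$; this makes $\sum_{\mu>J}n_\mu\asymp J^{d-1}2^J$, hence $n\asymp J^{d-1}2^J$. Combining the upper bound in \eqref{case1}, $x_{n_\mu}(id_\mu^*)\lesssim 2^{\mu(-t+\frac1{p_0}-\frac12)}\,x_{n_\mu}(id_{p_0,2}^{D_\mu})$, with the estimate for $x_{n_\mu}(id_{p_0,2}^{D_\mu})$ from property (a) in Appendix~A (available since $p_0\le 2$ and $n_\mu\le D_\mu/2$), exactly as in Theorem~\ref{the1-1}, one obtains a bound on $x_{n_\mu}^\rho(id_\mu^*)$ which, apart from the factor $\mu^{(d-1)\rho(\frac12-\frac1{p_0})}$, is $2^{\mu\rho(-t+\lambda(\frac1{p_0}-\frac12))}$ times a $\mu$-independent power of $2$.

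The exponent $-t+\lambda(\frac1{p_0}-\frac12)$ is negative precisely when $\lambda(\frac1{p_0}-\frac12)<t$; since $\frac1{p_0}-\frac12\ge 0$, the hypothesis $t>\frac1{p_0}-\frac12$ is exactly what permits a choice of $\lambda>1$ with this property (for $p_0=2$ any $\lambda>1$ works). For such $\lambda$ the $\mu$-series is geometric, so $\sum_{\mu=J+1}^{L}x_{n_\mu}^\rho(id_\mu^*)\lesssim J^{(d-1)\rho(\frac12-\frac1{p_0})}\,2^{-J\rho t}$, uniformly in $L$. Choosing $L$ of order $J$ and large enough — but still small enough that every $n_\mu\ge1$, which is compatible because $\lambda$ may be taken as close to $1$ as we wish — makes $2^{-L\alpha\rho}$ no larger than this main term (here $\alpha>0$ is used). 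Then \eqref{sum1} yields $x_{n_J}(id^*)\lesssim J^{(d-1)(\frac12-\frac1{p_0})}2^{-Jt}$ along the subsequence $n_J\asymp J^{d-1}2^J$, which rewrites as $n_J^{-t}(\log n_J)^{(d-1)(t+\frac12-\frac1{p_0})}$; the monotonicity and polynomial behaviour of the Weyl numbers then transfer the estimate to all $n\ge2$, as at the end of the proof of Theorem~\ref{the1-1}.

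The genuinely delicate point will be the simultaneous calibration of $\lambda$, $L$ and the $n_\mu$: one needs $\lambda>1$ for the $\mu$-series to collapse to its first term — this is where $t>1/p_0-1/2$ enters — yet $\lambda$ close enough to $1$, and $L$ of order $J$, that all $n_\mu$ remain $\ge1$ while $2^{-L\alpha\rho}$ is still absorbed by the main term. Verifying that the admissible window for $\lambda$ is non-empty, and that the $n_\mu$ stay in the range where the Appendix~A estimate for $x_{n_\mu}(id_{p_0,2}^{D_\mu})$ applies, will be the heart of the matter.
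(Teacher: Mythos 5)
Your argument is correct, but it is not the route the paper takes. The paper disposes of Lemma~\ref{the5-2} in two lines: since $p\le 2$ and the underlying functions live on a fixed bounded set, one has the continuous embedding $s^{0,\Omega}_{2,2}f\hookrightarrow s^{0,\Omega}_{p,2}f$, so $id^*$ factors as $s^{t,\Omega}_{p_0,p_0}b\to s^{0,\Omega}_{2,2}f\to s^{0,\Omega}_{p,2}f$; the ideal property of the Weyl numbers then reduces everything to Thm.~\ref{the1-1} applied with $p=2$ (whose hypothesis $t>\frac1{p_0}-\frac12$ is exactly the one assumed here), and the claimed rate $n^{-t}(\log n)^{(d-1)(t+\frac12-\frac1{p_0})}$ drops out. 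What you do instead is rerun the splitting machinery \eqref{ws-15}--\eqref{sum1} from scratch for the target $s^{0,\Omega}_{p,2}f$, using the upper bound of \eqref{case1} in place of \eqref{case2}; morally this is the same factorization through $\ell_2$, only performed blockwise inside the decomposition rather than globally, and your calibration is sound: $\alpha=t-(\frac1{p_0}-\frac1p)_+>0$, the tail loses its log factor since $(\frac12-\frac1{p_0})_+=0$, the condition $\lambda(\frac1{p_0}-\frac12)<t$ (the $p=2$ instance of \eqref{cond1}) collapses the $\mu$-sum to its first term, and taking $\lambda$ close to $1$ reconciles $L\asymp CJ$ (needed to absorb $2^{-L\alpha\rho}$) with $n_\mu\ge1$ and $2n_\mu\le D_\mu$ so that property (a) of Appendix~A applies. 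The trade-off: the paper's reduction is shorter and recycles Thm.~\ref{the1-1} wholesale, while your direct computation is self-contained and has the minor virtue of making explicit the admissibility window for $\lambda$ and $L$, a point the proof of Thm.~\ref{the1-1} passes over silently.
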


\begin{proof} The restriction $t>\frac{1}{p_0}-\frac{1}{2}$ implies the following chain of continuous embeddings
\[
s_{p_0,p_0}^{t,\Omega}b\hookrightarrow s_{2,2}^{0,\Omega}f \hookrightarrow s_{p,2}^{0,\Omega}f\, .
\]
Now we consider the commutative diagram

\tikzset{node distance=4cm, auto}

\begin{center}
\begin{tikzpicture}
 \node (H) {$s^{t,\Omega}_{p_0,p_0}b$};
 \node (L) [right of =H] {$s^{0,\Omega}_{p,2}f $};
 \node (L2) [right of =H, below of =H, node distance = 2cm ] {$ s^{0,\Omega}_{2,2}f$};
 \draw[->] (H) to node {$id^*$} (L);
 \draw[->] (H) to node [swap] {$id^1$} (L2);
 \draw[->] (L2) to node [swap] {$id^2$} (L);
 \end{tikzpicture}
\end{center}
\noindent
The ideal property of the $s$-numbers  and Thm. \ref{the1-1} (applied with $p=2$) yield the claim.
\end{proof}

\begin{lemma}\label{the5-3}
Let $0<p \leq p_0< 2$ and $0<t<\frac{1}{p_0}-\frac{1}{2}$. Then 
$$x_n(id^*)\lesssim n^{-t} $$
holds for all $n\ge 1$.
\end{lemma}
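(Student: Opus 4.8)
The plan is to re-use the decomposition $id^{*}=\sum_{\mu\ge 0}id^{*}_{\mu}$ from \eqref{ws-15}--\eqref{ws-16}, but to spread the ``budget'' $n$ over only $\asymp\log n$ dyadic levels, with weights growing in $\mu$. Write $\rho:=\min(1,p)$ and $\gamma:=\frac{1}{p_{0}}-\frac12$; here $\gamma>0$ since $p_{0}<2$, and $t<\gamma$ by hypothesis. First I would record two ingredients. Because $p\le p_{0}<2$, Corollary \ref{ba2-1} (using $(s^{t,\Omega}_{p_{0},p_{0}}b)_{\mu}=(s^{t,\Omega}_{p_{0},p_{0}}f)_{\mu}$, see Lemma \ref{ba1}(ii)) yields $\|id^{*}_{\mu}\|\lesssim 2^{-\mu t}$, so $\sum_{\mu>L}\|id^{*}_{\mu}\|^{\rho}\lesssim 2^{-Lt\rho}$ for all $L$. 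Secondly, the upper estimate in \eqref{case1} combined with property (a) of Appendix A applied to $id_{p_{0},2}^{D_{\mu}}$ (note $p_{0}\le 2$) gives
\[
x_{k}(id^{*}_{\mu})\ \lesssim\ 2^{\mu(-t+\gamma)}\,x_{k}(id_{p_{0},2}^{D_{\mu}})\ \lesssim\ 2^{\mu(-t+\gamma)}\,k^{-\gamma}\qquad(1\le k\le D_{\mu}),
\]
and $x_{k}(id^{*}_{\mu})=0$ for $k>D_{\mu}$ by the rank property. It is crucial to route through the $\ell_{2}$-target via \eqref{case1} here; the competing bound \eqref{case3} does not decay in $n_{\mu}$ and would only reproduce the large-smoothness (logarithmic) rate.

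Now fix $n\ge 2$ and set
\[
\theta:=\frac{(\gamma-t)\rho}{\gamma\rho+1},\qquad L:=\big\lceil\log_{2}n\big\rceil .
\]
Note that $\theta>0$ is exactly the hypothesis $t<\gamma=\frac{1}{p_{0}}-\frac12$, whereas $\theta<1$ amounts to $-t\rho<1$; thus the range of $t$ in the lemma is precisely what makes the following allocation work. I would then choose $A\asymp n^{1-\theta}$ and put $n_{\mu}:=\lceil A\,2^{\mu\theta}\rceil$ for $0\le\mu\le L$, adjusting a single $n_{\mu}$ so that $n-1=\sum_{\mu=0}^{L}(n_{\mu}-1)$; this is admissible because $\sum_{\mu=0}^{L}A\,2^{\mu\theta}\asymp A\,2^{L\theta}\asymp n^{1-\theta}n^{\theta}=n$, the sum being dominated by its last term since $\theta>0$. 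For small $\mu$ one may have $n_{\mu}>D_{\mu}$, in which case the corresponding term vanishes; for the remaining $\mu$ one uses $n_{\mu}\ge A\,2^{\mu\theta}/2$ in the bound above.

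Inserting this into \eqref{ws-16}, the tail contributes $\lesssim 2^{-Lt\rho}\le n^{-t\rho}$, and, using $(-t+\gamma)-\gamma\theta=\frac{\gamma-t}{\gamma\rho+1}$ so that $\rho\big((-t+\gamma)-\gamma\theta\big)=\theta$,
\[
\sum_{\mu=0}^{L}x^{\rho}_{n_{\mu}}(id^{*}_{\mu})\ \lesssim\ A^{-\gamma\rho}\sum_{\mu=0}^{L}2^{\mu\rho((-t+\gamma)-\gamma\theta)}\ =\ A^{-\gamma\rho}\sum_{\mu=0}^{L}2^{\mu\theta}\ \lesssim\ A^{-\gamma\rho}2^{L\theta}\ \asymp\ n^{-t\rho},
\]
where the last step follows from $A\asymp n^{1-\theta}$, $2^{L\theta}\asymp n^{\theta}$ and the identity $\theta-\gamma\rho(1-\theta)=-t\rho$ (equivalently $\theta(\gamma\rho+1)=(\gamma-t)\rho$). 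Hence $x^{\rho}_{n}(id^{*})\lesssim n^{-t\rho}$, i.e.\ $x_{n}(id^{*})\lesssim n^{-t}$; the reduction to all $n\ge 1$ (absorbing the small adjustment of the $n_{\mu}$) is routine and uses only the monotonicity and polynomial growth of the Weyl numbers, exactly as at the end of the proof of Theorem \ref{the1-1}.

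The one genuinely delicate point is the choice in the second paragraph: contrary to the large-smoothness proofs (where one fills $J$ levels completely and truncates the rest), here the budget must be spread over $L\asymp\log n$ levels with weights $n_{\mu}\propto 2^{\mu\theta}$ increasing in $\mu$, and the exponent $\theta$ has to be tuned so that the main sum and the tail simultaneously land on $n^{-t\rho}$. Once $\theta$ has been identified, everything else is a routine computation.
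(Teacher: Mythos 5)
Your argument is correct: the two ingredients you use (the block splitting \eqref{ws-15}/\eqref{ws-16} with the tail controlled by Corollary \ref{ba2-1}, and the reduction \eqref{case1} combined with property (a) of Appendix A for $id^{D_\mu}_{p_0,2}$) are exactly the ones the paper's proof of Lemma \ref{the5-3} relies on, and your exponent arithmetic ($\rho((\gamma-t)-\gamma\theta)=\theta$ and $\theta-\gamma\rho(1-\theta)=-t\rho$) checks out, as does the closing subsequence-plus-monotonicity step. Where you genuinely differ is in the allocation of the ranks $n_\mu$, which the paper itself flags as the real issue: the paper keeps the template \eqref{sum1}, i.e.\ it fills the levels $0,\dots,J$ completely ($n_\mu=D_\mu+1$, zero Weyl numbers), then spends a comparable budget only on the short window $J+1\le\mu\le L$ with $L=J+(d-1)[\log J]$ (length $\asymp\log\log n$) via $n_\mu=[D_\mu 2^{(\mu-L)\beta+J-\mu}]$ and an auxiliary $\beta>0$ chosen so the geometric sum is dominated by $\mu=L$, the special $L$ being what cancels the $J^{(d-1)(\frac12-\frac1{p_0})}$ factor; you instead skip the complete-filling step and spread $n_\mu\asymp n^{1-\theta}2^{\mu\theta}$ geometrically over all $\asymp\log_2 n$ levels up to $L=\lceil\log_2 n\rceil$, with the single closed-form exponent $\theta=\frac{(\gamma-t)\rho}{\gamma\rho+1}$ balancing main sum and tail. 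Your version is somewhat more streamlined and self-contained for this regime (one tuned parameter, no interplay of $J$, $L$, $\beta$), while the paper's choice keeps the proof inside the uniform scheme reused for all the other parameter constellations; so your parenthetical claim that the budget \emph{must} be spread over $\log n$ levels is not quite accurate — the paper's allocation shows it can also be concentrated near level $J$ — it is merely sufficient. Two cosmetic points: the bound $x_k(id^{D_\mu}_{p_0,2})\lesssim k^{-\gamma}$ for $D_\mu/2<k\le D_\mu$ needs the (trivial) monotonicity remark, since property (a) is stated for $2k\le D_\mu$; and rather than ``adjusting a single $n_\mu$'' it is cleaner to simply prove the bound at $N_n:=1+\sum_{\mu=0}^L(n_\mu-1)\asymp n$ and transfer to all $n$ by monotonicity and the polynomial rate, which you in fact also invoke.
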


\begin{proof}
For given $J \in \N$ we choose $ L:=J+(d-1)\, [\log J]$. Then
\be\label{ws-30}
2^{-L\alpha}\, L^{(d-1)(\frac{1}{2}-\frac{1}{p_0})_+} = 2^{-Lt} \asymp 2^{-tJ}\, J^{(d-1)(-t)}\, .
\ee
We define
\[
n_{\mu}:= \big[D_{\mu}\, 2^{(\mu-L)\beta+J-\mu}\big] \, , \qquad J+1 \le \mu \le L\, , 
\]
for some $\beta>0$. Then \eqref{ws-27} follows. Property (a) in Appendix A yields
\[
x_{n_{\mu}}(id_{p_0,2}^{D_{\mu}})\lesssim \Big(D_{\mu}2^{(\mu-L)\beta+J-\mu}\Big)^{\frac{1}{2}-\frac{1}{p_0}}\, .
\]
This, in connection with \eqref{case1}, leads to
\beqq
\sum_{\mu=J+1}^{L} x^{\rho}_{n_{\mu}}(id_{\mu}^*)& \lesssim & 
\sum_{\mu=J+1}^{L}2^{\mu{\rho}(- t+\frac{1}{p_0}-\frac{1}{2})} \Big(D_{\mu}\, 2^{(\mu-L)\beta+J-\mu}\Big)^{{\rho}(\frac{1}{2}-\frac{1}{p_0})}
\\
& \lesssim & \sum_{\mu=J+1}^{L} 2^{\mu{\rho}(- t+\frac{1}{p_0}-\frac{1}{2}+(\frac{1}{2}-\frac{1}{p_0})\beta)}\Big(\mu^{(d-1)}2^{-L\beta+J}\Big)^{\rho(\frac{1}{2}-\frac{1}{p_0})}\, .
\eeqq
Because of $t< \frac{1}{p_0}-\frac{1}{2}$ we can select $\beta>0$ such that
\[
-t+\frac{1}{p_0}-\frac{1}{2}+\big(\frac{1}{2}-\frac{1}{p_0}\big)\beta >0 \, .
\]
Consequently
\beq\label{ws-31}
\sum_{\mu=J+1}^{L}x^{\rho}_{n_{\mu}}(id_{\mu}^*) & \lesssim & 
2^{L{\rho}(- t+\frac{1}{p_0}-\frac{1}{2}+(\frac{1}{2}-\frac{1}{p_0})\beta)} \, \Big(L^{(d-1)} \, 2^{-L\beta+J}\Big)^{\rho(\frac{1}{2}-\frac{1}{p_0})} 
\nonumber
\\
& = & 2^{L\rho (- t+\frac{1}{p_0}- \frac 12)}\, \Big(L^{(d-1)}\, 2^{J}\Big)^{\rho(\frac{1}{2}-\frac{1}{p_0})}
\nonumber
\\
& \lesssim & 2^{L\rho(- t+\frac{1}{p_0}- \frac{1}2)}\, \Big(J^{(d-1)}\, 2^{J}\Big)^{\rho(\frac{1}{2}-\frac{1}{p_0})}
\nonumber
\\
& = & 2^{L\rho(- t)} \, 2^{L\rho(\frac{1}{p_0}- \frac 12)}\, \Big(J^{(d-1)}\, 2^{J}\, \Big)^{\rho(\frac{1}{2}-\frac{1}{p_0})}\, .
\eeq
Observe
\[
2^{L(\frac{1}{p_0}-\frac{1}{2})} \, \Big(J^{(d-1)} \, 2^{J}\Big)^{\frac{1}{2}-\frac{1}{p_0}} = 2^{(J+(d-1)[\log J])(\frac{1}{p_0}-\frac{1}{2})} 
\Big(J^{(d-1)}\, 2^{J}\Big)^{\frac{1}{2}-\frac{1}{p_0}}\asymp 1 \, .
\]
Replacing $L$ by $J+(d-1)[\log J] $ in \eqref{ws-31} we obtain
\[
\sum_{\mu=J+1}^{L} x^{\rho}_{n_{\mu}}(id_{\mu}^*)\lesssim 2^{-L\rho t} \lesssim (2^{J} \, J^{d-1})^{-\rho t} .
\]
This inequality, together with \eqref{ws-30}, yield
\[
x_{n_J}(id^*) \lesssim n_J^{-t}\, , 
\]
where 
\[
n_J := 1 + \sum_{\mu=0}^J D_\mu + \sum_{\mu=J+1}^L \Big(\big[D_{\mu}\, 2^{(\mu-L)\beta+J-\mu}\big] -1\Big)\, , \qquad J \in \N\, . 
\] 
Now we can continue as at the end of the proof of Thm. \ref{the1-1}.
\end{proof}

It remains to investigate the following situation: $0<p_0<p<2$ and $\frac{1}{p_0}-\frac{1}{p} < t <\frac{1}{p_0}-\frac{1}{2}$. 
The estimates of the Weyl numbers $x_n(id^*)$ from above will be the most complicated part within this paper.

\begin{lemma}\label{the5-4}
Let $0<p_0<p<2$ and $\frac{1}{p_0}-\frac{1}{p} < t <\frac{1}{p_0}-\frac{1}{2}$. Then
\[
x_n(id^*)\lesssim n^{-t}
\]
holds for all $n \ge 1$.
\end{lemma}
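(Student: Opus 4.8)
The plan is to push the decomposition $id^*=\sum_{\mu\ge 0}id_\mu^*$ from \eqref{ws-15}--\eqref{sum1} through with a carefully tuned choice of $J$, $L$ and of the allocation $(n_\mu)$. Write $\alpha=t-(1/p_0-1/p)_+=t-1/p_0+1/p$, which is positive by the hypothesis $t>1/p_0-1/p$ and, crucially, strictly smaller than $t$; moreover $(1/2-1/p_0)_+=0$ since $p_0<2$, so \eqref{sum1} reduces to
\[
x_n^\rho(id^*)\lesssim \sum_{\mu=J+1}^{L}x_{n_\mu}^\rho(id_\mu^*)+2^{-L\alpha\rho},\qquad \rho=\min(1,p),
\]
with $n-1=\sum_{\mu=0}^L(n_\mu-1)$ and $n_\mu=D_\mu+1$ for $\mu\le J$, so that $\sum_{\mu\le J}(n_\mu-1)\asymp J^{d-1}2^J$ by \eqref{ws-18}, \eqref{ws-23}. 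The target is $x_n(id^*)\lesssim n^{-t}$ with $n\asymp 2^JJ^{d-1}$, i.e. $x_n^\rho(id^*)\lesssim 2^{-Jt\rho}J^{-(d-1)t\rho}$; since $\alpha<t$, the tail $2^{-L\alpha\rho}$ forces $L$ to be taken considerably larger than $J$. One fixes $L=L(J)=\big[\tfrac{t}{\alpha}J+(d-1)\tfrac{t}{\alpha}\log J\big]$ (or any comparable value), so that $2^{-L\alpha\rho}\asymp 2^{-Jt\rho}J^{-(d-1)t\rho}$, as required.

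For the remaining block $J+1\le\mu\le L$ I would use \eqref{case3} (valid because $p\le 2$), giving $x_{n_\mu}(id_\mu^*)\lesssim 2^{-\mu\alpha}x_{n_\mu}(id_{p_0,p}^{D_\mu})$, and then insert the behaviour of the finite-dimensional Weyl numbers $x_n(id_{p_0,p}^{m})$ for $0<p_0<p\le 2$ recorded in property (a) of Appendix~A; the finer estimate \eqref{case5} (replacing $p$ by some $p_0<p-\epsilon\le p$) is available as a substitute should a sharper exponent be needed. The allocation is chosen of the familiar shape $n_\mu:=\big[D_\mu\,2^{(\mu-L)\beta+(J-\mu)}\big]\le D_\mu$ with a suitable $\beta>0$, exactly as in Lemmas \ref{the5-3} and \ref{the4-2}, which guarantees $\sum_{\mu=J+1}^{L}n_\mu\lesssim 2^JJ^{d-1}$ and hence $n\asymp 2^JJ^{d-1}$. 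Plugging the Appendix~A bound into $\sum_{\mu=J+1}^{L}2^{-\mu\alpha\rho}x_{n_\mu}^\rho(id_{p_0,p}^{D_\mu})$ produces a geometric-type series in $\mu$; the hypothesis $t<1/p_0-1/2$ is precisely what gives the relevant exponent of $2^\mu$ the sign needed for the sum to be dominated by its endpoint $\mu=L$, and the definition of $L$ converts that endpoint contribution into $\lesssim 2^{-Jt\rho}J^{-(d-1)t\rho}$, \emph{without} any leftover power of $\log n$. Finally, as at the end of the proof of Theorem \ref{the1-1}, one passes from the subsequence $n_J\asymp 2^JJ^{d-1}$ to arbitrary $n\ge 2$ by monotonicity and the polynomial behaviour of Weyl numbers.

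The main obstacle is the bookkeeping in the middle sum. Because $p_0<p$, the norms $\|id_\mu^*\|\asymp 2^{-\mu\alpha}$ decay with exponent $\alpha$ strictly below the target exponent $t$, so the tail cannot be cut off at $L=O(J)$ (as it can in Lemma \ref{the5-3}, where $\alpha=t$), and one is forced to work over the long range $[J,L]$ with $L/J\to t/\alpha>1$. There one must extract just enough decay from each $x_{n_\mu}(id_{p_0,p}^{D_\mu})$ while respecting the hard budget $\sum_{\mu=J+1}^{L}n_\mu\lesssim 2^JJ^{d-1}$, and at the same time ensure that the long $\mu$-summation contributes no logarithmic factor, the target being $n^{-t}$ with $\beta=0$. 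Reconciling these three constraints --- tail control, budget, and log-free summation --- is what makes this the most technical estimate of the paper; if a single geometric profile for $n_\mu$ does not close the estimate, the natural remedy is to split $[J,L]$ into two (or more) subranges and allocate $n_\mu$ by a different rule on each, choosing the break-points so that the dominant contributions of the subsums are balanced, which is the route I would pursue.
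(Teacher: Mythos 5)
Your primary plan does not close, and the step that fails is exactly the one you flag as delicate. Write $\delta:=\tfrac1{p_0}-\tfrac1p$, so $\alpha=t-\delta<t$. With your choice $L\approx\tfrac{t}{\alpha}J$ and the single profile $n_\mu=[D_\mu 2^{(\mu-L)\beta+J-\mu}]\asymp \mu^{d-1}2^{J+(\mu-L)\beta}$ on all of $(J,L]$, neither route through Appendix~A gives the target $2^{-Jt\rho}J^{-(d-1)t\rho}$. If you use \eqref{case3} together with $x_n(id^{m}_{p_0,p})\asymp n^{\frac1p-\frac1{p_0}}$ (the case $0<p_0\le p\le2$), the summand is $\asymp 2^{-\mu\alpha}\bigl(\mu^{d-1}2^{J+(\mu-L)\beta}\bigr)^{-\delta}$, whose exponent in $2^{\mu}$ is $-\alpha-\delta\beta<0$ for every $\beta>0$; so the sum is dominated by $\mu=J+1$, not by $\mu=L$ (your sign claim based on $t<\tfrac1{p_0}-\tfrac12$ is not correct for this bound), and the dominant term is $\asymp 2^{-Jt}2^{\delta\beta(L-J)}$, exponentially too large since $L-J\asymp J$. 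If instead you use \eqref{case1} with the $n^{\frac12-\frac1{p_0}}$ law (as in Lemma \ref{the5-3}), endpoint domination at $\mu=L$ does hold for small $\beta$, but the endpoint value is $\asymp 2^{-Lt}2^{(L-J)(\frac1{p_0}-\frac12)}L^{-(d-1)(\frac1{p_0}-\frac12)}=2^{-Jt}2^{(L-J)(\frac1{p_0}-\frac12-t)}\cdots$, and since $t<\tfrac1{p_0}-\tfrac12$ the factor $2^{(L-J)(\frac1{p_0}-\frac12-t)}$ again grows exponentially in $J$. In short: stretching $L$ to $\tfrac{t}{\alpha}J$ to kill the tail $2^{-L\alpha\rho}$ is incompatible with a single geometric allocation on $(J,L]$; also the cruder uniform choice $n_\mu\asymp 2^JJ^{d-1}2^{-(\mu-J)\gamma'}$ only yields $n^{-t}(\log n)^{(d-1)(t-\delta)}$, so the log-free bound really requires more.

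Your closing sentence (split the range and use different rules on the pieces) is precisely what the paper does, but carrying it out is the substance of the lemma and is missing from your proposal. The paper decomposes $id^*=\sum_{\mu\le J}+\sum_{J<\mu\le L}+\sum_{L<\mu\le K}+\sum_{\mu>K}$ with the break-point $L:=J+(d-1)[\log J]$ only \emph{logarithmically} above $J$, and $K$ chosen so large that $2^{K(-t+\frac1{p_0}-\frac1p)}\le 2^{-Jt}J^{-(d-1)t}$. On $(J,L]$ it uses your profile together with \eqref{case1} and the $n^{\frac12-\frac1{p_0}}$ law, exactly as in Lemma \ref{the5-3}, which gives $\lesssim 2^{-Lt}\asymp 2^{-Jt}J^{-(d-1)t}$ (this short stretch is what produces the factor $J^{-(d-1)t}$ rather than $J^{-(d-1)\delta}$). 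On the long range $(L,K]$ it switches to the allocation $n_\mu:=[J^{d-1}2^J2^{(L-\mu)\gamma}]$, which decays geometrically in absolute terms (so the budget $\sum n_\mu\asymp J^{d-1}2^J$ is kept), and estimates via \eqref{case3} and the $n^{\frac1p-\frac1{p_0}}$ law; the resulting series $\sum_{\mu>L}2^{-\mu t}2^{(\mu-L)(\gamma+1)\delta}$ converges to $\lesssim 2^{-Lt}$ provided $\gamma>0$ is chosen so small that $t>(\gamma+1)(\tfrac1{p_0}-\tfrac1p)$, which is possible precisely because $t>\tfrac1{p_0}-\tfrac1p$. Without these specific choices (the logarithmic break-point, the change of allocation rule, and the use of two different finite-dimensional estimates on the two ranges) the argument does not go through, so as it stands your proof has a genuine gap.
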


\begin{proof}
{\em Step 1.} We need to replace the decomposition of $id^*$ from \eqref{ws-15} by a more sophisticated one:
\beqq
id^* = \sum_{\mu=0}^{J} id_{\mu}^* + \sum_{\mu=J+1}^{L} id_{\mu}^* + \sum_{\mu=L+1}^{K} id_{\mu}^* + \sum_{\mu=K+1}^{\infty}id_{\mu}^*
\qquad \mbox{with}\quad J < L < K\, .
\eeqq
Here $J,L$ and $K$ will be chosen later on.
As in \eqref{ws-16} this decomposition results in the estimate
\be\label{ws-33}
x^\rho_n(id^*)\leq \sum_{\mu=0}^{J} x^\rho_{n_{\mu}}(id_{\mu}^*) + \sum_{\mu=J+1}^{L} x^\rho_{n_{\mu}}(id_{\mu}^*)
+ \sum_{\mu=L+1}^{K} x^\rho_{n_{\mu}}(id_{\mu}^*) + \sum_{\mu=K+1}^{\infty} \|id_{\mu}^*\|^\rho,\qquad \rho=\min(1,p)\, , 
\ee
where $n-1 = \sum_{\mu=0}^{K}(n_{\mu}-1)$.
Cor. \ref{ba2-1} yields
\[
 \|\, id_{\mu}^* \, \| \lesssim 2^{-\mu\alpha} \, \mu^{(d-1)(\frac{1}{2}-\frac{1}{p_0})_+} = 2^{\mu(-t+\frac{1}{p_0}-\frac{1}{p})} 
\]
and therefore
\[
\sum_{\mu=K+1}^{\infty} \|\, id_{\mu}^*\, \|^\rho \lesssim 2^{K\rho(-t+\frac{1}{p_0}-\frac{1}{p})} \, .
\]
As above we choose
\[
n_{\mu}:=D_{\mu}+1,\ \ \mu=0,1,....,J\, ,
\]
see \eqref{ws-18}. Hence
\beqq
\sum_{\mu=0}^{J} n_{\mu}\asymp J^{d-1} \, 2^J \qquad \mbox{and}\qquad \sum_{\mu=0}^{J}x^\rho_{n_{\mu}}(id_{\mu}^*)=0\, ,
\eeqq
see \eqref{ws-23} and \eqref{ws-19}.
Inserting this into \eqref{ws-33} we obtain
\begin{equation}\label{sum2}
x^\rho_n(id^*) \lesssim \sum_{\mu=J+1}^{L} x^\rho_{n_{\mu}}(id_{\mu}^*) + 
\sum_{\mu=L+1}^{K} x^\rho_{n_{\mu}}(id_{\mu}^*) + 2^{K\rho(-t+\frac{1}{p_0}-\frac{1}{p})}\, .
\end{equation}
{\em Step 2.} For given $J$ we choose $K$ large enough such that
\beqq
2^{K(-t+\frac{1}{p_0}-\frac{1}{p})} \leq 2^{-Jt}\, J^{(d-1)(-t)}\, .
\eeqq
Furthermore, we choose $L:= J+(d-1)[\log J]$ also in dependence on $J$. This implies
\beqq
 2^{-Lt}\asymp 2^{-tJ}\, J^{(d-1)(-t)}\, .
\eeqq
Now we fix our remaining degrees of freedom by defining 
\[
n_{\mu} : =
\begin{cases}
\big[D_{\mu} \, 2^{(\mu-L)\beta+J-\mu}\big] & \qquad \text{if}\quad J+1 \le \mu \leq L\, , \\
\big[J^{d-1}2^{J}\, 2^{(L-\mu)\gamma}\big] & \qquad \text{if}\quad L+1 \le \mu \leq K\, . 
\end{cases}
\]
Here $\beta, \, \gamma>0$ will be fixed later.
Since $\gamma >0$, applying \eqref{ws-27}, we have
\be\label{ws-37}
\sum_{\mu=J+1}^{K} n_{\mu}\asymp J^{d-1}2^J\, .
\ee
{\em Substep 2.1.} We estimate the first sum in \eqref{sum2}. 
Making use of the same arguments as in proof of Lemma \ref{the5-3}
we find
\be\label{ws-38}
\sum_{\mu=J+1}^{L} x^\rho_{n_{\mu}}(id_{\mu}^*) \lesssim 2^{-L\rho t} \lesssim 2^{-tJ\rho}\, J^{-(d-1)\rho t}\, .
\ee
{\em Substep 2.2.} Now we estimate the second sum in (\ref{sum2}). Therefore we consider the following splitting of $n_{\mu}$, $L+1\leq \mu\leq K$ 
\[
n_{\mu} \asymp J^{d-1} \, 2^J \, 2^{(L-\mu)\gamma} = J^{d-1} \, 2^{\mu}\, 2^{L-\mu}\, 2^{-(d-1)[\log J]}\, 2^{(L-\mu)\gamma}=
2^{\mu} \, 2^{(L-\mu)(\gamma+1)} \, ,
\]
where we used the definition of $L$. Observe $n_\mu \le D_\mu/2$.
The inequality \eqref{case3} and property (a) in Appendix A lead to the estimate
\beqq
x_{n_{\mu}}(id_{\mu})&\lesssim & 2^{\mu(- t+\frac{1}{p_0}-\frac{1}{p})}\, x_n(id_{p_0,p}^{D_{\mu}})
 \lesssim 2^{\mu(- t+\frac{1}{p_0}-\frac{1}{p})}\, (2^{\mu}\, 2^{(L-\mu)(\gamma + 1)})^{\frac{1}{p}-\frac{1}{p_0}}
\\
& = & 2^{-\mu t}\, 2^{(L-\mu)(\gamma +1)(\frac{1}{p}-\frac{1}{p_0})}\, .
\eeqq
This implies
\[ 
\sum_{\mu=L+1}^{K} x^\rho_{n_{\mu}}(id_{\mu}^*) \lesssim \sum_{\mu=L+1}^{K} 2^{-\mu\rho t}\, 
2^{(L-\mu)(\gamma + 1)(\frac{1}{p}-\frac{1}{p_0})\rho}\, .
\]
Choosing $\gamma >0$ such that
$$
 t> (\gamma + 1)\, \Big(\dfrac{1}{p_0}-\dfrac{1}{p}\Big)
$$
we conclude
\[
\sum_{\mu=L+1}^{K} x^\rho_{n_{\mu}}(id_{\mu}^*) \lesssim 2^{-Lt\rho}\asymp 2^{-tJ\rho}J^{-(d-1)t\rho}\, .
\]
Hence, inserting the previous inequality and \eqref{ws-38} into \eqref{sum2},
$$x_n(id^*)\lesssim 2^{-tJ}J^{(d-1)(-t)}$$
follows. Based on this estimate and \eqref{ws-37} one can finish the proof as before.
\end{proof}

As a corollary of Lem. \ref{the5-1} - Lem. \ref{the5-4} we obtain the main result of this subsection.

\begin{theorem}\label{the5-5}
Let $0 < p_0, p \le 2$ and $t > \Big(\frac{1}{p_0}-\frac{1}{p}\Big)_+$.
\\
{\rm (i)}
If $t > \frac{1}{p_0}-\frac 12$, then
\[
x_n(id^*) \asymp n^{-t} (\log n)^{(d-1)(t+\frac{1}{2}-\frac{1}{p_0})}
\]
holds for all $n \ge 2$.
\\
{\rm (ii)}
If $ t < \frac{1}{p_0}-\frac 12$, then
\[
x_n(id^*) \asymp n^{-t} 
\]
holds for all $n \ge 1$.
\end{theorem}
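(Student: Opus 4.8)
The plan is to assemble Theorem \ref{the5-5} directly from Lemmas \ref{the5-1}--\ref{the5-4}: in each of the two smoothness regimes one simply matches the lower bound furnished by Lemma \ref{the5-1} against the relevant upper bound, so the only work beyond invoking those lemmas is to verify that their hypotheses jointly cover the whole parameter range.

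For part (i) I would argue as follows. Since $t > \frac{1}{p_0} - \frac 12$, the exponent $t + \frac 12 - \frac{1}{p_0}$ is positive, so $\big(t+\frac12-\frac{1}{p_0}\big)_+ = t+\frac12-\frac{1}{p_0}$, and Lemma \ref{the5-1} yields $x_n(id^*) \gtrsim n^{-t}(\log n)^{(d-1)(t+\frac12-\frac{1}{p_0})}$ for $n \ge 2$. The hypothesis of Lemma \ref{the5-2} is precisely $t > \frac{1}{p_0}-\frac12$, so that lemma supplies the matching upper bound, and the two combine into the asserted equivalence.

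For part (ii) I first note that $t < \frac{1}{p_0} - \frac 12$ forces $\frac{1}{p_0} > \frac 12$, hence $p_0 < 2$, and makes $t+\frac12-\frac{1}{p_0}$ negative, so $\big(t+\frac12-\frac{1}{p_0}\big)_+ = 0$ and Lemma \ref{the5-1} already gives $x_n(id^*) \gtrsim n^{-t}$ (trivially also at $n=1$, where $x_1(id^*)=\|id^*\|\asymp 1$). For the upper bound I would split on the position of $p$ relative to $p_0$. If $p \le p_0$, then $p < 2$ as well, so $0<p\le p_0<2$ and $0<t<\frac{1}{p_0}-\frac12$, and Lemma \ref{the5-3} gives $x_n(id^*)\lesssim n^{-t}$. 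If instead $p_0 < p$, the standing hypothesis $t > \big(\frac{1}{p_0}-\frac1p\big)_+ = \frac{1}{p_0}-\frac1p$ together with $t<\frac{1}{p_0}-\frac12$ forces $\frac1p>\frac12$, i.e. $p<2$; thus $0<p_0<p<2$ and $\frac{1}{p_0}-\frac1p<t<\frac{1}{p_0}-\frac12$, and Lemma \ref{the5-4} gives $x_n(id^*)\lesssim n^{-t}$. Either way the upper and lower bounds agree, yielding $x_n(id^*)\asymp n^{-t}$ for all $n\ge1$.

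I do not expect any real obstacle in this final assembly; the substance sits upstream. Lemma \ref{the5-4}, covering $0<p_0<p<2$ with $\frac{1}{p_0}-\frac1p<t<\frac{1}{p_0}-\frac12$, is the delicate case, since it requires the refined three-block decomposition with cut-offs $J<L<K$ rather than the two-block splitting \eqref{ws-15}. At the level of Theorem \ref{the5-5} itself, the one point that must be checked with care is that the case distinction $p\le p_0$ versus $p_0<p$ in part (ii) is exhaustive and that each branch genuinely satisfies the slightly different smoothness restrictions imposed in Lemmas \ref{the5-3} and \ref{the5-4} — which is exactly what the chain of inequalities displayed above records.
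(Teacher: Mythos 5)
Your proposal is correct and is essentially the paper's own proof: the paper states Theorem \ref{the5-5} precisely as a corollary of Lemmas \ref{the5-1}--\ref{the5-4}, and your assembly (Lemma \ref{the5-1} for the lower bounds, Lemma \ref{the5-2} for the upper bound in (i), Lemmas \ref{the5-3}/\ref{the5-4} split according to $p\le p_0$ or $p_0<p$ in (ii)) is exactly the intended argument. Your explicit verification that the hypotheses of those lemmas are met throughout the parameter range (in particular that $t<\frac{1}{p_0}-\frac12$ forces $p_0<2$, and $p<2$ in the case $p_0<p$) is the only detail the paper leaves implicit.
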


\begin{remark}
\rm
Again we comment on the limiting situation $t=\frac{1}{p_0}-\frac{1}{2}$.
For $0<p_0,p<2$, $\rho :=\min (1,p)$ and $t=\frac{1}{p_0}-\frac{1}{2}$ it follows
\[
n^{-t}\lesssim x_n(id^*)\lesssim n^{-t}(\log\log n)^{t+\frac1\rho},\qquad n\geq 3.
\]
This is the only limiting case where the gap is of order $\log \log n$ to some power.
For that reason we give a few more details.
In principal we argue as in Lemma \ref{the5-3}.
For given $J \in \N$, $J \ge 4$,   we choose $ L:=J+(d-1)\, [\log J]$ as above. Next we define  
\[
n_{\mu}:= \Big[\frac{2^J \, J^{d-1}}{\log J}\Big] \, , \qquad J+1 \le \mu \le L\, .
\]
Then
\[
\sum_{\mu=J+1}^L n_{\mu} \asymp 2^J \, J^{d-1} 
\]
and 
\[
x_{n_{\mu}}(id_{p_0,2}^{D_{\mu}})\lesssim \Big(\frac{2^J \, J^{d-1}}{\log J}\Big)^{\frac{1}{2}-\frac{1}{p_0}}
\]
follow, see property (a) in Appendix A.
Applying \eqref{case1} we find
\beqq
\sum_{\mu=J+1}^{L} x^{\rho}_{n_{\mu}}(id_{\mu}^*)& \lesssim & 
\sum_{\mu=J+1}^{L}2^{\mu{\rho}(- t+\frac{1}{p_0}-\frac{1}{2})} \, \Big(\frac{2^J \, J^{d-1}}{\log J}\Big)^{\rho(\frac{1}{2}-\frac{1}{p_0})}
\\
& \lesssim & \big(2^J \, J^{d-1}\big)^{-\rho t} \, (\log J)^{1+ \rho t} \, .
\eeqq
As in Lemma \ref{the5-3} this proves the claim.
\end{remark}


\subsubsection{The case $0<p \leq 2 < p_0 \leq \infty$}


This is the last case we have to consider.

\begin{theorem}\label{the3-1}
Let $0< p \leq 2 < p_0\leq \infty$ and $ t>\frac{1}{p_0}$. Then
\[
x_n(id^*)\asymp n^{-t+\frac{1}{p_0}-\frac{1}{2}}(\log n)^{(d-1)(t+\frac{1}{2}-\frac{1}{p_0})}
\]
holds for all $n\ge 2$.
\end{theorem}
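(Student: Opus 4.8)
The plan is to reduce everything to results already available. The decisive observation is that the claimed order $n^{-t+1/p_0-1/2}(\log n)^{(d-1)(t+1/2-1/p_0)}$ is exactly the one proved in Theorem \ref{the4-1} for the identity $s^{t,\Omega}_{p_0,p_0}b\to s^{0,\Omega}_{2,2}f$: if we put the fine index $2$ in place of $p$ in Theorem \ref{the4-1}, its smoothness restriction $t>\frac{1/p-1/p_0}{p_0/2-1}$ collapses to $t>\frac{1/2-1/p_0}{p_0/2-1}=\frac1{p_0}$, which is precisely our hypothesis. For the estimate from above I would then use that $0<p\le 2<p_0$ furnishes the chain of continuous embeddings
\[
s^{t,\Omega}_{p_0,p_0}b \;=\; s^{t,\Omega}_{p_0,p_0}f \;\hookrightarrow\; s^{0,\Omega}_{2,2}f \;\hookrightarrow\; s^{0,\Omega}_{p,2}f\, ,
\]
where the equality is Lemma \ref{ba1}(ii), the first embedding is valid because $t>1/p_0>0$, and the last embedding is bounded since $p\le 2$ (on the level of the square functions this is merely $\|g|L_p\|\lesssim\|g|L_2\|$ for $g$ supported in a fixed bounded set). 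Factoring $id^*=\iota\circ\widetilde{id}$ with $\widetilde{id}:s^{t,\Omega}_{p_0,p_0}b\to s^{0,\Omega}_{2,2}f$ and $\iota:s^{0,\Omega}_{2,2}f\to s^{0,\Omega}_{p,2}f$, property $(c)$ of the $s$-numbers gives $x_n(id^*)\le\|\iota\|\,x_n(\widetilde{id})$, and since $s^{0,\Omega}_{2,2}f=s^{0,\Omega}_{2,2}b$ the operator $\widetilde{id}$ is exactly the one covered by Theorem \ref{the4-1} (with $p$ replaced by $2$); the upper bound follows.

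For the estimate from below I would argue directly, as in the first steps of the proofs of Theorem \ref{the1-1} and Lemma \ref{the5-1}. Start from \eqref{eqlow1}, which gives $x_n(id^*)\ge x_n(id^*_\mu)$ for every $\mu\in\N_0$, and, since $p\le 2$, apply the lower estimate in \eqref{case1}:
\[
x_n(id^*)\;\gtrsim\;\mu^{(d-1)(\frac12-\frac1p)}\,2^{\mu(-t+\frac1{p_0}-\frac1p)}\,x_n\bigl(id^{D_\mu}_{p_0,p}\bigr)\, .
\]
Now choose $n=[D_\mu/2]$ and invoke the behaviour of the Weyl numbers of $id^{D_\mu}_{p_0,p}$ in the range $p\le 2<p_0$ recorded in Appendix A, namely $x_{[D_\mu/2]}\bigl(id^{D_\mu}_{p_0,p}\bigr)\gtrsim D_\mu^{\frac1p-\frac12}$. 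Inserting $D_\mu\asymp 2^\mu\mu^{d-1}$, all powers of $\mu$ cancel and one obtains $x_n(id^*)\gtrsim 2^{\mu(-t+\frac1{p_0}-\frac12)}$ for $n=[D_\mu/2]$. Since $2^\mu\asymp n/\log^{d-1}n$ for this $n$, this is the asserted lower bound along the subsequence $\{[D_\mu/2]\}_\mu$, and the passage to arbitrary $n\ge 2$ is routine by monotonicity and the polynomial behaviour of the Weyl numbers, exactly as at the end of the proof of Theorem \ref{the1-1}.

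The part I expect to be delicate is the single non-formal ingredient above, the lower bound $x_{[D_\mu/2]}(id^{D_\mu}_{p_0,p})\gtrsim D_\mu^{1/p-1/2}$ from Appendix A. This is a Kashin/Gluskin-type statement — one has to produce a good $\ell_2$-section of $\ell_{p_0}^m$ — and it is genuinely stronger than what a plain factorization $\ell_{p_0}^m\to\ell_2^m\to\ell_p^m$ yields; note that the value needed here is $D_\mu^{1/p-1/2}$, not the larger number $\|id^{D_\mu}_{p_0,p}\|=D_\mu^{1/p-1/p_0}$, which is attained only on the much shorter plateau $n\lesssim D_\mu^{2/p_0}$ and would give merely the weaker decay $n^{-tp_0/2}$. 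Everything else is the same bookkeeping already performed several times in Section \ref{sequence3}.
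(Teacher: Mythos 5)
Your argument coincides with the paper's proof: the upper bound is obtained exactly as in the paper by factoring $id^*$ through $s^{0,\Omega}_{2,2}f$ and invoking Theorem \ref{the4-1} with $p=2$ (where the restriction indeed becomes $t>\frac{1}{p_0}$), and the lower bound uses \eqref{eqlow1}, \eqref{case1}, the choice $n=[D_\mu/2]$ and the Appendix A estimate $x_n(id^{D_\mu}_{p_0,p})\gtrsim D_\mu^{1/p-1/2}$, just as in the paper's Step 1. Your closing remark correctly identifies that this Gluskin-type lower bound (property (c)(i) in Appendix A, taken from the literature) is the only non-formal ingredient; everything else matches the paper's bookkeeping.
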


\begin{proof}
{\em Step 1.} Estimate from below. 
Since $p\leq 2$, from (\ref{eqlow1}) and (\ref{case1}) we derive
\[
x_n(id^*) \gtrsim \mu^{(d-1)(\frac{1}{2}-\frac{1}{p})} \, 2^{\mu(-t+\frac{1}{p_0}-\frac{1}{p})} \, x_n(id_{p_0,p}^{D_\mu})
\, .
\]
We choose $n:=[D_{\mu}/2]$ and obtain from 
property (c)(part(i)) in Appendix A that
\[
x_n(id_{p_0,p}^{D_{\mu}} )\gtrsim (D_{\mu})^{\frac{1}{p}-\frac{1}{2}}\gtrsim (\mu^{d-1}\, 2^{\mu})^{\frac{1}{p}-\frac{1}{2}}\, .
\]
This implies
\[ 
x_n(id^*)\gtrsim 2^{\mu(-t-\frac{1}{2}+\frac{1}{p_0})} \, .
\]
Using $2^{\mu} \asymp \frac{n}{\log^{d-1}n}$ we conclude
\[
x_n(id^*)\gtrsim n^{-t-\frac{1}{2}+\frac{1}{p_0}}(\log n)^{(d-1)(t+\frac{1}{2}-\frac{1}{p_0})}\, .
\]
{\em Step 2.} Estimate from above.
We consider the commutative diagram

\tikzset{node distance=4cm, auto}

\begin{center}
\begin{tikzpicture}
 \node (H) {$s^{t,\Omega}_{p_0,p_0}b$};
 \node (L) [right of =H] {$s^{0,\Omega}_{p,2}f $};
 \node (L2) [right of =H, below of =H, node distance = 2cm ] {$ s^{0,\Omega}_{2,2}f$};
 \draw[->] (H) to node {$id^*$} (L);
 \draw[->] (H) to node [swap] {$id^1$} (L2);
 \draw[->] (L2) to node [swap]{$id^2$} (L);
 \end{tikzpicture}
\end{center}

From $p< 2$ we derive $ s^{0,\Omega}_{2,2}f \hookrightarrow s^{0,\Omega}_{p,2}f$ which implies 
$\| \, id^2\, \|< \infty$. The ideal property of the $s$-numbers in combination with Thm. \ref{the4-1} yield
\[ 
x_n (id^*) \lesssim n^{-t+\frac{1}{p_0}-\frac{1}{2}} \, (\log n)^{(d-1)(t+\frac{1}{2}-\frac{1}{p_0})} 
\]
if $t>\frac{ {1}/{2}- {1}/{p_0}}{ {p_0}/{2}-1}= \frac{1}{p_0}$. 
\end{proof}

\begin{theorem}\label{the3-2}
Let $0< p\leq 2< p_0< \infty$ and $0< t<\frac{1}{p_0}$. Then
\[
 x_n(id^*)\asymp n^{-\frac{tp_0}{2}}(\log n)^{(d-1)(t+\frac{1}{2}-\frac{1}{p_0})}
\]
holds for all $n\ge 2$.
\end{theorem}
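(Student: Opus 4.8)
The plan is to argue exactly as in the proof of Theorem \ref{the3-1}, simply replacing the large--smoothness input (Theorem \ref{the4-1}) by its small--smoothness counterpart (Theorem \ref{the4-2}). The key observation that makes this work is the parameter identity $\frac{1/2-1/p_0}{p_0/2-1}=\frac{1}{p_0}$, so that the hypothesis $0<t<\frac{1}{p_0}$ of Theorem \ref{the3-2} is precisely the hypothesis of Theorem \ref{the4-2} applied with the target index equal to $2$.

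\textbf{Estimate from above.} First I would record the chain of continuous embeddings
\[
s^{t,\Omega}_{p_0,p_0}b \hookrightarrow s^{0,\Omega}_{2,2}f \hookrightarrow s^{0,\Omega}_{p,2}f \, .
\]
The first embedding holds for every $t>0$: by Corollary \ref{ba2-1} we have $\|\, id^*_\mu: (s^{t,\Omega}_{p_0,p_0}b)_\mu \to (s^{0,\Omega}_{2,2}f)_\mu\| \asymp 2^{-\mu t}\, \mu^{(d-1)(\frac12-\frac{1}{p_0})}$ (here $p_0>2$), which is summable over $\mu$; the second is the embedding $s^{0,\Omega}_{2,2}f \hookrightarrow s^{0,\Omega}_{p,2}f$ coming from $p\le 2$, just as in the proof of Theorem \ref{the3-1}. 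Factoring $id^*$ through the associated commutative diagram and using the ideal property of the Weyl numbers gives
\[
x_n\big(id^*: s^{t,\Omega}_{p_0,p_0}b \to s^{0,\Omega}_{p,2}f\big) \lesssim x_n\big(id^*: s^{t,\Omega}_{p_0,p_0}b \to s^{0,\Omega}_{2,2}f\big) \, .
\]
Since $s^{0,\Omega}_{2,2}f=s^{0,\Omega}_{2,2}b$ by Lemma \ref{ba1} and since $0<t<\frac{1}{p_0}$ is exactly the admissible range in Theorem \ref{the4-2} with $p=2$, that theorem yields $x_n(id^*: s^{t,\Omega}_{p_0,p_0}b \to s^{0,\Omega}_{2,2}f)\asymp n^{-\frac{tp_0}{2}}(\log n)^{(d-1)(t+\frac12-\frac{1}{p_0})}$, hence the desired upper bound.

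\textbf{Estimate from below.} Since $p\le 2$, combining \eqref{eqlow1} and \eqref{case1} gives
\[
x_n(id^*) \gtrsim \mu^{(d-1)(\frac12-\frac1p)}\, 2^{\mu(-t+\frac{1}{p_0}-\frac1p)}\, x_n\big(id_{p_0,p}^{D_\mu}\big)\, , \qquad \mu\in\N_0\, .
\]
Here I would choose $n=[D_\mu^{2/p_0}]$, so that $n\le D_\mu$ because $p_0>2$. In this small--$n$ range the relevant part of Appendix A (property (c)(part(ii)), the mixed case $p\le 2<p_0$) gives $x_n(id_{p_0,p}^{D_\mu})\gtrsim D_\mu^{\frac1p-\frac{1}{p_0}}\asymp (2^\mu\mu^{d-1})^{\frac1p-\frac{1}{p_0}}$; substituting, the dependence on $p$ cancels and one is left with $x_n(id^*) \gtrsim \mu^{(d-1)(\frac12-\frac{1}{p_0})}\, 2^{-t\mu}$. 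Since $n=[D_\mu^{2/p_0}]$ one has $\mu\asymp\log n$ and $2^\mu\asymp n^{p_0/2}(\log n)^{-(d-1)}$, and rewriting the right-hand side in terms of $n$ produces $n^{-\frac{tp_0}{2}}(\log n)^{(d-1)(t+\frac12-\frac{1}{p_0})}$ along the subsequence $(n_\mu)_\mu$. As consecutive $n_\mu$ have bounded ratio and the target is polynomial up to logarithmic factors, the monotonicity of the Weyl numbers transfers this lower bound to all $n\ge 2$, exactly as at the end of the proof of Theorem \ref{the1-1}. This computation is identical to Step 1 of the proof of Theorem \ref{the4-2}, with $id_{p_0,2}^{D_\mu}$ replaced by $id_{p_0,p}^{D_\mu}$.

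\textbf{Main obstacle.} There is essentially no new difficulty: the genuinely hard part --- the multi-scale splitting producing the exponent $\frac{tp_0}{2}$ and the logarithmic power --- was already carried out in the proof of Theorem \ref{the4-2}. The only points that require care are the arithmetic that makes Theorem \ref{the4-2} applicable with $p=2$ (the identity $\frac{1/2-1/p_0}{p_0/2-1}=\frac{1}{p_0}$), the verification that the embeddings $s^{t,\Omega}_{p_0,p_0}b\hookrightarrow s^{0,\Omega}_{2,2}f\hookrightarrow s^{0,\Omega}_{p,2}f$ are continuous under the standing assumptions $0<t<\frac{1}{p_0}$, $p\le 2$ (which follows from Corollary \ref{ba2-1} and $p_0>2$), and selecting the correct splitting point $n=[D_\mu^{2/p_0}]$ in the lower bound so that one sits in the small--$n$ regime of Appendix A.
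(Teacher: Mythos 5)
Your proposal is correct and follows essentially the same route as the paper: the upper bound by factoring through $s^{0,\Omega}_{2,2}f$ and invoking Theorem \ref{the4-2} with target index $2$ (using $\tfrac{1/2-1/p_0}{p_0/2-1}=\tfrac{1}{p_0}$), and the lower bound by combining \eqref{eqlow1}, \eqref{case1} with the choice $n=[D_\mu^{2/p_0}]$ and property (c)(ii) of Appendix A. No gaps; the only cosmetic difference is that you verify the continuity of $s^{t,\Omega}_{p_0,p_0}b\hookrightarrow s^{0,\Omega}_{2,2}f$ explicitly, which the paper leaves implicit.
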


\begin{proof}
{\em Step 1.} Estimate from below. Since $p\leq 2$, from (\ref{eqlow1}) and (\ref{case1}) we obtain
\[
x_n(id^*)\gtrsim \mu^{(d-1)(\frac{1}{2}-\frac{1}{p})} \, 2^{\mu(-t+\frac{1}{p_0}-\frac{1}{p})} \, x_n(id_{p_0,p}^{D_\mu})
\, .
\]
With $n:=[D_{\mu}^{\frac{2}{p_0}}]$ 
property (c)(part(ii)) in Appendix A yields
\[
x_n(id_{p_0,p}^{D_{\mu}} )\gtrsim D_{\mu}^{\frac{1}{p}-\frac{1}{p_0}} \gtrsim (\mu^{d-1}\, 2^{\mu})^{\frac{1}{p}-\frac{1}{p_0}}
\, .
\]
Hence
\[ 
x_n(id^*) \gtrsim \mu^{(d-1)(\frac{1}{2}-\frac{1}{p_0})}\, 2^{-t\mu}\, .
\]
Since  $2^{\mu}\asymp \frac{n^{\frac{p_0}{2}}}{\log^{d-1}(n^{\frac{p_0}{2}})}$ 
we conclude
\[
x_n(id^*)\gtrsim n^{-\frac{tp_0}{2}}(\log n)^{(d-1)(t+\frac{1}{2}-\frac{1}{p_0})}\, .
\]
{\em Step 2.} Estimate from above. Again we consider the commutative diagram
\tikzset{node distance=4cm, auto}
\begin{center}
\begin{tikzpicture}
 \node (H) {$s^{t,\Omega}_{p_0,p_0}b$};
 \node (L) [right of =H] {$s^{0,\Omega}_{p,2}f $};
 \node (L2) [right of =H, below of =H, node distance = 2cm ] {$ s^{0,\Omega}_{2,2}f$};
 \draw[->] (H) to node {$id^*$} (L);
 \draw[->] (H) to node [swap] {$id^1$} (L2);
 \draw[->] (L2) to node [swap]{$id^2$} (L);
 \end{tikzpicture}
\end{center}
\noindent
In addition we know
\[
x_n(id^1)\asymp n^{-\frac{tp_0}{2}}(\log n)^{(d-1)(t+\frac{1}{2}-\frac{1}{p_0})}\, , \qquad n \geq 2\, .
\]
if $2 < p_0 < \infty$ and $0<t<\frac{1}{p_0}$, see  Thm. \ref{the4-2}. Now the ideal property of the $s$-numbers 
yields
\[
x_n(id^*) \lesssim n^{-\frac{tp_0}{2}}(\log n)^{(d-1)(t+\frac{1}{2}-\frac{1}{p_0})}\, , \qquad n \geq 2\,
\]
if $0<t<\frac{1}{p_0}$.
\end{proof}

\begin{remark}
\rm
In the limiting situation $t=\frac{1}{p_0}>0$
we have
\[
n^{-\frac{1}{2}}(\log n)^{\frac{(d-1)}{2}}\lesssim x_n(id^*)\lesssim n^{-\frac{1}{2}}(\log n)^{\frac{(d-1)}{2}}(\log n)^{\frac{1}{2}+\frac{1}{\rho}}
\]
for all $n \ge 2$. Here $\rho=\min(1,p)$.
\end{remark}


\section{Proofs}
\label{proof}


Here we will give proofs of the assertions in Section \ref{main1}.
For better readability we continue to work with  $(p_0,p)$ instead of $(p_1,p_2)$. 


\subsection{Proof of the main Theorem \ref{main}}


The heart of the matter is the following in principal well-known lemma.

\begin{lemma}\label{weyl}
Let $0 < p_0 \le \infty$, $0< p< \infty$ and $t\in \re$. Then
\[
x_n( id^*: s_{p_0,p_0}^{t,\Omega}b \to s_{p,2}^{0,\Omega}f)\asymp x_n\big(id: S_{p_0,p_0}^t B(\Omega)\to S_{p,2}^0 F(\Omega)\big)
\]
holds for all $n \in \N$.
\end{lemma}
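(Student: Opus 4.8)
The plan is to transport everything to the wavelet side and then read the equivalence off from the ideal property (c) of the $s$-numbers. By Lemma \ref{wavelet} the wavelet transform $\mathcal W$ identifies the spaces on $\R$ with the sequence spaces $s^t_{p,q}a$, and — using the extension construction recorded after Definition \ref{defomega} — the spaces on $\Omega$, which are defined by restriction in Def. \ref{defomega}, carry analogous ``wavelet isomorphisms'' onto $s^{t,\Omega}_{p,q}a$. First I would fix $N=N(t,p_0,p)\in\N$ large enough that the system $\Psi_{\bar\nu,\bar m}$ is admissible simultaneously for $S^t_{p_0,p_0}B(\R)$ and for $S^0_{p,2}F(\R)$ in the sense of Lemma \ref{wavelet}; since $N$ depends only on the parameters this costs nothing.

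Next, for $A\in\{B,F\}$ I would introduce two bounded linear operators. The first is
\[
\mathcal E_A:\ S^t_{p,q}A(\Omega)\to s^{t,\Omega}_{p,q}a,\qquad f\mapsto \big(2^{|\bar\nu|_1}\langle \ce f,\Psi_{\bar\nu,\bar m}\rangle\big)_{\bar\nu\in\N_0^d,\ \bar m\in A^\Omega_{\bar\nu}},
\]
where $\ce$ is a fixed bounded extension operator from $\Omega$ to $\R$; by orthonormality of $2^{|\bar\nu|_1/2}\Psi_{\bar\nu,\bar m}$ the coefficients sitting outside the sets $A^\Omega_{\bar\nu}$ automatically vanish, and $\mathcal E_A$ is bounded by the norm estimate recorded after Def. \ref{defomega}. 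The second is the synthesis-and-restriction map
\[
\mathcal S_A:\ s^{t,\Omega}_{p,q}a\to S^t_{p,q}A(\Omega),\qquad \lambda\mapsto \Big(\sum_{\bar\nu\in\N_0^d}\sum_{\bar m\in A^\Omega_{\bar\nu}}\lambda_{\bar\nu,\bar m}\Psi_{\bar\nu,\bar m}\Big)\Big|_\Omega,
\]
which is bounded because extension by zero is isometric from $s^{t,\Omega}_{p,q}a$ into $s^t_{p,q}a$ and both $\mathcal W^{-1}$ and the restriction are bounded. Since every $\Psi_{\bar\nu,\bar m}$ with $\bar m\notin A^\Omega_{\bar\nu}$ vanishes on $\Omega$, the wavelet expansion gives $\mathcal S_A\circ\mathcal E_A=\mathrm{id}$ on $S^t_{p,q}A(\Omega)$; in particular $\mathcal S_A$ is a bounded surjection.

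Now the two inequalities. For $x_n(id)\lesssim x_n(id^*)$ I would verify that the square
\[
\begin{CD}
s^{t,\Omega}_{p_0,p_0}b @>>> s^{0,\Omega}_{p,2}f\\
@AAA @VVV\\
S^t_{p_0,p_0}B(\Omega) @>>> S^0_{p,2}F(\Omega)
\end{CD}
\]
with top arrow $id^*$, bottom arrow $id$, upward arrow $\mathcal E_B$ and downward arrow $\mathcal S_F$ commutes: applying $\mathcal S_F\circ id^*\circ\mathcal E_B$ to $f$ just recomputes $(\ce f)|_\Omega=f$, now measured in the $F$-norm. Property (c) then yields $x_n(id)\le\|\mathcal S_F\|\,\|\mathcal E_B\|\,x_n(id^*)$; note that this direction does not need any injectivity. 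For the converse I would argue that $\mathcal S_A$ is in fact injective: if $\sum_{\bar m\in A^\Omega_{\bar\nu}}\lambda_{\bar\nu,\bar m}\Psi_{\bar\nu,\bar m}$ vanishes on $\Omega$, testing against the wavelets whose support lies inside $\Omega$ kills all ``interior'' coefficients, and the remaining ``boundary'' series — which still vanishes on $\Omega$ — must vanish identically by the linear independence of the dilated and translated generators. Being a bounded bijection between quasi-Banach spaces, $\mathcal S_A$ is then an isomorphism by the open mapping theorem, and since $\mathcal S_A\circ\mathcal E_A=\mathrm{id}$ its inverse is $\mathcal E_A$. Hence $id^*=\mathcal E_F\circ id\circ\mathcal S_B$, and property (c) gives $x_n(id^*)\le\|\mathcal E_F\|\,\|\mathcal S_B\|\,x_n(id)$. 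Combining the two estimates proves the lemma.

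The only genuinely delicate point is the injectivity of $\mathcal S_A$ — equivalently, that $\mathcal S_A$ is a true isomorphism rather than merely a retraction onto a complemented subspace of $s^{t,\Omega}_{p,q}a$. The wavelets live on $\R$ and some of them overlap $\partial\Omega$, so it is not obvious a priori that restriction to $\Omega$ loses no information; this is precisely the assertion that $S^t_{p,q}A(\Omega)$ admits an intrinsic wavelet characterization indexed by the sets $A^\Omega_{\bar\nu}$, which is where one invokes the local linear independence of compactly supported wavelet systems (or simply quotes one of the wavelet-on-domains results in the literature cited in the Appendix). Everything else — boundedness of $\mathcal E_A$ and $\mathcal S_A$, commutativity of the two diagrams, and the two applications of property (c) — is routine.
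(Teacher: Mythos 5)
Your first inequality is exactly the paper's argument: you factor $id=(R_\Omega\circ\mathcal W^*)\circ id^*\circ(\mathcal W\circ\ce_d)$ through a bounded linear extension operator and the wavelet analysis/synthesis maps, and apply the ideal property (c); this coincides with the commutative diagram in the paper's proof. (One small slip there: the coefficients $\langle \ce f,\Psi_{\bar\nu,\bar m}\rangle$ with $\bar m\notin A^\Omega_{\bar\nu}$ do \emph{not} vanish automatically, since neighbouring wavelets overlap $\supp \ce f$; this is harmless because your $\mathcal E_B$ simply discards them and the identity $\mathcal S_A\circ\mathcal E_A=\mathrm{id}$ only uses that the discarded wavelets vanish on $\Omega$.)

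The genuine gap is in the converse inequality. Your argument stands or falls with the injectivity of $\mathcal S_A$, i.e.\ with the claim that a series $\sum_{\bar\nu}\sum_{\bar m\in A^\Omega_{\bar\nu}}\lambda_{\bar\nu,\bar m}\Psi_{\bar\nu,\bar m}$ vanishing on $\Omega$ has all coefficients zero. Testing against interior wavelets does kill the interior coefficients, but the step ``the remaining boundary series vanishes identically by the linear independence of the dilated and translated generators'' is unjustified: orthonormality (hence linear independence) on $\re^d$ says nothing about the \emph{restrictions} of the boundary wavelets to $\Omega$. What you need is local linear independence of the restricted system, a special property of particular wavelet families (it is a nontrivial theorem even for Daubechies wavelets) and not a consequence of the hypotheses of Lemma \ref{wavelet}; the cited domain literature only provides a quotient-type (representation) description of $S^t_{p,q}A(\Omega)$, precisely because restriction need not be injective on the span of the boundary wavelets. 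Moreover, infinitely many boundary wavelets from all levels accumulate at $\partial\Omega$, so even level-wise linear independence would not let you pass from a vanishing infinite series to vanishing coefficients without a uniform Riesz-type bound. Note that your intermediate claim ($\mathcal S_A$ an isomorphism whose inverse is $\mathcal E_A$ built from an arbitrary extension operator) is strictly stronger than the lemma itself. The paper obtains $x_n(id^*)\lesssim x_n(id)$ from a modified diagram that never inverts the restriction map; one concrete way to realize this: all wavelets with $\bar m\in A^\Omega_{\bar\nu}$ are supported in a fixed larger cube $Q\supset\overline\Omega$, so $id^*$ factors as (analysis of a linear extension from $Q$) $\circ$ $(id:S^t_{p_0,p_0}B(Q)\to S^0_{p,2}F(Q))$ $\circ$ (synthesis followed by restriction to $Q$), where the coefficient recovery is now exact by orthonormality because the wavelets sit inside $Q$; an affine change of variables identifies the embedding on $Q$ with the one on $\Omega$. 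As written, your proof of the converse direction is incomplete.
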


\begin{proof}
{\em Step 1.} 
Let $0 < p_0 < \infty$.
Let $E: B^{t}_{p_0,p_0} (0,1) \to B^{t}_{p_0,p_0} (\re) $ denote a linear and continuous extension operator.
For existence of those operators we refer, e.g., to \cite[3.3.4]{Tr83} or \cite{Ry}. Without loss of generality we may assume that 
\[
\supp Ef \subset \bigcup_{{\bar{\nu} \in \N_0^d}, \, \,   \bar{m}\in A_{\bar{\nu}}^\Omega }  \supp \Psi_{\bar{\nu},\bar{m}}\, , 
\] 
see Section \ref{domino}, for all $f$.
Then the $d$-fold tensor product operator
\[
\ce_d := E \otimes \ldots \otimes E 
\]
maps the tensor product space $S^t_{p_0,p_0} B((0,1)^d) = B^{t}_{p_0,p_0} (0,1) \otimes_{\gamma_{p_0}} \ldots \otimes_{\gamma_{p_0}} B^{t}_{p_0,p_0} (0,1)  $ 
into the tensor product space 
$S^t_{p_0,p_0} B(\R) = B^{t}_{p_0,p_0} (\re) \otimes_{\gamma_{p_0}} \ldots \otimes_{\gamma_{p_0}} B^{t}_{p_0,p_0} (\re)$, see \cite{SUt}, 
and is again a linear and continuous extension operator.
This follows from the fact that $\gamma_{p_0}$ is an uniform quasi-norm.
Hence  $\ce_d \in \cl (S^{t}_{p_0,p_0}B (\Omega), S^{t}_{p_0,p_0}B (\R))$.
\\
{\em Step 2.} Let $p_0 = \infty$. We discussed extension operators in this case in Subsection \ref{extrem2}.
Now we can argue as in Step 1.
\\
{\em Step 3.} We follow \cite{Vybiral} and consider the commutative diagram
\[
\begin{CD}
S_{p_0,p_0}^{t}B(\Omega) @ >\mathcal{E}_d >> S_{p_0,p_0}^{t}B(\mathbb{R}^d) @>\mathcal{W}>>s_{p_0,p_0}^{t,\Omega}b\\
@V id VV @. @VV id^*V\\
S_{p,2}^{0}F(\Omega) @ <R_{\Omega}<< S_{p,2}^{0}F(\mathbb{R}^d)@ <\mathcal{W}^* << s_{p,2}^{0,\Omega}f
\end{CD}
\]
The mapping $\mathcal{W}$ is defined as 
\[ 
\mathcal{W} f := \, \Big( 2^{|\bar{\nu}|_1}\, \langle f, \, \Psi_{\bar{\nu}, \bar{k}}\rangle
\Big)_{\bar{\nu}\in \N_0^d, \, \bar{k} \in A^{\Omega}_{\bar{\nu}}}\, .
\]
Furthermore, $\mathcal{W}^*$ is defined as 
\[
\mathcal{W}^* \lambda := 
\sum_{\bar{\nu} \in \N_0^ d}
\sum_{\bar{k} \in A^{\Omega}_{\bar{\nu}}} \lambda_{\bar{\nu}, \bar{k}} \, \Psi_{\bar{\nu}, \bar{k}}
\]
and $R_\Omega$ means the restriction to $\Omega$.
The boundedness of $\ce_d, \mathcal{W}, \mathcal{W}^*, R_\Omega$ and the ideal property of the $s$-numbers  yield
$x_n (id)\lesssim x_n (id^*)$.
A similar argument with a slightly modified diagram yields $x_n (id^*)\lesssim x_n (id)$ as well.
\end{proof}
Next we need to recall an adapted Littlewood-Paley assertion, see Nikol'skij \cite[1.5.6]{Ni}.

\begin{lemma}\label{littlewood}
Let $1 < p< \infty$. Then
\[
S_{p_,2}^{0} F (\R) = L_p (\R) \qquad \mbox{and}\qquad S_{p_,2}^{0} F (\Omega) = L_p (\Omega)
\]
in the sense of equivalent norms.
\end{lemma}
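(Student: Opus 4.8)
The plan is to prove the identity first on $\R$ and then to deduce the version on $\Omega$ by restriction. Recall from Appendix B that $S^0_{p,2}F(\R)$ is defined via a tensor-product smooth dyadic resolution of unity $\varphi_{\bar\nu}(\xi)=\prod_{\ell=1}^d\varphi_{\nu_\ell}(\xi_\ell)$, $\bar\nu\in\N_0^d$. Writing $\Delta^{(\ell)}_{j}$ for the Littlewood--Paley projection $g\mapsto\mathcal{F}^{-1}[\varphi_j(\xi_\ell)\,\mathcal{F}g]$ acting only in the $\ell$-th variable, we have $\mathcal{F}^{-1}[\varphi_{\bar\nu}\,\hat f]=\Delta^{(1)}_{\nu_1}\circ\dots\circ\Delta^{(d)}_{\nu_d}f$, and projections in different variables commute. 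Thus
\[
\|\,f\,|S^0_{p,2}F(\R)\|\asymp\Big\|\Big(\sum_{\bar\nu\in\N_0^d}\big|\Delta^{(1)}_{\nu_1}\cdots\Delta^{(d)}_{\nu_d}f\big|^2\Big)^{1/2}\Big|L_p(\R)\Big\|\, ,
\]
so the assertion on $\R$ reduces to showing that, for $1<p<\infty$, the right-hand side is equivalent to $\|\,f\,|L_p(\R)\|$.

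First I would recall the one-dimensional Littlewood--Paley inequality $\|(\sum_{j\ge 0}|\Delta_j g|^2)^{1/2}\,|L_p(\re)\|\asymp\|g\,|L_p(\re)\|$, $1<p<\infty$, together with its Hilbert-space-valued version: for a separable Hilbert space $\mathcal H$ the same equivalence holds for $\mathcal H$-valued functions, as a consequence of Calder\'on--Zygmund theory for operators with values in $\mathcal H$ and the Fefferman--Stein maximal inequality. Then I would iterate. Apply the $\ell_2(\N_0^{d-1})$-valued inequality (for a.e.\ fixed value of $(x_2,\dots,x_d)$, then integrate in those variables) to the function $x\mapsto(\Delta^{(2)}_{\nu_2}\cdots\Delta^{(d)}_{\nu_d}f(x))_{\nu_2,\dots,\nu_d}$, using that $\Delta^{(1)}_{\nu_1}$ acts on it componentwise; this gives
\[
\Big\|\Big(\sum_{\bar\nu}\big|\Delta^{(1)}_{\nu_1}\cdots\Delta^{(d)}_{\nu_d}f\big|^2\Big)^{1/2}\Big|L_p\Big\|\asymp\Big\|\Big(\sum_{\nu_2,\dots,\nu_d}\big|\Delta^{(2)}_{\nu_2}\cdots\Delta^{(d)}_{\nu_d}f\big|^2\Big)^{1/2}\Big|L_p\Big\|\, .
\]
Repeating this successively for the variables $\ell=2,\dots,d$ removes all summations and leaves $\|\,f\,|L_p(\R)\|$, which proves the claim on $\R$.

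Finally, the statement on $\Omega=(0,1)^d$ is immediate from the $\R$-case together with Definition \ref{defomega}: both $S^0_{p,2}F(\Omega)$ and $L_p(\Omega)$ are endowed with the quotient norm $\inf\{\|\,g\,|X(\R)\|:\ g|_\Omega=f\}$, with $X=S^0_{p,2}F$ respectively $X=L_p$, and since the two norms on $\R$ are equivalent, the infima taken over the common family of extensions $g$ are equivalent as well.

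I expect the main obstacle to be the clean execution of the iteration: one must have the one-dimensional Littlewood--Paley inequality available in its $\ell_2$-valued form (equivalently, a Mikhlin--H\"ormander multiplier theorem in a single variable with Hilbert-space-valued symbol), verify that the projections in distinct coordinate directions commute so that Fubini applies at each step, and keep track of the low-frequency blocks $\nu_\ell=0$, which correspond to a single smooth, compactly supported bump multiplier and are harmless on $L_p$. Alternatively, since $S^0_{p,2}F(\R)=L_p(\R)$ for $1<p<\infty$ is the well-documented mixed-smoothness analogue of the classical identity $F^0_{p,2}=L_p$, one may simply invoke Nikol'skij \cite[1.5.6]{Ni}.
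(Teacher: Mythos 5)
Your proposal is correct, and in fact it supplies more than the paper does: the paper disposes of this lemma by a single citation to Nikol'skij \cite[1.5.6]{Ni} (which is precisely the fallback you mention in your last sentence), whereas you sketch the standard self-contained argument, namely the iteration of the one-dimensional Hilbert-space-valued Littlewood--Paley inequality coordinate by coordinate, using that the projections $\Delta^{(\ell)}_{j}$ in distinct variables commute. That route is sound; the only ingredients are the $\ell_2$-valued one-dimensional Littlewood--Paley theorem (or a vector-valued Mikhlin multiplier theorem) and Fubini, exactly as you indicate. One small imprecision: $L_p(\Omega)$ is not \emph{defined} by a quotient norm, so for the statement on $\Omega$ you should say explicitly that the restriction norm $\inf\{\|g|L_p(\R)\| : g|_{\Omega}=f\}$ coincides with the intrinsic norm of $L_p(\Omega)$ -- extension by zero gives one inequality and monotonicity of the $L_p$-norm under restriction gives the other -- after which your comparison of the two quotient norms, using the already established equivalence on $\R$ and Definition \ref{defomega}, yields $S^{0}_{p,2}F(\Omega)=L_p(\Omega)$ as claimed.
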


\noindent
{\bf Proof of Thm. \ref{main}}.
Lemma \ref{weyl} and Lemma \ref{littlewood} allow to carry over
the results obtained in Section \ref{sequence} to the level of function spaces. 
Theorem \ref{main} becomes a consequence of Theorems \ref{the1-1} - \ref{the4-2}, Theorem \ref{the5-5} and Theorems \ref{the3-1}, \ref{the3-2}. 
\qed


\subsection{Proofs of the results in Subsections \ref{extrem}}
\label{proofextrem}


Recall that  $s^{t,\Omega}_{\infty,q}a$ or $(s^{t,\Omega}_{\infty,q}a)_\mu $  has to 
be interpreted as $s^{t,\Omega}_{\infty,q}b$ and $(s^{t,\Omega}_{\infty,q}b)_\mu $. 

\begin{lemma}\label{lift2}
Let $t,r\in \mathbb{R}$ and $0<p,q,p_0,q_0\leq \infty$. Then 
\[
x_n(id^1: s_{p_0,q_0}^{t, \Omega}a \to s_{p,q}^{r,\Omega}a )
 \asymp x_n(id^2: s_{p_0,q_0}^{t-r,\Omega}a\to  s_{p,q}^{0,\Omega}a )\, , \qquad n \in \N\, .
\]
\end{lemma}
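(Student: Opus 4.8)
The plan is to reduce the statement to the elementary lift isomorphism already recorded in Remark \ref{lift}. First I would observe that the diagonal operator
\[
J_\sigma:\quad (\lambda_{\bar{\nu},\bar{m}})_{\bar{\nu},\bar{m}}\ \longmapsto\ (2^{\sigma|\bar{\nu}|_1}\,\lambda_{\bar{\nu},\bar{m}})_{\bar{\nu},\bar{m}}
\]
from \eqref{lift1} makes sense verbatim on the restricted sequence spaces $s^{t,\Omega}_{p,q}a$, $a\in\{b,f\}$ (and, under the convention fixed in Subsection \ref{prepa}, also when $p=\infty$, since then $s^{t,\Omega}_{\infty,q}a$ is read as $s^{t,\Omega}_{\infty,q}b$). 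A direct comparison of the quasi-norms in the relevant definitions shows that
\[
J_\sigma:\quad s^{t,\Omega}_{p,q}a\ \longrightarrow\ s^{t-\sigma,\Omega}_{p,q}a
\]
is an isometric isomorphism with inverse $J_{-\sigma}$: multiplying the $\bar{\nu}$-th block by $2^{\sigma|\bar{\nu}|_1}$ exactly absorbs the change of the weight $2^{|\bar{\nu}|_1(t-1/p)q}$ in the $b$-case (resp. of the factor $2^{|\bar{\nu}|_1 t}$ appearing inside the $L_p$-quasi-norm in the $f$-case) produced by replacing $t$ by $t-\sigma$. This is the same one-line computation as in Remark \ref{lift}.

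Next I would factorize the two identities through one another. By the above we have the chain
\[
s^{t,\Omega}_{p_0,q_0}a\ \xrightarrow{\;J_r\;}\ s^{t-r,\Omega}_{p_0,q_0}a\ \xrightarrow{\;id^2\;}\ s^{0,\Omega}_{p,q}a\ \xrightarrow{\;J_{-r}\;}\ s^{r,\Omega}_{p,q}a\, ,
\]
and tracing a sequence $\lambda$ through it (the two lifts cancel on each block) shows $J_{-r}\circ id^2\circ J_r = id^1$. Hence the ideal property (c) of $s$-numbers gives
\[
x_n(id^1)\ \le\ \|J_{-r}\|\cdot x_n(id^2)\cdot \|J_r\|\ =\ x_n(id^2)\, ,\qquad n\in\N\, ,
\]
because $\|J_r\|=\|J_{-r}\|=1$. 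Interchanging the roles of $id^1$ and $id^2$ (i.e. using $id^2 = J_r\circ id^1\circ J_{-r}$) yields the reverse inequality, so in fact $x_n(id^1)=x_n(id^2)$ for all $n$, which is even slightly stronger than the claimed $\asymp$.

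There is no real obstacle here; the statement is essentially bookkeeping. The only points requiring (minimal) care are that the lift $J_\sigma$ has to be verified on the $\Omega$-versions of the sequence spaces rather than on the spaces $s^t_{p,q}a$ of Definition \ref{sequence1}, and that the endpoint cases $p_0=\infty$ (or $p=\infty$) are admissible only thanks to the $b$-interpretation convention. Both are immediate from the definitions. If one prefers to cite Remark \ref{lift} as a black box without recomputing the norms, the operators $J_\sigma$ are at worst isomorphisms with norm bounds independent of $n$, and the very same factorization argument then delivers $x_n(id^1)\asymp x_n(id^2)$.
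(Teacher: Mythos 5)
Your proof is correct and follows essentially the same route as the paper: both factorize $id^1$ through $id^2$ (and vice versa) via the lift operators $J_{\pm r}$ of Remark \ref{lift} and invoke the ideal property (c) of $s$-numbers. Your observation that $J_\sigma$ is in fact isometric on the $\Omega$-versions, giving equality $x_n(id^1)=x_n(id^2)$ rather than just $\asymp$, is a harmless sharpening of the same argument.
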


\begin{proof}
We consider the commutative diagram
\[
\begin{CD}
s_{p_0,q_0}^{t, \Omega}a @ > id^1 >> s_{p,q}^{r,\Omega}a\\
@V J_{r} VV @AA J_{-r} A\\
s_{p_0,q_0}^{t-r,\Omega}a @ >id^2>> s_{p,q}^{0,\Omega}a.
\end{CD}
\]
Here $J_{r}$ is the isomorphism defined in \eqref{lift1}.
Hence $x_n (id^1) \lesssim x_n (id^2)$.
But
\[
\begin{CD}
s_{p_0,q_0}^{t-r, \Omega}a @ > id^2 >> s_{p,q}^{0,\Omega}a\\
@V J_{-r} VV @AA J_{r} A\\
s_{p_0,q_0}^{t,\Omega}a @ >id^1>> s_{p,q}^{r,\Omega}a
\end{CD}
\]
yields $x_n (id^2) \lesssim x_n (id^1)$ as well. The proof is complete.
\end{proof}
\noindent
{\bf Proof of Theorem \ref{satz01}}. 
{\em Step 1.} Estimate from above. Under the given restrictions there always exists some $r>\frac{1}{2}$ such that $t>r+\Big(\frac{1}{p_0}-\frac{1}{2}\Big)_+$.
We  consider the commutative diagram
\tikzset{node distance=4cm, auto}
\begin{center}
\begin{tikzpicture}
 \node (H) {$S^{t}_{p_0,p_0}B((0,1)^d)$};
 \node (L) [right of =H] {$L_\infty((0,1)^d)$};
 \node (L2) [right of =H, below of =H, node distance = 2cm ] {$S^{r}_{2,2}B((0,1)^d)$};
 \draw[->] (H) to node {$id_1$} (L);
 \draw[->] (H) to node [swap] {$id_2$} (L2);
 \draw[->] (L2) to node [swap] {$id_3$} (L);
 \end{tikzpicture}
\end{center}
The multiplicativity of the Weyl numbers yields 
\[
x_{2n-1} (id_1)\le x_n (id_2)\, x_n (id_3)\, .
\]
From Lemmas \ref{littlewood} and \ref{lift2} we have
\be\label{lift3}
x_n(id_2)\asymp x_n(id:S^{t-r}_{p_0,p_0}B((0,1)^d)\to L_2((0,1)^d) ).
\ee
Prop. \ref{satz 2}, Thm. \ref{main} and \eqref{lift3} lead to
\[
x_{2n-1} (id_1)\lesssim \frac{(\log n)^{(d-1)r}}{n^{r-\frac 12}}\, \left\{
\begin{array}{lll}
\frac{(\log n)^{(d-1)(t-r- \frac{1}{p_0}+\frac{1}{2} )}}{n^{t-r}} & \quad & \mbox{if}\quad 0 < p_0 \le 2\, , \: 
t-r> \frac{1}{p_0}-\frac{1}{2}\, , 
\\
&& \\
\frac{(\log n)^{(d-1)(t-r - \frac{1}{p_0}+\frac{1}{2} )}}{n^{t-r -\frac{1}{p_0}+\frac{1}{2}}} 
& \quad & \mbox{if}\quad 2 \le p_0 \le \infty\, , \: t-r> \frac{1}{p_0} \, .
\end{array}
\right.
\]
Finally, the monotonicity of the Weyl numbers yields the claim for all $n\geq 2.$
\\
\\
{\em Step 2.} Estimate from below.
The claim will follow from the next proposition.

\begin{proposition}\label{satz03}
Let $t>\frac{1}{p_0}$. As estimates from below we get 
\beqq
x_{n} (id:\ S^{t}_{p_0,p_0}B((0,1)^d) &\to& L_\infty((0,1)^d)) 
\\
 & \gtrsim & 
\left\{
\begin{array}{lll}
\frac{(\log n)^{(d-1)(t+ \frac 12 -\frac{1}{p_0} )}}{n^{t-\frac{1}{2}}} & \quad & \mbox{if}\quad 0 < p_0 \le 2\, , 
\\
&& \\
\frac{(\log n)^{(d-1)(t + \frac 12 - \frac{1}{p_0} )}}{n^{t -\frac{1}{p_0}}} 
& \quad & \mbox{if}\quad 2 \le p_0 \le \infty\, , 
\nonumber
\end{array}
\right.
\eeqq
for all $ n\geq 2$.
\end{proposition}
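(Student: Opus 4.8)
The plan is, in both ranges of $p_0$, to bound $x_n(id)$ from below by the Weyl numbers of the \emph{same} identity with a smaller source space for which Proposition~\ref{satz 2} (the case $p_0=2$) applies, using the ideal property~$(c)$ and the monotonicity~$(a)$ of the Weyl numbers.

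For $2\le p_0\le\infty$ the decisive ingredient is the continuous embedding $S^{t+\frac12-\frac1{p_0}}_{2,2}B(\Omega)\hookrightarrow S^{t}_{p_0,p_0}B(\Omega)$, which is immediate from the wavelet characterization: on the sequence level it reduces to the inequalities $\|\cdot\,|\ell_{p_0}\|\le\|\cdot\,|\ell_2\|$ applied once on each $A^{\Omega}_{\bar\nu}$ and once on $\N_0^d$ (valid because $p_0\ge2$), so no smoothness is lost. Composing with $id\colon S^{t}_{p_0,p_0}B(\Omega)\to L_\infty(\Omega)$ and using property~$(c)$ gives $x_n\bigl(id\colon S^{t}_{p_0,p_0}B(\Omega)\to L_\infty(\Omega)\bigr)\gtrsim x_n\bigl(id\colon S^{t+\frac12-\frac1{p_0}}_{2,2}B(\Omega)\to L_\infty(\Omega)\bigr)$. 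Since the hypothesis $t>\frac1{p_0}$ makes the auxiliary smoothness $t+\frac12-\frac1{p_0}$ exceed $\frac12$, Proposition~\ref{satz 2} applies and produces precisely $(\log n)^{(d-1)(t+1/2-1/p_0)}/n^{t-1/p_0}$; for $p_0=2$ this is Proposition~\ref{satz 2} itself.

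For $0<p_0\le2$ the asserted bound is $(\log n)^{(d-1)(t+1/2-1/p_0)}/n^{t-1/2}$ and the embedding trick fails, since the natural inclusion of an $L_2$-based Besov space into $S^{t}_{p_0,p_0}B(\Omega)$ now costs an $\varepsilon$ of smoothness. I would split the task as follows. First, $L_\infty(\Omega)\hookrightarrow L_2(\Omega)$ together with Theorem~\ref{main} (region~$I^{*}$) already yields $x_n(id\colon\cdot\to L_\infty)\gtrsim x_n\bigl(id\colon S^{t}_{p_0,p_0}B(\Omega)\to L_2(\Omega)\bigr)\asymp n^{-t}(\log n)^{(d-1)(t+1/2-1/p_0)}$, i.e.\ the correct logarithmic power, but only with decay $n^{-t}$. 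The missing factor $n^{1/2}$ is a genuinely $L_\infty$-effect and must be extracted from the fine structure of the wavelet system: at a dyadic level $\mu$ the $D_\mu\asymp\mu^{d-1}2^{\mu}$ relevant wavelets split into $\asymp\mu^{d-1}$ pairwise $L_2$-orthogonal blocks of $\asymp2^{\mu}$ functions with disjoint supports, and these blocks can be aligned along a ``track'' of nested dyadic cubes so that their values reinforce one another at a common point. Running this geometry through the commutative diagrams of Subsection~\ref{sequence2} (property~$(c)$, Lemma~\ref{ba1}) and the lower estimates of Appendix~A for the relevant identities $id^{D_\mu}_{p_0,p}$, with $n\asymp[D_\mu/2]$, should yield the extra power of~$n$; one then passes from the subsequence $n\asymp J^{d-1}2^{J}$ to all $n\ge2$ by monotonicity and the polynomial behaviour of the Weyl numbers, exactly as at the end of the proof of Theorem~\ref{the1-1}.

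The hard part is this $p_0\le2$ estimate. A single-level reduction, or any crude domination of the $L_\infty$-target by an $L_2$- or $L_p$-target, gives the right power of $n$ or the right logarithmic power but not both; obtaining $n^{-(t-1/2)}$ together with $(\log n)^{(d-1)(t+1/2-1/p_0)}$ forces one to use the multi-scale, disjoint-block/aligned-track geometry of the wavelet basis --- the same mechanism that underlies Temlyakov's Proposition~\ref{satz 2}.
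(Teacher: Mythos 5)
Your argument for $2\le p_0\le\infty$ is correct and is genuinely different from the paper's: the lossless embedding $S^{t+\frac12-\frac1{p_0}}_{2,2}B((0,1)^d)\hookrightarrow S^{t}_{p_0,p_0}B((0,1)^d)$ (valid since $p_0\ge 2$, on the wavelet level just $\ell_2\hookrightarrow\ell_{p_0}$ in $\bar m$ and in $\bar\nu$), the ideal property, and Proposition \ref{satz 2} with smoothness $t+\frac12-\frac1{p_0}>\frac12$ give exactly the claimed bound. The paper instead proves both cases simultaneously by a single trick: multiplicativity of the Weyl numbers through the factorization $S^{t}_{p_0,p_0}B\to L_\infty\to L_2$ together with Pietsch's inequality \eqref{kuehn}, $n^{1/2}x_n(T)\le\pi_2(T)$, applied to $id:L_\infty((0,1)^d)\to L_2((0,1)^d)$, whose $2$-summing norm is $1$; this yields $x_{2n-1}(id:S^{t}_{p_0,p_0}B\to L_2)\le n^{-1/2}\,x_n(id:S^{t}_{p_0,p_0}B\to L_\infty)$, and then the known $L_2$-asymptotics (Theorems \ref{the5-5} and \ref{the4-1}) finish the proof.

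For $0<p_0\le 2$ your proposal has a genuine gap, and it is precisely at the decisive point: gaining the factor $n^{1/2}$ over the trivial bound obtained from $L_\infty\hookrightarrow L_2$. The ``aligned track of nested dyadic cubes whose values reinforce at a common point'' is a heuristic, not an argument: no test functions are constructed, no lemma is invoked, and, more importantly, it is not explained how such a construction would produce a lower bound for \emph{Weyl} numbers at all. Lower bounds for $x_n$ in this paper are always routed either through factorizations of finite-dimensional identities $id^{D_\mu}_{p_0,p}$ (ideal property plus Appendix A) or through inequality \eqref{kuehn}; a pointwise-reinforcement construction of the Temlyakov type naturally bounds quantities defined via subspaces or test elements (Kolmogorov, Gelfand, approximation numbers), and for $x_n$, which involves an arbitrary precomposition $A\in\mathcal L(\ell_2,X)$ with $\|A\|\le 1$, it does not transfer without an additional idea. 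Note also that the natural single-level reduction with $\ell_\infty$-target, $x_n(id)\gtrsim 2^{-\mu(t-\frac1{p_0})}x_n(id^{D_\mu}_{p_0,\infty})$ with $n\asymp D_\mu/2$, gives only $n^{-(t-\frac12)}(\log n)^{(d-1)(t-\frac1{p_0})}$, i.e.\ it is short by $(\log n)^{(d-1)/2}$ — so the missing piece cannot be recovered from Appendix A at a single level, as you in effect concede. The clean way to close the gap is the paper's: combine your $L_2$-based estimate $x_n(id:S^{t}_{p_0,p_0}B\to L_2)\asymp n^{-t}(\log n)^{(d-1)(t+\frac12-\frac1{p_0})}$ with $x_n(id:L_\infty\to L_2)\le n^{-1/2}\pi_2(id:L_\infty\to L_2)=n^{-1/2}$ and the multiplicativity $x_{2n-1}(id:S^{t}_{p_0,p_0}B\to L_2)\le x_n(id:S^{t}_{p_0,p_0}B\to L_\infty)\,x_n(id:L_\infty\to L_2)$.
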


\begin{proof}
Again we shall use the multiplicativity of the Weyl numbers, but this time in connection with its relation to 
the $2$-summing norm \cite[Lemma 8]{Pi2}. 
Let us recall this notion.\\
An operator $T\in \mathcal{L}(X,Y)$ is said to be \emph{absolutely $2$-summing} if there is a constant 
$C>0$ such that for 
all $n\in \mathbb{N}$ and $x_1 , \dots, x_n \in X$ the inequality
\begin{equation}\label{F2}
\big (\sum_{j=1} ^n \|\, T x_j \, |Y\|^2 \Big )^{1/2} \leq C 
\sup_{x^* \in X^*, \|x^*|X^*\|\leq 1} \Big ( \sum_{j=1} ^n |<x_j , x^* >|^2\Big)^{1/2}
\end{equation}
holds (see \cite[Chapter 17]{Pi1}). 
The norm $\pi _2 (T)$ is given by the infimum of all $C>0$ satisfying (\ref{F2}).
$X^*$ refers to the dual space of $X$.
Pietsch \cite{Pi2}  proved the inequality
\[
n^{1/2}\, x_n (S) \le \pi_2 (S)\, , \qquad n \in \N\, , 
\]
for any linear operator $S$.
Using this inequality with respect $S= id$  we conclude  
\beqq
&& \hspace{-0.7cm}
x_{2n-1} (id: ~ S^{t}_{p_0,p_0}B((0,1)^d) \to L_2((0,1)^d)) 
\\
& \le & 
x_{n} (id:~S^{t}_{p_0,p_0}B((0,1)^d) \to L_\infty ((0,1)^d)) \, 
x_{n} (id:~L_{\infty}((0,1)^d) \to L_2((0,1)^d)) 
 \\
& \le &
x_{n} (id:~S^{t}_{p_0,p_0}B((0,1)^d) \to L_\infty ((0,1)^d)) \, n^{-1/2}\, 
\pi_2 (id:~L_{\infty}((0,1)^d) \to L_2((0,1)^d)) 
\\
& = &
x_{n} (id:~S^{t}_{p_0,p_0}B((0,1)^d) \to L_\infty ((0,1)^d)) \, n^{-1/2}\, ;
\eeqq
where in the last equality we have used that
\[
\pi_{2} (id: L_\infty ((0,1)^d) \longrightarrow L_2 ((0,1)^d))=
\|id: L_\infty ((0,1)^d) \longrightarrow L_2 ((0,1)^d)\|=1\, , 
\]
see \cite[Example 1.3.9]{Pi3}). 
Since
\beqq
n^{\frac 12}\, x_{2n-1} (id: && \hspace{-0.7cm} S^{t}_{p_0,p_0}B((0,1)^d) \to L_2((0,1)^d))
\\
& \asymp &
\left\{
\begin{array}{lll}
\frac{(\log n)^{(d-1)(t+ \frac 12 -\frac{1}{p_0} )}}{n^{t-\frac{1}{2}}} & \quad & \mbox{if}\quad 0 < p_0 \le 2\, , \:t > \frac{1}{p_0} - \frac 12 \, , 
\\
&& \\
\frac{(\log n)^{(d-1)(t + \frac 12 - \frac{1}{p_0} )}}{n^{t -\frac{1}{p_0}}} 
& \quad & \mbox{if}\quad 2 \le p_0 \le \infty\, , \: t>\frac{1}{p_0} \, , 
\\
\end{array}
\right.
\eeqq
see Thm. \ref{the5-5}, Thm. \ref{the4-1}, this proves the claimed estimate from below.
\end{proof}


\subsection{Proof of the results in Subsection \ref{extrem1}}


As a preparation we need the following counterpart of the classical result $F^0_{1,2}(\R) \hookrightarrow L_1 (\R)$
in the dominating mixed situation. 
The following proof we learned from Dachun Yang and  Wen Yuan \cite{YY}.

\begin{lemma} We have
\[
S^0_{1,2}F(\mathbb{R}^d)\hookrightarrow L_1(\mathbb{R}^d).
\]
\end{lemma}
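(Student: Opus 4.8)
The plan is to establish $S^0_{1,2}F(\R) \hookrightarrow L_1(\R)$ by reducing it, via the wavelet isomorphism $\mathcal{W}$ of Lemma \ref{wavelet}(ii), to the sequence-space estimate $\|\mathcal{W}^* \lambda \, | L_1(\R)\| \lesssim \|\lambda \, | s^0_{1,2}f\|$, and then proving the latter by a duality/maximal function argument. First I would recall that, for $f \in S^0_{1,2}F(\R)$, one has the wavelet expansion $f = \sum_{\bar\nu,\bar m} 2^{|\bar\nu|_1} \langle f, \Psi_{\bar\nu,\bar m}\rangle \Psi_{\bar\nu,\bar m}$ (convergence in a suitable sense), with $\|f\,|S^0_{1,2}F(\R)\| \asymp \|\lambda\,|s^0_{1,2}f\|$ where $\lambda_{\bar\nu,\bar m} = 2^{|\bar\nu|_1}\langle f, \Psi_{\bar\nu,\bar m}\rangle$; equivalently, using $|\chi_{\bar\nu,\bar m}|^q$ and $|Q_{\bar\nu,\bar m}|^{-1} = 2^{|\bar\nu|_1}$, the norm is
\[
\|\lambda\,|s^0_{1,2}f\| = \Big\| \Big(\sum_{\bar\nu \in \N_0^d} \sum_{\bar m \in \Z} |\lambda_{\bar\nu,\bar m}|^2 \, \chi_{\bar\nu,\bar m}(\cdot)\Big)^{1/2} \Big| L_1(\R) \Big\|.
\]
So it suffices to show $\|\sum_{\bar\nu,\bar m} \lambda_{\bar\nu,\bar m} \Psi_{\bar\nu,\bar m}\,|L_1\| \lesssim \|(\sum_{\bar\nu,\bar m}|\lambda_{\bar\nu,\bar m}|^2\chi_{\bar\nu,\bar m})^{1/2}|L_1\|$.

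The key step is a square-function estimate. I would dualize against $g \in L_\infty(\R)$ with $\|g\|_\infty \le 1$: write $\langle \sum \lambda_{\bar\nu,\bar m}\Psi_{\bar\nu,\bar m}, g\rangle = \sum_{\bar\nu,\bar m} \lambda_{\bar\nu,\bar m} \langle \Psi_{\bar\nu,\bar m}, g\rangle$ and apply Cauchy--Schwarz in the $\bar\nu,\bar m$ summation pointwise after inserting indicator functions. More precisely, since $\Psi_{\bar\nu,\bar m}$ is (essentially) supported on a dilate of $Q_{\bar\nu,\bar m}$ and $\|\Psi_{\bar\nu,\bar m}\|_\infty \asymp 2^{|\bar\nu|_1/2}$, one gets $|\langle \Psi_{\bar\nu,\bar m}, g\rangle| \lesssim 2^{-|\bar\nu|_1/2} \inf_{x \in Q_{\bar\nu,\bar m}}(M_d g)(x)$ (where $M_d$ is an iterated, direction-by-direction Hardy--Littlewood maximal operator adapted to the dominating mixed structure), hence
\[
\Big|\sum_{\bar\nu,\bar m} \lambda_{\bar\nu,\bar m}\langle \Psi_{\bar\nu,\bar m},g\rangle\Big| \lesssim \int_{\R} \Big(\sum_{\bar\nu,\bar m} |\lambda_{\bar\nu,\bar m}|^2 \chi_{\bar\nu,\bar m}(x)\Big)^{1/2} \Big(\sum_{\bar\nu,\bar m} 2^{-|\bar\nu|_1}|\langle \Psi_{\bar\nu,\bar m},g\rangle|^2 |Q_{\bar\nu,\bar m}|^{-1}\chi_{\bar\nu,\bar m}(x)\Big)^{1/2}\, dx,
\]
and then bound the second factor by the vector-valued / Littlewood--Paley-type inequality $\|(\sum_{\bar\nu} |S_{\bar\nu} g|^2)^{1/2}|L_\infty\| \lesssim \|g\|_\infty$ for the wavelet Littlewood--Paley pieces $S_{\bar\nu}g$, which in the dominating mixed setting follows by iterating the one-dimensional Littlewood--Paley estimate over the $d$ coordinate directions (or directly from $S^0_{\infty,2}F$-type bounds, noting $L_\infty \hookrightarrow S^0_{\infty,2}F$). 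Taking the supremum over $g$ gives the claim.

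The main obstacle is making the iterated maximal-function / Littlewood--Paley estimate in the \emph{dominating mixed} situation rigorous, rather than the isotropic one: one must control the square function direction by direction, which requires the Fefferman--Stein vector-valued maximal inequality applied $d$ times and the tensor-product structure of $\Psi_{\bar\nu,\bar m}$. Since the excerpt attributes the argument to Yang and Yuan, I expect the actual proof may instead proceed more slickly — e.g., by using a known atomic or molecular characterization of $S^0_{1,2}F(\R)$, observing that each atom has $L_1$-norm controlled by its sequence coefficient, and summing; or by appealing to the one-dimensional embedding $F^0_{1,2}(\re)\hookrightarrow L_1(\re)$ together with a Fubini-type iteration argument exploiting that $S^0_{1,2}F$ is built from tensor products. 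I would present the duality-plus-maximal-function route as the default, flagging that the dominating mixed Littlewood--Paley bound is the step requiring care and can be supplied by iterating the classical one-dimensional result.
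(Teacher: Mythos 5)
Your reduction to the wavelet/sequence level and the duality set-up are fine, but the decisive estimate of your plan is missing and, as stated, false. After the pointwise Cauchy--Schwarz step the complementary factor you must control in $L_\infty$ is (once the normalization in your display is corrected) exactly the wavelet square function of $g$, i.e. $\big(\sum_{\bar{\nu},\bar{m}} 2^{|\bar{\nu}|_1}\,|\langle g, 2^{|\bar{\nu}|_1/2}\Psi_{\bar{\nu},\bar{m}}\rangle|^2\,\chi_{\bar{\nu},\bar{m}}(x)\big)^{1/2}$. Your maximal-function bound $|\langle \Psi_{\bar{\nu},\bar{m}},g\rangle|\lesssim 2^{-|\bar{\nu}|_1/2}\|g\|_\infty$ only shows that each single term is $O(\|g\|_\infty^2)$, with no decay in $|\bar{\nu}|_1$, so the sum over $\bar{\nu}\in\N_0^d$ cannot be handled termwise; and the inequality you fall back on, $\|(\sum_{\bar{\nu}}|S_{\bar{\nu}}g|^2)^{1/2}\,|L_\infty\|\lesssim \|g\|_\infty$, is false already for $d=1$: there is no Littlewood--Paley square-function estimate at $p=\infty$ to ``iterate over the directions''. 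The square function of a bounded function is in general only of BMO/exponential-square type, not bounded; for instance $g(x)=\sin(\log|x|)$ is bounded while its dyadic square function grows like $\sqrt{\log(1/|x|)}$ near the origin, since a fixed positive proportion of the dyadic blocks contribute a uniform amount there. For the same reason the shortcut ``$L_\infty\hookrightarrow S^0_{\infty,2}F$'' is not available: with the naive square-function quasi-norm this embedding fails, and the correct statement requires the Carleson-measure (Frazier--Jawerth) definition of the $p=\infty$ Lizorkin--Triebel space, which the paper deliberately never introduces (its $F$-scale assumes $p<\infty$).

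This is also precisely where your route differs from the paper's proof. The paper uses the same duality-plus-Cauchy--Schwarz skeleton, but on the Fourier-analytic pieces $\cfi[\varphi_{\bar{j}}\cf f]$ and $\cfi[\phi_{\bar{j}}\cf g]$, and it obtains convergence of the $\bar{j}$-sum not from any $L_\infty$ square-function inequality but from a claimed pointwise kernel estimate giving geometric decay, $|\cfi[\phi_{\bar{j}}\cf g](x)|\lesssim \|g\|_\infty 2^{-|\bar{j}|_1 M}$, for its specific multipliers $\phi_{\bar{j}}$; wavelet coefficients of a general bounded $g$ admit no analogous decay beyond the trivial $2^{-|\bar{\nu}|_1/2}$ normalization, which cancels exactly. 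To salvage your approach you would need the genuinely deeper dual estimate of $h_1$--$\mathrm{bmo}$/Carleson-measure type (in the product setting), or you should switch to one of the alternatives you mention only in passing, e.g. an atomic or molecular decomposition of $s^{0}_{1,2}f$ in which each atom has $L_1$-norm $\lesssim 1$ and the $L_1$-norm of $f$ is summed directly; as it stands, the key inequality in your main route would fail.
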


\begin{proof}
Let $f\in S^0_{1,2}F(\mathbb{R}^d)$. We may assume that $f$ is a Schwartz function, due to the density of $\cs (\R)$ 
in $S^0_{1,2}F(\mathbb{R}^d)$.
Let $(\varphi_{\bar{j}})_{\bar{j}}$ be the smooth dyadic decomposition of unity defined in 
\eqref{unityd}.
Let $\phi_0, \phi \in C_0^\infty (\re)$ be functions s.t.
\beqq
\phi_0 (t) & = &  1 \qquad \mbox{on}\quad \supp \varphi_0
\\
\phi (t) & = &  1 \qquad \mbox{on}\quad \supp \varphi_1\, .
\eeqq
We put 
$\phi_j (t):= \phi (2^{-j+1}t)$, $j \in \N$, and 
\[
\phi_{\bar{j}} := \phi_{{j_1}}\otimes \ldots \otimes
\phi_{{j_d}} \, , \qquad \bar{j}=(j_1, \ldots\, , j_d)\in
\N_0^d\, .
\]
It follows
\[
\sum_{\bar{j} \in \N_0^d} \varphi_{\bar{j}} (x)\, \cdot \, \phi_{\bar{j}} (x) = 1 \qquad \mbox{for all}\quad x \in \R\, ,
\]
see \eqref{unity} and \eqref{unityd}.
This implies 
\[ 
 f = \sum_{\bar{j}\in \mathbb{N}_0^d} \cfi [ \varphi_{\bar{j}} (\xi)\, \phi_{\bar{j}} (\xi)\, \cf f (\xi)] \qquad  \mbox{(convergence in $S'(\mathbb{R}^d)$)}.
\]
Let $g\in L_\infty(\mathbb{R}^d)$. H\"older's inequality yields
 \beqq
|\langle f,g\rangle | &\le& \sum_{\bar{j} \in \mathbb{N}_0^d} | \langle \cfi [\varphi_{\bar{j}} \cf f], \cfi [\phi_{\bar{j}} \, \cf g]\rangle| 
 \\
 & \le & 
\big\| \big(\sum_{\bar{j} \in \mathbb{N}_0^d } |\cfi [\varphi_{\bar{j}} \cf f]|^2\big)^{\frac{1}{2}} | L_1(\mathbb{R}^d) \big\|
\, \, 
 \big\| \big(\sum_{\bar{j} \in \mathbb{N}_0^d} |\cfi [\phi_{\bar{j}} \cf g]|^2\big)^{\frac{1}{2}} | L_\infty(\mathbb{R}^d)\big\|.
 \eeqq
Next we are going to use the tensor product structure of $\cfi \phi_{\bar{j}}$ and the fact that 
$\cfi \phi_{j_l}$, $l=1, \ldots \, , d$, are Schwartz functions.
For any $M>0$, we have
\beqq
|\cfi [\phi_{\bar{j}} \cf g](x)|& \lesssim & |\int_{\mathbb{R}^d} g(y)\, (\cfi \phi_{\bar{j}})(x-y)\, dy|
\\
 &\ls& \|g|L_{\infty}(\mathbb{R}^d)\| \, 
\int_{\mathbb{R}^n}\frac{ 2^{|\bar{j}|_1}}{\prod_{l=1}^{d}(1+2^{j_l}|x_l-y_l|)^{(1+M)}} dy 
 \\
&=& \|g|L_{\infty}(\mathbb{R}^d)\| \, \prod_{l=1}^{d} \, \int_{\mathbb{R}} \frac{ 2^{j_l}}{(1+2^{j_l}|x_l-y_l|)^{(1+M)}} \, dy \, .
\eeqq
Some elementary calculations yield
\[
 \int_{\mathbb{R}} \frac{ 2^{j_l}}{(1+2^{j_l}|x_l-y_l|)^{(1+M)}} \, dy \lesssim 
2^{-j_lM} 
\]
with constants independent of $j_l$. Inserting this in our previous estimate we obtain
\[
 |\cfi [\phi_{\bar{j}} \cf g](x)|\lesssim \|g|L_{\infty}(\mathbb{R}^d)\|\, 2^{-|\bar{j}|_1 M}. 
\]
Hence
\beqq
 \big\| \big(\sum_{\bar{j} \in \mathbb{N}_0^d} |\cfi [\phi_{\bar{j}} \cf g](x)|^2\big)^{\frac{1}{2}} | L_\infty\big\| 
&\ls& \|g|L_{\infty}(\mathbb{R}^d)\| \, \sum_{\bar{j} \in \mathbb{N}_0^d} \, 2^{-|\bar{j}|_1M} 
 \\
 &\ls& \|g|L_{\infty}(\mathbb{R}^d) \, \|\sum_{\mu=0}^{\infty}\, \sum_{|\bar{j}|_1=\mu}\, 2^{-|\bar{j}|_1 M}\, 
 \\
 &\ls& \|g|L_{\infty}(\mathbb{R}^d)\| \, .
 \eeqq
 Therefore, we obtain
\[
\|f| L_1(\mathbb{R}^d)\| = \sup_{\| g|L_\infty (\R)\|=1} \, |\langle f,g\rangle | \lesssim 
\| (\sum_{\bar{j}\in \mathbb{N}_0^d}|\cfi [\varphi_{\bar{j}} \cf f]|^2)^{\frac{1}{2}} | L_1(\mathbb{R}^d) \|.
\]
That completes our proof.
\end{proof}

\noindent {\bf Proof of Theorem \ref{extrem11}}. {\it Step 1.} Estimate from above. From the chain of embeddings
$$S_{p_0,p_0}^tB((0,1)^d)\hookrightarrow S^0_{1,2}F((0,1)^d)\hookrightarrow L_1((0,1)^d),$$
together with Lem. \ref{weyl}, Thm. \ref{the5-5}, Thm. \ref{the3-1}, Thm. \ref{the3-2} and the abstract properties of Weyl numbers, see Section \ref{basic}, we 
derive the upper bound. \\
{\it Step 2.} We prove the lower bound for the case $p_0<2 $ and $t< \frac{1}{p_0}-\frac{1}{2}$. 
First we note that, under the condition $t>\max(0,\frac{1}{p_0}-1)$, the chain of embeddings holds true
$$S_{p_0,p_0}^{t}B((0,1)^d) \hookrightarrow  L_1((0,1)^d) \hookrightarrow   S^0_{1,\infty}B((0,1)^d). $$
Then the ideal property of the $s$-numbers yields
\beq 
\label{ws-39}
x_n(id: S_{p_0,p_0}^{t}B((0,1)^d)\to S^0_{1,\infty}B((0,1)^d) )\lesssim x_n(S_{p_0,p_0}^{t}B((0,1)^d)\to L_1((0,1)^d)) \, .
\eeq
Next we consider the commutative diagram
\[
\begin{CD}
B^{t+r}_{p_0,p_0}(0,1) @ > id_1>> B^r_{1,\infty}(0,1)\\
@V \text{Ext} VV @AA \text{Tr} A\\
S_{p_0,p_0}^{t+r}B((0,1)^d) @ >id>> S^r_{1,\infty}B((0,1)^d)
\end{CD}
\]
Here the linear operators $\text{Ext}$ and $\text{Tr}$ are defined as follows. 
For $g\in B^{t+r}_{p_0,p_0}(0,1)$, we put
$$ (\text{Ext}g)(x_1,...,x_d)=g(x_1), \qquad x = (x_1,...,x_d) \in \R\, . $$
In case of $f\in S^r_{1,\infty}B((0,1)^d)$ we define
$$(\text{Tr} f)(x_1)=f(x_1,0,...,0)\, , \qquad x_1 \in \re \, .$$
Note that the condition $r>1$ guarantees that the operator $\text{Tr}$ is well defined, see \cite[Thm.~2.4.2]{ST}.
Furthermore, $\text{Ext}$ maps $B^{t+r}_{p_0,p_0}(0,1)$ continuously into  
$S_{p_0,p_0}^{t+r}B((0,1)^d)$. This follows from the fact that $\|\, \cdot \, |S_{p_0,p_0}^{t+r}B((0,1)^d)\|$ is a cross-quasi-norm, see the formula in Rem. \ref{blabla}(i).
Hence $id_1= \text{Tr} \circ id \circ \text{Ext} $ and 
\beq 
\label{ws-39-1}
x_n(id_1: && \hspace{-0.8cm} B^{t+r}_{p_0,p_0}(0,1)\to B^r_{1,\infty}(0,1) )
\\
& \lesssim & x_n(id: S_{p_0,p_0}^{t+r}B((0,1)^d)\to S^r_{1,\infty}B((0,1)^d) )\, .
\nonumber
\eeq
Making use of a lifting argument, see Lem. \ref{lift2}, we conclude that
\beq
\label{ws-39-2}
x_n(id: && \hspace{-0.8cm} S_{p_0,p_0}^{t+r}B((0,1)^d)\to S^r_{1,\infty}B((0,1)^d) )
\\
& \asymp & x_n(id: S_{p_0,p_0}^{t}B((0,1)^d)\to S^0_{1,\infty}B((0,1)^d) ) \, .
\nonumber
\eeq
The lower bound is now obtained from \eqref{ws-39}, \eqref{ws-39-1}, \eqref{ws-39-2} and 
\[
x_n(id_1:B^{t+r}_{p_0,p_0}(0,1)\to B^r_{1,\infty}(0,1) ) \asymp n^{-t},\qquad n \in \N\, , 
\]
if $0<p_0\leq 2$ and $t>\max(0,\frac{1}{p_0}-1)$, see Lubitz \cite{Claus} and Caetano \cite{Caed}.\\
{\it Step 3.} 
We prove  that 
$$x_n(id: S_{p_0,p_0}^tB((0,1)^d) \to  L_1((0,1)^d )) \gtrsim n^{-t}(\log n)^{(d-1)(t+\frac{1}{2}-\frac{1}{p_0})}$$
if $ p_0\leq 2$ and $ t> \frac{1}{p_0}-\frac{1}{2}$. There always exists 
a pair $(\theta,p)$ such that
\[
0<\theta<1\, , \qquad 1 < p < 2\, 
\]
and
\[
\| f|L_p((0,1)^d)\| \leq \|f|L_1((0,1)^d)\|^{1-\theta}\, \|f|L_{2}((0,1)^d)\|^{\theta}\qquad   \text{for all}\quad  f\in L_{2}((0,1)^d).
 \]
Next we employ the interpolation property of the Weyl numbers, see Thm. \ref{inter}, and obtain
\beqq
x_{2n-1}&& \hspace{-0.7cm}(id :  S^t_{p_0,p_0}B((0,1)^d)\to L_p((0,1)^d))\lesssim
\\  
&& x_n^{1-\theta}(id: S^t_{p_0,p_0}B((0,1)^d)\to L_1((0,1)^d))\, \, x_n^{\theta}(id:S^t_{p_0,p_0}B((0,1)^d)\to L_{2}((0,1)^d)).
\eeqq
Note that  $0< p_0\leq 2$ and $ t> \frac{1}{p_0}-\frac{1}{2}$ imply
\beqq
x_{n}(id :S^t_{p_0,p_0}B((0,1)^d)\to L_2((0,1)^d))&\asymp& x_{n}(id :S^t_{p_0,p_0}B((0,1)^d)\to L_{p}((0,1)^d))\\
&\asymp& n^{-t}(\log n)^{(d-1)(t+\frac{1}{2}-\frac{1}{p_0})},
\eeqq
see Thm. \ref{main}. This leads to
$$ x_n(id: S^t_{p_0,p_0}B((0,1)^d)\to L_1((0,1)^d))\gtrsim n^{-t}(\log n)^{(d-1)(t+\frac{1}{2}-\frac{1}{p_0})}.$$
The lower bounds in the remaining cases can be proved similarly.\qed


\subsection{Proofs of the results in Subsection \ref{extrem2} }
{\bf Proof of Theorem \ref{satz04}}. 
Define $id^*: s^{t, \Omega}_{p_0,p_0}b \to s^{0, \Omega}_{\infty,\infty}b$
and $id_\mu^*: (s^{t, \Omega}_{p_0,p_0}b)_\mu \to (s^{0, \Omega}_{\infty,\infty}b)_\mu$.
Cor. \ref{ba2-1} yields
\be\label{ws-56b}
\| \, id_\mu^* \, \|\lesssim 2^{\mu (\frac{1}{p_0}-t)}\, .
\ee
Arguing as in proof of Prop. \ref{wichtig1} one can establish the following.

\begin{lemma}\label{erg-4}
 Let $0< p_0 \le \infty$ and $t \in \re$.
Then 
\beqq
x_n(id_{\mu}^*) \asymp x_n (id_\mu^{**}: 2^{\mu(t-\frac{1}{p_0})} \ell_{p_0}^{D_\mu} \to \ell_{\infty}^{D_\mu} )
\asymp 2^{\mu(-t+\frac{1}{p_0})} \, x_n(id_{p_0,\infty}^{D_\mu})
\eeqq
for all $n \in \N$.
\end{lemma}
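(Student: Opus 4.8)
The plan is to mimic the structure of the proof of Proposition \ref{wichtig1}, replacing the target fine-index pair by $(\infty,\infty)$ and tracking how the mixed sequence space $(s^{0,\Omega}_{\infty,\infty}b)_\mu$ relates, on the $\mu$-th dyadic block, to a weighted copy of $\ell_\infty^{D_\mu}$. Recall from Lemma \ref{ba1}(ii) that $(s^{t,\Omega}_{p_0,p_0}b)_\mu = 2^{\mu(t-1/p_0)}\ell_{p_0}^{D_\mu}$, and that the analogous identity holds on the target side: by definition of the norm, $\| \lambda \, |(s^{0,\Omega}_{\infty,\infty}b)_\mu\| = \sup_{|\bar\nu|_1 = \mu} 2^{-|\bar\nu|_1/\infty}\sup_{\bar m} |\lambda_{\bar\nu,\bar m}| = \sup |\lambda_{\bar\nu,\bar m}|$, so $(s^{0,\Omega}_{\infty,\infty}b)_\mu = \ell_\infty^{D_\mu}$ isometrically (no logarithmic factor appears here because there is no inner $\ell_q$-summation over the $\binom{\mu+d-1}{d-1}$ frequency shells — the $q=\infty$ norm is just a supremum). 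Hence $id_\mu^*$ is, up to the diagonal isometries $J$ of Remark \ref{lift} that rescale by $2^{\pm\mu(t-1/p_0)}$ on the source and by nothing on the target, exactly the operator $2^{\mu(-t+1/p_0)}\,id^{D_\mu}_{p_0,\infty}$.

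Concretely, I would set up the commutative diagram
\[
\begin{CD}
2^{\mu(t-\frac{1}{p_0})}\ell_{p_0}^{D_\mu} @ > id_\mu^{**} >> \ell_\infty^{D_\mu} \\
@A \wr AA @| \\
(s^{t,\Omega}_{p_0,p_0}b)_\mu @ > id_\mu^* >> (s^{0,\Omega}_{\infty,\infty}b)_\mu
\end{CD}
\]
where the left vertical arrow is the canonical isometric identification from Lemma \ref{ba1}(ii) and the right vertical arrow is the identity. The ideal property $(c)$ of $s$-numbers, applied in both directions, gives $x_n(id_\mu^*) = x_n(id_\mu^{**})$ for all $n$. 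Then I factor $id_\mu^{**}$ through the commutative diagram
\[
\begin{CD}
2^{\mu(t-\frac{1}{p_0})}\ell_{p_0}^{D_\mu} @ > id^3 >> \ell_{p_0}^{D_\mu} \\
@V id_\mu^{**} VV @VV id_{p_0,\infty}^{D_\mu} V \\
\ell_\infty^{D_\mu} @ = \ell_\infty^{D_\mu}
\end{CD}
\]
with $\|id^3\| = 2^{-\mu(t-1/p_0)}$; applying $(c)$ once more yields $x_n(id_\mu^{**}) \le 2^{\mu(-t+1/p_0)} x_n(id_{p_0,\infty}^{D_\mu})$, and the reverse inequality follows from the same diagram read with the inverse (bounded, norm $2^{\mu(t-1/p_0)}$) of $id^3$. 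This chains together to the asserted double equivalence.

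The only genuine point requiring care — and the step I expect to be the main obstacle, modest as it is — is verifying that the target-side block space really carries no logarithmic weight, i.e. that $(s^{0,\Omega}_{\infty,\infty}b)_\mu = \ell_\infty^{D_\mu}$ with constants independent of $\mu$. This is where the present lemma differs qualitatively from Proposition \ref{wichtig1}, in which the passage between $(s^{0,\Omega}_{p,2}f)_\mu$ and $\ell_p^{D_\mu}$ cost a factor $\mu^{(d-1)|1/2-1/q|}$ coming from Lemma \ref{ba2}(ii). Here $q_0 = q = \infty$, so Lemma \ref{ba2}(ii) gives $\mu^{(d-1)\cdot 0} \asymp 1$, and direct inspection of Definition \ref{sequence1} / the block-space definition confirms that the $\infty$-norm over the $D_\mu$ coefficients with $|\bar\nu|_1 = \mu$ is literally $\max_{\bar\nu,\bar m}|\lambda_{\bar\nu,\bar m}|$. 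Once this is in place the rest is a routine transcription of the argument already used for Proposition \ref{wichtig1}, and I would simply remark that the details are ``as in the proof of Prop. \ref{wichtig1}'' — which is precisely the phrasing the excerpt anticipates.
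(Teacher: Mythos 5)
Your proposal is correct and follows essentially the same route the paper intends, namely the diagram/ideal-property argument of Proposition \ref{wichtig1} combined with the block identifications of Lemma \ref{ba1}(ii); the paper's proof is exactly the remark ``arguing as in the proof of Prop.\ \ref{wichtig1}''. Your observation that $(s^{0,\Omega}_{\infty,\infty}b)_\mu$ is isometrically $\ell_\infty^{D_\mu}$ (so no factor $\mu^{(d-1)(\cdot)}$ arises and both bounds match) is precisely the point that makes the two-sided equivalence work here.
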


\noindent
Property (a) in Appendix A yields
\[
x_n(id_{p_0,\infty}^{D_\mu}) \asymp \left\{ \begin{array}{lll}
1 &\qquad & \mbox{if}\quad 2 \le p_0 \le \infty\, , \\
n^{\frac 12 - \frac{1}{p_0}} && \mbox{if} \quad 0< p_0 \le 2\, ,                      
\end{array}\right.
\]
if $2n \le D_\mu$.
Now we may follow the proof of Thm. \ref{the1-1}.
This results in the following useful statement.

\begin{theorem}\label{erg-1} 
{\rm (i)}
Let $0< p_0\leq 2$ and $t>\frac{1}{p_0}$. Then
\[
x_n(id^*: s^{t, \Omega}_{p_0,p_0}b \to s^{0, \Omega}_{\infty,\infty}b) 
\asymp n^{-t+\frac{1}{2}} (\log n)^{(d-1)(t-\frac{1}{p_0})} \, , \qquad n\geq 2\, .
\]
{\rm (ii)}
Let $2\leq p_0\leq \infty$ and $t>\frac{1}{p_0}$. Then
\[
x_n(id^*: s^{t, \Omega}_{p_0,p_0}b \to s^{0, \Omega}_{\infty,\infty}b) 
\asymp n^{-t+\frac{1}{p_0}} (\log n)^{(d-1)(t-\frac{1}{p_0})}\, , \qquad n\geq 2\, .
\]
\end{theorem}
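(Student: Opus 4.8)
The plan is to follow, almost verbatim, the scheme used for Theorem~\ref{the1-1} when $0<p_0\le2$ and the scheme used for Theorem~\ref{the2-1} when $2\le p_0\le\infty$, the only change being that the role played there by $x_n(id_{p_0,p}^{D_\mu})$ is now taken over by $x_n(id_{p_0,\infty}^{D_\mu})$, whose behaviour is recorded in property~(a) in Appendix~A: $x_n(id_{p_0,\infty}^{D_\mu})\asymp 1$ if $2\le p_0\le\infty$ and $x_n(id_{p_0,\infty}^{D_\mu})\asymp n^{\frac12-\frac1{p_0}}$ if $0<p_0\le2$, provided $2n\le D_\mu$. Since the target space $s^{0,\Omega}_{\infty,\infty}b$ is a Banach space, the exponent $\rho$ in the additivity inequality \eqref{ws-16} may be taken equal to $1$.

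First I would prove the lower bound, which is uniform in both cases. By \eqref{eqlow1} and Lemma~\ref{erg-4}, $x_n(id^*)\gtrsim 2^{\mu(-t+\frac1{p_0})}\,x_n(id_{p_0,\infty}^{D_\mu})$ for every $\mu\in\N_0$. Choosing $n=[D_\mu/2]$ (so that $2n\le D_\mu$), inserting the above values of $x_n(id_{p_0,\infty}^{D_\mu})$, and using $D_\mu\asymp\mu^{d-1}2^\mu$ from Lemma~\ref{ba1} together with $2^\mu\asymp n/\log^{d-1}n$, one gets after re-expressing everything in terms of $n$ that $x_n(id^*)\gtrsim n^{-t+\frac1{p_0}}(\log n)^{(d-1)(t-\frac1{p_0})}$ if $p_0\ge2$ and $x_n(id^*)\gtrsim n^{-t+\frac12}(\log n)^{(d-1)(t-\frac1{p_0})}$ if $p_0\le2$.

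For the upper bound with $2\le p_0\le\infty$ the elementary argument of Theorem~\ref{the2-1} suffices: with $S_J$ the projection onto the levels $|\bar\nu|_1\le J$ (rank $\asymp 2^JJ^{d-1}$), the bound \eqref{ws-56b} and $t>1/p_0$ give $\|id^*-S_J\|\le\sum_{\mu>J}\|id_\mu^*\|\lesssim 2^{J(-t+\frac1{p_0})}$, hence $a_n(id^*)\lesssim 2^{J(-t+\frac1{p_0})}$ for $n\asymp 2^JJ^{d-1}$, and $x_n\le a_n$ together with monotonicity and the polynomial behaviour of Weyl numbers (as at the end of the proof of Theorem~\ref{the1-1}) yields the claim. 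For $0<p_0\le2$ this cheap bound is too weak, since it produces the exponent $-t+1/p_0$ rather than $-t+1/2$, so one must instead copy Step~2 of the proof of Theorem~\ref{the1-1}: split $id^*=\sum_{\mu=0}^J id_\mu^*+\sum_{\mu=J+1}^L id_\mu^*+\sum_{\mu>L}id_\mu^*$ as in \eqref{ws-15}, annihilate the first sum by taking $n_\mu=D_\mu+1$ and using the rank property, control the tail by $\sum_{\mu>L}\|id_\mu^*\|\lesssim 2^{L(-t+\frac1{p_0})}$, and for $J+1\le\mu\le L$ set $n_\mu=[D_\mu 2^{(J-\mu)\lambda}]$ with $\lambda>1$ chosen so that $t-\tfrac12>\lambda(\tfrac1{p_0}-\tfrac12)$, which is possible precisely because $t>1/p_0$. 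Then Lemma~\ref{erg-4} and property~(a) in Appendix~A give $x_{n_\mu}(id_\mu^*)\lesssim \mu^{(d-1)(\frac12-\frac1{p_0})}2^{\mu(-t+\frac12)}2^{(J-\mu)\lambda(\frac12-\frac1{p_0})}$, a geometric series dominated by its term at $\mu\asymp J$, so that $x_n(id^*)\lesssim J^{(d-1)(\frac12-\frac1{p_0})}2^{J(-t+\frac12)}$ with $n\asymp J^{d-1}2^J$; passing from this subsequence to all $n$ as before finishes the proof. The only genuinely delicate point is this last case: one has to verify that the parameters $J,L,\lambda$ in the three-block splitting can be tuned so that the resulting $(\log n)$-power equals exactly $(d-1)(t-\frac1{p_0})$ and thus matches the lower bound, the remainder being bookkeeping with the $s$-number axioms and the facts already assembled in Section~\ref{sequence} and Appendix~A.
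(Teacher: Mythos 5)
Your proposal is correct and follows essentially the same route as the paper: reduce via Lemma~\ref{erg-4} and \eqref{eqlow1} to $x_n(id_{p_0,\infty}^{D_\mu})$, invoke property~(a) of Appendix~A, and run the block-splitting scheme of Theorem~\ref{the1-1} (with $\rho=1$ and the admissible choice of $\lambda$ coming exactly from $t>1/p_0$), which is precisely what the paper does. The only deviation is that for part~(ii) you replace the Theorem~\ref{the1-1} splitting by the rank-projection/approximation-number argument of Theorem~\ref{the2-1} together with \eqref{ws-56b} and $x_n\le a_n$; this is an equally valid variant since $x_n(id_{p_0,\infty}^{D_\mu})\lesssim 1$ makes either argument give the exponent $-t+\frac{1}{p_0}$.
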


By making use of a lifting argument, see Lemma \ref{lift2}, and the counterpart of Lemma \ref{weyl} for this situation, i.e.,
\[
x_n( id^*: s_{p_0,p_0}^{t,\Omega}b \to s_{\infty,\infty}^{0,\Omega}b) \asymp x_n
\big(id: S_{p_0,p_0}^t B(\Omega)\to S_{\infty,\infty}^0 B(\Omega)\big)\, , \qquad n \in \N\, , 
\]
we immediately get the following corollary.

\begin{corollary}\label{erg-2} 
Let $s \in \re$.\\
{\rm (i)}
Let $0< p_0\leq 2$ and $t> s+ \frac{1}{p_0}$. Then
\[
x_n(id: S^{t}_{p_0,p_0}B((0,1)^d) \to S^{s}_{\infty,\infty}B((0,1)^d)) \asymp n^{-t + s +\frac{1}{2}} (\log n)^{(d-1)(t-s-\frac{1}{p_0})} \, , \qquad n \geq 2\, .
\]
{\rm (ii)}
Let $2\leq p_0\leq \infty$ and $t>s+\frac{1}{p_0}$. Then
\[
x_n(id: S^{t}_{p_0,p_0}B((0,1)^d) \to S^{s}_{\infty,\infty}B((0,1)^d)) \asymp n^{-t+s +\frac{1}{p_0}} 
(\log n)^{(d-1)(t-s-\frac{1}{p_0})}\, , \qquad n \geq 2\, .
\]
\end{corollary}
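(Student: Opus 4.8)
The plan is to deduce Corollary~\ref{erg-2} from Theorem~\ref{erg-1} by a two-step reduction: first pass from the function-space embedding to the sequence-space embedding via a wavelet isomorphism, then kill the secondary smoothness $s$ by a lifting. First I would establish the analogue of Lemma~\ref{weyl} for the target space $S^s_{\infty,\infty}B(\Omega)$ in place of $S^0_{p,2}F(\Omega)$, i.e. the stated claim
\[
x_n\big(id: S_{p_0,p_0}^t B(\Omega)\to S_{\infty,\infty}^s B(\Omega)\big)\asymp x_n\big(id^*: s_{p_0,p_0}^{t,\Omega}b \to s_{\infty,\infty}^{s,\Omega}b\big)\, .
\]
This is proved exactly as Lemma~\ref{weyl}: one inserts the commutative diagram built from the tensor-product extension operator $\ce_d$ (constructed in Step~1 of Lemma~\ref{weyl}, which is available here since $B^t_{p_0,p_0}(0,1)$ admits a bounded linear extension operator and $\gamma_{p_0}$ is a uniform quasi-norm; for $p_0=\infty$ use the extension operator from Subsection~\ref{extrem2}), the wavelet isomorphism $\mathcal{W}$ of Lemma~\ref{wavelet}(i), its synthesis counterpart $\mathcal{W}^*$, and the restriction $R_\Omega$. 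The ideal property of the $s$-numbers then gives both inequalities, since for the target we only need the (easier) $B$-scale wavelet characterization, not the $F$-scale one.

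Next I would remove the parameter $s$. By Remark~\ref{lift} the diagonal map $J_{-s}$ is an isomorphism of $s^{s,\Omega}_{\infty,\infty}b$ onto $s^{0,\Omega}_{\infty,\infty}b$, and more to the point, Lemma~\ref{lift2} (applied with $r:=s$, $p=p_0=\infty$ on the target side, $q_0=p_0$, $q=\infty$) gives directly
\[
x_n\big(id^*: s_{p_0,p_0}^{t,\Omega}b \to s_{\infty,\infty}^{s,\Omega}b\big)\asymp x_n\big(id^{**}: s_{p_0,p_0}^{t-s,\Omega}b \to s_{\infty,\infty}^{0,\Omega}b\big)\, ,\qquad n\in\N\, .
\]
Combining this with the reduction from the first step and then invoking Theorem~\ref{erg-1} with smoothness parameter $t-s$ in place of $t$ yields: in case $0<p_0\le 2$ and $t-s>1/p_0$,
\[
x_n\big(id: S_{p_0,p_0}^t B(\Omega)\to S_{\infty,\infty}^s B(\Omega)\big)\asymp n^{-(t-s)+\frac12}(\log n)^{(d-1)((t-s)-\frac{1}{p_0})}=n^{-t+s+\frac12}(\log n)^{(d-1)(t-s-\frac{1}{p_0})}\, ,
\]
and in case $2\le p_0\le\infty$ and $t-s>1/p_0$,
\[
x_n\big(id: S_{p_0,p_0}^t B(\Omega)\to S_{\infty,\infty}^s B(\Omega)\big)\asymp n^{-t+s+\frac{1}{p_0}}(\log n)^{(d-1)(t-s-\frac{1}{p_0})}\, ,
\]
for all $n\ge 2$, which is precisely the two displayed formulas of the corollary.

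There is one bookkeeping point to verify, and it is really the only potential obstacle: I must check that Theorem~\ref{erg-1} itself is legitimately established, i.e. that its proof (announced as ``follow the proof of Thm.~\ref{the1-1}'', using Lemma~\ref{erg-4} to reduce $x_n(id_\mu^*)$ to $2^{\mu(-t+1/p_0)}x_n(id_{p_0,\infty}^{D_\mu})$, together with the stated behaviour $x_n(id_{p_0,\infty}^{D_\mu})\asymp 1$ for $p_0\ge2$ and $\asymp n^{1/2-1/p_0}$ for $p_0\le2$ when $2n\le D_\mu$, from property (a) of Appendix~A) does go through with the splitting \eqref{ws-15}--\eqref{sum1}, the choices of $J,L,n_\mu$, and the tail estimate \eqref{ws-17}--\eqref{ws-56b}. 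Assuming that (as the excerpt does), the corollary is then immediate. A minor caveat: the lifting identity used above requires $s\in\re$ arbitrary with no positivity assumption, which is consistent with Remark~\ref{nochmal}(iii), so the restriction $s>0$ in Theorem~\ref{satz04} is indeed not needed at the level of $S^s_{\infty,\infty}B$; here we simply carry $s\in\re$ throughout.
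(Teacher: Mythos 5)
Your proposal is correct and follows essentially the same route as the paper: the authors also obtain the corollary by combining the $B$-scale counterpart of Lemma \ref{weyl} (wavelet transfer to the sequence-space setting, using the tensor-product extension operator and Lemma \ref{wavelet}(i)), the lifting Lemma \ref{lift2} to reduce the target smoothness $s$ to $0$, and Theorem \ref{erg-1} applied with smoothness $t-s$. Your bookkeeping remarks (extension operator for $p_0=\infty$ from Subsection \ref{extrem2}, no positivity restriction on $s$) match the paper's treatment.
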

\noindent
Now  Thm. \ref{satz04} follows from $\cz^{s}_{\mix}((0,1)^d)=S^{s}_{\infty,\infty}B((0,1)^d) $,
see Lemma \ref{zyg}.
\qed\\

\noindent
{\bf Proof of Theorem \ref{ap-cor}}.
The lower estimate in the case of high smoothness is a direct consequence of  $x_n \leq a_n$
and Theorem  \ref{satz04}.
\\
{\em Step 1.} We prove the upper bound of $a_{n} (id: S^{t}_{p_0,p_0}B((0,1)^d) \to S^{0}_{\infty,\infty}B((0,1)^d))$ in case $p_0 >1$. 
First, recall
\[
a_n(id_{p_0,\infty}^{D_\mu}) \asymp \left\{ \begin{array}{lll}
1 &\qquad & \mbox{if}\quad 2 \le p_0 \le \infty\, , \\
\min(1,D_{\mu}^{1-\frac{1}{p_0}}n^{-\frac{1}{2}}) && \mbox{if} \quad 1< p_0 < 2\, ,                     
\end{array}\right.
\]
if $2n \le D_\mu$, see \cite{Glus,Vy}.
To avoid nasty calculations by checking this behaviour for $p_0 \ge 2$
one may use the elementary chain of inequalities
\[
x_n(id_{p_0,\infty}^{D_\mu}) \leq a_n(id_{p_0,\infty}^{D_\mu}) \lesssim 1
\]
in combination with property (a) in Appendix A.\\
Let  $2\leq p_0\leq \infty$.
Because of $a_n(id_{p_0,\infty}^{D_\mu}) \asymp x_n(id_{p_0,\infty}^{D_\mu}) \asymp 1$ if $2n \le D_\mu$ 
we may argue as in case of Weyl numbers, see the proof of Thm. \ref{satz04} given above.
\\
Now we consider the case $ 1< p_0 < 2$ and $t>1$. We define
$$id^*: s^{t, \Omega}_{p_0,p_0}b \to s^{0, \Omega}_{\infty,\infty}b \quad 
\text{and}  \quad id_\mu^*: (s^{t, \Omega}_{p_0,p_0}b)_\mu \to (s^{0, \Omega}_{\infty,\infty}b)_\mu \, .$$
\eqref{ws-56b} and Lemma \ref{erg-4} yield
\[
\| \, id_\mu^* \, \|\lesssim 2^{\mu (\frac{1}{p_0}-t)}\, 
\]
and
\begin{equation}\label{ws-51a}
a_n(id_{\mu}^*) \asymp a_n (id_\mu^{**}: 2^{\mu(t-\frac{1}{p_0})} \ell_{p_0}^{D_\mu} \to \ell_{\infty}^{D_\mu} )
\asymp 2^{\mu(-t+\frac{1}{p_0})} \, a_n(id_{p_0,\infty}^{D_\mu})
\end{equation}
for all $n \in \N$.
Now we get as in Subsection 5.2, formula \eqref{sum1}, 
\begin{equation}\label{sum1b}
a_n(id^*)\lesssim \sum_{\mu=J+1}^{L} \, a_{n_{\mu}}(id_{\mu}^*)+ 2^{L (-t+\frac{1}{p_0})}\, ,
\end{equation}
since $\rho=1$ here.
For 
\be\label{lambda0}
1 < \lambda < \frac 12 + \frac t2 
\ee
we define 
$$ n_{\mu} := D_{\mu} \, 2^{(J-\mu)\lambda}\, , \qquad  J+1\leq \mu\leq L \, .$$
Then, as above, 
$$n_{\mu}\leq \frac{D_{\mu}}{2} \qquad \text{and}\qquad  \sum_{\mu=J+1}^{L}n_{\mu}\asymp J^{d-1}2^J$$
follows.
From (\ref{ws-51a}) and 
$ a_n(id_{p_0,\infty}^{D_\mu}) \asymp \min(1,D_{\mu}^{1-\frac{1}{p_0}}n^{-\frac{1}{2}})$ we conclude
\beqq
a_{n_{\mu}}(id_{\mu}^*)&\lesssim& 2^{\mu(-t+\frac{1}{p_0})} \, a_{n_\mu}(id_{p_0,\infty}^{D_\mu})\\
&\lesssim& 2^{\mu(-t+\frac{1}{p_0})}D_{\mu}^{1-\frac{1}{p_0}}  (D_{\mu}.2^{(J-\mu)\lambda})^{-\frac{1}{2}} \\
&\lesssim& 2^{\mu(-t+\frac{1}{2})}\mu^{(d-1)(\frac{1}{2}-\frac{1}{p_0})}2^{-\frac{1}{2}(J-\mu)\lambda}
\, .\eeqq
This leads to
\beqq
 \sum_{\mu=J+1}^{L} \, a_{n_{\mu}}(id_{\mu}^*)&\lesssim& \sum_{\mu=J+1}^{L}2^{\mu(-t+\frac{1}{2})}\mu^{(d-1)(\frac{1}{2}-\frac{1}{p_0})}2^{-\frac{1}{2}(J-\mu)\lambda}\\
 &\lesssim& 2^{J(-t+\frac{1}{2})}J^{(d-1)(\frac{1}{2}-\frac{1}{p_0})},
\eeqq
since $\lambda $ satisfies $\lambda < \frac 12 + \frac t2$, see \eqref{lambda0},
guaranteeing  the convergence of the series in that way. 
Now we choose $L$ large enough such that 
$$ 2^{L (-t+\frac{1}{p_0})} \lesssim 2^{J(-t+\frac{1}{2})}J^{(d-1)(\frac{1}{2}-\frac{1}{p_0})}.$$
In view of \eqref{sum1b} this yields
$$a_n(id^*)\lesssim 2^{J(-t+\frac{1}{2})}J^{(d-1)(\frac{1}{2}-\frac{1}{p_0})}.$$
This proves the estimate from above.
\\
{\em Step 2.} Let $p_0=1$. Then we use 
\[
a_n(id_{1,\infty}^{D_\mu}) \lesssim  n^{-\frac{1}{2}}                    
\]
if $2n \le D_\mu$, see \cite{Vy}.
This is just the limiting case of Step 1.
So we argue as there. \\
{\em Step 3.} It remains to consider the following case: $1<p_0<2$, $s=0$ and $\frac{1}{p_0}<t<1$. 
\\
{\em Substep 3.1.} Estimate from above.
In this case we define
\be\label{ws000}
L:=\Big[J\frac{p_0'}{2}+(d-1)p_0'\Big(\frac{1}{p_0}-\frac{1}{2}\Big)\log J\Big]
\ee
and
\[
n_{\mu}:= \big[D_{\mu}\, 2^{(\mu-L)\beta+J-\mu}\big] \, , \qquad J+1 \le \mu \le L\, , 
\]
for some $\beta>0$. Here $p_0'$ is the conjugate of $p_0$, i.e., $\frac{1}{p_0}+\frac{1}{p_0'}=1$. Again we have
$$n_{\mu}\leq \frac{D_{\mu}}{2} \qquad \text{and}\qquad  \sum_{\mu=J+1}^{L}n_{\mu}\asymp J^{d-1}2^J.$$
From (\ref{ws-51a}), (\ref{sum1b}) and
$ a_n(id_{p_0,\infty}^{D_\mu}) \asymp \min(1,D_{\mu}^{1-\frac{1}{p_0}}n^{-\frac{1}{2}})$ we get
\beqq
a_n(id^*)&\lesssim& \sum_{\mu=J+1}^{L} 2^{\mu(-t+\frac{1}{p_0})}D_{\mu}^{1-\frac{1}{p_0}}  \big[D_{\mu}\, 2^{(\mu-L)\beta+J-\mu}\big]^{-\frac{1}{2}}+ 2^{L (-t+\frac{1}{p_0})}\, \\
&\lesssim& \sum_{\mu=J+1}^{L} 2^{\mu\big(-t+1-\frac{\beta}{2}\big)}2^{\frac{L\beta-J}{2}}\mu^{(d-1)\big(\frac{1}{2}-\frac{1}{p_0}\big)}+ 2^{L (-t+\frac{1}{p_0})}\, .
\eeqq
The condition $t<1$ guarantees that we can choose $\beta>0$ such that $-t+1-\frac{\beta}{2}>0$. Then we have
\beqq
a_n(id^*)
&\lesssim&  2^{L\big(-t+1-\frac{\beta}{2}\big)}2^{\frac{L\beta-J}{2}}J^{(d-1)\big(\frac{1}{2}-\frac{1}{p_0}\big)}+ 2^{L (-t+\frac{1}{p_0})}\, \\
&=&  2^{L(-t+1)}2^{-\frac{J}{2}}J^{(d-1)\big(\frac{1}{2}-\frac{1}{p_0}\big)}+ 2^{L (-t+\frac{1}{p_0})}\, .
\eeqq
Now, replacing $L$ by the value in \eqref{ws000}, a simple calculation yields
\beqq
a_n(id^*) &\lesssim&  2^{J\frac{p_0'}{2}\big(-t+\frac{1}{p_0}\big)}J^{(d-1)\big[\frac{p_0'}{2}\big(-t+\frac{1}{p_0}\big) +t-\frac{1}{p_0}\big]}\, .
\eeqq
Rewriting this in dependence on $n$ we obtain
\beqq
a_n(id^*)\lesssim n^{-\frac{p_0'}{2}(t-\frac{1}{p_0})}(\log n)^{(d-1)(t- \frac{1}{p_0} )}.
\eeqq
This proves the estimate from above. 
\\
{\em Substep 3.2.} Estimate from below. First of all, notice that we can prove 
$ a_n(id^*)\geq a_n(id_{\mu}^*)$ as in  (\ref{eqlow1}). 
We choose $n=[D_{\mu}^{\frac{2}{p_0'}}]$.
By employing again (\ref{ws-51a}) and 
\[ 
a_n(id_{p_0,\infty}^{D_\mu}) \asymp \min(1,D_{\mu}^{1-\frac{1}{p_0}}n^{-\frac{1}{2}})
\] 
we obtain the desired estimate. 
Finally,  by making use of a lifting argument, see Lemma \ref{lift2}, and the counterpart of Lemma \ref{weyl} we finish our proof.\qed


\subsection{Proof of interpolation properties of Weyl numbers}


For the basics in interpolation theory we refer to 
the monographs \cite{BL,lun,t78}. 
\\
To begin with we deal with Gelfand numbers.
The  $n$-th Gelfand number is defined as 
\be\label{def-gel}
c_n (T)=\inf_{M_n}\sup_{\|x|X\|\leq 1, x\in M_n}\|Tx|Y\|
\ee
where $M_n$ is a subspace of $X$ such that $\text{codim}M_n< n$, see also Section \ref{basic}. 
Next we recall the interpolation properties of Gelfand numbers, for the case of  Banach spaces we refer to Triebel \cite{Tr70}.

\begin{theorem}\label{inte2}
Let $0<\theta<1$. 
Let $X, Y, X_0,Y_0$ be  quasi-Banach spaces. 
Further we assume $Y_0 \cap Y_1  \hookrightarrow Y$ and the existence of a positive constant $C$ with
\be\label{interq}
\| y|Y\| \leq C \, \| y|Y_0\|^{1-\theta}\, \|y|Y_1\|^{\theta} \qquad   \text{for all}\quad  y\in Y_0\cap Y_1.
 \ee
Then, if 
\[
T\in \mathcal{L}(X,Y_0) \cap \mathcal{L}(X,Y_1) \cap \cl(X,Y)
\]
it follows
\[
c_{n+m-1}(T:X\to Y)\le C\,   c_n^{1-\theta}(T: X\to Y_0)\, c_m^{\theta}(T:X\to Y_1)
\]
for all $n,m \in \N$. Here $C$ is the same constant as in \eqref{interq}.
\end{theorem}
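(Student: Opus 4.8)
The plan is to argue directly from the definition \eqref{def-gel} of the Gelfand numbers via subspaces of finite codimension, following the classical Banach-space argument of Triebel \cite{Tr70}. The quasi-Banach setting causes no difficulty here, because the proof never uses a triangle inequality in $Y$, only the multiplicative interpolation inequality \eqref{interq} applied pointwise to the images $Tx$. First I would fix $\varepsilon>0$; since the infimum in \eqref{def-gel} is in general not attained, I choose a subspace $M\subset X$ with $\mathrm{codim}\,M<n$ and
\[
\sup\{\,\|Tx|Y_0\|:\ x\in M,\ \|x|X\|\le 1\,\}\ \le\ c_n(T:X\to Y_0)+\varepsilon\,,
\]
and, similarly, a subspace $L\subset X$ with $\mathrm{codim}\,L<m$ and the analogous bound by $c_m(T:X\to Y_1)+\varepsilon$.

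Next I would set $N:=M\cap L$ and note the codimension count $\mathrm{codim}\,N\le \mathrm{codim}\,M+\mathrm{codim}\,L\le (n-1)+(m-1)=n+m-2<n+m-1$, so that $N$ is admissible in the definition of $c_{n+m-1}(T:X\to Y)$. For any $x\in N$ with $\|x|X\|\le 1$ we have $Tx\in Y_0\cap Y_1$, because $T\in\mathcal L(X,Y_0)\cap\mathcal L(X,Y_1)$, hence \eqref{interq} applies and gives
\[
\|Tx|Y\|\ \le\ C\,\|Tx|Y_0\|^{1-\theta}\,\|Tx|Y_1\|^{\theta}\ \le\ C\,\big(c_n(T:X\to Y_0)+\varepsilon\big)^{1-\theta}\big(c_m(T:X\to Y_1)+\varepsilon\big)^{\theta}\,.
\]
Taking the supremum over all such $x$ and using that $N$ is admissible in \eqref{def-gel}, we obtain
\[
c_{n+m-1}(T:X\to Y)\ \le\ C\,\big(c_n(T:X\to Y_0)+\varepsilon\big)^{1-\theta}\big(c_m(T:X\to Y_1)+\varepsilon\big)^{\theta}\,,
\]
and letting $\varepsilon\downarrow 0$ finishes the proof.

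There is no genuine obstacle here: the only points that need a moment's care are the codimension estimate for the intersection $M\cap L$ and the observation that \eqref{interq} is applied to $Tx\in Y_0\cap Y_1$ rather than to $x$ itself. Let me remark, looking ahead, that Theorem \ref{inter} for Weyl numbers is then obtained by exactly the same template together with the representation $x_n(T)=\sup\{c_n(TA):\ A\in\mathcal L(\ell_2,X),\ \|A\|\le 1\}$ from Remark \ref{weylextra}: one applies the present inequality to each composition $TA$ — noting that $\|TA|\mathcal L(\ell_2,Y_i)\|\le \|T|\mathcal L(X,Y_i)\|\,\|A\|$, so all three compositions are bounded — and passes to the supremum over $A$ on both sides, the constant $C$ being unaffected.
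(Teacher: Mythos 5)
Your proposal is correct and follows essentially the same route as the paper: intersect near-optimal subspaces for $c_n(T:X\to Y_0)$ and $c_m(T:X\to Y_1)$, use the codimension bound $\mathrm{codim}(M\cap L)<n+m-1$, and apply \eqref{interq} pointwise to $Tx\in Y_0\cap Y_1$ before taking suprema (the paper phrases this via $\inf$/$\sup$ manipulations instead of an explicit $\varepsilon$, but the argument is the same). Your closing remark on deducing Theorem \ref{inter} via Remark \ref{weylextra} also matches the paper's deduction.
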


\begin{proof}
We follow the proof in \cite{Tr70}. 
Let $L_n$ and $L_m$ be subspaces of $X$ such that $\text{codim}L_n< n$ and $\text{codim}L_m< m$ respectively. Then $\text{codim}(L_n\cap L_m)<m+n-1$. 
Furthermore, by assumption, for all $x \in X$ we have $Tx \in Y_0 \cap Y_1.$
From \eqref{def-gel} and \eqref{interq} we derive
\beqq
c_{m+n-1} (T:X\to Y)&=&\inf_{L_n,L_m}\sup_{\substack{\|x|X\|\leq 1\\ x\in L_n\cap L_m}}\|Tx|Y\|\\
&\leq &C\inf_{L_n,L_m}\sup_{\substack{\|x|X\|\leq 1\\ x\in L_n\cap L_m}}\| \, Tx\, |Y_0\|^{1-\theta}\, \|\, Tx\, |Y_1\|^{\theta}\\
&\leq &C\big(\inf_{L_n}\sup_{\substack{\|x|X\|\leq 1\\ x\in L_n}}\| \, Tx \, |Y_0\|\big)^{1-\theta}\big(\inf_{L_m}\sup_{\substack{\|x|X\|\leq 1\\ x\in L_m}}\| \, Tx\, |Y_1\|\big)^{\theta}\\
&=& C \,  c_n^{1-\theta}(T: X\to Y_0)\, c_m^{\theta}(T:X\to Y_1) \, .
\eeqq
The proof is complete.
\end{proof}

\begin{remark}
 \rm
Triebel \cite{Tr70} worked with Gelfand widths. For compact operators 
Gelfand widths and Gelfand numbers coincide, see also \cite{Tr70}.
Hence, if we require 
\[
T\in \mathcal{K}(X,Y_0) \cap \mathcal{K}(X,Y_1) \cap \cl(X,Y)\, , 
\]
where $\mathcal{K}(X,Y)$ stands for the subspace of $\mathcal{L}(X,Y)$ formed by the compact operators,
then Theorem \ref{inte2} remains true for Gelfand widths.
Without extra conditions on $T$ Gelfand widths and Gelfand numbers 
may not coincide, see Edmunds and Lang \cite{EL} for a discussion of this question.
\end{remark}

\noindent
Now we ready prove the Thm. \ref{inter}.\\
\noindent
{\bf Proof of Theorem \ref{inter}.} 
Let $A\in \mathcal{L}(\ell_2,X)$ such that $\|A\|\leq 1$. Then from Thm. \ref{inte2} we conclude
\[
c_{n+m-1}(TA:\ell_2\to Y)\le C\,   c_n^{1-\theta}(TA: \ell_2\to Y_0)\, c_m^{\theta}(TA:\ell_2\to Y_1).
\]
Employing Remark \ref{weylextra}(ii) we obtain
\[
c_{n+m-1}(TA:\ell_2\to Y)\le C\,   x_n^{1-\theta}(T: X\to Y_0)\, x_m^{\theta}(T: X\to Y_1).
\]
Now taking the supremum with respect to $A$ we find
$$x_{n+m-1}(T:X\to Y)\le C\,   x_n^{1-\theta}(T: X\to Y_0)\, x_m^{\theta}(T:X\to Y_1).$$
The proof is complete.
\qed


\section{Appendix A - Weyl numbers of the embeddings $\ell_{p_0}^m \to \ell_{p}^m$}
\label{appendixb}


The Weyl numbers of $id: \ell_{p_0}^m\to \ell_{p}^m$ have been investigated at various places, we refer to 
Lubitz \cite{Claus}, K\"onig \cite{Koe}, Caetano \cite{Cae1,Cae2} and Zhang, Fang, Huang \cite{Fang1}.
We shall need the following.

\begin{itemize}
 \item[(a)] (\cite[Korollar 2.2]{Claus} and \cite{Fang1}) Let $n,m\in \N$ and $2n\leq m$. Then we have
\[ 
x_n(id_{p_0,p}^{m})\asymp \left\{\begin{array}{lll}
1 & \qquad & \mbox{if}\quad 2 \le p_0 \le p\le \infty\, ,
\\
n^{\frac{1}{p}-\frac{1}{p_0}} & & \mbox{if} \quad 0 < p_0\leq p\leq 2\, , 
\\
n^{\frac{1}{2}-\frac{1}{p_0}} && \mbox{if} \quad 0< p_0 \leq 2 \leq p \leq \infty\,, 
\\
 m^{\frac{1}{p}-\frac{1}{p_0}} && \mbox{if} \quad 0< p<p_0\leq 2\, .
\end{array}\right.
\]
\item[(b)] (\cite[Korollare 2.6, 2.8, Satz 2.9]{Claus}) Let $2\leq p<p_0\leq \infty$ and $n,m,k\in\mathbb{N},\ k\geq 2$. Then we have
\begin{enumerate}
\item $x_n(id_{p_0,p}^{m})\lesssim \bigg(\dfrac{m}{n}\bigg)^{\frac{1}{r}}$ if $n\leq m$, $\ \dfrac{1}{r}=\dfrac{ {1}/{p}- {1}/{p_0}}{1- {2}/{p_0}}$,
\item $x_n(id_{p_0,p}^{m})\gtrsim m^{\frac{1}{p}-\frac{1}{p_0}}$ if $1\leq n\leq [m^{\frac{2}{p_0}}]$,
\item $x_n(id_{p_0,p}^{kn})\asymp 1$.
\end{enumerate}
\item[(c)] (\cite{Fang1})
Let $0< p\leq 2< p_0\leq \infty$ and $n, m\in \N$. Then
\begin{enumerate}
\item $x_n(id_{p_0,p}^m)\gtrsim m^{\frac{1}{p}-\frac{1}{2}}$ if $n\leq \frac{m}{2}$,
\item $x_n(id_{p_0,p}^m)\gtrsim m^{\frac{1}{p}-\frac{1}{p_0}}$ if $n\leq m^{\frac{2}{p_0}}$.
\end{enumerate}

\end{itemize}


\section{Appendix B - Function spaces of dominating mixed smoothness}
\label{appendixa}



\subsection{Besov and Lizorkin-Triebel spaces on $\re$}
\label{space1}


Here we recall the definition and a few properties of Besov and
Sobolev spaces defined on $\re$. We shall use the Fourier analytic
approach, see e.g. \cite{Tr83}. Let $\varphi \in C_0^\infty (\re)$
be a function such that $\varphi (t)=1$ in an open set containing
the origin. Then by means of
\be\label{unity}
\varphi_0 (t) = \varphi (t)\, , \qquad \varphi_j(t)=
\varphi(2^{-j}t)-\varphi(2^{-j+1}t)\, , \qquad t\in \re\, , \quad j
\in \N\, ,
\ee
we get a smooth dyadic decomposition of unity, i.e.,
\[
\sum_{j=0}^\infty \varphi_j(t)= 1 \qquad \mbox{for all}\quad t \in \re\, ,
\]
and $\supp \varphi_j$ is contained in the dyadic annulus $\{t\in \re: \quad a \, 2^j \le |t| \le b \, 2^j \}$
with $0 < a < b < \infty$ independent of $j \in \N$.

\begin{definition} Let $0< p,q \le\infty$ and $s \in \re$.
\\
{\rm (i)}
The Besov space $B^s_{p,q}(\re)$ is then the collection of all
tempered distributions $f\in \mathcal{S}'(\re)$ such that
\[
\|\, f \, |B^s_{p,q}(\re)\|:= \Big(\sum\limits_{j=0}^{\infty}
2^{jsq}\, \|\, \cfi [\varphi_j \cf f] (\, \cdot\, )\,
|L_p(\re)\|^q\Big)^{1/q}
\]
is finite. 
\\
{\rm (ii)} Let $p< \infty$.
The Lizorkin-Triebel space $F^s_{p,q}(\re)$ is then the collection of all
tempered distributions $f\in \mathcal{S}'(\re)$ such that
\[
\|\, f \, |F^s_{p,q}(\re)\|:= \Big\| \Big(\sum\limits_{j=0}^{\infty}
2^{jsq}\, |\, \cfi [\varphi_j \cf f] (\, \cdot\, )\,|^q\Big)^{1/q}\, \Big| L_p(\re)\Big\|
\]
is finite. 
\end{definition}

\begin{remark}
 \rm
{\rm (i)} There is an extensive literature about Besov and Lizorkin-Triebel spaces, we refer to the monographs 
 \cite{Ni}, \cite{Tr83}, \cite{Tr92} and \cite{Tr06}.
These quasi-Banach spaces $B^s_{p,q}(\re)$ and $F^s_{p,q}
(\re)$ can be characterized in various ways, e.g. by differences and
derivatives, whenever $s$ is sufficiently large, i.e., $s > \max(0,1/p-1)$ in case of Besov spaces and
$s > \max(0,1/p-1, 1/q-1)$ in case of Lizorkin-Triebel spaces. 
We refer to \cite{Tr83} for details. \\ 
{\rm (ii)} The spaces $B^s_{p,q}(\re)$ and $F^s_{p,q} (\re)$ do not
coincide as sets except the case $p=q$. 
 \end{remark}


\subsection{Besov and Lizorkin-Triebel spaces of dominating mixed smoothness}
\label{space2}


Detailed treatments of Besov and Lizorkin-Triebel spaces of dominating
mixed smoothness are given at various places, we refer to the
monographs \cite{Am,ST}, the survey \cite{Sc2} as well as to the booklet \cite{Vybiral}.\\
If $\varphi_j$, $j \in \N_0$, is a smooth dyadic decomposition of
unity as in \eqref{unity}, then by means of
\be\label{unityd}
\varphi_{\bar{j}} := \varphi_{{j_1}}\otimes \ldots \otimes
\varphi_{{j_d}} \, , \qquad \bar{j}=(j_1, \ldots\, , j_d)\in
\N_0^d\, ,
\ee
we obtain a smooth decomposition of unity on $\R$.

\begin{definition} Let $0<p,q \le \infty$ and $t \in\re$.
\\
{\rm (i)}
The Besov space of dominating mixed smoothness $ S^{t}_{p,q}B(\re^d)$ is the
collection of all tempered distributions $f \in \mathcal{S}'(\R)$
such that
\[
 \|\, f \, |S^{t}_{p,q}B(\R)\| :=
\Big(\sum\limits_{\bar{j}\in \N_0^d} 2^{|\bar{j}|_1 t q}\, \|\, \cfi[\varphi_{\bar{j}}\, \cf f](\, \cdot \, )
|L_p(\re^d)\|^q\Big)^{1/q}
\]
is finite. 
\\
{\rm (ii)} Let $0 < p< \infty$.
The Lizorkin-Triebel space of dominating mixed smoothness $ S^{t}_{p,q}F(\re^d)$ is the
collection of all tempered distributions $f \in \mathcal{S}'(\R)$
such that
\[
 \|\, f \, |S^{t}_{p,q}F(\R)\| :=
\Big\| \Big(\sum\limits_{\bar{j}\in \N_0^d} 2^{|\bar{j}|_1 t q}\, |\, \cfi[\varphi_{\bar{j}}\, \cf f](\, \cdot \, )|^q \Big)^{1/q} \Big|L_p(\re^d)\Big\|
\]
is finite. 
\end{definition}

\begin{remark}\label{blabla}
\rm
{\rm (i)} The most interesting property of these classes for us consists in the following: 
if 
\[
 f(x) = \prod_{j=1}^d f_j (x_j)\, , \qquad x=(x_1, \ldots \, , x_d)\, , \quad f_j \in A^t_{p,q}(\re)\, , \quad j=1, \ldots \, , d \, , 
\]
then $f \in S^t_{p,q}A(\R)$ and 
\[
 \| \, f \, | S^t_{p,q}A(\R)\| = \prod_{j=1}^d \|\, f_j \, |A^s_{p,q} (\re)\| \, , \qquad A \in \{B,F\}\, .  
\]
I.e., Lizorkin-Triebel and Besov spaces of dominating mixed smoothness have a cross-quasi-norm.\\ 
{\rm (ii)} These classes $S^{t}_{p,q}B(\R)$ as well
as $S^{t}_{p,q}F(\R)$ are quasi-Banach spaces. If either 
$t > \max (0, (1/p)-1)$ (B-case) or $t > \max (0, 1/p-1, 1/q-1)$ (F-case), then they can be
characterized by differences, we refer to \cite{ST} and \cite{U1} for details. \\
{\rm (iii)} Again the spaces $S^{t}_{p,q}B(\R)$ and
$S^{t}_{p,q}F(\R)$ do not coincide as sets except the
case $p=q$. 
\\
{\rm (iv)} For $d=1$ we have
\[
 S^{t}_{p,q} A(\re) = A^t_{p,q}(\re)\, , \qquad A \in \{B,F\}\, . 
\]
\end{remark}

{\bf Acknowledgement:} The authors would like to thank A. Hinrichs for a hint 
concerning a misprint in the phd-thesis of Lubitz \cite{Claus}, 
V.N. Temlyakov for a hint 
concerning a misprint in his paper  \cite{Tem2}, T. K\"uhn for an explanation how to use 
\eqref{kuehn} and Dachun Yang and Wen Yuan for a nice new proof of the continuous embedding $F^0_{1,2} (\R) \hookrightarrow L_1 (\R)$.


\end{document}